\documentclass[a4paper,12pt]{amsart}
\usepackage[margin=3.cm]{geometry}
\usepackage{verbatim}
\usepackage{yhmath}
\usepackage{hyperref} 
\usepackage{lipsum}
\usepackage{amsthm}
\usepackage{amsmath}
\usepackage{amsbsy}
\usepackage{bigints}
\usepackage{amsfonts}%
\usepackage{amssymb}%
\usepackage{graphicx}
\usepackage[utf8]{inputenc}%
\usepackage[T1]{fontenc}%
\usepackage[english]{babel}%
\usepackage[all]{xy}
\usepackage{amsmath, amsthm, amssymb,amsfonts,mathrsfs,amscd,latexsym}
\usepackage{graphicx}
\usepackage{tikz-cd}
\usepackage{mathtools}
\usepackage{hyperref}
\usepackage{graphicx}
\graphicspath{ {./image/} }
\usepackage{CJKutf8}
\allowdisplaybreaks[4]
\usepackage[dvipsnames]{xcolor}

\newcommand{\poubelle}[1]{}

\usepackage{wrapfig}
\usepackage[all]{xy}
\usepackage{colortbl}
\usepackage{amsmath,amssymb,graphics,epsfig,color}
\usepackage{enumerate,wasysym,stmaryrd}
\usepackage{color}
\usepackage{fancybox}
\usepackage{colordvi}
\usepackage{multicol}
\usepackage{colordvi}
\usepackage{geometry}
\usepackage{lscape}
\usepackage{amsthm}
\usepackage{eqnarray,amsmath}
\usepackage{array}
\usepackage{booktabs}   
  \newcolumntype{x}[1]{>{\centering\hspace{0pt}}p{#1}}
\def\A{\mathcal{A}}
\def\X{\mathfrak{X}}

\def\R{\mathbb{R}}

\newtheorem{theorem}{Theorem}[section]

\newtheorem{theoremA}{Theorem}

\newtheorem{corollary}[theorem]{Corollary}

\newtheorem{corollaryA}{Corollary}

\newtheorem{define}[theorem]{Definition}
\newtheorem{example}[theorem]{Example}
\newtheorem{lemma}[theorem]{Lemma}
\newtheorem{prop}[theorem]{Proposition}
\newtheorem{remark}[theorem]{Remark}

\newcommand{\DPL}{\mathcal{DPL}(\Delta)}
\newcommand{\RPL}{\mathcal{RPL}(\Delta)}

\newcommand{\w}{\mathrm{w}}
\newcommand{\aut}{\mathrm{Aut}}
\newcommand{\red}{\mathrm{red}}

\newcommand{\T}{\mathbb{T}}

\newcommand{\C}{\mathbb{C}}
\renewcommand{\P}{\mathbb{P}}
\newcommand{\ext}{\mathrm{ext}}
\newcommand{\DF}{\mathcal{F}^{\Delta}_{v,w}}

\renewcommand{\A}{\mathcal{A}}
\newcommand{\Y}{\mathcal{Y}}
\renewcommand{\X}{\mathcal{X}}
\newcommand{\J}{\textnormal{\textbf{J}}^{\textnormal{NA}}}
\renewcommand{\S}{\mathbb{S}}

\newcommand{\hess}{\textnormal{Hess}}

\newcommand{\Z}{\mathcal{Z}}
\newcommand{\Scal}{\mathrm{Scal}}

\newcommand{\tl}{\mathbb{T}}
\newcommand{\Gc}{G^{\mathbb{C}}}

\usepackage[utf8]{inputenc}

\allowdisplaybreaks

\title[Yau-Tian-Donaldson correspondence for projective bundles]{Relative uniform Yau--Tian--Donaldson correspondence for projective bundles over a curve}

\AtEndDocument{%
{\footnotesize
\textsc{Simon Jubert, Sorbonne Université, Université Paris Cité, CNRS, IMJ-PRG, 
F-75005 Paris, France}\par
\textit{Email address}: \texttt{simon.jubert@sorbonne-universite.fr}

\bigskip

\textsc{Chenxi Yin, Département de Mathématiques, UQAM, C.P. 8888, Succursale Centre-ville, Montréal 
(Québec), H3C 3P8, Canada}\par
\textit{Email address}: \texttt{yin.chenxi@courrier.uqam.ca, yin.chenxi.mathematics@gmail.com}
}
}

\begin{document}
\raggedbottom

\author{Simon Jubert}
\author{Chenxi Yin}

\begin{abstract}
This paper is concerned with a relative uniform Yau–Tian–Donaldson correspondence, in terms of test configurations, for the projectivization $\mathbb{P}(E)$ of a holomorphic vector bundle $E$ over a smooth curve.

For any Kähler class $[\omega]$ on $\mathbb{P}(E)$, we construct  K\"ahler test configurations, which we call \textit{compatible test configurations}. They are obtained by gluing horospherical test configurations from the fibers, which arise from convex functions on a suitable moment polytope $\Delta$ following the construction of Delcroix, to the principal bundle associated with $\mathbb{P}(E)$. 

Using the generalized Calabi ansatz of Apostolov--Calderbank--Gauduchon--Tønnesen-Friedman on these test configurations, we show that the relative uniform stability of $(\mathbb{P}(E),[\omega])$ for compatible test configurations implies the existence of an extremal metric in this class, thereby establishing the equivalence. 

Along the way, we prove that these two conditions are equivalent to the weighted uniform stability of $\Delta$ for suitable explicit weight functions defined from the topological data of $\mathbb{P}(E)$.

\end{abstract}
\maketitle

\tableofcontents

\section{Introduction}
\subsection{Background}
A fundamental problem in Kähler geometry, proposed by Calabi~\cite{EC82}, is the search for \emph{a canonical Kähler metric} representing a given Kähler class on a compact Kähler manifold. Calabi proposed as natural candidates the Kähler metrics whose scalar curvature defines a Hamiltonian vector field preserving the Kähler structure—these are known as \emph{extremal Kähler metrics} (extremal metric for short). 

An important special class of extremal metrics is the class of constant scalar curvature Kähler metrics (cscK), which in turn include the Kähler–Einstein (KE) metrics that arise when the first Chern class is a multiple of the given Kähler class. In this context, the foundational works of  Calabi \cite{Calabi1957canonical,Calabi1954space}, Aubin \cite{Au76, Au78} and Yau \cite{Yau7} in the 1960s and 1970s established general existence and uniqueness results for KE metrics on compact complex manifolds with negative or vanishing first Chern class.

In the general case, however, the existence of an extremal Kähler metric is obstructed by a certain notion of algebraic stability, called \emph{(relative uniform) K-stability} \cite{Don10, Lah19, Sze15, Tia, Yau7}. The notion requires the strict positivity of the Donaldson-Futaki invariant of the non-product test configurations of a given Kähler manifold. The conjectural equivalence between the relative uniform K-stability and the existence of an extremal Kähler metric in the given Kähler class has been one of the main focus of research in the field, and is known as the Yau-Tian-Donaldson (YTD) conjecture.

\smallskip

Two versions of the YTD correspondence have been recently established in the deep works \cite{BJ25,DZ25}, where the uniform K-stability is replaced by a priori stronger notions of stability. 

In \cite{BJ25}, the authors consider a stronger notion than K-stability, which was introduced by Chi Li \cite{Li22}, under the name of \textit{K-stability for models}. Instead of testing stability on test configurations, this version considers a strictly larger class of degenerations called \textit{models}. Chi Li proved in \cite{Li22} that K-stability for models implies the existence of a cscK metric (a result that has been extended to the extremal Kähler case in \cite{hashimoto2025relative}, to transcendental Kähler classes in \cite{Mes24} and to the weighted polarized setting in \cite{HL25}). The authors in \cite{BJ25} proved the converse for projective polarized manifolds in the weighted setting.

In \cite{DZ25}, the authors prove that on a polarized Kähler manifold with a discrete automorphism group, the existence of a cscK metric is equivalent to a notion of $K^{\beta}$-stability of the Kähler class. This stability notion is defined by the positivity of a quantity that is a variant of the Donaldson--Futaki invariant and depends on a large parameter $\beta$. 
As a consequence of their $K^{\beta}$-stability framework, the authors of \cite{DZ25} recover the result of \cite{BJ25} for general cscK manifolds, showing that the existence of a cscK metric is equivalent to stability with respect to a smaller class of models, namely the log discrepancy models.

The original version of the YTD conjecture, expressed in terms of relative uniform K-stability on test configurations, has been established in several special cases. Notably, this includes Fano varieties (see, e.g., \cite{BBJ, CDSI, CDSII, CDSIII,  LTW, Li_2022, Tia}) and toric manifolds~\cite{Abr, CLS, CC21b, SKD, Don, Has16, ZZ} and other manifolds with large symmetry group, e.g. \cite{Del2023, Jub23}. Our work establishes a result in the spirit of this latter approach, and consequently the more recent frameworks of \cite{BJ25, DZ25} are not required for our arguments.

\smallskip

Even though several versions of the YTD correspondence are now available, the problem of finding an effective criterion to test stability remains a profound open question in the field. Indeed, for a generic compact Kähler manifold, testing K-stability remains a major difficulty, and several strategies in this direction have been developed over the past decades. These include, for instance, the introduction of numerical invariants such as the delta invariant (see e.g. \cite{BJ20, CRZ19, DJ25, Z21, Zha24}) and the use of symmetries of the manifold to translate the K-stability into the stability of a suitable moment polytope (see e.g. \cite{ACGT11,  Del2023, Del25, DJ23, Don10, Jub23, LLS23, Sze06, ZZ}). The present paper is concerned with this latter approach.

\subsection{Main results and strategy of proof} In this paper, we study the YTD conjecture in terms of K\"ahler test configurations \cite{DR17, Sjo} on the projectivization $Y := \mathbb{P}(E) \rightarrow C$ of a holomorphic vector bundle $E$ over a smooth curve $C$. We fix a Kähler class $[\omega_Y]$ on $Y$. Without loss of generality, we assume that $[\omega_Y]$ restricts to the Kähler class of the Fubini-Study metric on each fiber.

Our starting point is a result of Apostolov–Keller \cite[Theorem 2]{AK}, which states that, if $(Y,[\omega_Y])$ is relatively uniformly K-stable, then the bundle $E$ decomposes as a direct sum of projectively flat vector bundles $E_j$ of possibly different slopes, i.e.

$$
    Y = \mathbb{P}\left( \oplus_{j=0}^\ell E_j \right) \longrightarrow C.
$$
Therefore, we may assume without loss of generality that \(E\) is of this form throughout the paper. Since the case where $\ell = 0$ is completely settled by \cite{ACGT11,RT06} (see also \cite[Remarks 1.1]{AK}), we also assume that $\ell\geq 1$. Our main result is the following

\begin{theoremA}{\label{t:intro:ytd}}
   If $(Y,[\omega_Y])$ is relatively uniformly K-stable on compatible test configurations, then there exists an extremal K\"ahler metric in $[\omega_Y]$. Moreover, any extremal K\"ahler metric in $[\omega_Y]$ is compatible.
\end{theoremA}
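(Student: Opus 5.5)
The plan is to reduce the statement to a weighted toric-type problem on the moment polytope $\Delta$ of the fibre, and to feed it into the known weighted YTD theory for polytopes. Since $E=\oplus_{j=0}^\ell E_j$ with each $E_j$ projectively flat, $Y$ is the bundle associated to a flat principal $PU$-bundle over $C$ with fibre a projective space carrying a $\mathbb{T}^{\ell}$-action. Hence $Y$ falls in the semisimple rigid toric bundle framework of Apostolov--Calderbank--Gauduchon--T{\o}nnesen-Friedman. Compatible metrics in $[\omega_Y]$ are given by the generalized Calabi ansatz: they are parametrized by symplectic potentials $u$ on $\Delta$ (strictly convex, with Guillemin boundary behaviour). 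In this ansatz the scalar curvature is expressed as
\[
\Scal(u)=\frac{1}{\mathrm{v}(x)}\Big(-\sum_{a,b}\partial_a\partial_b\big(\mathrm{v}\,H^{ab}\big)+\mathrm{w}(x)\Big),
\]
with explicit weights $\mathrm{v},\mathrm{w}$ on $\Delta$. These weights are affine or polynomial functions built from the slopes and ranks of the $E_j$, the genus of $C$, and the class $[\omega_Y]$. The extremal equation on $Y$ therefore becomes the $(\mathrm{v},\mathrm{w}_{\ext})$-weighted extremal Abreu equation on $\Delta$.

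The first step is to identify the relative Donaldson--Futaki invariant and the relative non-Archimedean $J$-functional of compatible test configurations with polytope functionals. A compatible test configuration is built from a rational piecewise-linear convex function $f$ on $\Delta$: take Delcroix's horospherical/toric test configuration of the fibre, and glue it to the principal bundle. Its central fibre is again fibred over $C$. We compute the intersection-theoretic (or equivariant-cohomological) formula for $\mathrm{DF}$ and $J^{\mathrm{NA}}_{\mathbb{T}}$, integrating first along fibres, using the Duistermaat--Heckman measure $\mathrm{v}\,dx$ pushed over $C$. The expected outcome is
\[
\mathrm{DF}(\mathcal{X}_f)=c\,\mathcal{L}_{\mathrm{v},\mathrm{w}}(f):=c\Big(\int_{\partial\Delta} f\,\mathrm{v}\,d\sigma-\int_\Delta f\,\mathrm{w}_{\ext}\,dx\Big)
\]
for some $c>0$, and $J^{\mathrm{NA}}_{\mathbb{T}}$ becomes the $\mathrm{v}$-weighted $L^1$-type norm of the normalization of $f$ modulo affine functions. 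Hence relative uniform K-stability on compatible test configurations is equivalent to weighted uniform stability of $\Delta$ on rational PL convex functions. We then extend to all continuous convex functions by density and by continuity of $\mathcal{L}$.

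The second step invokes the existence theory: weighted uniform stability of $\Delta$ implies the existence of a solution $u$ of the weighted Abreu equation. Here one would use the weighted toric YTD results (Lahdili, Apostolov--Jubert, Jubert's work on semisimple principal bundles, Chen--Cheng type a priori estimates in the weighted setting). Concretely, uniform stability gives coercivity of the weighted relative Mabuchi functional restricted to $\mathbb{T}$-invariant compatible potentials. Coercivity plus the weighted Chen--Cheng estimates produces a smooth minimizer, and hence a compatible extremal metric in $[\omega_Y]$ via the Calabi ansatz. This coercivity-to-existence step is the one I expect to be the main obstacle. The properness obtained on the polytope only controls compatible potentials, while the continuity method or variational argument runs on the full space of $K$-invariant potentials on $Y$. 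To bridge this, I would first try to show that the relative Mabuchi functional on $Y$, restricted to compatible potentials, equals the weighted polytope Mabuchi functional up to a constant. One then runs the weighted variational argument directly on the space of $\mathrm{v}$-weighted convex functions, using Legendre duality and the weighted Donaldson/Chen--Cheng estimates to get regularity and the correct boundary behaviour.

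For the final assertion, uniqueness of extremal metrics modulo $\aut_{\mathrm{red}}(Y)$ (Berman--Berndtsson, Chen--Paun--Zeng) shows that any extremal metric is the pull-back by an automorphism of the compatible one just constructed. Compatibility is preserved under the relevant automorphisms, which commute with the torus and preserve the bundle structure. Strictly, uniqueness only needs some extremal metric to be compatible. If one wants this without assuming stability, first average an extremal metric over a maximal compact subgroup containing $\mathbb{T}$, then check rigidity: $\mathbb{T}$-invariant extremal metrics on such rigid toric bundles are compatible, by the Apostolov--Calderbank--Gauduchon--T{\o}nnesen-Friedman rigidity of the momentum map combined with uniqueness.
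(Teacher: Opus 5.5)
Your test-configuration side matches the paper's. You glue Delcroix's horospherical test configurations of the fibre to the principal $G$-bundle, integrate along the fibres via the rigid semisimple structure so that $\mathbf{DF}$ and the reduced $\mathbf{J}^{\mathrm{NA}}$ become weighted polytope functionals, and use uniqueness for the final assertion. The paper factors through the fibre $X$ with weights $(\bar p,\bar w)$, but it notes that going straight from $Y$ to $\Delta$ also works.

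The genuine gap is the existence step, exactly at the obstacle you name, and your proposed bridge does not close it. Your polytope weight is $\mathrm{v}=p\bar p$ with $p=\prod_j L_j^{d_j-1}$, which vanishes on every facet with $d_j\geq 2$. So the problem on $\Delta$ (equivalently, the weighted problem on $\mathbb{P}^\ell$) is degenerate, and the weighted existence theory you want to quote requires a positive weight. Moreover, Chen--Cheng type estimates are a priori estimates for a PDE on a compact manifold. They give no regularity for a minimizer of a functional on convex functions on $\Delta$; on the polytope side that is Donaldson's programme, carried out only for toric surfaces. On the manifold, the Chen--Cheng regularity-of-minimizers theorem needs a minimizer over all of $\mathcal{E}^1$, which coercivity on the compatible class alone does not provide.

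The missing idea is to stay on a manifold where the weight is positive and to solve the weighted Chen continuity path \emph{inside the compatible class}. The natural choice is the fibre $X$ with $v=\bar p=\sum_j (c-\mu(E_j))L_j>0$, or $Y$ itself. The path is handled in three steps:
\begin{itemize}
\item openness comes from the implicit function theorem restricted to compatible potentials, since the linearized operator splits into vertical and horizontal parts and so preserves compatibility;
\item a compatible starting solution comes from a well-chosen compatible reference metric;
\item closedness comes from coercivity of $\mathbf{M}^X_{v,w}$ on compatible metrics combined with the a priori estimates of Di Nezza--Jubert--Lahdili.
\end{itemize}
Since the path never leaves the compatible class, coercivity on that class is all that is needed.

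The same degeneracy also undermines your claim that uniform stability ``gives'' coercivity. The Donaldson/Zhou--Zhu control of the entropy $-\int_\Delta\log\det\mathrm{Hess}(u)\,\mathrm{v}\,dx$ uses a weight bounded below. The paper handles this by lifting to the polytope $\hat\Delta$ of the toric fibre. For compatible potentials $\hat u=u+\sum_j L_j u_j$ one has $\det\mathrm{Hess}(\hat u)=p\,\det\mathrm{Hess}(u)\det\mathrm{Hess}(u_B)$ and $\int_{\hat\Delta}f\,v\,d\hat x=\mathrm{Vol}(\Delta_B)\int_\Delta f\,pv\,dx$. Hence $\mathcal{M}^{\hat\Delta}_{v,w}(\hat u)=\mathrm{Vol}(\Delta_B)\,\mathcal{M}^{\Delta}_{pv,\hat w}(u)+C$, and the nondegenerate coercivity argument is run on $\hat\Delta$, where the weight is positive.
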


Compatible K\"ahler metrics, introduced in \cite{ACGT06, ACGT04}, generalize the metric on $\mathbb{P}^1$-bundles obtained by the well-known Calabi ansatz.

Note that the case \(\ell = 0\) is completely settled by the works of \cite{ACGT11, RT06}, which show that a cscK metric exists in every K\"ahler class; see e.g. \cite[Theorem 1 and Remark 3]{AK}. Moreover, the case \(\ell = 1\) has been extensively studied in \cite{ACGT08}, where several existence results are established, together with their relations to both Mumford stability and K-stability.

\smallskip

When the K\"ahler class $[\omega_Y]=2\pi c_1(L)$ is the first Chern class of a holomorphic line bundle $L$, it is known that the existence of an extremal metric implies the relative uniform K-stability \cite{BDL, Has16, Li_2022, Sjo20}. Then 

\begin{corollaryA}{\label{c:intro}}
The following statements are equivalent:
    
\begin{enumerate}
    \item $(Y,2\pi c_1(L))$ admits an extremal K\"ahler metric.
    \item $(Y,2\pi c_1(L))$ is relatively uniformly K-stable for compatible test configurations.
    \item$ (Y,2\pi c_1(L))$ is relatively uniformly K-stable.
\end{enumerate}    
\end{corollaryA}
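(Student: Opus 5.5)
The corollary follows by a cycle of implications, taking Theorem~\ref{t:intro:ytd} and the known results recalled just before the statement as given. I will prove $(1)\Rightarrow(3)\Rightarrow(2)\Rightarrow(1)$.

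\emph{Step $(1)\Rightarrow(3)$.} Assume $(Y,2\pi c_1(L))$ carries an extremal K\"ahler metric. Since the class is the first Chern class of a holomorphic line bundle, the results of \cite{BDL, Has16, Li_2022, Sjo20} quoted above show that it is relatively uniformly K-stable. The only care needed is to match conventions. The relative uniform stability in those works is measured against algebraic test configurations together with a norm, such as the reduced non-Archimedean $J$-functional $\J$ relative to a maximal torus of $\mathrm{Aut}(Y)$. I would check that this is the same notion used in this paper, or at least one that implies it, by restricting the K\"ahler test configurations of \cite{DR17, Sjo} to the polarized (algebraic) ones.

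\emph{Step $(3)\Rightarrow(2)$.} This is a restriction argument. Relative uniform K-stability requires the inequality $\mathrm{DF}(\X)\ge \varepsilon\, \J_{\T}(\X)$ for every test configuration $\X$. The compatible test configurations form a subclass, so the inequality holds on them in particular. The point to verify is that compatible test configurations of the polarized class $2\pi c_1(L)$ really belong to the class over which $(3)$ is tested. There are two cases.
\begin{itemize}
\item If $(3)$ is formulated with K\"ahler test configurations, membership is immediate.
\item If $(3)$ uses only algebraic test configurations, I would reduce to rational data. The horospherical test configurations of Delcroix come from convex piecewise-linear functions on $\Delta$. When those functions have rational slopes, the test configuration is polarized by a power of $L$. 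Since Donaldson--Futaki invariants and $\J$ are continuous and homogeneous in the convex function, a density argument extends the inequality to arbitrary compatible test configurations.
\end{itemize}

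\emph{Step $(2)\Rightarrow(1)$.} This is exactly Theorem~\ref{t:intro:ytd} applied to the class $[\omega_Y]=2\pi c_1(L)$. For this I first need the normalization used in the theorem: the reduction to $E=\oplus_{j=0}^{\ell} E_j$ with projectively flat summands. If $\ell\ge 1$ this follows from \cite[Theorem 2]{AK}. Here I use that $(1)$ or $(3)$ holds along the cycle, or, starting from $(2)$ alone, that the argument of \cite{AK} only uses special test configurations which can be taken compatible. If $\ell=0$, the existence of an extremal (indeed cscK) metric follows from \cite{ACGT11, RT06}. Rescaling the class so that it restricts to the Fubini--Study class on the fibres does not affect either stability or existence.

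The main obstacle is the compatibility of definitions in Step $(3)\Rightarrow(2)$, and in particular the passage from $(2)$ to the decomposed form of $E$. I expect this to require checking that the destabilizing test configurations of \cite{AK} are of compatible type, namely degenerations of $E$ to the graded bundle of a filtration. If that fails, I would instead interpret $(2)$ as presupposing the splitting $E=\oplus_j E_j$, as the paper does throughout.
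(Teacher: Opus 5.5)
Your cycle $(1)\Rightarrow(3)\Rightarrow(2)\Rightarrow(1)$ is exactly the paper's argument. The cited results \cite{BDL, Has16, Li_2022, Sjo20} give $(1)\Rightarrow(3)$. The step $(3)\Rightarrow(2)$ is immediate because compatible test configurations are smooth dominating K\"ahler test configurations; they are also already built from rational data $f\in\mathcal{DPL}_{\mathrm{dom}}(\Delta)$, so your density detour is unnecessary. Finally, $(2)\Rightarrow(1)$ is Theorem~\ref{t:intro:ytd}. For that last step your fallback is the right reading: the splitting $E=\oplus_{j}E_j$ into projectively flat summands is a standing hypothesis, without which compatible test configurations are not even defined. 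By contrast, invoking $(1)$ or $(3)$ inside the step $(2)\Rightarrow(1)$ would be circular.
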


The first key novel ingredient in the proof of Theorem \ref{c:intro} is the construction of a suitable class of test configurations of $Y$, which we call \emph{compatible test configurations}. We describe the construction in more detail below.

\smallskip

By the above discussion, $Y$ is a fiber bundle with fiber

$$
    X := \mathbb{P}\left( \oplus_{j=0}^\ell \mathbb{C}^{d_j} \right),
$$
where $d_j = \mathrm{rk}(E_j)$. 

The construction of these test configurations proceeds in two main steps:

\begin{enumerate}
    \item  Construct suitable test configurations $\mathcal{X}$ for the fiber $X$, called \textit{compatible} test configurations,  with both toric and horospherical symmetries.
\item Glue these test configurations fiberwise using the horospherical symmetries to produce the desired \textit{compatible} test configurations $\mathcal{Y}$ for $Y$.
\end{enumerate}

We outline below the main ideas behind steps 1 and 2.

\subsubsection{Step 1: the fiber $X$}
Let $\omega_{\mathrm{FS}}$ be the Fubini--Study metric on 
\(
X := \mathbb{P}\!\left( \oplus_{j=0}^\ell \mathbb{C}^{d_j} \right).
\)
There are three important Hamiltonian actions on $(X,\omega_{\mathrm{FS}})$.   The first action is given by the $\ell$-torus $\mathbb{T}$ acting on $X$, induced by the splitting 
\(
X = \mathbb{P}\!\left( \oplus_{j=0}^\ell \mathbb{C}^{d_j} \right)
\): each $\mathbb{S}^1$-factor of $\T$ is induced by the diagonal action on $\mathbb{C}^{d_j}$. The corresponding moment polytope is the standard $\ell$-dimensional simplex $\Delta$. The second one is the standard action of a torus $T$, for which $(X,\omega_{\mathrm{FS}},T)$ is a toric manifold. We denote by $\hat{\Delta}$ the associated moment polytope, which is the standard simplex of the appropriate dimension. The third action is provided by $G = \prod_{j=0}^{\ell}U(d_{j})$, which comes with a Kirwan polytope isomorphic to $\Delta.$ It is an important fact that $X$ is a horospherical $G^{\mathbb{C}}-$variety, where $G^{\mathbb{C}} = \prod_{j=0}^{\ell}\mathrm{GL}(d_{j},\mathbb{C})$ is the complexification of $G.$

The inclusion $\tl\subset T$ provides a natural projection
\(
\pi : \hat{\Delta} \longrightarrow \Delta.
\)
Given a suitable convex piecewise affine function $f$ on $\Delta$, we may pull it back to a function $\pi^* f$ on $\hat{\Delta}$. Using Donaldson’s construction applied to  $\pi^* f$, we obtain a smooth toric test configuration $(\mathcal{X}_f,\mathcal{A}_{\mathcal{X}_f})$ for $(X,[\omega_{\mathrm{FS}}],\mathbb{T})$, which we refer to as a \emph{compatible test configuration}. By \cite[Theorem~4.1]{Del2023}, one can also directly construct a test configuration from $f$ on $\Delta$ by viewing $\Delta$ as the Kirwan polytope of the $G$-action. This construction turns out to coincide with $(\mathcal{X}_{f}, \mathcal{A}_{f})$.

\subsubsection{Step 2: the fibration $Y$}

We now explain the construction of compatible test configurations $\mathcal{Y}$ for $Y$. First, we regard $Y=\mathbb{P}(E)=\mathbb{P}({\oplus_{j=0}^{\ell}}E_{j})$ as:

\begin{equation*}
    Y= P \times_{G} X \longrightarrow C,
\end{equation*}
where $G:= \prod_{j=0}^{\ell}U(d_{j})$ and $P$ is a principal $G$-bundle over the curve $C$. Then, using the horospherical $G^{\mathbb{C}}\times \mathbb{C}^{*}$-symmetry of  $\mathcal{X}_f$, we define the \textit{compatible test configurations} $\mathcal{Y}_f$ by

\begin{equation*}
\mathcal{Y}_f:=  P \times_G \mathcal{X}_f \longrightarrow C.
\end{equation*}

Based on an extension of the generalized Calabi construction to non-commutative group structures, we define a K\"ahler metric $\omega_{\mathcal{Y}_f}$ on $\mathcal{Y}_f$, starting from a suitable K\"ahler metric $\omega_{\mathcal{X}_f}$ on $\mathcal{X}_f$. The latter is obtained by restricting the Fubini--Study metric to $\mathcal{X}_f$ after embedding it into a sufficiently large projective space.
Afterwards, we can show that the fiber-wise $\T \times \mathbb{S}^1$-action on $(\mathcal{Y}_f,\omega_{\mathcal{Y}_f})$ is semisimple and rigid in the sense of \cite{ACGT04, ACGT11}. We then use this property to explicitly compute the Donaldson--Futaki invariant and the reduced non-Archimedean $J$-functional of $(\mathcal{Y}_f,\omega_{\mathcal{Y}_f})$. This allows us to show the following result.

\begin{theoremA}{\label{t:intro:bundle}}
Suppose that $(Y,[\omega_Y])$ is relatively uniformly K-stable for compatible test configurations. Then its fiber $(X,[\omega_\mathrm{FS}])$ is $(\bar{p},\bar{w})$-uniformly K-stable on compatible test configurations.
\end{theoremA}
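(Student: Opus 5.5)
The strategy is to compare the two stability notions term by term on each compatible test configuration. For a convex piecewise affine function $f$ on $\Delta$ we have the fiber configuration $\mathcal{X}_f$ and the glued configuration $\mathcal{Y}_f = P\times_G \mathcal{X}_f$. I will express both the Donaldson--Futaki invariant and the reduced non-Archimedean $J$-functional of $(\mathcal{Y}_f,\omega_{\mathcal{Y}_f})$ as integrals over $\Delta$ of $f$ and its boundary values, against weights coming from the topology of $Y$. Then I will recognise these integrals as the $(\bar p,\bar w)$-weighted Donaldson--Futaki invariant and the weighted $J^{\mathrm{NA}}$ of the fiber test configuration $\mathcal{X}_f$. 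Once
\[
\mathrm{DF}(\mathcal{Y}_f)=c\,\mathrm{DF}_{\bar p,\bar w}(\mathcal{X}_f), \qquad \J_{\mathbb{T}}(\mathcal{Y}_f)=c'\,\J_{\mathbb{T},\bar p}(\mathcal{X}_f)
\]
with $c,c'>0$, the inequality $\mathrm{DF}(\mathcal{Y}_f)\ge \epsilon\,\J_{\mathbb{T}}(\mathcal{Y}_f)$ transfers directly to the fiber, with constant $\epsilon c'/c$.

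\textbf{Step 1: the metric on $\mathcal{Y}_f$.} Using the generalized Calabi ansatz, I write $\omega_{\mathcal{Y}_f}$ in momentum coordinates $(\mu,\mu_0)$ for the $\mathbb{T}\times\mathbb{S}^1$ action. Semisimplicity and rigidity of this action give a volume form of the form $p(\mu)\,\omega_{\mathcal{X}_f}^{[m]}\wedge\omega_C$. Here $\bar p(\mu)=\prod_j(\langle p_j,\mu\rangle + c_j)^{d_j-1}$ is an affine-product polynomial. Its factors come from the projectively flat structure of the $E_j$: the slopes enter through $c_1(E_j)$, and the exponent $d_j-1$ records the Fubini--Study fibers $\mathbb{P}^{d_j-1}$.

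\textbf{Step 2: the scalar curvature.} The ACGT formula gives
\[
\mathrm{Scal}(\omega_{\mathcal{Y}})=\frac{1}{\bar p}\Bigl(\bar p\,\mathrm{Scal}(\omega_{\mathcal{X}})\Bigr)+\frac{\text{base terms}}{\bar p}.
\]
The base terms are linear in $\mu$, weighted by the slopes and the genus of $C$; they define $\bar w$, up to normalisation, by $\bar w\bar p = \mathrm{Scal}_C\,\bar p + \sum_j (\ldots)$. Integrating $\mathrm{Scal}$ against the Calabi ansatz volume over $\mathcal{Y}_f$ and pushing forward to $\mathcal{X}_f$ reproduces the weighted Donaldson--Futaki invariant in the sense of Lahdili, or rather its version on the Kirwan/moment polytope via Delcroix. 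The extremal correction term is handled in the same way, because the extremal vector field of $Y$ lies in $\mathrm{Lie}(\mathbb{T})$ and is the weighted extremal affine function on $\Delta$.

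\textbf{Step 3: the $J$-functionals.} I compare them via their polytope expressions. The quantity $\J(\mathcal{X}_f)$ equals $\int_\Delta (f-\min f)$, reduced over affine functions, with respect to the measure coming from the toric/horospherical structure. For $\mathcal{Y}_f$ the measure is multiplied by $\bar p$ times the base volume. Since $\bar p$ is positive on $\Delta$, these are exactly the $\bar p$-weighted quantities, up to a fixed constant. The reduction over $\mathbb{T}$-twists also matches, because twisting $f$ by an affine function on $\Delta$ corresponds on both sides to twisting by the same $\mathbb{T}$ one-parameter subgroup.

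The main obstacle is Step 2. Two points are delicate. First, $\mathcal{X}_f$ is only a horospherical (non-toric in $G$) degeneration. Second, the central fiber is singular or reducible, so the DF invariant must be computed via intersection numbers on a smooth compactification, or via Donaldson's boundary formula on $\hat\Delta$ pushed forward along $\pi:\hat\Delta\to\Delta$. My first attempt will use the non-Archimedean expression $\mathrm{DF}=\bar S\,E^{\mathrm{NA}}+ \mathcal{R}^{\mathrm{NA}}$, with the intersection products on $\overline{\mathcal{Y}_f}$ computed via the fibration over $C$. This turns them into fiber intersections twisted by the characteristic classes of $P$. The boundary terms of $\Delta$ then arise from the $\mathbb{C}^*$-invariant divisors of $\mathcal{X}_f$, and the weight $\bar p$ arises from the Duistermaat--Heckman pushforward of $\hat\Delta$ to $\Delta$ and from $P$.
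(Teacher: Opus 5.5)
Your overall route is the paper's: use rigidity and semisimplicity of the $\mathbb{T}\times\mathbb{S}^1$-action on $(\mathcal{Y}_f,\omega_{\mathcal{Y}_f})$ (Proposition~\ref{Yfssrgd}) to get Calabi-ansatz identities for the volume form and the scalar curvature, then compare $\mathbf{DF}$ and $\mathbf{J}^{\mathrm{NA}}$. But Step~1 misidentifies the weight, and this breaks the argument as written.

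\textbf{The weight $\bar p$.} The product $\prod_j L_j^{d_j-1}$ is the weight $p$ of \eqref{weight:x}. It relates $X$ to $\mathbb{P}^\ell$, so it is intrinsic to the fibre and is already built into $\mathcal{X}_f$; it appears when $\hat\Delta$ is pushed forward to $\Delta$, cf.\ \eqref{int:func}. The weight created by the fibration $\mathcal{Y}_f\to C$ is the coefficient of $\omega_C$ in the reduced form $\omega_\lambda$ of \eqref{omega_lambda}. This is the \emph{affine} function $\bar p(x)=c-\sum_j\mu(E_j)L_j(x)$. It enters to the first power because $C$ is a curve, and the slopes enter through the curvature $\Omega$ via $\langle\mu^G,\Omega\rangle$. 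The correct identities are
\[
\omega_{\mathcal{Y}_f}^{[n+2]}=\bar p(\mu)\,\omega_{\mathcal{X}_f}^{[n+1]}\wedge\omega_C,\qquad \mathrm{Scal}(\omega_{\mathcal{Y}_f})=\mathrm{Scal}_{\bar p}(\omega_{\mathcal{X}_f})+\frac{\mathrm{Scal}(\omega_C)}{\bar p}.
\]
Here the first term is the $\bar p$-weighted scalar curvature, not $\mathrm{Scal}(\omega_{\mathcal{X}_f})$. The base term is $4(1-\mathrm{g})/\bar p$, which is not linear in $\mu$, and it produces the last term of $\bar w$ in \eqref{bar:p:w}. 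With your $\bar p$, the integrated identity would not give $\mathbf{DF}_{\bar p,\bar w}(\mathcal{X}_f)$.

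\textbf{The $\mathbf{J}$ comparison.} The same error breaks Step~3. An equality with a $\bar p$-weighted $\mathbf{J}$ on $\mathcal{X}_f$ is fine, but here $(v,w)$-uniform stability is measured against the \emph{unweighted} reduced $\mathbf{J}^{\mathrm{NA}}_{\mathbb{T}^{\mathbb{C}}}$. By Lemma~\ref{l:J-fonc:compa:toric}, $\mathbf{J}^{\mathrm{NA}}(\mathcal{Y}_f)$ is a $p\bar p$-weighted integral of $f-\inf f$ over $\Delta$, while $\mathbf{J}^{\mathrm{NA}}(\mathcal{X}_f)$ is the $p$-weighted one. So you need
\[
\mathbf{J}^{\mathrm{NA}}_{\mathbb{T}^{\mathbb{C}}}(\mathcal{Y}_f)\ge \kappa\,\inf_\Delta\bar p\;\mathbf{J}^{\mathrm{NA}}_{\mathbb{T}^{\mathbb{C}}}(\mathcal{X}_f),
\]
with $\kappa>0$ independent of $f$. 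This requires $\inf_\Delta\bar p>0$. That holds for the correct $\bar p=\sum_j(c-\mu(E_j))L_j$, since $c>\max_j\mu(E_j)$. It fails for your product polynomial, which vanishes on every facet $L_j=0$ with $d_j\ge 2$, so your claim that ``$\bar p$ is positive on $\Delta$'' is false as stated. For the mixed term $\int\Pi^*\omega_Y^{[n+1]}\wedge\omega_{\mathcal{Y}_f}$ you also need that the dominating map $\Pi:\mathcal{X}_f\to X\times\mathbb{P}^1$ is $G$-equivariant, so that it descends to $\mathcal{Y}_f\to Y\times\mathbb{P}^1$.

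\textbf{The ``main obstacle''.} This is not actually an obstacle. Compatible test configurations are smooth: $\mathcal{X}_f$ is the toric manifold of the Delzant polytope $\hat\Delta_{R-f}$, and $\mathcal{Y}_f=P\times_G\mathcal{X}_f$. Hence Lahdili's integral definition of $\mathbf{DF}$ applies directly. The pointwise identities above integrate immediately to $\mathbf{DF}(\mathcal{Y}_f)=\mathrm{Vol}(C,[\omega_C])\,\mathbf{DF}_{\bar p,\bar w}(\mathcal{X}_f)$, with no need for non-Archimedean intersection theory or any analysis of the central fibre.
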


The weights $\bar{p}$ and $\bar{w}$ are explicit functions on $\Delta$ depending on topological data of $Y$, see \eqref{bar:p:w}.

\smallskip

When the class is induced from a polarized K\"ahler class $2\pi c_1(L_X)$ on the fiber $X$ via the semisimple rigid construction. We have the following result.

\begin{corollaryA}\label{c:intro:B}
The following statements are equivalent.

\begin{enumerate}
    \item There exists an extremal metric in $(Y,2\pi c_1(L_Y))$.
    \item There exists a $(\bar{p},\bar{w})$-weighted cscK metric in $(X,2\pi c_1(L_X))$.
    \item $\Delta$ is $(p\bar{p},\bar{w})$-uniformly $K$-stable.
\end{enumerate}
\end{corollaryA}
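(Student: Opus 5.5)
We prove Corollary~\ref{c:intro:B} through the cycle $(1)\Rightarrow(3)\Rightarrow(1)$, with $(2)$ attached as an intermediate equivalence. Everything rests on the generalized Calabi ansatz of \cite{ACGT04, ACGT11}. Since $[\omega_Y]=2\pi c_1(L_Y)$ comes from $2\pi c_1(L_X)$ by the semisimple rigid construction, a compatible metric on $Y$ is determined by a $\T$-invariant (equivalently $G$-invariant) metric on the fiber, hence by a symplectic potential $u$ on $\Delta$. The scalar curvature of the compatible metric on $Y$ equals, up to the explicit weights, the $(\bar p,\bar w)$-weighted scalar curvature of the fiber metric. Here $\bar p,\bar w$ are the polynomials built from the slopes and ranks of the $E_j$, as in \eqref{bar:p:w}.

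\textbf{Step 1: $(1)\Leftrightarrow(2)$.} By Theorem~\ref{t:intro:ytd}, any extremal metric in $[\omega_Y]$ is compatible. We then make the dictionary explicit: a compatible metric on $Y$ is extremal exactly when its scalar curvature is an affine function of the $\T$-moment map. Through the Calabi ansatz formula for the scalar curvature, this says that the associated fiber metric is a $(\bar p,\bar w)$-weighted cscK metric, where the affine extremal function is absorbed into the weight $\bar w$ as in the definition of weighted cscK. The converse goes the same way: a weighted cscK metric on $X$ is $\T$-invariant after averaging. It is in fact $G$-invariant, using uniqueness modulo automorphisms and the horospherical symmetry. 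Feeding it into the ansatz gives an extremal metric on $Y$.

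\textbf{Step 2: $(1)\Rightarrow(3)$.} Because the class is polarized, extremal metrics imply relative uniform K-stability by \cite{BDL, Has16, Li_2022, Sjo20}. In particular $Y$ is stable on compatible test configurations. Theorem~\ref{t:intro:bundle} then gives $(\bar p,\bar w)$-uniform K-stability of the fiber on compatible test configurations. We translate this into the polytope. For $\mathcal{X}_f$ coming from a convex piecewise affine $f$, the Donaldson--Futaki invariant and the reduced non-Archimedean $J$-functional are computed by the polytope integrals in Delcroix's horospherical formalism \cite{Del2023}. The horospherical Duistermaat--Heckman measure on the Kirwan polytope $\Delta$ carries the density $p$, the product over roots. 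As a result these integrals become the $(p\bar p,\bar w)$-weighted Donaldson functional $\mathcal{L}(f)$ and the weighted distance to affine functions. Finally, a density argument extends the inequality $\mathcal{L}(f)\ge \lambda\|f\|$ from rational piecewise affine convex $f$ to all convex functions in the relevant class, which is uniform K-stability of $\Delta$.

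\textbf{Step 3: $(3)\Rightarrow(1)$ or $(2)$, the main obstacle.} From weighted uniform stability of $\Delta$ we must produce a solution. We would invoke the weighted toric/horospherical existence theory for uniformly stable weighted polytopes, namely the weighted version of Chen--Cheng for weighted cscK metrics (\cite{Jub23, Del2023}, with the weights $p\bar p$ and $\bar w$ positive on $\Delta$). This yields a $(\bar p,\bar w)$-cscK metric on $X$, hence $(2)$ and, by Step 1, $(1)$. Alternatively, we could close the loop through Theorem~\ref{t:intro:ytd}. For that, observe that the computations of Step 2 are reversible: uniform stability of $\Delta$ gives positivity of the DF invariant on every $\mathcal{Y}_f$ with the same uniform constant, which is relative uniform K-stability on compatible test configurations and hence gives an extremal metric. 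The delicate point in either route is checking that the normalizations line up exactly. The invariants computed on $\mathcal{Y}_f$ must match the weighted polytope functional with density exactly $p\bar p$, and the relative correction (projection away from the extremal affine function) must coincide with the weighted extremal affine function on $\Delta$. We would verify this by the explicit localization computation used for Theorem~\ref{t:intro:bundle}, simply run as an identity rather than an inequality.
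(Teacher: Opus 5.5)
\textbf{Gap in Step 3 (primary route).} Your Step 3, as you primarily state it, would fail: the weight $p\bar p$ is not positive on $\Delta$.

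\begin{itemize}
\item Since $p=\prod_j L_j^{d_j-1}$, it vanishes on every facet $\{L_j=0\}$ with $d_j\geq 2$.
\item $\bar w$ is even singular there, because it contains the terms $-2d_j(d_j-1)/L_j$.
\end{itemize}

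So the weighted existence results you want to invoke do not apply to $(\Delta,p\bar p,\bar w)$, since they need a positive weight (\cite{Jub23}, or \cite{DJL25} applied on $\mathbb{P}^\ell$).

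Nor can you pass naively to $X$, where the weight $\bar p$ is positive. Condition (3) only controls compatible test configurations. At best it gives coercivity of the weighted Mabuchi energy of $X$ on compatible metrics, not on all $\T$-invariant metrics as \cite{DJL25} requires.

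Overcoming exactly this is the content of Theorem~\ref{t:intro:fiber}:
\begin{itemize}
\item coercivity on $\Delta$ is proved by lifting to $\hat{\Delta}$, where $d\hat{x}=p\,dx\,dx_B$ and the weight is positive;
\item it is then transferred to the Mabuchi energy of $X$ restricted to compatible metrics;
\item Chen's continuity path is solved inside the space of compatible metrics.
\end{itemize}
With this in hand, the paper's proof of $(3)\Rightarrow(1)$ and $(3)\Rightarrow(2)$ is just Proposition~\ref{l:kstab:stab:delta}, Theorem~\ref{t:intro:fiber} and \cite[(7)]{ACGT11}.

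\textbf{Your fallback route works.} It closes the loop, and it is legitimate because Theorem~\ref{t:intro:ytd} is already proved.
\begin{itemize}
\item Proposition~\ref{l:kstab:stab:delta} turns (3) into $(\bar p,\bar w)$-stability of $X$ on compatible test configurations.
\item The proof of Theorem~\ref{t:stab:y:x}, run backwards, gives stability of $Y$ on compatible test configurations. It combines the identity $\mathbf{DF}(\mathcal{Y}_f,\mathcal{A}_{\mathcal{Y}_f})=\mathrm{Vol}(C,[\omega_C])\,\mathbf{DF}_{\bar p,\bar w}(\mathcal{X}_f,\mathcal{A}_{\mathcal{X}_f})$ with a bound of $\mathbf{J}^{\mathrm{NA}}_{\T^{\mathbb{C}}}(\mathcal{Y}_f,\mathcal{A}_{\mathcal{Y}_f})$ by a constant multiple of $\sup_\Delta\bar p\cdot\mathbf{J}^{\mathrm{NA}}_{\T^{\mathbb{C}}}(\mathcal{X}_f,\mathcal{A}_{\mathcal{X}_f})$.
\item Theorem~\ref{t:intro:ytd} then gives (1).
\end{itemize}
This is a detour, however, since Theorem~\ref{t:intro:ytd} is itself Theorem~\ref{t:stab:y:x} combined with Theorem~\ref{t:intro:fiber}.

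\textbf{Your treatment of (2) differs from the paper's.} The paper proves $(2)\Rightarrow(3)$ using \cite[Corollary 1]{AJL}: in the polarized case a $(\bar p,\bar w)$-cscK metric implies weighted uniform K-stability, in particular on compatible test configurations. Proposition~\ref{l:kstab:stab:delta} then finishes.

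Your direct $(2)\Rightarrow(1)$ is viable but under-justified.
\begin{itemize}
\item A weighted cscK metric is $\T$-invariant by definition; averaging would not preserve the equation.
\item Uniqueness modulo automorphisms does not by itself yield a $G$-invariant representative. You need, for instance, a weighted Calabi--Matsushima theorem: the $\T$-commuting isometry group of the metric is a maximal compact subgroup of $\mathrm{Aut}^{\T}_{\mathrm{red}}(X)$, so the image of $G$ is conjugate into it.
\item You also need the multiplicity-free argument of Proposition~\ref{rdss}, showing that $G$-invariant metrics are rigid and semisimple, hence compatible, before \cite[(7)]{ACGT11} applies.
\end{itemize}

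Similarly, your $(1)\Rightarrow(2)$ uses the compatibility clause of Theorem~\ref{t:intro:ytd}. So you must first derive stability from (1) via polarization, as in your Step 2.
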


At this point, the second novel ingredient in our proof of Theorem \ref{t:intro:ytd} is a piece of toric geometry of independent interest, extending previous work \cite{Don02,ZZ,CC21c} and of the first named author \cite{Jub23}.

To set the stage, let $v > 0$ be a log-concave function and let $w$ be a smooth function on $\Delta$. We establish the following existence result.

\begin{theoremA}{\label{t:intro:fiber}}
 If  $(X,[\omega_{\mathrm{FS}}])$ is $(v,w)$-uniformly K-stable for compatible test configurations, there exists a compatible $(v,w)$-cscK metric on $X$ . 
\end{theoremA}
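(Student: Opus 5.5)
The plan is to reduce the statement to a weighted toric-type problem on the simplex $\Delta$ and then invoke a continuity-method / variational existence result for weighted Abreu equations. \emph{Compatible} $(v,w)$-cscK metrics on $X$ are exactly the $G$-invariant (equivalently $T$-invariant, $\T$-pulled back) metrics obtained from symplectic potentials $u$ on $\Delta$ via the generalized Calabi ansatz. On these metrics the $(v,w)$-scalar curvature equation becomes a weighted Abreu equation on $\Delta$,
\begin{equation*}
-\frac{1}{\bar v(x)}\sum_{i,j}\frac{\partial^2}{\partial x_i\partial x_j}\bigl(\bar v(x)\, u^{ij}\bigr) = \bar w(x),
\end{equation*}
where $\bar v = v\cdot \mathrm{D}_{\mathrm{FS}}$. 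Here $\mathrm{D}_{\mathrm{FS}}(x)=\prod_j x_j^{d_j-1}$ (up to a constant) is the Duistermaat--Heckman density coming from the $\mathbb{P}^{d_j-1}$ factors, equivalently the Weyl-type polynomial of the horospherical structure. The weight $\bar w$ is $w$ corrected by the corresponding curvature terms. The first step is to write this equation precisely and to check that $\bar v$ is still log-concave on the interior of $\Delta$, being a product of a log-concave function and a monomial.

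The second step translates stability. For a compatible test configuration $\mathcal{X}_f$ with $f$ convex piecewise affine (rational) on $\Delta$, I will compute the weighted Donaldson--Futaki invariant and the reduced non-Archimedean $J$-functional. Using the $\T\times\mathbb{S}^1$ symmetry and Delcroix's description \cite[Theorem~4.1]{Del2023}, the expected formulas are
\begin{equation*}
\mathrm{DF}_{v,w}(\mathcal{X}_f)=c\,\mathcal{L}_{\bar v,\bar w}(f),\qquad \mathcal{L}_{\bar v,\bar w}(f)=\int_{\partial\Delta} f\,\bar v\, d\sigma-\int_\Delta f\,\bar w\,\bar v\, dx,
\end{equation*}
and $J^{\mathrm{NA}}(\mathcal{X}_f)$ is comparable to $\int_\Delta (f-\text{affine normalization})\,\bar v\,dx$. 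Hence uniform stability on compatible test configurations is the inequality $\mathcal{L}_{\bar v,\bar w}(f)\ge \lambda \|f\|_{J}$ for all rational convex piecewise affine $f$, normalized against affine functions. By density and the continuity of both sides in the $L^1$ topology, this extends to all normalized convex functions on $\Delta$. Some care is needed at the facets where $\bar v$ vanishes, i.e. the faces corresponding to $d_j>1$; there the boundary measure is degenerate, and one must check that the approximation still controls the boundary integral.

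The third step is existence. The uniform weighted stability of $\Delta$ implies properness (coercivity) of the weighted Mabuchi functional restricted to $G$-invariant potentials, equivalently of Donaldson's functional $\mathcal{F}(u)=-\int_\Delta \log\det(\mathrm{Hess}\,u)\,\bar v\,dx+\mathcal{L}_{\bar v,\bar w}(u)$. I would then invoke the weighted version of the Chen--Cheng a priori estimates, as in \cite{Jub23} (following \cite{CC21c}), restricted to $G\times\T$-invariant metrics. Properness relative to the reductive symmetry then yields a solution, since these estimates give existence whenever the weighted Mabuchi energy is proper modulo the automorphism torus. Alternatively, the toric approach of \cite{ZZ,Don02} can be followed directly on $\Delta$ with degenerate density. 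The resulting metric is invariant, hence compatible.

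I expect the main obstacle to be the passage from stability on test configurations to coercivity on the full space of invariant potentials. This requires both the $L^1$ density argument with the degenerate weight $\bar v$ on the facets $x_j=0$ with $d_j>1$, and the identification of the horospherical reduction with a genuinely weighted (rather than toric) problem, so that the results of \cite{Jub23} apply with $v$ replaced by $\bar v$. I would first try to treat $X$ as toric under $T$, with $\pi^*$ of all data on $\hat\Delta$. There the density is nondegenerate and \cite{Jub23} applies verbatim, because $\pi^*$ of convex functions on $\Delta$ are exactly the $\T$-compatible ones. One then only needs to check that restricting coercivity to pulled-back functions suffices, which follows from averaging over the fibers of $\pi$ using convexity (Jensen).
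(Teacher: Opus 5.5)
Your main line matches the paper's argument. The shared steps are: the weights $(pv,\hat w)$ with $p=\prod_{j=0}^{\ell}L_j^{d_j-1}$; stability on compatible test configurations rewritten as $(pv,\hat w)$-uniform stability of $\Delta$ (Proposition~\ref{l:kstab:stab:delta}); coercivity of $\mathcal{M}^{\Delta}_{pv,\hat w}$ obtained by lifting to $\hat\Delta$, where the weight $v$ is nondegenerate (Lemma~\ref{stable-equivaut-energy-propr-v}); and Chen's continuity path run inside a restricted class of metrics. Your $\bar v$ should be this $pv$; your product omits the factor $L_0^{d_0-1}$, where $L_0=1-\sum_k x_k$.

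You differ in how the path is confined. The paper, following \cite{Jub23}, works with compatible metrics directly. It gets openness from the vertical/horizontal splitting of the linearized operator and chooses the reference metric specially for $t<1$. You restrict to $G$-invariant metrics, $G=\prod_j U(d_j)$. This is a legitimate and somewhat cleaner variant. The centralizer of $G$ in $\mathrm{Aut}_{\mathrm{red}}(X)$ is exactly $\T^{\C}$, and with a $G$-invariant reference metric, equivariance keeps the path in the class. You can use either uniqueness for $t<1$ or the implicit function theorem on $G$-invariant functions.

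Your variant rests on ``$G$-invariant $=$ compatible'', which you assert without proof and the paper never uses. You need to prove it:
\begin{itemize}
\item level sets of $\mu^{\T}$ are single $G$-orbits by multiplicity-freeness, which gives rigidity;
\item $\prod_j U(d_j)$-invariant forms on $\prod_j\P^{d_j-1}$ are combinations of the $\omega_{\mathrm{FS}_j}$, which gives semisimplicity;
\item then apply \cite{ACGT04} together with $G$-invariance of the connection.
\end{itemize}
You should also say explicitly that the restriction is essential. $\mathbf{M}^X_{v,w}$ is not $\T^{\C}$-coercive on $\mathcal{K}(X)^{\T}$, because the centralizer of $\T$ is $\prod_j\mathrm{GL}(d_j,\C)/\C^*$, not $\T^{\C}$. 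So \cite[Theorem~1.1]{DJL25} cannot just be quoted.

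Several side claims are wrong.
\begin{itemize}
\item ``Equivalently $T$-invariant'' is false. $T$-invariant metrics correspond to arbitrary symplectic potentials on $\hat\Delta$, while compatible ones correspond only to $\hat u=u+\sum_j L_ju_j$, not to $\pi^*u$.
\item Hence toric results on $(X,T)$ cannot be applied ``verbatim''. They need coercivity on all $T$-invariant potentials, which your hypothesis does not give. Fiberwise averaging with Jensen only yields $\mathcal{F}^{\hat\Delta}_{v,w}(F)\ge\mathcal{F}^{\hat\Delta}_{v,w}(\pi^*\bar F)$, which by itself controls nothing in the fiber directions.
\item Averaging is also unnecessary. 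On compatible potentials, $\det\mathrm{Hess}\,\hat u=p\,\det\mathrm{Hess}\,u\,\det\mathrm{Hess}\,u_B$ together with \eqref{equa:fut2} gives $\mathcal{M}^{\hat\Delta}_{v,w}(\hat u)=\mathrm{Vol}(\Delta_B)\,\mathcal{M}^{\Delta}_{pv,\hat w}(u)+C$ exactly (Lemma~\ref{l:mab:del:mab:hat}). The argument of \cite{Jub23} then runs with the nondegenerate weight.
\item Working directly on $\Delta$ with the degenerate density is not covered by available existence results. This is precisely why the paper passes back to $X$.
\item In Step~2 the hypothesis only covers $f\in\mathcal{DPL}_{\mathrm{dom}}(\Delta)$. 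You therefore also need to approximate rational convex PL functions by such $f$, as in Lemma~\ref{p:stab:eq:rigid}.
\item The extension to general convex $f$ should go through uniform approximation, since the boundary term is not $L^1$-continuous.
\end{itemize}
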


The first step in the proof of the above theorem is to show that $(v,w)$-uniform K-stability with respect to compatible test configurations implies the coercivity of the weighted Mabuchi energy $\mathcal{M}^{\Delta}_{pv,\hat{w}}$ on $\Delta$, where $p$ and $\hat{w}$ are the weights associated with $X$ and with $(v,w)$.

Once the coercivity of $\mathcal{M}^{\Delta}_{pv,\hat{w}}$ is established, we use this property to deduce the coercivity of the $(v,w)$-weighted Mabuchi energy $\mathbf{M}_{v,w}^X$ of $(X,[\omega_{\mathrm{FS}}])$ on the space of compatible metrics in $[\omega_{\mathrm{FS}}]$. We then adapt the arguments of \cite[Theorem~6.1]{Jub23} and use \cite{DJL25, DJL24} (see also \cite{HanLiu}) to prove the existence of a compatible $(v,w)$-weighted cscK metric in $[\omega_{\mathrm{FS}}]$, even though $\mathbf{M}_{v,w}^X$ is not coercive on the full space of $\T$-invariant K\"ahler metrics in $[\omega_{\mathrm{FS}}]$.

Note that Theorem \ref{t:intro:fiber} is generalized to the more general setting of toric manifolds with a rigid and semisimple action in Appendix \ref{a:toric:comp}.

Applying Theorems~\ref{t:intro:bundle} and \ref{t:intro:fiber} with \(v=\bar{p}\) and \(w=\bar{w}\), we obtain a compatible \((\bar{p},\bar{w})\)-cscK metric \(\omega\) on \(X\). Let \(\tilde{\omega}\) denote the compatible Kähler metric on \(Y\) induced by \(\omega\). It then follows from \cite[(7)]{ACGT11} that \(\tilde{\omega}\) is extremal on \(Y\), which proves Theorem~\ref{t:intro:ytd}.

\subsection{Organization of the paper}
This paper is organized as follows.
\begin{itemize}
  \item Section~\ref{wcsck} is devoted to a review of the basic definitions concerning weighted cscK metrics and weighted K-stability.
  \item In Section~\ref{s:comp:tc}, we introduce compatible test configurations $\mathcal{X}_f$ for $X$. We establish the equivalence between relative uniform stability of $X$ for this class of test configurations and a suitable notion of stability for the polytope $\Delta$.
  \item Section~\ref{s:exi:x} is dedicated to the proof of Theorem~\ref{t:intro:fiber}.
  \item In Section~\ref{horo tc for proj}, we show that $\mathcal{X}_f$ admits horospherical symmetries and is endowed with a rigid and semisimple action.
  \item In Section~\ref{diffcalDFinv}, we introduce compatible test configurations for $Y$. We then prove Theorems~\ref{t:intro:ytd} and~\ref{t:intro:bundle}, as well as Corollaries~\ref{c:intro} and~\ref{c:intro:B}.
\item Finally, we provide 4 appendices. Appendix~\ref{a:toric} reviews standard facts on the weighted stability of toric manifolds. Appendix~\ref{a:toric:comp} sketches a generalization of Theorem~\ref{t:intro:fiber} to a larger class of toric manifolds. In Appendix~\ref{a:horo}, we recall some facts of horospherical varieties. Finally, Appendix~\ref{bundleconstruction} recalls a standard construction of Kähler metrics on bundles.
\end{itemize}

\section*{Acknowledgment}
 The authors wish to express their  gratitude to Vestislav Apostolov, Sébastien Boucksom, Thibaut Delcroix, Mattias Jonsson, Julien Keller, Eveline Legendre, and Pietro Mesquita Piccione for enlightening discussions. They are particularly grateful to Vestislav Apostolov, Partha Ghosh, Mattias Jonsson and Julien Keller for their valuable comments on an earlier version of this manuscript, and to Paul Gauduchon for generously sharing his unpublished notes on multiplicity-free manifolds.

The first author is funded by the ERC SiGMA - 101125012 (PI: Eleonora
Di Nezza).

\section{Review on weighted csck metrics and weighted K-stability}{\label{s:wcsck}}

\subsection{Weighted cscK metrics}{\label{s:reviewcscK}}

Let $(X, \omega)$ be a compact K\"ahler manifold  of complex dimension $n$ and let $\mathbb{T}$ be an $\ell$-dimensional compact real torus in the reduced automorphism group $\mathrm{Aut}_{\mathrm{red}}(X)$ (see \cite[Section 2.4]{Gau}).  We denote by $\mathfrak{t}$ its Lie algebra and by $\mathfrak{t}^*$ its dual vector space. Hence $\mathbb{T}=\mathfrak{t}/\Lambda$, where $\Lambda$ is the lattice of the torus. Suppose $\omega$ is a $\mathbb{T}$-invariant K\"ahler form. It is well-known that the $\mathbb{T}$-action on $X$ is $\omega$-Hamiltonian (see e.g. \cite[Section 2.5]{Gau}) and  that $\Delta_{\omega}:= \mu^\T_{\omega}(X)$ is a convex polytope in $\mathfrak{t}^{*}$ \cite{Ati, GS, Lah19}. Here $\mu^\T_\omega: X \rightarrow \mathfrak{t}^{*}$ is a moment map associated with $\omega$. When no confusion between groups can arise, we will simply write $\mu_\omega$ instead of $\mu^\T_\omega$. 
We can always normalize $\mu_{\omega}$ in such a way that $\Delta=\Delta_{\omega}$ is independent of the $\T$-invariant K\"ahler metric in the K\"ahler class $[\omega]$, see \cite[Lemma 1]{Lah19}.

For a given positive  function $v \in {C}^{\infty}(\Delta,\R_{>0})$, we define \textit{the $v$-weighted Ricci form} by

\begin{equation*}
    \mathrm{Ric}_v(\omega) := \mathrm{Ric}(\omega) -\frac{1}{2}dd^c \log v(\mu_\omega).
\end{equation*}
Then the $v$-weighted scalar curvature of a $\T$-invariant K\"ahler metric $\omega$ is defined by

\begin{equation*}
 {\Scal_v(\omega)}:= 2\Lambda_{\omega,v}(\mathrm{Ric}_v(\omega)),
\end{equation*}

\noindent where for any $(1,1)$-form $\beta$ with $\T$-moment map $\mu_\beta$, $\Lambda_{\omega,v}(\beta):=\Lambda_{\omega}(\beta)+ \langle d\log(v),\mu_\beta \rangle$  is the $v$-weighted trace with respect to $\omega$. The definition of the weighted scalar curvature is equivalent to the one introduced in \cite{Lah19}, up to a multiplicative factor $v$.

Given a second smooth function $w \in {C}^{\infty}(\Delta,\R)$, a $\T$-invariant K\"ahler metric $\omega$ is called \emph{$(v,w)$-weighted cscK} if its $v$-weighted scalar curvature satisfies

\begin{equation}{\label{wcsck}}
    \Scal_v(\omega)=w(\mu_\omega).
\end{equation}

In some cases, including the one of central importance in this paper, it is relevant to allow $v$ to vanish on the boundary of the polytope $\Delta$, see Example~\ref{e:proj:bund}.

The significance of \eqref{wcsck} in relation to various geometric conditions is thoroughly described in \cite{Lah19}. However, we shall mention a few specific cases below which are relevant to this paper. 

\begin{example}[Extremal K\"ahler metric]
    For $v = 1$, and $w=\ell_{\ext}$ is the affine extremal function, the $(v,w)$-weighted cscK metrics are Calabi's extremal K\"ahler metrics \cite{EC82}.
\end{example}

\begin{example}{\label{e:semisimple:principal}}\textnormal{(Semisimple principal fibrations)}
Let $(X,\omega,\T)$ be a compact K\"ahler manifold and $P$ a $\T$-principal bundle over product of cscK manifolds $B=\prod_a B_a$, endowed with a semisimple connection. We can then consider the associate semisimple principal fibration, given by

\begin{equation*}
    Y:=P \times_\T X \longrightarrow B
\end{equation*}

Then, by \cite{ACGT04, AJL, Jub23} any extremal metric on $Y$ correspond to a weighted $(p,\tilde{w})$-cscK metric on $X$, where $p$ and $\tilde{w}$  certain weights associated with the fibration.
\end{example}

We present below a key example for the purpose of the present paper.

\begin{example}{\label{e:proj:bund}}\textnormal{(Projective bundle over a curve \cite{ACGT11})}
We refer to \cite[Section 3.2]{ACGT11} or \cite{AK} for more details.  

Let $Y:=\mathbb{P}(E)$ be the projectivization of a holomorphic line bundle $E$ over a curve $C$. By a result of Apostolov–Keller \cite[Theorem 2]{AK}, $E$ decomposes as a direct sum of projectively flat vector bundles $E_j$, i.e. $ Y = \mathbb{P}\left( \oplus_{j=0}^\ell E_j \right) \longrightarrow C,$
provided that there exists a relatively uniformly K-stable Kähler class on $Y$. We can then write 

\begin{equation*}
    Y = P \times_G X \longrightarrow C
\end{equation*}
where
$$
    X := \mathbb{P}\left( \oplus_{j=0}^\ell \mathbb{C}^{d_j} \right),
$$
$d_j = \mathrm{rk}(E_j)$, $G=\prod_{j=0}^\ell U(d_j)$, for a certain principal $G$-bundle $P \rightarrow C$.  Consider the projective space 

\begin{equation*}
  \mathbb{P}^\ell:=\mathbb{P}(\mathbb{C}^{\ell+1})
\end{equation*}
endowed with  its standard $\T$-action and let $\Delta$ be its standard simplex, where $\T \cong (\mathbb{S}^1)^\ell$. The manifolds $X$ and $\mathbb{P}^\ell$ are related by blowing-up of $X$ along the sub-manifold $ \cup_{j=0}^\ell \mathbb{P}(\mathbb{C}^{d_j})$. Indeed, the resulting manifold

\begin{equation*}
\tilde{X}:=\mathbb{P}\left(\oplus_{j=0}^\ell \mathcal{O}_{\mathbb{P}(\mathbb{C}^{d_j})}(-1)\right) \longrightarrow \prod_{j=0}^\ell \mathbb{P}(\mathbb{C}^{d_j})
\end{equation*}
is a bundle with fiber $\mathbb{P}^\ell$.
Suppose that there exists a $(pv,\hat{w})$-weighted cscK metric $\omega_{\mathbb{P}^\ell}$ on $\P^\ell$ for the weights defined on $\Delta$ by

\begin{equation}{\label{weight:x}}
p(x) = \prod_{j=0}^\ell (L_j(x))^{d_j-1} \quad \text{ and } \quad \hat{w}(x) = w(x)- \sum_{j=0\atop
    d_j \neq 1}^{\ell} \frac{2d_j(d_j-1)}{L_j(x)},
\end{equation}
where $2d_j(d_j-1)$ denotes the constant scalar curvature of  the Fubini--Study metric $\omega_{\mathrm{FS}_j}$ on $\mathbb{P}(\mathbb{C}^{d_j})$ and $\{L_j\}_j$ the label of $\Delta$, see Section \ref{s:torique}.  Then the compatible K\"ahler metric $\omega_X$ on $X$ associated with $\omega_{\mathbb{P}^\ell}$ (Definition \ref{d:comp:metric:pot}) is a $(v,w)$-cscK metric on $X$, see Lemma \ref{l:scal}. We note that here the weight $p$ may vanish on the boundary of the polytope $\Delta$ of $(\P^\ell,\omega_{\P^\ell})$.

\smallskip

Now consider weights $(\bar{p}p,\bar{w})$-weighted cscK metric $\bar{\omega}_{\P^\ell}$ on $\P^\ell$,  where
\begin{equation}{\label{bar:p:w}}
\begin{aligned}
\bar{p}(x)
&= c-\sum_{j=0}^\ell \mu(E_j)L_j(x), \\
\bar{w}(x)
&= \ell_{\mathrm{ext}}(x)
 - \sum_{j=0\atop d_j \neq 1}^{\ell} \frac{2d_j(d_j-1)}{L_j(x)}
 - \frac{4(1-\mathrm{g})}{c-\sum_{j=0}^\ell \mu(E_j) L_j(x)} .
\end{aligned}
\end{equation}

In the above formula, $c$ is a constant such that $\bar{p}>0$, $\mathrm{g}$ is the genus of the curve $C$, and $\mu(E_j):=\frac{\mathrm{deg}(E_j)}{d_j}= \int_C c_1(E_j)/d_j$ is the slope of the bundle $E_j$ and $\ell_{\mathrm{ext}}$ is the affine extremal function of $(Y,[\omega_Y],\hat{T})$ and $\hat{T}$ is a maximal compact torus inside $\mathrm{Aut}_{\mathrm{red}}(Y)$.

Then $\bar{\omega}_{\P^\ell}$ induces an extremal K\"ahler metric on $Y$, see e.g. \cite[(7)]{ACGT11}.
\end{example}

 There are crucial geometric differences between the situations in Examples~\ref{e:semisimple:principal} and~\ref{e:proj:bund}. Notably, in Example~\ref{e:proj:bund} the associated principal bundle has a non-commutative structure group
$G := \prod_j U(d_j),$ and the corresponding weight function may vanish along the boundary of the polytope.

\subsection{Weighted K-stability}{\label{s:red:j}}

We first recall the usual definition of an equivariant K\"ahler test configuration. 

\begin{define}
A $\mathbb{T}$-equivariant Kähler test configuration $(\mathcal{X}, \mathcal{A})$  for a K\"ahler manifold $(X, [\omega], \mathbb{T})$ is a normal compact Kähler space $\mathcal{X}$ endowed with
\begin{itemize}
  \item a flat morphism $\pi : \mathcal{X} \to \mathbb{P}^1$;
  \item a $\mathbb{C}^*$-action $\rho$ covering the standard $\mathbb{C}^*$-action on $\mathbb{P}^1$, and a $\mathbb{T}$-action commuting with $\rho$ and preserving $\pi$;
  \item a $\mathbb{T} \times \mathbb{C}^*$-equivariant biholomorphism $\Pi_0 : (\mathcal{X} \setminus \pi^{-1}(0)) \cong (X \times (\mathbb{P}^1 \setminus \{0\}))$;
  \item a Kähler class $\mathcal{A} \in H^{1,1}(\mathcal{X}, \mathbb{R})$ such that $(\Pi_0^{-1})^*(\mathcal{A})|_{X \times \{\tau\}} = [\omega]$.
\end{itemize}
\end{define} 

We say that $(\mathcal{X}, \mathcal{A})$ is \emph{smooth} if $\mathcal{X}$ is smooth and \emph{dominating} if $\Pi_0$ extends to a $\mathbb{T} \times \mathbb{C}^*$-equivariant morphism
\begin{equation*}
\Pi : \mathcal{X} \to X \times \mathbb{P}^1.
\end{equation*}

For any dominating test configuration $(\X,\A)$ we define the non-Archimedean $J-$functional to be:

\begin{equation*}
  \J(\X,\A):=\frac{1}{\int_X[\omega]^n} \left(\int_{\X}(\Pi^* [\omega])^{n} \wedge \mathcal{A}-  \frac{1}{n+1}\int_{\X}\mathcal{A}^{[n+1]}\right).
\end{equation*}

We introduce the $\xi$-twist $(\mathcal{X}_\xi, \mathcal{A}_\xi)$ of a smooth test configuration $(\mathcal{X}, \mathcal{A})$, see \cite{Has16, Li_2022}). Let $\xi \in 2\pi\Lambda$ be a lattice element. The discussion below also holds for a rational element $\xi$ of $\mathfrak{t}$. The choice of $\xi$ induces a $\mathbb{C}^*$-action $\rho_{\xi}$ on $\mathcal{X}$. By composing the original $\mathbb{C}^*$-action $\rho$ with $\rho_{\xi}$, we obtain a new $\mathbb{C}^*$-action on the space $\X \setminus \pi^{-1}(\infty)$. Then $(\mathcal{X}_\xi, \mathcal{A}_\xi)$ is defined by trivially compactifying $\X \setminus \pi^{-1}(\infty)$ at infinity with respect to the new $\mathbb{C}^*$-action $\rho \cdot \rho_\xi$  (see \cite[Definition 2.7]{BHJ2017} for a precise definition of the compactification). We let $\S^1_\xi$ be the $\S^1$-action on $\X_\xi$ obtained by composing the original $\S^1$-action with the one induced by $\xi$. Then $(\mathcal{X}_\xi, \mathcal{A}_\xi)$ is a $\T \times \S^1_\xi$-equivariant test configuration.

When \(\xi\) is not rational, the pair \((\mathcal{X}_{\xi}, \mathcal{A}_{\xi})\) does not necessarily define a test configuration. However, the functional \(\mathbf{J}^{\text{NA}}(\mathcal{X}_{\xi}, \mathcal{A}_{\xi})\) can be obtained as a continuous extension of the corresponding quantity defined for rational values of \(\xi\).

\begin{define}[\cite{Has16, Li_2022}]
For any dominating test configuration $(\mathcal{X},  \mathcal{A})$, the reduced non-Archimedean $\mathbf{J}$-functional is defined by

\begin{equation}{\label{non:arc:j}}
\mathbf{J}^{\textnormal{NA}}_{\mathbb{T}^{\mathbb{C}}} (\mathcal{X}, \mathcal{A}) := \inf_{\xi \in \mathfrak{t}} \mathbf{J}^{\textnormal{NA}} (\mathcal{X}_{\xi}, \mathcal{A}_{\xi}).
\end{equation}
\end{define}

\begin{define}[\cite{Lah19}]
For any smooth test configuration $(\mathcal{X},  \mathcal{A})$ the weighted Futaki--invariant is defined by

\begin{equation*}
    \mathbf{DF}_{v,w}(\mathcal{X},\A)= - \int_{\mathcal{X}}\big(\Scal_v(\Omega_\X) - w(\mu_{\Omega_\X})\big)v(\mu_{\Omega_\X})\Omega_\X^{[n+1]}+(8 \pi) \mathrm{Vol}(X,[\omega]),
\end{equation*}
where $\Omega_\X$ is any $\T$-invariant K\"ahler metric in $\A$ with $\Delta$-normalized moment map $\mu_{\Omega_\X}$.
\end{define}

Let $\mathcal{C}$ be a class of test configurations for $(X,[\omega],\T)$, i.e. a subset of $\T$-equivariant test configurations for $(X,[\omega],\T)$. 

\begin{define}
    We say that a compact K\"ahler manifold $(X,[\omega],\T)$ is 
    
    \begin{enumerate}
        \item $(v,w)$-weighted K-stable on $\mathcal{C}$ if there exists a constant $\lambda>0$ such that for any dominating smooth test configuration $(\mathcal{X},\A) \in \mathcal{C}$

    \begin{equation*}
        \mathbf{DF}_{v,w}(\mathcal{X},\A) \geq \lambda \mathbf{J}^{\textnormal{NA}}_{\mathbb{T}^{\mathbb{C}}} (\mathcal{X}, \mathcal{A})
    \end{equation*}
    \item relatively uniformly K-stable, if $v=1$ and $w=\ell_{\mathrm{ext}}$ is the affine extremal function in $(1)$ above.
    \end{enumerate}

\end{define}

In this paper, we shall consider the following special classes of smooth, dominating test configurations.

\begin{example}

\begin{enumerate}
    \item  Compatible test configuration $(\X_f,\mathcal{A}_{\X_f})$ for the projective space $X=\mathbb{P}(\oplus_{j=0}^\ell \mathbb{C}^{d_j})$ see Sections \ref{s:compatible:test} and \ref{horo tc for proj}.
    \item Compatible test configuration $(\Y_f,\mathcal{A}_{\Y_f})$ for projective bundles over a curve $\mathbb{P}(\oplus_{j=0}^\ell E_j) \rightarrow C$, see Section \ref{s:bundle:tc}.
\end{enumerate}  
\end{example}

\subsection{Weighted toric K-stability}\label{s:torique}

Let $(Z^\ell,\omega,\T)$ be a compact toric K\"ahler manifold and let $(\Delta,\mathbf{L})$ be its associated labeled Delzant polytope.  More precisely, $\mathbf{L}$ is a collection of affine-linear functions defining $\Delta$. Moreover, a polytope is said to be Delzant if $\Delta$ is compact; $\Delta$ is \emph{simple}, meaning that each vertex of $\Delta$ annihilates 
    precisely $\ell$ of the affine functions in $\mathbf{L}$ and the corresponding normals form a basis of $\mathfrak{t}^*$; $\Delta$ is \emph{integral}, meaning that for each vertex  of $\Delta$, the inward normal vectors of the adjacent facets span $\Lambda$.

Following \cite{SKD, Lah19, LLS}, for $v \in \mathcal{C}^{\infty}( \Delta,\R_{\geq 0})$, $v>0$ on $\mathring{\Delta} $, and $w \in \mathcal{C}^{\infty}(\Delta,\R)$, we define the $(v,w)$-Donaldson--Futaki invariant of the labeled polytope $(\Delta,\textbf{L})$ as the functional 
\(\DF : \mathcal{C}^{0}(\Delta) \rightarrow \mathbb{R}\) by
\begin{equation}{\label{define-futaki}}
  \DF(f):=  2\int_{\partial \Delta}f v d\sigma -  \int_{\Delta}  f  w  v dx.
\end{equation}
where $d\sigma$ is the induced measure on each face $F_i \subset \partial \Delta$ by letting $dL_i \wedge d \sigma = -dx$ and $dx$ is the Lebesgue measure on $\Delta$. We suppose that the weights $v$ and $w \in C^{\infty}(\Delta,\R)$ satisfy $
    \mathcal{F}_{v,w}(f)=0 $ 
for all $f$ affine-linear on $\Delta$. Integration by parts (see e.g. \cite{SKD}) reveals that the latter condition is  necessary  for the existence of a $(v,w)$-cscK metric in $(Z,[\omega_Z],\mathbb{T})$. 
Following \cite{Has16}, we introduce the $v$-weighted $J$-norm:
\begin{equation*}
\| f\|_{J_v(\Delta)}:= \inf_{\xi\in \mathrm{Aff}(\Delta)}  \int_\Delta \left(f+\xi-\inf_P(f+\xi)\right) v dx,
\end{equation*}
 where \(\mathrm{Aff}(\Delta)\) denotes the space of affine functions on \(\Delta\). We recall the usual definition of weighted K-stability for a polytope, first introduced in the unweighted setting in \cite{SKD}.

 Let $\mathcal{CV}^{\infty}(\Delta) $ be the space of continuous convex function on $\Delta$, smooth in the interior $\mathring{\Delta}$.  

\begin{define}{\label{d:stability}}
The labeled polytope \((\Delta,\textbf{L})\) is \((v,w)\)-uniformly K-stable if 
there exists $\lambda > 0$ such that for all  $f \in \mathcal{CV}^{\infty}(\Delta)$, 
\begin{equation*}
\mathcal{F}_{v,w}^{\Delta} \geq \lambda \| f \|_{J_v(\Delta)}.
\end{equation*}
\end{define}

We introduce the $v$-weighted $L^1$-norm

\begin{equation*}
\| f \|_{L_v^1(\Delta)}:= \int_{\Delta} |f|vdx.   
\end{equation*}

When $v=1$, we simply write $\| \cdot \|_{L^1(\Delta)}$ for the unweighted $L^1$-norm and $\| \cdot \|_{J(\Delta)} $   for the unweighted $J$-norm.

Following \cite{SKD}, we fix $x_0 \in \mathring{\Delta}$  and consider the following  normalization
 
 \begin{equation}{\label{normalized-function-polytope}}
     \mathcal{CV}^{\infty}_*(\Delta):=\{ f \in \mathcal{CV}^{\infty}(\Delta) \text{ } | \text{ } f(x) \geq f(x_0)=0 \}.
 \end{equation}
 
\noindent Then, any $f\in \mathcal{CV}^{\infty}(\Delta)$ can
be written uniquely as $f = f^*+f_0$, where $f_0$ is affine-linear and $f^*\in \mathcal{CV}_*^{\infty}(\Delta)$ is the linear projection from $ \mathcal{CV}^{\infty}(\Delta)$ to $\mathcal{CV}^{\infty}_*(\Delta) $.

\begin{lemma}{\label{l:stab:eq}}
Let $v \in \mathcal{C}^{\infty}( \Delta,\R_{\geq 0})$. There exists $C>0$ such that for any $f \in \mathcal{CV}^{\infty}(\Delta)$. We have the following equivalence of norms:

\begin{equation*}
        \| f \|_{J_{v}(\Delta)} \leq \| f^* \|_{L_{v}^1(\Delta)} \leq C  \| f \|_{J_{v}(\Delta)}.  
\end{equation*}
\end{lemma}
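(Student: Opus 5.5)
The left inequality is immediate from the definition of the $J_v$-norm. I would take $\xi=-f_0$ in the infimum, so that $f+\xi=f^*$. Since $f^*\ge 0$ and $f^*(x_0)=0$, we have $\inf_\Delta f^*=0$. Hence
\[
\|f\|_{J_v(\Delta)}\le \int_\Delta f^* v\,dx=\|f^*\|_{L^1_v(\Delta)},
\]
because $f^*\geq 0$.

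For the right inequality I plan a compactness argument on the normalized cone. First, $\|f\|_{J_v}$ depends only on $f^*$, since adding affine functions does not change it. So I may assume $f=f^*\in\mathcal{CV}^\infty_*(\Delta)$. For any affine $\xi$, set $g=f+\xi-\inf_\Delta(f+\xi)$. This is convex and nonnegative, and it differs from $f$ by an affine function. Consequently $g^*=f$: the normalization is obtained by subtracting the unique affine function $a$ with $a(x_0)=g(x_0)$ whose gradient lies in the subdifferential of $g$ at $x_0$, as a supporting hyperplane. It therefore suffices to show
\[
\|g^*\|_{L^1_v}\le C\|g\|_{L^1_v}\quad\text{for all convex } g\ge 0,
\]
and then take the infimum over $\xi$.

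To prove this, I write $g^*=g-a$ with $a$ the supporting affine function at $x_0$, so $\|g^*\|_{L^1_v}\le \|g\|_{L^1_v}+\|a\|_{L^1_v}$. The remaining task is to bound $a$ by $\|g\|_{L^1_v}$.
\begin{itemize}
\item Because $v>0$ in the interior and is continuous, there is a closed ball $B\subset\mathring\Delta$ around $x_0$ with $v\ge c_0>0$ on $B$. This is enough even if $v$ vanishes on $\partial\Delta$.
\item Since $g\geq a$ and $g\ge 0$, convexity gives $0\le g(x_0)\le \frac{1}{|B'|}\int_{B'}g$ for a smaller ball $B'$ centered at $x_0$, by the mean value inequality for convex functions. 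Hence $g(x_0)\le C\|g\|_{L^1_v}$.
\item For the gradient $\nabla a$: pick the point $y=x_0+r\,\nabla a/|\nabla a|\in B$. Convexity gives $g\ge a$ on a half-ball near $y$ where $a\ge a(x_0)+\tfrac r2|\nabla a|$. Together with $a(x_0)=g(x_0)\ge 0$, this yields $|\nabla a|\le C\int_B g\le C'\|g\|_{L^1_v}$.
\end{itemize}
Since $\Delta$ is bounded, these two bounds give $\|a\|_{L^1_v}\le C''\|g\|_{L^1_v}$, which proves the claim.

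The main obstacle is this gradient estimate on the supporting hyperplane, uniformly in $g$. The key point is to carry out all estimates on the fixed interior ball $B$, where $v$ is bounded below, which makes the degeneracy of $v$ on $\partial\Delta$ harmless. If the half-ball argument proves delicate, I would fall back on a compactness/contradiction argument. Assume normalized $f_k$ with $\|f_k\|_{L^1_v}=1$ and $\|f_k\|_{J_v}\to0$. Local uniform convergence of convex functions on compact subsets of the interior, using the $L^1_v$ bound on $B$, would then produce a limit that is affine, vanishes to first order at $x_0$, and is therefore zero. This contradicts the normalization once one checks that no mass escapes to the boundary; that check uses the same $L^1$-control of convex functions near $\partial\Delta$ coming from their values on $B$.
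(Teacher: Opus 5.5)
Your argument is correct, and it is a self-contained alternative to what the paper does. The paper gives no proof of its own. It cites [NS, Proposition 5.4.1] for $v=1$ and asserts that a ``straightforward adaptation'' covers the weighted case.

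Your route runs as follows:
\begin{itemize}
\item You reduce, via $(f+\xi)^*=f^*$ and the affine invariance of $\|\cdot\|_{J_v(\Delta)}$, to the estimate $\|g^*\|_{L^1_v(\Delta)}\le C\|g\|_{L^1_v(\Delta)}$ for convex $g\ge 0$.
\item You then bound the tangent plane $a$ of $g$ at $x_0$ on a fixed ball $B=B(x_0,\rho)\subset\mathring{\Delta}$. The value $a(x_0)=g(x_0)$ is controlled by the mean-value inequality.
\item The gradient is controlled by the bound $g\ge a\ge \tfrac{\rho}{2}|\nabla a|$ on the cap $\{x\in B:\langle x-x_0,\nabla a/|\nabla a|\rangle\ge \rho/2\}$. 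This step is cleaner phrased with that cap than with your ``half-ball near $y$''. The cap's volume does not depend on the direction, which is what makes the bound uniform.
\end{itemize}

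What this buys you is that $C$ depends on the weight only through $\inf_B v$ and $\int_\Delta v\,dx$. That is exactly what is needed when the lemma is applied to the degenerate weight $pv$, as in the proof of Lemma~\ref{stable-equivaut-energy-propr-v}. It makes the ``adaptation'' transparent, and your compactness fallback is not needed.

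You should state one hypothesis explicitly. You use $v>0$ on $\mathring{\Delta}$ (in fact only $v\ge c_0>0$ near $x_0$), while the lemma as printed only assumes $v\ge 0$. This is the standing assumption of Section~\ref{s:torique}, and it cannot be dropped. Take $\Delta=[0,1]$, $x_0=\tfrac12$, and $v$ supported in $(\tfrac12+\delta,1]$. Smooth $f=\max(0,\tfrac12+\tfrac{\delta}{2}-x)$ only near its kink, away from $x_0$ and from the support of $v$. The result has $\|f\|_{J_v(\Delta)}=0$, while $f^*\ge \tfrac{\delta}{2}$ on the support of $v$.
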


When $v=1$, the above Lemma is well-known, see e.g. \cite[Proposition 5.4.1.]{NS}. A straightforward adaptation shows that the statement still holds in the context of Lemma \ref{l:stab:eq}.

\section{Compatible test configurations of projective spaces}{\label{s:comp:tc}}

\subsection{Blown-down structure and compatible K\"ahler metrics}{\label{s:set-up}}

We consider 
\(
X = \mathbb{P}\!\left(\oplus_{j=0}^{\ell} \mathbb{C}^{d_j}\right),
\)
equipped with its standard toric $T$-action.
We let $\hat{\Delta}$ be the associated moment polytope. We have a block diagonal action of a torus $\T\cong (\mathbb{S}^1)^{\ell}$ on $X$ as a sub-torus of the standard torus $T$ : the $j$th circle of $\T$ is induced by the scalar multiplication on the factor $\mathbb{C}^{d_j}$.  Let

\begin{equation*}
    \tilde{X}:=\mathbb{P}\left(\oplus_{j=0}^\ell \mathcal{O}_{\mathbb{P}(\mathbb{C}^{d_j})}(-1)\right) \longrightarrow \prod_{j=0}^\ell \mathbb{P}(\mathbb{C}^{d_j})
\end{equation*}
be the blow-up of $X$ along the sub-manifolds $ \cup_{j=0}^\ell \mathbb{P}(\mathbb{C}^{d_j})$. On each projective space $\mathbb{P}(\mathbb{C}^{d_j})$ with $d_j\geq2$, we consider the Fubini-Study metric $\omega_{\mathrm{FS}_j}$. Let $
    P \longrightarrow \prod_{j=0}^\ell \mathbb{P}(\mathbb{C}^{d_j})$ be the $\T$-bundle associated with the complex bundle  $ \oplus_{j=1}^\ell \bigg(\mathcal{O}_{\mathbb{P}(\mathbb{C}^{d_j})}(-1)\otimes\mathcal{O}_{\mathbb{P}(\mathbb{C}^{d_0})}(1)\bigg)\rightarrow \prod_{j=0}^\ell \mathbb{P}(\mathbb{C}^{d_j})$. We fix a basis $(\xi_1,\dots,\xi_\ell)$ of the Lie algebra $\mathfrak{t}$ of $\T$. Then, there is a connection $\theta$ on $P$ with curvature 
\begin{equation*}
    d\theta = \sum_{j=0 \atop
    d_j \neq 1}^{\ell} p_j \otimes \omega_{\mathrm{FS}_j}, \quad p_j \in \mathfrak{t},
\end{equation*}
where $p_j=\xi_j$ for $j=1,\dots,\ell$ and $p_0=(-1,\dots,-1)$. Hence,  $\tilde{X}$ is the semisimple principal fibration associated with $P$ and to the $\ell$-projective space $(\mathbb{P}^{\ell},\omega_{\mathrm{FS}})$:
\begin{equation*}
    \tilde{X} := P \times_\T  \mathbb{P}^\ell \longrightarrow \prod_{j=0}^\ell \mathbb{P}(\mathbb{C}^{d_j}).
\end{equation*}
Let $\mu$ be the moment map of a toric K\"ahler metric $\omega_{\mathbb{P}^\ell} \in [\omega_{\mathrm{FS}}]$. By $\T$-invariance, $\mu$ descends to a smooth $\mathfrak{t}^*$-valued function on $\tilde{X}$. We then consider the semi-positive closed form 
\begin{equation}{\label{comp:metric}}
   \hat{\omega}=\sum_{j=0\atop
    d_j \neq 1}^\ell( \langle p_j, \mu \rangle + c_j) \omega_{\mathrm{FS}_j} + \langle d\mu \wedge \theta \rangle, \quad \text{ on } \tilde{X}
\end{equation}
where $\langle p_j, \mu \rangle + c_j =:L_j(\mu) \in \mathbf{L}$, the label of the standard simplex $\Delta$ of $(\mathbb{P}^\ell, \T, [\omega_{\mathrm{FS}}])$.  Here $\mathfrak{t}^*$ denotes the dual Lie algebra of $\mathbb{T}$.   Actually, it follows from \cite[Proposition 2]{ACGT06} that $\hat{\omega}$ is the pull-back of  
a unique K\"ahler metric $\omega$ on $X$
via the blow-down map $\mathrm{Bd} : \tilde{X} \rightarrow X$. We shall mention that the Fubini--Study metric is compatible.

Following \cite{ ACGT06, ACGT04} we define the following

\begin{define}{\label{d:comp:metric}}
    A K\"ahler metric $\omega$ on $X$ defined as above from a K\"ahler metric $\omega_{\mathbb{P}^\ell}$ on $\mathbb{P}^\ell$ is called a compatible K\"ahler metric.
\end{define}

Observe that by construction, the $\T$-action on $(X,\omega,\T)$ is Hamiltonian, with moment polytope $\Delta$.

\subsection{The Lie algebra and moment coordinates}
We fix a compatible K\"ahler metric $\omega$ on $X$ as defined in Definition \ref{comp:metric}. Let $x:=(x_1,\dots,x_{\ell})$ be the moment coordinates on $\Delta$ of the $\T$-action on $(X,\omega)$.  Let $T_B:=\prod_{j=0}^\ell T_j$, where $T_j \cong (\mathbb{S}^1)^{d_j-1}$ is the standard action on the projective space $ B_j:=\mathbb{P}(\mathbb{C}^{d_j})$. We let $B:=\prod_{j=0}^\ell B_j$, $\Delta_B$ be the moment polytope of $(B,T_B,\omega_B)$ and $\omega_B:= \sum_{j=0,
    d_j \neq 1}^\ell \omega_{\mathrm{FS}_j}$.

The set-up described above and  in Section \ref{s:set-up} is a special case of a toric manifold with rigid semisimple action in the terminology of \cite{ACGT06, ACGT04}, see Appendix \ref{a:toric:comp}. Hence, we deduce the following result from \cite[Section 3.4]{ACGT11}.

\begin{lemma}{\label{l:moment}}
The $T_B$-action on $(B,\omega_B)$ lifts to an isometric Hamiltonian action $\hat{T}_B \subset \aut_{\red}(X)$ with moment coordinates

\begin{equation}{\label{moment}}
    \hat{x}^j= x^j (\langle  p_j, x \rangle + c_j), \quad \text{for } j=0,\dots \ell, \text { and } d_j \neq 1,
\end{equation}
where $x^j$ are the Hamiltonian coordinates of $(B_j,\omega_{\mathrm{FS}_j},T_j)$. Moreover there is an exact sequence of Lie algebras

\begin{equation*}
  \{0\} \longrightarrow \mathfrak{t} \longrightarrow \hat{\mathfrak{t}} \longrightarrow \mathfrak{t}_B \longrightarrow \{0\},
\end{equation*}
where $\mathfrak{t}$ and $\mathfrak{t}_B$ are the Lie algebras of $\T$ and $T_B$ respectively. Moreover, the injection $ \mathfrak{t} \rightarrow \hat{\mathfrak{t}} $ yields the projection between moment polytopes
\begin{equation}{\label{proj}}
    \pi : \hat{\Delta} \longrightarrow \Delta.
\end{equation}
\end{lemma}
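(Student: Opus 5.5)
The lemma is essentially the specialization of the general theory of toric manifolds with a rigid semisimple action \cite[Section 3.4]{ACGT11} (see also \cite{ACGT06, ACGT04}) to our blow-down setting. I would nevertheless verify it directly on the blow-up $\tilde{X}$, using the explicit form \eqref{comp:metric} of $\hat{\omega}$, and then push the conclusions down to $X$ via $\mathrm{Bd}$.

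\textbf{Step 1: lifting the $T_B$-action.} The torus $T_j$ acts on $B_j=\mathbb{P}(\mathbb{C}^{d_j})$ through the diagonal action on $\mathbb{C}^{d_j}$. This action naturally lifts to the tautological bundle $\mathcal{O}_{B_j}(-1)$, hence to $\oplus_j \mathcal{O}_{B_j}(-1)$ and to $\tilde{X}$. The lifted action commutes with the fibrewise $\T$-action and with the blow-down map. In fact it is just the action of the diagonal subtorus of the standard torus $T$ on $X$, restricted to a complement of $\T$. It therefore descends to $X$ as a subgroup $\hat{T}_B\subset T\subset \aut_{\red}(X)$.

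Since $T$ is generated by $\T$ together with these lifts, we get $\hat{\mathfrak{t}}=\mathrm{Lie}(T)$. The surjection $\hat{\mathfrak{t}}\to\mathfrak{t}_B$ is induced by the projection $\tilde{X}\to B$, and its kernel is exactly $\mathfrak{t}$. This gives the exact sequence, and a dimension count confirms it: $\ell+\sum_j(d_j-1)=\dim T$.

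\textbf{Step 2: computing the moment map.} Choose the connection $\theta$ on $P$ to be $T_B$-invariant, which is possible by averaging. Let $\hat{\zeta}$ be the horizontal lift, with respect to $\theta$, of a generator $\zeta$ of $T_j$. Contracting \eqref{comp:metric} with $\hat{\zeta}$, the term $\langle d\mu\wedge\theta\rangle$ contributes nothing, because $\theta(\hat{\zeta})=0$ and $d\mu(\hat{\zeta})=0$. The only contribution is therefore
\[
\iota_{\hat{\zeta}}\hat{\omega}=L_j(\mu)\,\iota_\zeta\omega_{\mathrm{FS}_j}=-L_j(\mu)\,dx^j_\zeta .
\]

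To recognize this as exact, I would use Leibniz:
\[
d\big(x^j L_j(\mu)\big)=L_j(\mu)\,dx^j+x^j\langle p_j,d\mu\rangle .
\]
The extra term $x^j\langle p_j,d\mu\rangle$ is absorbed by modifying the horizontal lift. The curvature relation $d\theta=\sum_j p_j\otimes\omega_{\mathrm{FS}_j}$ together with $\iota_\zeta\omega_{\mathrm{FS}_j}=-dx^j$ means that the correct invariant lift of $\zeta$ is $\hat{\zeta}-\langle x^j p_j,\cdot\rangle$, i.e. the horizontal lift minus the fundamental field of $x^j p_j\in\mathfrak{t}$. Its contraction with $\hat{\omega}$ yields exactly $-d\big(x^j(\langle p_j,\mu\rangle+c_j)\big)$. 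This is precisely \eqref{moment}, and it shows the lifted action is Hamiltonian.

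Isometry follows because every ingredient of \eqref{comp:metric} is invariant: $\omega_{\mathrm{FS}_j}$, $\mu$ and $\theta$. Smooth extension across the exceptional divisors is automatic, since $\hat{\omega}=\mathrm{Bd}^*\omega$ and the action descends to $X$.

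\textbf{Step 3: the projection of polytopes.} The inclusion $\mathfrak{t}\hookrightarrow\hat{\mathfrak{t}}$ dualizes to a linear map $\hat{\mathfrak{t}}^*\to\mathfrak{t}^*$. This map sends the $\hat{T}$-moment map $(\mu,\hat{x})$ to $\mu$. It therefore maps $\hat{\Delta}=\mathrm{Im}(\mu,\hat{x})$ onto $\mu(X)=\Delta$, which defines \eqref{proj}.

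\textbf{Main obstacle.} The delicate point is Step 2: getting the correct lift so that the connection-curvature term produces exactly the factor $x^j\langle p_j,\cdot\rangle$, with signs consistent with the conventions $p_0=(-1,\dots,-1)$ and $p_j=\xi_j$. I would fix signs by checking a single case: $\ell=1$ with $d_0=1$, $d_1=2$, where $X=\mathbb{P}^2$ and everything can be compared with the standard Fubini--Study moment map.
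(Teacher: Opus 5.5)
Your argument is correct, but it is not how the paper proceeds. The paper gives no computation: it observes that $(X,\omega,T)$ with the subtorus $\T$ is a toric manifold with a rigid semisimple action of blow-down type (cf.\ Appendix~\ref{a:toric:comp}) and quotes \cite[Section~3.4]{ACGT11}. You instead reprove that special case by hand. You lift $T_B$ linearly through the tautological bundles, identify the $\theta$-preserving lift $\tilde\zeta$ of a generator $\zeta$ as the horizontal lift plus a vertical correction dictated by the curvature, contract with \eqref{comp:metric}, and descend through $\mathrm{Bd}$.

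The direct route exposes two things the citation hides. First, \eqref{moment} corresponds to the lift with $\theta(\tilde\zeta)=x^jp_j$ and no constant; any other lift differs by some $\eta\in\mathfrak{t}$ and shifts $\hat{x}^j$ by $\langle\mu,\eta\rangle$, which reflects that the splitting of the exact sequence is not canonical. Second, $\hat{x}^j$ descends to a smooth function on $X$. Your ``automatic'' is justified because $x^jL_j(\mu)$ vanishes on $\{L_j(\mu)=0\}$, the only divisor along which $\mathrm{Bd}$ forgets the $B_j$-coordinate; alternatively, $X$ is simply connected, so the descended action has a moment map, which agrees with $\hat{x}^j$ up to a constant on the dense set where $\mathrm{Bd}$ is biholomorphic. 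The citation, for its part, covers the general blow-down setting of Appendix~\ref{a:toric:comp} at no extra cost.

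\textbf{Correction to Step~2: the sign.} With your conventions ($\iota_\zeta\omega_{\mathrm{FS}_j}=-dx^j$, $d\theta=\sum_j p_j\otimes\omega_{\mathrm{FS}_j}$) and the standard normalization $\theta(K_\xi)=\xi$, invariance $\mathcal{L}_{\tilde\zeta}\theta=0$ gives $d\bigl(\theta(\tilde\zeta)\bigr)=-\iota_\zeta d\theta=p_j\,dx^j$. So $\tilde\zeta$ is the horizontal lift \emph{plus} $x^jK_{p_j}$, not minus. Since $\iota_{K_\xi}\langle d\mu\wedge\theta\rangle=-\langle d\mu,\xi\rangle$, this yields $\iota_{\tilde\zeta}\hat\omega=-L_j\,dx^j-x^j\,dL_j=-d(x^jL_j)$. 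Your sign instead gives $-L_j\,dx^j+x^j\,dL_j$, which is not even closed. No test case is needed: the sign is forced by closedness of $\iota_{\tilde\zeta}\hat\omega$.

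\textbf{Correction to Step~2: the connection.} Rather than averaging $\theta$, which replaces the given compatible metric by a gauge-equivalent one, take $\theta$ to be the connection induced by the Fubini--Study Hermitian metrics on the $\mathcal{O}_{B_j}(-1)$. It has the required curvature and is already $\prod_jU(d_j)$-invariant.
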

From the above Lemma, the moment coordinate $\hat{x}$ of $(X,\omega,T)$ can be written as
 
 \begin{equation*}
     \hat{x}=(x, \hat{x}_B),
 \end{equation*}

\noindent where $\hat{x}_B:=(\hat{x}_1^j,\dots, \hat{x}^j_{d_j-1})_{j=0, d_j\neq1}^{\ell+1}$ are the moment coordinates of the lifted action  $\hat{T}_{B}$ of $T_B$. This yields the  identification

\begin{equation}{\label{identify:pol}}
   \mathring{\hat{\Delta}} \cong \mathring{\Delta} \times \mathring{\Delta}_B
\end{equation}
through

\begin{equation*}
    \hat{x}=(x,\hat{x}_B) \rightarrow \left(x, \frac{1}{\langle p_j, x \rangle +c_j} \hat{x}^j\right)=(x,x_B), \quad j=0,\dots \ell,
\end{equation*}
where $\mathring{\Delta}$ denotes the interior of $\Delta$ and similarly for $   \mathring{\hat{\Delta}}$ and $\mathring{\Delta}_B$.

\subsection{The spaces of integral functions}
Let $v>0$ be a weight function on $\Delta$, considered also as a weight on $\hat{\Delta}$ via $\pi$. We consider the space $L_v^1(\hat{\Delta})$ of functions on $\hat{\Delta}$ integrable with respect to the Lebesgue measure $vd\hat{x}$.  From \eqref{moment} and \eqref{identify:pol}, we have that on $\mathring{\hat{\Delta}}$

\begin{equation}{\label{measure}}
   d\hat{x}=pdxdx_B,
\end{equation}
where $dx_B$ is the Lebesgue measure on $\Delta_B$  and 

\begin{equation}{\label{hatv}}
     p(x)  = \prod_{j=0}^\ell \big(\langle p_j,x \rangle +c_j  \big)^{d_j-1}.
 \end{equation}

Let $L_{pv}^1(\Delta)$ be the space of integrable functions on $\Delta$ with respect to $pvdx$. We deduce the following key Lemma:

\begin{lemma}
For any function $f \in L_{vp}^1(\Delta)$, 

\begin{equation}{\label{int:func}}
   \int_{\hat{\Delta}} f vd\hat{x} =   \mathrm{Vol}(\Delta_B) \int_{\Delta} f p vdx.
\end{equation}   
In particular, we have an embedding $\pi^* : L_{pv}^1(\Delta) \hookrightarrow L_v^1(\hat{\Delta})$.
\end{lemma}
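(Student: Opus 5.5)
The plan is to obtain \eqref{int:func} as a Fubini computation in the product coordinates given by \eqref{identify:pol}. The boundary $\partial\hat{\Delta}$ has Lebesgue measure zero, so it suffices to integrate over the interior $\mathring{\hat{\Delta}}$. Here $\pi^*f = f\circ\pi$, so the function being integrated on $\hat{\Delta}$ is $f(x)v(x)$, and it depends only on the first component $x$.

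First, I check that the change of variables $\hat{x}=(x,\hat{x}_B)\mapsto (x,x_B)$ is a diffeomorphism from $\mathring{\hat{\Delta}}$ onto $\mathring{\Delta}\times\mathring{\Delta}_B$. By \eqref{moment}, its inverse is
\[
(x,x_B)\mapsto \bigl(x,\,(L_j(x)\,x^j)_j\bigr), \qquad L_j(x)=\langle p_j,x\rangle+c_j .
\]
Each $L_j$ is strictly positive on $\mathring{\Delta}$, so this map is smooth and bijective there.

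Next, I compute the Jacobian. The matrix is block triangular: the $x$-block is the identity, and the block for $x^j\mapsto \hat{x}^j$ is $L_j(x)\,\mathrm{Id}_{d_j-1}$. The determinant is therefore $\prod_j L_j(x)^{d_j-1}=p(x)$, which is \eqref{measure}: $d\hat{x}=p\,dx\,dx_B$.

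Tonelli's theorem, applied first to $|f|v$, then gives
\[
\int_{\hat{\Delta}}|f|v\,d\hat{x}=\int_{\mathring{\Delta}}\int_{\mathring{\Delta}_B}|f(x)|v(x)p(x)\,dx_B\,dx=\mathrm{Vol}(\Delta_B)\int_\Delta |f|pv\,dx<\infty .
\]
So $f\in L^1_{pv}(\Delta)$ implies $\pi^*f\in L^1_v(\hat{\Delta})$, and Fubini then yields \eqref{int:func} for $f$ itself. The same identity shows $\|\pi^*f\|_{L^1_v(\hat\Delta)}=\mathrm{Vol}(\Delta_B)\|f\|_{L^1_{pv}(\Delta)}$. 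Hence $\pi^*$ is linear and injective modulo null functions, since $p>0$ on $\mathring{\Delta}$, and this is the claimed embedding.

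The only genuinely delicate point is the identification \eqref{identify:pol}, namely that the fibre of $\pi$ over an interior point $x$ is exactly $L_j(x)$ times the simplices $\Delta_{B_j}$ in the $\hat{x}_B$-coordinates. This comes from Lemma \ref{l:moment} together with the compatible structure. Since it is stated earlier in the paper, I take it as given, and then the argument is only a Jacobian computation followed by Fubini.
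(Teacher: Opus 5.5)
Your proposal is correct and follows the paper's proof: change variables through \eqref{identify:pol} with $d\hat{x}=p\,dx\,dx_B$ from \eqref{measure}, then integrate out the $\Delta_B$ factor by Fubini. Your explicit Jacobian computation for \eqref{measure}, the Tonelli step applied to $|f|v$, and the resulting norm identity that gives the embedding fill in details the paper leaves implicit.
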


In the above Lemma we identify the function $f$ and its pullback by $\pi$ to $\hat{\Delta}$. We will use this abuse of notation throughout the paper.

\begin{proof}
  By \eqref{identify:pol} and \eqref{measure}

\begin{equation*}
\begin{split}
\int_{\hat{\Delta}} fv d\hat{x} 
=& \int_{\mathring{\Delta} \times \mathring{\Delta}_B} f pv dx dx_B =\mathrm{Vol}(\Delta_B) \int_{\Delta} f pv dx,
\end{split}
\end{equation*}   
where $\mathrm{Vol}(\Delta_B)$ denotes the volume of $\Delta_B$ with respect to the Lebesgue measure.

\end{proof}

\subsection{Compatible symplectic potentials}
 The following Lemma is proved in \cite[Lemma 8]{ACGL} and the arguments of \cite[Lemma~8]{ACGL} also apply in our setting.
, when there is no blow-down. If $d_j \ge 2$, we fix a symplectic potential $u_j$ for the Fubini--Study metric $\omega_{\mathrm{FS}_j}$. 
 
\begin{lemma}{\label{Lemma-symp-pot}}
Let $u \in \mathcal{S}(\Delta, \mathbf{L})$ be the symplectic potential of a K\"ahler metric $\omega_{\mathbb{P}^\ell}$ on $\mathbb{P}^\ell$. Let $\omega$ be the compatible K\"ahler metric associated with $\omega_{\mathbb{P}^\ell}$. Then 
  \begin{equation}{\label{comp:symp}}
     \hat{u}= u  + \sum_{j=0\atop
    d_j \neq 1}^\ell (\langle p_j, x \rangle +c_j) u_j.
 \end{equation}
is a symplectic potential of $\omega$.
 \end{lemma}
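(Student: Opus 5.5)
The approach is to check directly that $\hat{u}$ is a symplectic potential for the toric structure $(X,\omega,T)$ in the moment coordinates $\hat{x}=(x,\hat{x}_B)$ of Lemma~\ref{l:moment}. Here each $u_j$ is read as a function of $x_B=\hat{x}^j/(\langle p_j,x\rangle+c_j)$ through the identification \eqref{identify:pol}. Two properties must be verified. First, the Hessian of $\hat{u}$ with respect to $\hat{x}$ is the inverse of the matrix $H=(\langle \hat{\xi}_a,\hat{\xi}_b\rangle_\omega)$ of inner products of the generators of $\hat{\mathfrak{t}}$. Second, $\hat{u}$ satisfies the Guillemin boundary conditions on $\hat{\Delta}$, namely $\hat{u}-\tfrac12\sum_k \hat{L}_k\log\hat{L}_k$ is smooth up to the boundary.

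\textbf{Hessian step.} I will use the form \eqref{comp:metric} of the metric, $\hat{\omega}=\sum_j L_j(x)\,\omega_{\mathrm{FS}_j}+\langle d\mu\wedge\theta\rangle$, together with the standard generalized Calabi ansatz description of its Riemannian metric. On $\tilde{X}$ the metric splits orthogonally into a fiber part, given by $u$ on $\mathbb{P}^\ell$ with $g_{\mathbb{P}^\ell}=\langle dx,\mathrm{Hess}(u)dx\rangle+\langle\theta,\mathrm{Hess}(u)^{-1}\theta\rangle$, and horizontal parts $L_j(x)\,g_{\mathrm{FS}_j}$. Each base factor is toric, with $g_{\mathrm{FS}_j}$ written through $u_j$ in coordinates $x^j$. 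The lifted Killing fields of $\hat{T}_B$ have moment maps $\hat{x}^j=L_j(x)x^j$, as in \eqref{moment}. From this one writes $g$ in the coordinates $(x,x_B,\text{angles})$ and changes variables to $(x,\hat{x}_B)$. I expect the cleanest route to be the Legendre-dual characterization: $\hat{u}$ is a symplectic potential iff $g=\langle d\hat{x},\mathrm{Hess}(\hat{u})d\hat{x}\rangle+\langle d\hat{t},\mathrm{Hess}(\hat{u})^{-1}d\hat{t}\rangle$. The key identity is that for a convex $u_j$, the perspective function $(x,\hat{x}^j)\mapsto L_j(x)\,u_j(\hat{x}^j/L_j(x))$ has Hessian whose quadratic form equals $L_j^{-1}\langle d\hat{x}^j-x^jdL_j,\mathrm{Hess}(u_j)(d\hat{x}^j-x^jdL_j)\rangle=L_j\langle dx^j,\mathrm{Hess}(u_j)dx^j\rangle$. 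The right-hand side is exactly the horizontal $L_j\,g_{\mathrm{FS}_j}$ contribution in the $dx^j$ directions. Adding $\mathrm{Hess}(u)$ in the $x$ directions recovers the full $d\hat{x}$-part of the metric. The angular part then follows by inverting the block matrix, where the connection term $\theta$ accounts for the shift of angular coordinates. This follows the computation of \cite[Lemma 8]{ACGL}.

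\textbf{Boundary step and main obstacle.} This step is where the blow-down enters, since \cite[Lemma 8]{ACGL} treats the case without one. The facets of $\hat{\Delta}$ are of two kinds. The first are facets pulled back from facets of the $\Delta_B$ factors, namely $\hat{L}=L_j(x)\,\ell_{j,k}(x^j)$ with $\ell_{j,k}$ the labels of $\Delta_B$. Near these, $L_j u_j$ contributes $\tfrac12 L_j\ell_{j,k}\log\ell_{j,k}=\tfrac12\hat{L}\log\hat{L}-\tfrac12\hat{L}\log L_j$, and the correction term is smooth wherever $L_j>0$. The second kind are the facets $\{L_j=0\}$ of $\Delta$, which in $\hat{\Delta}$ collapse by \eqref{moment}. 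Indeed, $\hat{\Delta}$ is the simplex cut out by the $\hat{L}$ labels alone, and $L_j$ is the sum of the $\hat{L}$'s of block $j$ rather than a label itself. Here one must check that the singular terms $\tfrac12 L_j\log L_j$ coming from $u$ and $-\tfrac12\sum_k\hat{L}_{j,k}\log L_j$ coming from the perspective terms cancel. This cancellation uses $\sum_k\ell_{j,k}=1$ for the Fubini--Study simplex labels, so the combination stays smooth near the vertices where $L_j$ vanishes. I expect this cancellation, together with the smoothness of the remainder at corners where several $L_j$ vanish, to be the delicate part. To handle it, I will first treat the case $u=u_{\mathrm{FS}}$ explicitly: $\hat{u}$ then becomes the standard $\tfrac12\sum\hat{L}\log\hat{L}$ on $\hat{\Delta}$, which is consistent with the Fubini--Study metric being compatible. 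For general $u$, I will write $u=u_{\mathrm{FS}}+(\text{smooth on }\Delta)$, whose pullback is smooth on $\hat{\Delta}$. Alternatively, one can avoid the boundary analysis entirely by invoking \cite[Proposition 2]{ACGT06}. That result states that $\hat{\omega}$ descends to a smooth Kähler metric on $X$. Since the $\hat{u}$ obtained from the Hessian step is the Legendre dual of a Kähler potential of a smooth toric metric, it then automatically lies in $\mathcal{S}(\hat{\Delta},\hat{\mathbf{L}})$.
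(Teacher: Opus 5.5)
Your proposal is correct and is essentially the paper's argument. The paper defers the Hessian identification to the computation of \cite[Lemma~8]{ACGL}, which your perspective-function identity reproduces, and in its appendix it handles the blow-down loci by the same cancellation $\sum_k L^j_k=1$ in $\hat u-\frac12\sum_c\hat L_c\log\hat L_c$; if you take the explicit boundary route rather than the fallback via \cite[Proposition~2]{ACGT06}, also record the determinant half of Abreu's conditions, $\det\mathrm{Hess}(\hat u)\prod_c\hat L_c=\bigl(\det\mathrm{Hess}(u)\prod_j L_j\bigr)\prod_j\bigl(\det\mathrm{Hess}(u_j)\prod_k L^j_k\bigr)$, which is smooth and positive because the last factors are constant for the Fubini--Study potentials.
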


We let $\mathcal{S}(\hat{\Delta}, \hat{\mathbf{L}})$ be the space of symplectic potentials of $\hat{\Delta}$.

\begin{define}{\label{d:comp:metric:pot}}
  A symplectic potential $\hat{u} \in \mathcal{S}(\hat{\Delta}, \mathbf{L})$ induced by a symplectic potential $u \in \mathcal{S}(\Delta, \mathbf{L})$ as in the above Lemma is called a compatible symplectic potential. 
\end{define}

By definition, a K\"ahler metric constructed from a compatible symplectic potential is a compatible K\"ahler metric in the sense of Definition \ref{comp:metric}.

 \subsection{The labeled polytope}
 For $d_j\geq2$, we let $\Delta_j$ be the standard $(d_j-1)$-simplex of $(\mathbb{P}(\mathbb{C}^{d_j}), \omega_{\mathrm{FS}_j},T_j)$. Let $\mathbf{L}_j=\{L^j_{0}, \dots, L^j_{d_j-1}\}$ be the label of $\Delta_j$. If $d_j=1$, we define $L^j_0=1$.  
\begin{lemma}{\label{l:label}}
The label $\hat{\mathbf{L}}=(\hat{L}_0,\dots, \hat{L}_{n})$ of the Delzant polytope $\hat{\Delta}$  is
\begin{equation}{\label{lab:delta:hat}}
 \hat{\Delta}= \{ ( \langle p_j , x \rangle + c_j) L^j_{i_j} \geq 0, \quad  j=0,\dots,\ell, \text{ and } i_j=0,\dots, d_j-1  \}.
\end{equation}
\end{lemma}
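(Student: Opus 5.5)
The plan is to identify the facets of $\hat{\Delta}$ in the product coordinates \eqref{identify:pol}. By Lemma~\ref{l:moment}, the moment coordinates of $(X,\omega,T)$ are $\hat{x}=(x,\hat{x}_B)$ with $\hat{x}^j=(\langle p_j,x\rangle+c_j)\,x^j$. Each affine function $L^j_{i_j}$ on $\Delta_j$ is affine in $x^j$, say $L^j_{i_j}(x^j)=\langle e^j_{i_j},x^j\rangle+a^j_{i_j}$. Then
\[
(\langle p_j,x\rangle+c_j)\,L^j_{i_j}(x^j)=\langle e^j_{i_j},\hat{x}^j\rangle+a^j_{i_j}(\langle p_j,x\rangle+c_j).
\]
The right-hand side is affine in $\hat{x}$, so every function in \eqref{lab:delta:hat} is an affine function on $\hat{\mathfrak t}^*$. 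For $d_j=1$ the convention $L^j_0=1$ gives just $L_j(x)=\langle p_j,x\rangle+c_j$, which is one of the labels of the simplex $\Delta$.

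First I show that the set defined by these inequalities is $\hat{\Delta}$. On the interior, \eqref{identify:pol} gives $\mathring{\hat\Delta}\cong\mathring\Delta\times\mathring\Delta_B$. A point $(x,x_B)$ lies there exactly when $L_j(x)>0$ for all $j$ and $L^j_{i_j}(x^j)>0$ for all $j,i_j$. Since $\sum_{i_j}L^j_{i_j}$ is a positive constant on $\Delta_j$ (standard simplex), the products $L_jL^j_{i_j}$ are all positive iff $L_j>0$ and all $L^j_{i_j}>0$. Summing over $i_j$ recovers $L_j$ up to a positive constant, so $L_j\ge 0$ is implied by the product inequalities. Taking closures, the product inequalities cut out $\overline{\mathring\Delta\times\mathring\Delta_B}$ read in $\hat x$-coordinates, which is $\hat\Delta$. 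Here I use that $\hat\Delta$ is convex with nonempty interior, so it is the closure of its interior. The number of inequalities is $\sum_j d_j=n+1$, which matches the number of facets of the standard $n$-simplex $\hat\Delta$. This indexes the family as $\hat L_0,\dots,\hat L_n$.

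Next I fix the normalization of the labels. It suffices to compare with the standard description of $X=\mathbb{P}(\mathbb{C}^{n+1})$: its facets are the moment images of the coordinate hyperplanes $\{z^j_{i_j}=0\}$. Along $z^j_{i_j}=0$ the corresponding $\hat{T}$-weight degenerates, and in the bundle picture this is where $x^j$ hits the facet $\{L^j_{i_j}=0\}$ of $\Delta_j$. When $d_j=1$ it is where $x$ hits $\{L_j=0\}$ of $\Delta$. So each product vanishes exactly on one facet. For the Delzant labeling, the linear parts must be the primitive inward normals in the lattice of $\hat T$. The linear part of $L_jL^j_{i_j}$ is $(a^j_{i_j}p_j,\,e^j_{i_j})\in\hat{\mathfrak t}$, using the splitting of the exact sequence of Lemma~\ref{l:moment} induced by the lift $\hat{T}_B$. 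This vector is primitive because $e^j_{i_j}$ is primitive in $\mathfrak t_B$, or is $0$ with $a^j_{i_j}=1$ and $p_j$ primitive.

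The step I expect to be the main obstacle is this lattice and normalization check. It requires knowing that the lifted $\hat T_B$ together with $\T$ generates exactly the lattice of the standard torus $T$, rather than a finite cover of it. I would verify it directly with Fubini--Study: take $\hat u$ from Lemma~\ref{Lemma-symp-pot} with $u$ and the $u_j$ the Guillemin potentials. Writing $u_j=\tfrac12\sum_{i_j}L^j_{i_j}\log L^j_{i_j}$ and $u=\tfrac12\sum_j L_j\log L_j$, one computes
\[
\hat u=\tfrac12\sum_{j,i_j}(L_jL^j_{i_j})\log(L_jL^j_{i_j})+\text{affine}.
\]
This uses $\sum_{i_j}L^j_{i_j}=1$, after normalizing so that these constants equal $1$. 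This is exactly Guillemin's potential for the labels \eqref{lab:delta:hat}. Since $\omega_{\mathrm{FS}}$ is compatible and its symplectic potential determines the labeled polytope, the labels are as claimed.
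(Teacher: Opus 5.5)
Your proposal is correct, but it takes a different route from the paper's proof of this lemma.

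\textbf{The paper's route.} The paper works entirely in explicit coordinates. After ordering so that $L_k=x_k$, $L_0=1-\sum_k x_k$, $L^j_i=x^j_i$ and $L^j_0=1-\sum_i x^j_i$, it computes the products as $\hat x^j_i$, $x_j-\sum_i\hat x^j_i$ and $1-\sum_k x_k-\sum_i\hat x^0_i$. The unimodular substitution \eqref{transfo:pol} then carries this system onto the standard $n$-simplex with its standard labels. This is short and exhibits the Delzant property of the product labeling directly. However, it tacitly assumes two things: that $(x,\hat x_B)$ are lattice coordinates for $T$, and that $\hat\Delta$ coincides with this particular copy of the simplex rather than merely being lattice-equivalent to it.

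\textbf{Your route.} Your argument makes both points explicit.
\begin{itemize}
\item The set equality comes from \eqref{identify:pol}, together with the observation that $\sum_{i_j}L^j_{i_j}=1$ forces $L_j>0$. The closure argument and the facet count $\sum_j d_j=n+1$ then give that each product defines exactly one facet.
\item The normalization of the labels comes from the boundary behaviour of the compatible potential of Lemma~\ref{Lemma-symp-pot}. The identity $\hat u=\tfrac12\sum_{j,i_j}(L_jL^j_{i_j})\log(L_jL^j_{i_j})$ in fact holds exactly, with no affine remainder.
\end{itemize}
Abreu's condition $\hat u-\tfrac12\sum_c\tilde L_c\log\tilde L_c\in C^\infty$ refers to the true lattice labels $\tilde L_c$. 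Once the facets agree, comparing it with this identity forces $\tilde L_c=L_jL^j_{i_j}$. So the lattice question you flagged is bypassed, and your middle paragraph on primitivity via the splitting can be dropped.

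\textbf{Comparison.} Your strategy is essentially the one the paper itself uses for the generalization, Lemma~\ref{l:a:label} in Appendix~\ref{a:toric:comp}, where no explicit formula for $\mathbf{L}$ is available. Its cost is reliance on Lemma~\ref{Lemma-symp-pot} and on the characterization \eqref{boundary:cond}. Its gain is that it is coordinate-free and extends verbatim to the rigid semisimple setting. The paper's computation, by contrast, is elementary and gives the explicit unimodular equivalence.
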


Observe that $\langle p_j , x \rangle + c_j) L^j_{i_j}$ is indeed an affine function of the moment coordinates $\hat{x}$, see \eqref{moment}.

\begin{proof}

We compute the RHS of \eqref{lab:delta:hat} and show that it corresponds to the label of the standard $n$-simplex $(\hat{\Delta}, \hat{\mathbf{L}})$. For each facet of $\Delta$ not involved in the blow-down, i.e., associated with an affine function $\langle p_j , x \rangle + c_j$ with $d_j=1$, we have $(\langle p_j , x \rangle + c_j) L^j_{i_j} = \langle p_j , x \rangle + c_j$.  Hence, we now suppose that $d_j \ge 2$. 

After reordering the labeling $\mathbf{L}$, if necessary, we assume that for every $k = 1, \dots, \ell$,
\(
\langle p_k, x \rangle + c_k = x_k\) and $\langle p_{0}, x \rangle + c_{0} = 1-\sum_{k=1}^{\ell}x_k.
\) Similarly, for $i=1, \dots, d_j - 1$, we have $L_i^j = x_i^j$ and $L_{0}^j = 1-\sum_{i=1}^{d_j-1} x_i^j$. Hence, for any $j = 0, \dots, \ell$ and $i = 1, \dots, d_j - 1$, by~\eqref{moment},
\[
(\langle p_j, x \rangle + c_j) L_i^j = \hat{x}_i^j.
\]

Also, for $j = 1, \dots, \ell$,
\[
(\langle p_j, x \rangle + c_j) L_0^j
= x_j \Bigl(1 - \sum_{i=1}^{d_j-1} x_i^j \Bigr)
= x_j - \sum_{i=1}^{d_j-1} \hat{x}_i^j.
\]

Finally, 
\begin{equation*}
\begin{aligned}
(\langle p_0, x \rangle + c_0) L_0^0
&= \left(1 - \sum_{k=1}^{\ell} x_k\right)
\Bigl(1 - \sum_{i=1}^{d_0-1} x_i^0 \Bigr) = 1 - \sum_{k=1}^{\ell} x_k  -  \sum_{i=1}^{d_0-1} \hat{x}_i^0
\end{aligned}
\end{equation*}

We now consider the following linear transformation:

\begin{equation}\label{transfo:pol}
\left\{
\begin{aligned}
\hat{x}_i^j &\longrightarrow \hat{y}_i^j,
&& j = 0,\dots,\ell,\quad i = 1,\dots,d_j-1, \\[0.4em]
x_k &\longrightarrow y_k + \sum_{i=1}^{d_j-1} \hat{y}_i^k,
&& k = 1,\dots,\ell.
\end{aligned}
\right.
\end{equation}

Hence \eqref{transfo:pol} identifies the RHS of \eqref{lab:delta:hat} and the standard $n$-simplex
\[
\Bigl\{
(\hat{y}_i^j, y_k)
\;\Big|\;
\hat{y}_i^j \ge 0,\;
y_k \ge 0,\;
1 - \sum_{k=1}^\ell \Bigl( y_k + \sum_{j=0}^\ell \sum_{i=1}^{d_j-1} \hat{y}_i^j \Bigr) \ge 0
\Bigr\},
\]
as a Delzant polytopes. Above the indices are as follows: $j=0,\dots,\ell$, $i=1,\dots,d_j-1$, $k=1,\dots, \ell$ and by convention $\hat{y}_i^j=0$ if $d_j=1$.
\end{proof}

\subsection{The boundary measure}

We define the boundary measure $d \sigma_{\hat{\Delta}}$ on the boundary $\partial \hat{\Delta}$ of $\hat{\Delta}$ by
\begin{equation}{\label{bound:mes}}
    - d \hat{L}_c \wedge d\sigma_{\hat{\Delta}} |_{\{\hat{L}_c =0\}}   = d\hat{x},
\end{equation}
and we define the boundary measure $d\sigma_{\Delta_B}$  on $\partial \Delta_B$ similarly.

\begin{lemma}{\label{l:bound:measure}}
The measure $d\sigma_{\hat{\Delta}}$ is given by

\begin{equation*}
d \sigma_{\hat{\Delta}}|_{\{\langle p_j , x \rangle + c_j) L^j_{i}=0\}} = \frac{p}{\langle p_j , x \rangle + c_j} dx (d\sigma_{\Delta_B})|_{\{L^j_{i}=0\}} \quad \text{ if }  d_j \neq1,  
\end{equation*} 
and
\begin{equation*}
 d \sigma_{\hat{\Delta}}|_{\{\{\langle p_j , x \rangle + c_j)=0\}} = (pd\sigma_{\Delta})|_{\{L_j=0\}} dx_B  \quad \text{ if } d_j =1.
\end{equation*}
\end{lemma}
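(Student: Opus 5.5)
The plan is to compute $d\sigma_{\hat{\Delta}}$ facet by facet from the defining relation \eqref{bound:mes}. On each facet I will use the product structure \eqref{identify:pol} together with $d\hat{x}=p\,dx\,dx_B$ from \eqref{measure}. By Lemma \ref{l:label}, the facets of $\hat{\Delta}$ come in two types: $\{(\langle p_j,x\rangle+c_j)L^j_i=0\}$ with $d_j\geq 2$, and $\{\langle p_j,x\rangle+c_j=0\}$ with $d_j=1$. I will treat the two cases separately.

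\textbf{Case $d_j\geq2$.} Write $L_j:=\langle p_j,x\rangle+c_j$. On the interior of the facet $\{L_jL^j_i=0\}$ we have $L_j>0$, so the facet corresponds, under \eqref{identify:pol}, to $\mathring{\Delta}\times\{L^j_i=0\}$. Differentiating gives
\[
d\hat{L}_c=d(L_jL^j_i)=L_j\,dL^j_i+L^j_i\,dL_j .
\]
Restricted to the facet, the second term vanishes because $L^j_i=0$ there. Hence
\[
-d\hat{L}_c\wedge\bigl(\tfrac{p}{L_j}\,dx\,d\sigma_{\Delta_B}\bigr)=p\,dx\wedge\bigl(-dL^j_i\wedge d\sigma_{\Delta_B}\bigr)=p\,dx\,dx_B=d\hat{x},
\]
up to the orientation conventions already used for $dx_B$. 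Here I use the defining relation of $d\sigma_{\Delta_B}$ and then \eqref{measure}. Since the measure is uniquely determined by \eqref{bound:mes}, this gives the first formula.

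\textbf{Case $d_j=1$.} The facet is $\{L_j=0\}$, and since $L^j_0=1$ we have $\hat{L}_c=L_j$. I expect the main obstacle here: the product coordinates $(x,x_B)$ degenerate on this facet only for those factors whose $L_k$ vanish. Since $d_j=1$, no $\Delta_B$-factor collapses along $L_j=0$, so on the interior of the facet the identification \eqref{identify:pol} extends as $\{L_j=0\}\times\mathring{\Delta}_B$. The weight $p$ is smooth there; it may vanish only where some other $L_k$ with $d_k\geq2$ vanishes, which is a measure-zero subset of the facet. Then
\[
-dL_j\wedge\bigl(p\,d\sigma_\Delta\,dx_B\bigr)=p\,dx\,dx_B=d\hat{x},
\]
using the defining relation $dL_j\wedge d\sigma_\Delta=-dx$ from Section \ref{s:torique}. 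This gives the second formula.

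Finally, I will remark that both identities are first obtained on the open dense parts of the facets where \eqref{identify:pol} holds. They extend to the whole facets because the complements have measure zero for both sides. The remaining care is only the sign and orientation bookkeeping in the wedge products, which I will fix once, consistently with the way \eqref{measure} was derived.
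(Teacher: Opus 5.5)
Your proposal is correct. It rests on the same input as the paper's proof, namely the change of variables $\hat{x}^k=L_k(x)\,x^k_B$ with Jacobian $p$ from \eqref{measure}, but it organizes the computation differently.

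The paper stays in the moment coordinates $\hat{x}$. It computes $d\sigma_{\hat\Delta}$ by contracting $d\hat{x}$ with a vector dual to $d\hat{L}_c$, then converts the surviving $d\hat{x}^r_q$ into $dx^r_q$ to produce the factor $p/L_j$. Since $\hat{L}_c$ is a coordinate function only for $i\geq1$, the facets $\{L_jL^j_0=0\}$ need a separate computation, in which an extra term $(p_j\lrcorner dx)\wedge d\hat{x}_B$ appears and has to be disposed of. The case $d_j=1$ is only said to be similar.

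You instead pass to the product coordinates $(x,x_B)$ and verify that your candidate form satisfies \eqref{bound:mes}, using only $d(L_jL^j_i)=L_j\,dL^j_i$ at points where $L^j_i=0$. This buys three things. It treats all $i$, including $i=0$, on the same footing. It does not depend on choosing coordinates in which $L^j_i=x^j_i$. And it gives the $d_j=1$ case in one line; it would also carry over verbatim to the setting of Appendix~\ref{a:toric:comp}.

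Two facts you use implicitly deserve a sentence each in a write-up:
\begin{enumerate}
\item When $d_j\geq2$, $L_j>0$ on the relative interior of $\{L_jL^j_i=0\}$. This holds because $L_j=\sum_{i'}L_jL^j_{i'}$ and the summands with $i'\neq i$ are positive there. Likewise, all $L_k$ with $k\neq j$ are positive on the relative interior of a facet with $d_j=1$.
\item \eqref{measure} holds as an identity of $n$-forms on the whole open set where the $L_k$ with $d_k\geq2$ are positive, not only on $\mathring{\hat\Delta}$. This is because the Jacobian of $(x,x_B)\mapsto(x,L_k(x)x^k_B)$ is block triangular with determinant $p$.
\end{enumerate}
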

\begin{proof}
We fix a basis $(\xi_1,\dots,\xi_\ell,\xi^j_i)$, $j=0,\dots,\ell$, $i=1,\dots,d_{j-1}$, $d_j\neq1$, of the Lie algebra $\hat{\mathfrak{t}}=\mathfrak{t}\oplus \hat{\mathfrak{t}}_B$. After reordering the label $\mathbf{L}^j$, if necessary, we may assume that for $i=1, \dots, d_j - 1$, we have $L_i^j = x_i^j$ and $L_{0}^j = 1-\sum_{i=1}^{d_j-1} x_i^j$. 

Suppose first that $d_j \neq 1$. Hence the affine-linear function $\hat{L}_c$ is of the form $\hat{L}_c=(\langle p_j , x \rangle + c_j) L_i^j$. Suppose that $i=1,\dots,d_{j-1}$.  Hence $L_i^j=x^j_i$ and 
$\hat{L}_c=\hat{x}^j_i$. By restricting  \eqref{bound:mes} to the facet given by $\hat{L}_c=0$ and taking the interior product w.r.t. to $\xi^j_i=(d\hat{x}^j_i)^*$, we have the following equalities of volume forms
\begin{align*}
- d\sigma_{\{\hat{L}_c = 0\}}
=&  \xi^j_i \lrcorner (dx \wedge d\hat{x}_B)\\
=&  (-1)^\ell dx \wedge ( \xi^j_i \lrcorner d\hat{x}_B)\\
=&  (-1)^\ell dx \wedge (-1)^k \bigwedge_{\substack{0\le r\le \ell \\ 1\le q\le d_{r-1}\\ (r,q)\neq(i,j)}} d\hat{x}^r_q\\
=& (-1)^\ell  \frac{p}{\langle p_j , x \rangle + c_j}
\ dx \wedge (-1)^k \bigwedge_{\substack{0\le r\le \ell \\ 1\le q\le d_{r-1}\\ (r,q)\neq(i,j)}} dx^r_q\\
=& (-1)^\ell \frac{p}{\langle p_j , x \rangle + c_j}
\, dx \wedge (d\sigma_{\Delta_B})\big|_{\{L_i^j = 0\}},
\end{align*}
where  $k$ is a positive integer depending on $i$ and $j$. Hence as positive measures we obtain the desired assertion.

Next still assuming that $d_j\neq 1$, we consider $L_0^j= 1- \sum_{i=1}^{d_i-1}x^j_i$. Hence $\hat{L}_c=(\langle p_j , x \rangle + c_j) L_0^j=(\langle p_j , x \rangle + c_j) - \sum_{i=1}^{d_i-1}\hat{x}^j_i$. Let $u_j:=-\sum_{i=1}^{d_j-1}\xi_i^j$. Hence, similarly, we restrict \eqref{bound:mes} to the facet given by $\hat{L}_c=0$ and we take the interior product w.r.t. $d\hat{L}_c=p_j+u_j$:
\begin{equation*}
\begin{split}
- d\sigma_{\{\hat{L}_c = 0\}}
=&  (p_j \lrcorner dx ) \wedge  d\hat{x}_B + (-1)^\ell dx \wedge (u_j \lrcorner \hat{x}_B)\\
=&  - p d\sigma_{\Delta}|_{\{L_j=0\}} \wedge d\hat{x}_B - (-1)^\ell  \frac{p}{\langle p_j , x \rangle + c_j}
\, dx \wedge (d\sigma_{\Delta_B})\big|_{\{L_0^j = 0\}} \\
=&  - (-1)^\ell \frac{p}{\langle p_j , x \rangle + c_j}
\, dx \wedge (d\sigma_{\Delta_B})\big|_{\{L_0^j = 0\}},\\
\end{split}
\end{equation*}
since $p=0$ on $L_j$. Thus, the desired equality of positive measure follows.

\smallskip

The case $d_j=1$ is obtained by similar computations.

\end{proof}

\subsection{Scalar curvature and the extremal affine function}

We have the following key result giving the expression of the scalar curvature of a compatible metric and the affine extremal function.

\begin{lemma}{\label{l:scal}}
The  $v$-scalar curvature of a compatible metric $\omega$ on $X$ associated with a metric $\omega_{\mathbb{P}^\ell}$ on $\mathbb{P}^\ell$ is given by 
\begin{equation*}
     \Scal_v(\omega)= \Scal_{pv}(\omega_{\mathbb{P}^\ell}) +  \sum_{j=0 \atop
    d_j \neq 1}^\ell \frac{2d_j(d_j-1)}{\langle p_j, x \rangle + c_j},
\end{equation*}
where $2d_j(d_j-1)$ is the constant scalar curvature of $(\mathbb{P}(\mathbb{C}^{d_j}),\omega_{\mathrm{FS}_j})$. Moreover, the affine extremal function $\ell_{\mathrm{ext}}$ of $(X,[\omega],T)$ is the pullback of an affine function on $\Delta$ and $\langle p_j, x \rangle + c_j$ in introduce in Section \ref{s:set-up}.
\end{lemma}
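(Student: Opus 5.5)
We will work entirely in symplectic (moment) coordinates on $\hat{\Delta}$, using Abreu's formula in its weighted form. For a toric metric with symplectic potential $\hat{u}$ on $\hat{\Delta}$ and weight $v$ pulled back from $\Delta$, the weighted scalar curvature is
\begin{equation*}
\Scal_v(\omega) = -\frac{1}{v}\sum_{a,b}\frac{\partial^2 (v\, \hat{H}_{ab})}{\partial \hat{x}_a\partial \hat{x}_b},
\end{equation*}
where $\hat{H}=(\hess\, \hat{u})^{-1}$, up to the normalisation constants fixed in Section~\ref{s:reviewcscK}. The same formula holds on $\mathbb{P}^\ell$ with weight $pv$. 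By Lemma~\ref{Lemma-symp-pot}, a compatible metric has potential $\hat u = u + \sum_j L_j(x) u_j$. Here $u_j$ depends only on the variables $x^j$ of $\Delta_j$, while $u$ depends only on $x$.

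The first step is to change coordinates from $\hat{x}=(x,\hat{x}_B)$ to $(x,x_B)$ via \eqref{identify:pol}. In the coordinates $(x,x_B)$ the metric is the generalized Calabi ansatz \eqref{comp:metric}: a product-type metric $\langle dx, H\, dx\rangle + \sum_j L_j(x)\, g_{\mathrm{FS}_j}$, with $H=(\hess\, u)^{-1}$. Rather than inverting $\hess\,\hat u$ directly, I will use the fact that the volume form in these coordinates is $p\,dx\,dx_B$, as in \eqref{measure}. The scalar curvature of a metric of the form $g_{\mathbb{P}^\ell} + \sum_j L_j(x) g_j$ on the (open dense) product is the standard formula from \cite{ACGT04, ACGT11}:
\begin{equation*}
\Scal(\omega) = \sum_j \frac{\Scal(g_j)}{L_j(x)} - \frac{1}{p}\sum_{a,b}\frac{\partial^2 (p H_{ab})}{\partial x_a \partial x_b}.
\end{equation*}
Here $\Scal(g_j) = 2d_j(d_j-1)$, and $p=\prod_j L_j^{d_j-1}$ appears because the Laplacian of $\log$ of the fibre volume enters. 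Inserting the weight $v(x)$, which is a function of $x$ only, and using the definition of $\Scal_v$ (the correction $-\tfrac12 dd^c\log v$ together with the weighted trace) gives $\Scal_v(\omega) = -\frac{1}{pv}\sum \partial^2_{ab}(pvH_{ab}) + \sum_j \frac{2d_j(d_j-1)}{L_j}$. The first term is exactly Abreu's weighted expression for $\Scal_{pv}(\omega_{\mathbb{P}^\ell})$.

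The main obstacle is justifying this computation in the $\hat{x}$ coordinates, that is, showing that
\begin{equation*}
\frac{1}{v}\sum \partial_{\hat x_a}\partial_{\hat x_b}\bigl(v\hat H_{ab}\bigr) = \frac{1}{pv}\sum \partial_{x_a}\partial_{x_b}\bigl(pvH_{ab}\bigr) - \sum_j \frac{2d_j(d_j-1)}{L_j}.
\end{equation*}
I would do this by a direct computation of $\hat{H}$. Write $\hess\, \hat u$ in block form using $\hat x^j = L_j(x)x^j$. The fibre-direction blocks are $\hess\, u_j/L_j$, and the off-diagonal terms come from the $x$-dependence of $L_j$. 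One then checks the divergence identity block by block. Alternatively, one can use the coordinate-free fact that the weighted scalar curvature is $-\frac{1}{v}\,\mathrm{div}\,\mathrm{div}$ of $vH$ with respect to $d\hat x$, and transport it to $p\,dx\,dx_B$ using \eqref{measure}. Since the identity holds on the dense open set $\mathring{\hat\Delta}$ and both sides are smooth, it extends.

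For the last claim, I would test the defining property of $\ell_{\mathrm{ext}}$ against affine functions. The function $\ell_{\mathrm{ext}}$ is the $L^2(d\hat x)$-projection of $\Scal(\omega)$ onto affine functions on $\hat\Delta$. Since the Fubini--Study metric is compatible, its scalar curvature (a constant, in fact) is a function of $x$ only. More generally, I would use the Futaki invariant: $\ell_{\mathrm{ext}}$ is characterised by $\mathcal{F}^{\hat\Delta}_{1,\ell_{\mathrm{ext}}}(\hat f)=0$ for all affine $\hat f$. Using Lemma~\ref{l:bound:measure}, \eqref{int:func}, and the symmetry of $\Delta_B$ under its permutations (which makes the integrals of $\hat{x}_B$-linear parts reduce to $x$-dependent ones), the orthogonal projection of an $x$-dependent function onto affine functions of $\hat x$ is an affine function of $x$. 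Hence $\ell_{\mathrm{ext}}$ is the pullback of an affine function on $\Delta$.
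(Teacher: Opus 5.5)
Your proposal is correct and follows essentially the paper's route. The paper simply invokes the generalized Calabi ansatz scalar curvature formula of \cite{ACGT04, ACGT11} for $v=1$, together with its weighted extension in \cite[Section 5]{AJL}, which is your step of inserting the $x$-dependent weight $v$. It then remarks that the computation is local on $\mu^{-1}(\mathring{\Delta})$, so the blow-down is harmless; this is your density argument.

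Consequently, the verification in the affine coordinates $\hat{x}$ that you call the main obstacle is not needed. Your proposed shortcut of transporting the double divergence through $d\hat{x}=p\,dx\,dx_B$ would not work as stated, since $(x,x_B)$ are not affine coordinates on $\hat{\Delta}$. The claim about $\ell_{\mathrm{ext}}$ is immediate because $[\omega]$ contains the cscK Fubini--Study metric, as you already observe.
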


This result is due to \cite[p. 380]{ACGT04} and \cite[Lemma 5]{ACGT11} when \( v = 1 \). The proof is written for semisimple principal fibrations in \cite[Section 5]{AJL} for any \( v > 0 \). The result in the previous reference is stated in the case where no blow-downs occurs, but the proof is local in nature and extends directly to our situation.

\subsection{Compatible test configurations}{\label{s:compatible:test}}
We consider the space $\mathcal{DPL}_{\mathrm{dom}}(\Delta)$ introduced in Section \ref{COnstruction:Donaldson}. 

\begin{prop}{\label{l:CT}}
Let $f \in \mathcal{DPL}_{\mathrm{dom}}(\Delta)$. Then $\pi^*(f)$ belongs to $\mathcal{DPL}_{\mathrm{dom}}(\hat{\Delta})$. In particular,  $f$ induces a smooth dominating toric test configuration $(\X_{f}, \A_{\X_f})$ for $(X,[\omega_{\mathrm{FS}}],T)$ defined as the toric manifold  associated with $\hat{\Delta}_{R-\pi^*f}$, see Definition \ref{d:torictc}.
\end{prop}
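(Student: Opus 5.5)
We do not yet have the definition of $\mathcal{DPL}_{\mathrm{dom}}(\Delta)$, which is only given in Section~\ref{COnstruction:Donaldson}. We assume it follows Donaldson: $f=\max_k \lambda_k$ is convex piecewise affine with rational coefficients in $\Lambda^*\otimes\mathbb{Q}$. For $R$ large, $\Delta_{R-f}=\{(x,t): x\in\Delta,\ 0\le t\le R-f(x)\}$ is then a Delzant polytope, which is the smoothness condition. Dominance means $f$ is of the form making $\X_f$ dominate $X\times\mathbb{P}^1$, e.g.\ $f\ge$ some fixed function, or $\Delta_{R-f}$ refining the product polytope. The plan is to check each defining property for $\pi^*f$ on $\hat\Delta$ in turn.

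\emph{Convexity, piecewise affinity and rationality.} Since $\pi:\hat\Delta\to\Delta$ is the restriction of a linear map dual to the lattice inclusion $\mathfrak t\to\hat{\mathfrak t}$ (Lemma~\ref{l:moment}), each $\pi^*\lambda_k$ is affine. Its linear part is an element of $\Lambda^*$ pulled back to $\hat\Lambda^*$, hence integral or rational as before. So $\pi^*f=\max_k\pi^*\lambda_k$ is convex, piecewise affine and rational. Dominance should transfer the same way, since it is a condition on the slopes or the lower bound of $f$ and is preserved by a linear pullback.

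\emph{Smoothness: the Delzant condition for $\hat\Delta_{R-\pi^*f}$.} This is the main obstacle. The facets of $\hat\Delta_{R-\pi^*f}$ are of two kinds. The first are the vertical facets $\hat L_c\ge 0$ from Lemma~\ref{l:label}. The second are the top facets $R-t-\pi^*\lambda_k\ge0$, together with the bottom facet $t\ge0$. A vertex of $\hat\Delta_{R-\pi^*f}$ lies over a vertex $\hat v$ of $\hat\Delta$ (at $t=0$), or at the top over a point lying on a face of $\hat\Delta$ where the arrangement of breaking hyperplanes of $\pi^*f$ meets.

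To handle this, I would use the coordinates \eqref{transfo:pol}, in which $\hat\Delta$ is the standard simplex and $x$ becomes a linear projection of $\hat y$. For a vertex $\hat v$ of $\hat\Delta$, $\pi(\hat v)$ is a vertex of $\Delta$. Moreover, the facets of $\hat\Delta$ at $\hat v$ split into two groups:
\begin{itemize}
\item the preimages of the facets of $\Delta$ at $\pi(\hat v)$;
\item facets transverse to the fibres of $\pi$, i.e.\ those coming from $\Delta_B$.
\end{itemize}
The top vertex over $\hat v$ uses these facets together with a top facet of $\pi^*f$. Because the fibre-direction normals are complementary to the pulled-back ones and complete them to a lattice basis, a block-triangular determinant computation reduces the unimodularity check to the corresponding check for $\Delta_{R-f}$ at $\pi(\hat v)$, which holds by assumption.

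For vertices lying over lower-dimensional faces, the same argument applies: the breaking locus of $\pi^*f$ is the preimage of the breaking locus of $f$. A vertex of $\hat\Delta_{R-\pi^*f}$ therefore projects to a vertex of $\Delta_{R-f}$, and its normals are the pulled-back normals at that vertex plus the fibre normals. Here I must check that $\pi^{-1}$ of a vertex of $\Delta$ is a vertex of $\hat\Delta$. This holds because $\pi$ maps the simplex $\hat\Delta$ onto the simplex $\Delta$ with fibres $\cong\Delta_B$ scaled by $L_j(x)$, which collapse at vertices by \eqref{moment}.

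\emph{Conclusion.} With $\pi^*f\in\mathcal{DPL}_{\mathrm{dom}}(\hat\Delta)$ established, Definition~\ref{d:torictc} (Donaldson's construction) gives the smooth dominating toric test configuration $(\X_f,\A_{\X_f})$ for $(X,[\omega_{\mathrm{FS}}],T)$, and the ``in particular'' follows.
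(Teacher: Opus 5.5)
\textbf{Verdict: there is a genuine gap in your smoothness step.} The rationality and dominance parts are fine, and a vertex-by-vertex Delzant check is a legitimate alternative to the paper's argument. The paper instead rewrites each $f_k=\sum_j a^k_jL_j$ as $\sum_{j,i}a^k_j L_jL^j_i$, an integral combination of the labels of $\hat\Delta$ from Lemma~\ref{l:label}, and simply asserts simplicity.

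The problem is that your picture of $\pi:\hat\Delta\to\Delta$ is false whenever some $d_j\ge 2$, which is exactly the blow-down situation of the paper.
\begin{itemize}
\item By \eqref{moment}, the fibre of $\pi$ over $x$ is $\prod_j L_j(x)\Delta_j$. At a vertex $v_k$ of $\Delta$ only the factors with $j\neq k$ collapse, so $\pi^{-1}(v_k)\cong\Delta_k$ is a $(d_k-1)$-simplex, not a vertex. For instance, for $\mathbb{P}(\mathbb{C}^2\oplus\mathbb{C})$ the fibre over $x=0$ is a segment.
\item By Lemma~\ref{l:label}, the facets of $\hat\Delta$ are $\{L_jL^j_i=0\}$. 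But $\pi^{-1}\{L_j=0\}=\bigcap_i\{L_jL^j_i=0\}$ has codimension $d_j$, so facets of $\Delta$ with $d_j\ge 2$ do not pull back to facets.
\end{itemize}
Consequences:
\begin{itemize}
\item The step you yourself flag as necessary (``$\pi^{-1}$ of a vertex of $\Delta$ is a vertex of $\hat\Delta$'') fails.
\item Your list of facets at a vertex is wrong.
\item In general several vertices of $\hat\Delta_{R-\pi^*f}$ lie over a single vertex of $\Delta_{R-f}$.
\item The pulled-back normals $dL_j$ that you feed into the block-triangular determinant are not facet normals of $\hat\Delta_{R-\pi^*f}$ at all.
\end{itemize}

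\textbf{What is true, and suffices.} Vertices of $\hat\Delta_{R-\pi^*f}$ do project to vertices $(q,t)$ of $\Delta_{R-f}$; they are the vertices of the fibre $\prod_jL_j(q)\Delta_j$ over $(q,t)$.

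\emph{Simplicity.} At such a vertex, the vanishing vertical labels are all $d_j$ of the $L_jL^j_i$ when $L_j(q)=0$, and all but one of them when $L_j(q)>0$. Together with the top and bottom facets through $(q,t)$, and using simplicity of $\Delta_{R-f}$ at $(q,t)$, this gives $(n-\ell)+(\ell+1)=n+1$ facets.

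\emph{Unimodularity.} The missing idea is the identity $\sum_iL_jL^j_i=L_j$, coming from $\sum_iL^j_i=1$; this is the same identity the paper's proof uses.
\begin{itemize}
\item In each complete group, replace one normal $d(L_jL^j_i)$ by the sum of the group. This is a unimodular change of basis, and it produces $d(L_j\circ\pi)$.
\item After this change, the normals at the vertex split into two sets. The first is the pull-backs of the normals of $\Delta_{R-f}$ at $(q,t)$, which form a basis of $\Lambda\times\mathbb{Z}$. This lattice is saturated in $\Lambda'\times\mathbb{Z}$ because $\T\subset T$ is a subtorus.
\item The second set is $d_j-1$ vectors $d(L_jL^j_i)$ for each $j$. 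These map to a basis of $\Lambda'/\Lambda$, the lattice of $T_B$ (Lemma~\ref{l:moment}).
\end{itemize}
Only after this change of basis does your block-triangular argument apply and give a basis of $\Lambda'\times\mathbb{Z}$. Once repaired, your check actually supplies the simplicity and unimodularity verification that the paper's proof leaves implicit.
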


In what follows we will omit the pullback by $\pi$ in $\hat{ \Delta}_{R-\pi^*f}$ and simply write $\hat{ \Delta}_{R-f}$.

\begin{proof}
By its very definition, $\hat{\Delta}_{R-f}$ is compact. The simplicity of $\hat{\Delta}_{R-f}$ follows from that of $\Delta_{R-f}$ and $\hat{\Delta}$. We check that $f$, seen as a function on $\hat{\Delta}$, is a sum of affine function from which its linear part have coefficient in the lattice $\Lambda' \times \mathbb{Z}$ of $T \times \mathbb{S}^1$. Then, the Delzant condition will follow from the definition of $\hat{\Delta}_f$.

Since $\Delta_{R - f}$ is Delzant, each $f_k$ in the expression of $f=(f_1,\dots,f_r)$ is of the form

\begin{equation*}
    f_k= \sum_{j=0}^\ell a^k_{j} L_{j}, \quad a^k_j \in \mathbb{Z}.
\end{equation*}

\noindent  Recall that, $L_j= \langle p_j, x \rangle + c_j$. We express $f$ in terms of the label $\hat{\mathbf{L}}=(\hat{L}_c)_{c=1}^p$ (see Lemma \ref{l:label}):

\begin{equation*}
\begin{split}
     f_k&= \sum_{j=0}^\ell a^k_j L_{j}\\
     &=  \sum_{j=0}^\ell a_j^k (\langle p_j,x \rangle +c_j)\left( \sum_{i=0}^{d_j} L^j_{i}\right) \\
     &=  \sum_{j=0}^\ell\sum_{i=0}^{d_j}  a^k_j (\langle p_j,x \rangle +c_j)  L^j_{i} ).\\
\end{split}     
\end{equation*}

\noindent To pass from the first line to the second, we use the fact that $\sum_{i=0}^{d_j} L^j_{i} = 1$ for all $j=0,\dots,\ell$. Hence, each $f_k$ can be written as a linear combination with integer coefficients of the elements of the label $\hat{\mathbf{L}}$ of $\hat{\Delta}$ (see Lemma \ref{l:label}). This shows that $f$ belongs to $\mathcal{DPL}(\hat{\Delta})$. Moreover, by  definition, we also have $f \in \mathcal{DPL}_{\mathrm{dom}}(\hat{\Delta})$.

\end{proof}

\begin{define}{\label{d:compatible:tc}}
A smooth dominating toric test configuration $(\X_f, \A_{\X_f})$, constructed as in Proposition \ref{l:CT} for $(X,[\omega_{\mathrm{FS}}],\T)$, is called a compatible test configuration.

\end{define}

\subsection{Stability for compatible test configurations}

The following Lemma provides an expression for the reduced non-Archimedean functional of a compatible  test configuration as a weighted $J$-norm on $\Delta$.

\begin{lemma}{\label{l:J-fonc:compa:toric}}
Let $(\X_f, \A_{\X_f})$ be a compatible test configuration for $(X,[\omega_{\mathrm{FS}}],\T)$. Then
    
\begin{equation*}    
\frac{1}{(2\pi)^{n+1} } \J_{\mathbb{T}^{\mathbb{C}}}(\X_f, \A_{\X_f})= \frac{\mathrm{Vol}(\Delta_B)}{\mathrm{Vol([\omega]})}   \inf_{\xi \in \mathrm{Aff}(\Delta)}\int_\Delta \big( f+\xi -\inf_\Delta (f+\xi)\big)pdx.
 \end{equation*}

\end{lemma}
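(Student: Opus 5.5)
The plan is to reduce the statement to the toric formula for $\J$ on $\hat{\Delta}$ and then push it down to $\Delta$ via \eqref{int:func}. Since $(\X_f,\A_{\X_f})$ is, by Proposition \ref{l:CT}, the smooth dominating toric test configuration attached to the convex piecewise affine function $\pi^*f\in\mathcal{DPL}_{\mathrm{dom}}(\hat{\Delta})$, I will use the standard toric expression for the non-Archimedean $J$-functional recalled in Appendix \ref{a:toric}. For a toric test configuration given by a convex function $g$ on $\hat{\Delta}$, this reads
\begin{equation*}
\frac{1}{(2\pi)^{n+1}}\mathbf{J}^{\mathrm{NA}}(\X_g,\A_g)=\frac{1}{\mathrm{Vol}([\omega])}\int_{\hat\Delta}\big(g-\inf_{\hat\Delta}g\big)\,d\hat{x}.
\end{equation*}
It comes from computing $\int_{\X}\A^{[n+1]}$ as the volume of the polytope $\hat\Delta_{R-g}$, and $\int_\X(\Pi^*[\omega])^n\wedge\A$ as $R\,\mathrm{Vol}(\hat\Delta)$ minus the contribution $\inf g$ from the central fibre. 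The normalization constants will be tracked with the convention $\mathrm{Vol}([\omega])=(2\pi)^n\mathrm{Vol}(\hat\Delta)$.

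Next I need the effect of the twist. For $\xi\in\mathfrak{t}$ rational, the twist $(\X_\xi,\A_\xi)$ of a toric test configuration is again toric, with convex function $g+\langle\xi,\cdot\rangle$, up to an additive constant that does not affect $g-\inf g$. By continuity this extends to all $\xi\in\mathfrak t$. Because the torus in the reduced functional \eqref{non:arc:j} is $\T$ and not the full $T$, the admissible shifts are the linear functions on $\hat\Delta$ pulled back from $\mathfrak{t}^*$ via $\pi$, together with constants. These are exactly the pullbacks of functions in $\mathrm{Aff}(\Delta)$. Hence
\begin{equation*}
\frac{1}{(2\pi)^{n+1}}\J_{\T^{\mathbb C}}(\X_f,\A_{\X_f})=\frac{1}{\mathrm{Vol}([\omega])}\inf_{\xi\in\mathrm{Aff}(\Delta)}\int_{\hat\Delta}\big(\pi^*(f+\xi)-\inf_{\hat\Delta}\pi^*(f+\xi)\big)d\hat x .
\end{equation*}

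Finally, $\pi:\hat\Delta\to\Delta$ is surjective, so $\inf_{\hat\Delta}\pi^*(f+\xi)=\inf_\Delta(f+\xi)$. The integrand is therefore the pullback of a continuous function on $\Delta$, and applying \eqref{int:func} with $v=1$ gives the factor $\mathrm{Vol}(\Delta_B)\int_\Delta(\cdots)\,p\,dx$, which is the claimed identity.

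The main obstacle is the first step: making sure the toric formula for $\mathbf{J}^{\mathrm{NA}}$ holds with the stated constants. This includes the domination, the identification of $\Pi^*[\omega]$ with the product polytope $\hat\Delta\times[0,R]$, and the correct normalization of the moment map of $\A_{\X_f}$. A second point to check is that twisting by $\xi\in\mathfrak t$ really corresponds to adding $\langle\xi,x\rangle$ only through the $\T$-coordinates $x$, which is where \eqref{proj} is used.
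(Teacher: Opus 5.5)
Your proposal is correct and follows the paper's proof, which applies the toric computation of Lemma \ref{l:J-fonc} to $\pi^*f$ on $\hat{\Delta}$ and then pushes the resulting integral down to $\Delta$ via \eqref{int:func}. You also make explicit two details the paper leaves implicit: only $\mathfrak{t}$-twists (pullbacks of $\mathrm{Aff}(\Delta)$) enter the infimum, and $\inf_{\hat{\Delta}}\pi^*(f+\xi)=\inf_{\Delta}(f+\xi)$ because $\pi$ is surjective.
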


\begin{proof}
The proof follows from the computation in the toric case (see Lemma \ref{l:J-fonc}) using  \eqref{int:func}.
\end{proof}

\begin{lemma}{\label{fut-rigid}}
Let $(\X_f, \A_{\X_f})$ be a compatible  test configuration. Then, the $(v,w)$-weighted Donaldson-Futaki invariant of $(\X_f, \A_{\X_f})$ is given by

\begin{equation}{\label{obj1}}
    \mathbf{DF}_{v,w}(\X_f, \A_{\X_f})= (2\pi)^{n+1}\mathrm{Vol}(\Delta_B)\mathcal{F}^{\Delta}_{pv,\hat{w}}(f),
\end{equation}

\noindent where $\hat{w}(x):=  w(x) - \sum_{j=0, d_j\neq1}^\ell \frac{2d_j(d_j-1)}{\langle p_j, x \rangle +c_j}$. In particular,

\begin{equation}{\label{equa:fut}}
\mathcal{F}^{\hat{\Delta}}_{v,w}(f)=\mathrm{Vol}(\Delta_B)\mathcal{F}^{\Delta}_{pv,\hat{w}}(f).
\end{equation}

\end{lemma}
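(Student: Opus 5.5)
The plan is to reduce both formulas to the toric computation on $\hat{\Delta}$ and then push the resulting integrals down to $\Delta$ via the fibration structure $\mathring{\hat{\Delta}}\cong\mathring{\Delta}\times\mathring{\Delta}_B$.

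\emph{Step 1: reduction to $\hat{\Delta}$.} Since $(\X_f,\A_{\X_f})$ is the smooth toric test configuration attached to $\hat{\Delta}_{R-\pi^*f}$ (Proposition~\ref{l:CT}), the standard toric computation of the weighted Donaldson--Futaki invariant recalled in Appendix~\ref{a:toric} (Lahdili's formula in the weighted case) applies. It gives
\[
\mathbf{DF}_{v,w}(\X_f,\A_{\X_f})=(2\pi)^{n+1}\mathcal{F}^{\hat{\Delta}}_{v,w}(\pi^*f),
\]
where $v,w$ are viewed as functions on $\hat{\Delta}$ through $\pi$. Here I use Lemma~\ref{l:scal} to know that the extremal function (if $w=\ell_{\mathrm{ext}}$), and more generally the weights, are pulled back from $\Delta$. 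Consequently \eqref{obj1} follows from \eqref{equa:fut}, and it remains to prove \eqref{equa:fut}.

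\emph{Step 2: the boundary term.} By Lemma~\ref{l:label}, the facets of $\hat{\Delta}$ are $\{L_jL^j_i=0\}$. For facets with $d_j=1$, Lemma~\ref{l:bound:measure} and Fubini give
\[
\int_{\{L_j=0\}}fv\,d\sigma_{\hat{\Delta}}=\mathrm{Vol}(\Delta_B)\int_{\{L_j=0\}}fpv\,d\sigma_\Delta .
\]
For $d_j\ge2$, the facets $\{L^j_i=0\}$ contribute
\[
\int_\Delta \frac{fvp}{L_j}\,dx\cdot\sigma_{\Delta_B}(\{L^j_i=0\}),
\]
where the facet $\{L^j_i=0\}$ has $d\sigma_{\Delta_B}$-measure $\mathrm{Vol}(\Delta_B)/\mathrm{Vol}(\Delta_j)\cdot\sigma_{\Delta_j}(\{L^j_i=0\})$. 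Summing over $i$, the identity $\int_{\partial\Delta_j}d\sigma=\frac{1}{2}\int_{\Delta_j}\Scal(\omega_{\mathrm{FS}_j})\,dx$ (the Futaki invariant of $\mathbb{P}^{d_j-1}$ applied to the constant $1$) shows that the total is
\[
\mathrm{Vol}(\Delta_B)\int_\Delta fvp\,\frac{d_j(d_j-1)}{L_j}\,dx .
\]
Multiplying by the factor $2$ in \eqref{define-futaki}, this produces exactly the term $-\sum_j \frac{2d_j(d_j-1)}{L_j}$ in $\hat{w}$, moved to the interior integral. Note that the facets $\{L_j=0\}$ with $d_j\ge2$ do not appear as facets of $\hat{\Delta}$; correspondingly $p$ vanishes there, so they contribute nothing to $\int_{\partial\Delta}fpv\,d\sigma$ either.

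\emph{Step 3: the interior term.} The interior term is handled directly by \eqref{int:func}:
\[
\int_{\hat{\Delta}}fwv\,d\hat{x}=\mathrm{Vol}(\Delta_B)\int_\Delta fwpv\,dx .
\]
Adding Steps 2 and 3 yields $\mathrm{Vol}(\Delta_B)\mathcal{F}^{\Delta}_{pv,\hat{w}}(f)$, which is \eqref{equa:fut}.

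\emph{Main obstacle.} I expect the main obstacle to be Step 2: correctly tracking the boundary measure normalizations on the facets $\{L^j_i=0\}$, and verifying the identity $2\sigma(\partial\Delta_j)/\mathrm{Vol}(\Delta_j)=2d_j(d_j-1)/2=d_j(d_j-1)$ for the standard simplex with the normalization \eqref{bound:mes}. I would first try the direct computation on the standard simplex: there are $d_j$ facets, the $d_j-1$ coordinate facets and the slanted facet all have measure $1/(d_j-2)!$, while the volume is $1/(d_j-1)!$. This gives a ratio of $d_j(d_j-1)$, as needed.
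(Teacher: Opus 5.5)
Your proposal is correct and follows the paper's proof essentially step for step. You reduce to $\mathcal{F}^{\hat{\Delta}}_{v,w}(f)$ via Lemma~\ref{l:toric:fut}, treat the interior term with \eqref{int:func}, and treat the boundary term with Lemma~\ref{l:bound:measure} together with $\sigma(\partial\Delta_j)=d_j(d_j-1)\mathrm{Vol}(\Delta_j)$, which the paper gets by integration by parts and your direct simplex computation confirms. Your remark that the facets $\{L_j=0\}$ with $d_j\ge 2$ contribute nothing because $p$ vanishes there makes explicit what the paper leaves implicit; just fix the slip in your last paragraph, where it is $\sigma(\partial\Delta_j)/\mathrm{Vol}(\Delta_j)$, not $2\sigma(\partial\Delta_j)/\mathrm{Vol}(\Delta_j)$, that equals $d_j(d_j-1)$.
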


\begin{proof}
By Lemma \ref{l:toric:fut}

\begin{equation*}
    \frac{1}{(2\pi)^{n+1}}\textbf{DF}(\mathcal{X}_f,\A_{\X_f})= \mathcal{F}^{\hat{\Delta}}_{v,\w}(f).
\end{equation*}

We compute the interior term of $\mathcal{F}^{\hat{\Delta}}_{v,\w}$. From \eqref{int:func} we obtain that

\begin{equation*}
    \begin{split}
    \int_{\hat{\Delta}} fwd\hat{x} = \mathrm{Vol}(\Delta_B) \int_{\Delta} fwdx.
    \end{split}
\end{equation*}

For the boundary term, we use Lemma \ref{l:bound:measure} to deduce 

\begin{equation*}
\begin{split}
    2\int_{\partial \hat{\Delta}} fv d \sigma_{\hat{\Delta}}  = &2\sum_{j=0 \atop d_j\neq 1}^\ell \frac{\mathrm{Vol}(\Delta_B)}{\mathrm{Vol}(\Delta_j)} \int_{\partial \Delta_j} d\sigma_{\Delta_j} \int_{\Delta} \frac{1}{\langle p_j,x\rangle +c_j} f pv dx \\
    &+ 2\mathrm{Vol}(\Delta_B)\int_{\partial \Delta} fpv d \sigma_{\Delta}
\end{split}
\end{equation*}

Since each $\Delta_j$ is the Delzant polytope of the cscK manifold $(\mathbb{P}^{d_j}, \omega_{\mathrm{FS_j}})$, integrating by parts reveals (see \cite[Lemma 3.3.5]{SKD} or the exposition  \cite[Proposition 3.1]{Apo}) that

\begin{equation*}
   \mathrm{Vol}(\Delta_j) 2d_j(d_j-1)=2\int_{\partial \Delta_j} d\sigma_{\Delta_i},
\end{equation*}
which concludes the proof.

\end{proof}

Let $\Delta^*$ be the union of relative interiors of faces of $\Delta$ up to codimension one

\begin{equation*}
    \Delta^*=\mathring{\Delta} \cup_{j=0}^{\ell} \mathring{F}_j
\end{equation*}
where $F_j$ is a facet of $\Delta$. Let 

\begin{equation}{\label{cv:star}}
  \mathcal{CV}^0_{L^1}(\Delta):=\{\, \text{continuous convex functions $f$ on } \Delta^*  \text{ s.t. } \int_{\partial \Delta}| f| d\sigma < \infty \, \}   
\end{equation}

We recall that $\mathcal{CV}^0_{L^1}(\Delta) \subset L^1(\Delta)$, see e.g. \cite[Corollary 5.1.3]{NS}. The above proof actually shows that \eqref{equa:fut} holds for any $f \in \mathcal{CV}^0_{L^1}(\Delta)$:

\begin{equation}{\label{equa:fut2}}
\mathcal{F}^{\hat{\Delta}}_{v,w}(f)=\mathrm{Vol}(\Delta_B)\mathcal{F}^{\Delta}_{pv,\hat{w}}(f).
\end{equation}

We show that a statement similar to Proposition \ref{l:smooth:test} still holds for weights $(pv,\hat{w})$, where we recall that $p$ could vanish on $\partial \Delta$.

\begin{lemma}{\label{p:stab:eq:rigid}}
The following statements are equivalent

 \begin{enumerate}
     \item The Delzant polytope $\Delta$ is $(pv,\hat{w})$-weighted uniformly K-stable, i.e. satisfies Definition \ref{d:stability}.
\item there exists $\lambda > 0$ such that for all  $f \in \mathcal{DPL}_{\mathrm{dom}}(\Delta)$, 

\begin{equation*}
\mathcal{F}^{\Delta}_{pv,\hat{w}}(f)  \geq \lambda \lVert  f\rVert_{J_{pv}(\Delta)}.
\end{equation*}
 \end{enumerate} 
\end{lemma}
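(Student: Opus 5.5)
The implication (1) $\Rightarrow$ (2) is immediate once we check that $\mathcal{DPL}_{\mathrm{dom}}(\Delta)\subset \mathcal{CV}^0_{L^1}(\Delta)$. Elements of $\mathcal{DPL}_{\mathrm{dom}}(\Delta)$ are convex and piecewise affine, so continuous on $\Delta$. The definition of uniform stability is phrased on $\mathcal{CV}^\infty(\Delta)$, so we first extend the inequality of Definition \ref{d:stability} by approximation. Given $f$ convex piecewise affine, we take smooth convex $f_k$ (e.g.\ mollify after extending $f$ affinely beyond $\Delta$, plus $\varepsilon|x|^2$) converging uniformly to $f$ on $\Delta$. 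Both sides of the inequality pass to the limit. For $\mathcal{F}^\Delta_{pv,\hat w}$, note that $pv$ is bounded on $\partial\Delta$ and $\hat w\, pv$ is bounded on $\Delta$: indeed $p\,/L_j$ is a polynomial whenever $d_j\ge 2$, since $L_j^{d_j-1}$ divides $p$. For the $J_{pv}$-norm, uniform convergence controls $\|f_k-f\|_{J_{pv}}$, using that the infimum over $\mathrm{Aff}(\Delta)$ is attained on a bounded set by Lemma \ref{l:stab:eq}.

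For (2) $\Rightarrow$ (1) we mimic the proof of Proposition \ref{l:smooth:test} (the unweighted/positive-weight case), which rests on two facts. The first is the density of rational piecewise affine convex functions in $\mathcal{CV}^\infty(\Delta)$ for a topology in which both $\mathcal{F}^\Delta_{pv,\hat w}$ and $\|\cdot\|_{J_{pv}}$ are continuous. The second is that $\mathcal{DPL}_{\mathrm{dom}}(\Delta)$ is, up to positive rescaling and adding affine functions, dense among rational piecewise affine convex functions. Both sides of (2) are invariant under adding affine functions: $\mathcal{F}$ vanishes on affine functions by the standing assumption, and the $J$-norm is defined modulo $\mathrm{Aff}(\Delta)$. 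Both sides are also homogeneous of degree one under positive scaling. So (2) extends to all rational convex piecewise affine functions, exactly as in the toric case via the construction of Section \ref{COnstruction:Donaldson}.

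Given $f\in\mathcal{CV}^\infty(\Delta)$, we normalize $f=f^*\in\mathcal{CV}^\infty_*(\Delta)$. We then approximate it by the rational convex piecewise affine functions $f_k$ obtained as maxima of finitely many rational supporting affine functions at a dense set of interior points. These satisfy $f_k\uparrow f$ pointwise on $\Delta$, uniformly on compacts of $\mathring\Delta$, and monotonically on $\partial\Delta$.

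The main obstacle is the passage to the limit in $\mathcal{F}^\Delta_{pv,\hat w}(f_k)$. The interior term involves $\hat w$, which blows up like $1/L_j$ near facets with $d_j\ge2$; the boundary term integrates against $pv\,d\sigma$, which vanishes on those facets. We plan to handle this by rewriting, through \eqref{equa:fut2}, $\mathrm{Vol}(\Delta_B)\mathcal{F}^\Delta_{pv,\hat w}(f_k)=\mathcal{F}^{\hat\Delta}_{v,w}(\pi^*f_k)$, where the weights are smooth and bounded on $\hat\Delta$. On $\hat\Delta$, monotone convergence gives convergence of the boundary integral, and dominated convergence gives convergence of the interior integral, since $0\le \pi^* f_k\le \pi^* f$ and $\pi^*f$ is bounded. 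Likewise, by \eqref{int:func}, $\|f_k\|_{J_{pv}(\Delta)}$ corresponds to an $L^1_v(\hat\Delta)$-type norm. Via Lemma \ref{l:stab:eq}, this is equivalent to $\|f_k^*\|_{L^1_{pv}}$, which converges by dominated convergence. Passing to the limit yields $\mathcal{F}^\Delta_{pv,\hat w}(f)\ge\lambda'\|f\|_{J_{pv}(\Delta)}$, with $\lambda'$ depending on $\lambda$ and the constant $C$ of Lemma \ref{l:stab:eq}.
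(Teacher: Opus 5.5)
Your argument is correct, but it takes a different route from the paper.

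\textbf{The paper's route.} After the same reduction of (2) to $\RPL$ that you use (borrowed from Lemma \ref{l:smooth:test}), the paper does not compare (1) and (2) directly. It passes through the intermediate condition \eqref{cond:NS}: $\mathcal{F}^{\Delta}_{pv,\hat w}\ge 0$ on $\mathcal{CV}^0_{L^1}(\Delta)$, with equality only for affine functions. It shows that (1) and (2) each imply \eqref{cond:NS} by the case analysis of \cite{NS}. It then recovers the uniform estimate by a Donaldson-type compactness and contradiction argument, using the auxiliary functional $\mathcal{F}^+_{pv}$, which dominates $\|f^*\|_{L^1_{pv}(\Delta)}$.

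\textbf{Your route.} You observe that $p/L_j$ is a polynomial for $d_j\ge 2$, so the density $\hat w\,pv$ is bounded on $\Delta$. Hence both $\mathcal{F}^{\Delta}_{pv,\hat w}$ and $\|\cdot\|_{J_{pv}(\Delta)}$ are continuous for the $C^0(\Delta)$-topology, despite the degeneracy of $p$ on $\partial\Delta$. The lemma then reduces to two density statements: smooth convex functions are $C^0$-dense among convex piecewise affine functions, and $\RPL$ is $C^0$-dense in $\mathcal{CV}^\infty(\Delta)$.

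\textbf{What each buys.} Your route is shorter and more elementary, and it avoids the compactness and lower-semicontinuity steps entirely. Your continuity observation is also exactly what justifies the paper's appeal to the sketch of Lemma \ref{l:smooth:test}, which was written for $v>0$, in the degenerate weight $pv$. The paper's route gives more: both conditions are equivalent to the a priori weaker K-stability condition \eqref{cond:NS}.

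\textbf{Simplifications.} Given your own boundedness observation, the lift to $\hat\Delta$ via \eqref{equa:fut2} in the last paragraph is unnecessary; you can apply dominated convergence directly on $\Delta$. Better still, since $f_k\uparrow f$ pointwise with $f$ continuous on the compact set $\Delta$, Dini's theorem makes the convergence uniform. The limit then passes directly by $C^0$-continuity, with the same $\lambda$ and without Lemma \ref{l:stab:eq}.

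\textbf{Minor points.} Pointwise convergence at $\partial\Delta$ for your supporting-hyperplane approximants needs a short justification, since gradients may blow up near the boundary. A simple fix is to first precompose $f$ with a homothety centred at an interior point, which makes the gradients bounded on $\Delta$. Finally, the opening remark that $\mathcal{DPL}_{\mathrm{dom}}(\Delta)\subset\mathcal{CV}^0_{L^1}(\Delta)$ plays no role in (1)$\Rightarrow$(2); the mollification argument is what does the work there.
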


\begin{proof}
The same argument as in the proof of Proposition \ref{l:smooth:test} shows that we may assume condition $(2)$ holds for $f \in \RPL$.

Following \cite{NS}, we consider the condition:

\begin{equation}{\label{cond:NS}}
    \mathcal{F}^{\Delta}_{pv,\hat{w}} (f) \geq 0 \quad \forall f \in \mathcal{CV}^0_{L^1}(\Delta) \text{ and } \mathcal{F}^{\Delta}_{pv,\hat{w}}  (f)=0 \Leftrightarrow f \text{ is affine}, 
\end{equation}
where $ \mathcal{CV}^0_{L^1}(\Delta)$ is defined in  \eqref{cv:star}. 

By replacing $\mathcal{DPL}_{\mathrm{dom}}(\Delta)$ with $\RPL$ in condition $(2)$ (as noted above), the same argument as in \cite[Theorem 1.0.1]{NS}, “Case 3” and “Case 4”, shows that conditions $(2)$ or $(1)$ of the statement imply \eqref{cond:NS}.

\smallskip

Now assume \eqref{cond:NS} holds. We show that~(1) holds by adapting the proof of \cite[Theorem~1.0.1]{NS}. For any $f\in \mathcal{CV}^\infty(\Delta)$, we introduce

\begin{equation*}
\mathcal{F}^{+}_{pv}(f):= 2\int_{\partial \Delta} f pv \, d\sigma_{\Delta} 
+ \sum_{j=0 \atop
    d_j \neq 1}^\ell \int_{\Delta} \frac{2d_j(d_j-1)}{\langle p_j,x \rangle +c_j} fpv dx
\end{equation*}
A similar functional was considered in \cite[Section 6.2]{Del2023} for weights corresponding to spherical varieties. An identical argument to that of \cite[Lemma 6.3]{Del2023} shows that

\begin{equation*}
    \mathcal{F}^{+}_{pv}(f^*) \geq  \| f^* \|_{L_{pv}^1(\Delta)},
\end{equation*}
where the notation $f^*$ is introduced in \eqref{normalized-function-polytope}. Since $  \| \cdot \|_{L_{pv}^1(\Delta)} \geq \| \cdot \|_{J_{pv}(\Delta)}$, to prove  (2) it is enough to show that there exists a uniform $C>0$ such that

\begin{equation}{\label{fut:b:norm}}
     \mathcal{F}^{\Delta}_{pv,\hat{w}} (f^*) \geq C\mathcal{F}^{+}_{pv}(f^*),
\end{equation}
for any $f \in \mathcal{CV}^{\infty}(\Delta)$. Suppose that \eqref{fut:b:norm} does not hold. Hence, there is a sequence $(f^*_i)_{i\geq 1} \subset \mathcal{CV}^{\infty}_*(\Delta)$ such that

\begin{equation}{\label{hypo:reso}}
    \lim_{i \to \infty}\mathcal{F}^{+}_{pv} (f^*_i)=1 \quad \text{ and } \quad \lim_{i \to \infty}\mathcal{F}^{\Delta}_{pv,\hat{w}}  (f^*_i)=0.
\end{equation}
By \cite[Proposition 5.2.6]{SKD} (see also \cite[Proposition 5.2.3]{NS}) there is a $f^* \in \mathcal{CV}_{L^1}^0(\Delta)$  such that, up to a sub-sequence, $f^*_i$ converge locally uniformly  to $f^*$ on $\mathring{\Delta}$ and

\begin{equation}{\label{conv:hat:delta}}
\lim_{i \to \infty}\int_{\Delta} |f_i^* - f^*| dx=0 \quad \text{and} \quad \mathcal{F}^{+}_{pv}(f^*) \leq \liminf_{i \to \infty} \mathcal{F}^{+}_{pv}(f^*_i).
\end{equation}
The above fact is proved in terms of integral on $\partial \Delta$ with respect to $d\sigma$, but the proof still holds by replacing $\int_{\partial \Delta} \cdot \,d \sigma_{\Delta}$ by $\mathcal{F}^+_{pv,\hat{w}}( \cdot)$. Observe that by definition

\begin{equation*}
    \mathcal{F}^{\Delta}_{pv,\hat{w}}(f^*) = \mathcal{F}^{+}_{pv}(f^*)- \int_{\Delta}  f^* wpv dx
\end{equation*}
Hence, from \eqref{conv:hat:delta} and the local uniform convergence we deduce that

\begin{align*}
    \mathcal{F}^{\Delta}_{pv,\hat{w}}(f^*) =& \mathcal{F}^{+}_{pv}(f^*)- \int_{\Delta}  f^* wpv dx\\
    \leq & \liminf_{i \rightarrow \infty} \left( \mathcal{F}^{+}_{pv}(f_i^*)- \int_{\Delta}  f_i^* wpv dx \right)\\
   = &\liminf_{i \rightarrow \infty} \mathcal{F}^{\Delta}_{pv,\hat{w}}(f^*_i)=0.
\end{align*}

Hence by \eqref{cond:NS} $f^*$ is affine. Since we suppose that $f^*$ is normalized, $f^*$ needs to be zero. It contradicts our choice of $f$. Indeed, 

\begin{equation*}
\begin{split}
   0=&\liminf_{i \rightarrow \infty} \mathcal{F}^{\Delta}_{pv,\hat{w}}(f^*_i) \\
   =&\liminf_{i \rightarrow \infty}\left(\mathcal{F}^{+}_{pv}(f_i^*)- \int_{\Delta}  f_i^* wpv dx \right),
\end{split}   
\end{equation*}
which is equal to $1$ by the local uniform convergence of $f^*_i$ to $0$ and \eqref{hypo:reso}. Hence \eqref{fut:b:norm} holds. It concludes the proof that \eqref{cond:NS} implies (2) of the statement.

Now we can show that \eqref{cond:NS} implies condition $(1)$ of the statement  by a  similar argument.
\end{proof}

We conclude this section by the following characterization of weighted stability for compatible test configurations.

\begin{prop}{\label{l:kstab:stab:delta}}
Let $X=\mathbb{P}(\oplus_{j=0}^\ell\mathbb{C}^{d_j})$ be the projective space endowed with the Fubini-Study metric $\omega_{\mathrm{FS}}$. Let $\Delta$ be the $\ell$-simplex associate with the $\T$-action.  Then $(X,[\omega_{\mathrm{FS}}],\T)$ is $(v,w)$-uniformly K-stable for the compatible test configurations if and only if $\Delta$ is $(pv,\hat{w})$-uniformly K-stable.
\end{prop}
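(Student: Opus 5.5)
The plan is to translate both sides of the stability inequality into quantities on $\Delta$ and then invoke Lemma \ref{p:stab:eq:rigid}. By Definition \ref{d:compatible:tc}, the compatible test configurations for $(X,[\omega_{\mathrm{FS}}],\T)$ are exactly the $(\X_f,\A_{\X_f})$ with $f\in\mathcal{DPL}_{\mathrm{dom}}(\Delta)$, produced by Proposition \ref{l:CT}. So $(v,w)$-uniform K-stability on this class means that there is $\lambda>0$ with
\begin{equation*}
\mathbf{DF}_{v,w}(\X_f,\A_{\X_f})\geq \lambda\, \mathbf{J}^{\textnormal{NA}}_{\mathbb{T}^{\mathbb{C}}}(\X_f,\A_{\X_f})\quad\text{for all } f\in\mathcal{DPL}_{\mathrm{dom}}(\Delta).
\end{equation*}

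The first step is to rewrite each side using results already established. Lemma \ref{fut-rigid} gives $\mathbf{DF}_{v,w}(\X_f,\A_{\X_f})=(2\pi)^{n+1}\mathrm{Vol}(\Delta_B)\,\mathcal{F}^{\Delta}_{pv,\hat w}(f)$. For the other side I would use Lemma \ref{l:J-fonc:compa:toric}, applied in its weighted form, i.e. with $p$ replaced by $pv$. This weighted version follows in the same way, combining the toric computation of Lemma \ref{l:J-fonc} (with weight $v$) and \eqref{int:func}. It yields
\begin{equation*}
\mathbf{J}^{\textnormal{NA}}_{\mathbb{T}^{\mathbb{C}}}(\X_f,\A_{\X_f})=(2\pi)^{n+1}\frac{\mathrm{Vol}(\Delta_B)}{\mathrm{Vol}([\omega])}\,\|f\|_{J_{pv}(\Delta)}.
\end{equation*}
One point needs care here. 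The infimum in the definition of $\mathbf{J}^{\textnormal{NA}}_{\mathbb{T}^{\mathbb{C}}}$ runs over $\xi\in\mathfrak{t}$, the Lie algebra of $\T$, and not over the larger torus $T$. Its twists act on $\hat\Delta$ by adding pullbacks of affine functions on $\Delta$. Consequently the infimum is over $\mathrm{Aff}(\Delta)$, and it matches the norm $\|\cdot\|_{J_{pv}(\Delta)}$.

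Substituting these two formulas, the stability inequality becomes, after cancelling $(2\pi)^{n+1}\mathrm{Vol}(\Delta_B)$ and absorbing the positive constant $\mathrm{Vol}([\omega])$ into $\lambda$,
\begin{equation*}
\mathcal{F}^{\Delta}_{pv,\hat w}(f)\geq \lambda'\,\|f\|_{J_{pv}(\Delta)}\quad\text{for all } f\in\mathcal{DPL}_{\mathrm{dom}}(\Delta).
\end{equation*}
This is exactly condition (2) of Lemma \ref{p:stab:eq:rigid}. That lemma states that condition (2) is equivalent to $(pv,\hat w)$-uniform K-stability of $\Delta$ in the sense of Definition \ref{d:stability}, and this gives both directions of the proposition.

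I expect the only real obstacle to be the identification of the $J$-functional: justifying the weighted version of Lemma \ref{l:J-fonc:compa:toric}, and checking that twisting by $\xi\in\mathfrak{t}$ corresponds precisely to adding affine functions pulled back from $\Delta$. Everything else is substitution of earlier lemmas.
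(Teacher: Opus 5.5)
Your route is the paper's: its proof substitutes Lemmas \ref{fut-rigid} and \ref{l:J-fonc:compa:toric} into the stability inequality and then applies Lemma \ref{p:stab:eq:rigid}. However, one intermediate identity in your proposal is wrong.

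The functional $\mathbf{J}^{\mathrm{NA}}$, and hence its reduced version, is unweighted in the paper's definition of $(v,w)$-uniform K-stability. It therefore cannot depend on $v$. There is no ``weighted version with weight $v$'' of Lemma \ref{l:J-fonc}. The only weight in Lemma \ref{l:J-fonc:compa:toric} is $p$, which comes from pushing the Lebesgue measure of $\hat{\Delta}$ forward to $\Delta$ via \eqref{int:func}. The correct identity is
\[
\mathbf{J}^{\mathrm{NA}}_{\mathbb{T}^{\mathbb{C}}}(\mathcal{X}_f,\mathcal{A}_{\mathcal{X}_f})=(2\pi)^{n+1}\frac{\mathrm{Vol}(\Delta_B)}{\mathrm{Vol}([\omega])}\,\|f\|_{J_{p}(\Delta)} .
\]
Your version with $\|f\|_{J_{pv}(\Delta)}$ is false in general; it holds only for $v\equiv 1$. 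After substitution you actually get $\mathcal{F}^{\Delta}_{pv,\hat w}(f)\geq \lambda'\|f\|_{J_p(\Delta)}$ for $f\in\mathcal{DPL}_{\mathrm{dom}}(\Delta)$. This is not literally condition (2) of Lemma \ref{p:stab:eq:rigid}, which uses the $J_{pv}$-norm.

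The repair is short. The weight $v$ is smooth and positive on the compact $\Delta$. The integrand $\bigl(f+\xi-\inf_\Delta(f+\xi)\bigr)p$ is nonnegative for every $\xi$. Comparing the integrals and then taking infima over $\xi$ gives
\[
\bigl(\min_\Delta v\bigr)\,\|f\|_{J_p(\Delta)}\leq \|f\|_{J_{pv}(\Delta)}\leq \bigl(\max_\Delta v\bigr)\,\|f\|_{J_p(\Delta)} .
\]
So the two uniform inequalities are equivalent in both directions, up to changing $\lambda$. The paper's one-line proof leaves this comparison implicit; the same kind of comparison, using $\inf\bar p$, appears explicitly in the proof of Theorem \ref{t:stab:y:x}. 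With this correction your argument is complete.

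Your remark on twists is fine. Twisting by $\xi\in\mathfrak{t}$ adds a linear function of $x$, and constants do not change $f+\xi-\inf_\Delta(f+\xi)$. Hence the infimum over $\mathfrak{t}$ agrees with the infimum over $\mathrm{Aff}(\Delta)$.
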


We recall that $\hat{w}$ is introduced in Lemma  \ref{fut-rigid}.

\begin{proof}
    It is a direct consequence of Lemmas \ref{l:J-fonc:compa:toric}, \ref{fut-rigid} and \ref{p:stab:eq:rigid}. 
\end{proof}

\section{Existence of a compatible weighted cscK metric on the projective space}{\label{s:exi:x}}
We consider 
\(
X = \mathbb{P}\!\left(\oplus_{j=0}^{\ell} \mathbb{C}^{\,d_j}\right)
\)
as described in Section~\ref{s:set-up}, equipped with the Fubini--Study metric $\omega_{\mathrm{FS}}$. 
The aim of this section is to prove Theorem \ref{t:intro:fiber}. We refer to Appendix~\ref{a:toric:comp} for a generalization of this result to a larger class of toric manifolds.

\smallskip

We introduce the $(pv,\hat{w})$-weighted Mabuchi energy of $(\Delta, \mathbf{L})$, defined for a symplectic potential $u \in \mathcal{S}(\Delta, \mathbf{L})$, by
\[
\mathcal{M}_{pv,\hat{w}}^{\Delta}(u)
= -\int_{\Delta} \log \det \bigl(\mathrm{Hess}(u) \, \mathrm{Hess}(u_0)^{-1}\bigr) \, pv \, dx 
+ \mathcal{F}^{\Delta}_{pv,\hat{w}}(u),
\]
where $\hat{w}$ is introduced in Lemma \ref{fut-rigid} and $p$ in \eqref{weight:x} and $u_0$ is the Guillemin potential of $\Delta$ \cite{Gui}.

We consider the compatible symplectic potential $\hat{u}_0 \in \mathcal{S}(\hat{\Delta},\hat{\mathbf{L}}) $ constructed from $u_0 \in \mathcal{S}(\Delta,\mathbf{L})$  as in \eqref{comp:symp}. Similarly, we define the $(v,w)$-weighted Mabuchi energy $\mathcal{M}_{v,w}^{\hat{\Delta}}$ of $(\hat{\Delta}, \hat{\mathbf{L}})$ with reference potential $\hat{u}_0$

\[
\mathcal{M}_{v,w}^{\hat{\Delta}}(\hat{u})
= -\int_{\hat{\Delta}} \log \det \bigl(\mathrm{Hess}(\hat{u}) \, \mathrm{Hess}(\hat{u}_0)^{-1}\bigr) \, v \, dx 
+ \mathcal{F}^{\hat{\Delta}}_{v,w}(\hat{u})
\]
for any $\hat{u} \in \mathcal{S}(\hat{\Delta},\hat{\mathbf{L}})$.

The relation between these functionals is given by the following Lemma.

\begin{lemma}{\label{l:mab:del:mab:hat}}
Let $\hat{u} \in \mathcal{S}(\hat{\Delta},\hat{\mathbf{L}})$ be a compatible symplectic potential associated with a symplectic potential $u \in \mathcal{S}(\Delta, \mathbf{L})$ as in Lemma \ref{Lemma-symp-pot}. Then
\begin{equation*}
\begin{split}
  \mathcal{M}^{\hat{\Delta}}_{v,w}( \hat{u}) = &\mathrm{Vol}(\Delta_B) \mathcal{M}^{\Delta}_{pv,\hat{w}}(u) + C,
\end{split}  
\end{equation*}
where $C:=\mathcal{F}_{v,w}^{\hat{\Delta}}\left(\sum_{j=0 \atop d_j \neq 1 }^\ell( \langle p_j,x\rangle + c_j)u_j)\right)$ is independent of $u$.
\end{lemma}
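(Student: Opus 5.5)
The plan is to split $\mathcal{M}^{\hat{\Delta}}_{v,w}(\hat u)$ into its entropy part and its linear part, and treat each separately.

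\textbf{Linear part.} The functional $\mathcal{F}^{\hat{\Delta}}_{v,w}$ is linear. By Lemma \ref{Lemma-symp-pot}, we have $\hat u = u + \sum_{j}(\langle p_j,x\rangle+c_j)u_j$. Therefore
\[
\mathcal{F}^{\hat{\Delta}}_{v,w}(\hat u)=\mathcal{F}^{\hat{\Delta}}_{v,w}(u)+C .
\]
The term $C$ does not involve $u$. Since $u$ is convex on $\Delta$ and its boundary values are of the standard Guillemin type (integrable on $\partial\Delta$), it lies in $\mathcal{CV}^0_{L^1}(\Delta)$. We may therefore apply \eqref{equa:fut2} to its pullback, which gives $\mathcal{F}^{\hat{\Delta}}_{v,w}(u)=\mathrm{Vol}(\Delta_B)\mathcal{F}^{\Delta}_{pv,\hat w}(u)$. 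The only extra check is that $C$ is finite. This needs the functions $L_j u_j$ to be integrable against the boundary measure of $\hat\Delta$. It follows from Lemma \ref{l:bound:measure}, since the $u_j$ have only logarithmic singularities along $\partial\Delta_j$.

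\textbf{Entropy part.} We work in the coordinates $(x,x_B)$ on $\mathring{\Delta}\times\mathring{\Delta}_B$ given by \eqref{identify:pol}, with $\hat x^j = L_j(x)\,x^j$ and $d\hat x = p\,dx\,dx_B$. The key step is to compute $\det\mathrm{Hess}_{\hat x}(\hat u)$ for a compatible potential. I would use the standard fact, as in \cite{ACGL}, \cite{ACGT04} and the proof of Lemma \ref{l:scal}, that in these coordinates
\[
\det \mathrm{Hess}_{\hat x}(\hat u)=\det\mathrm{Hess}_x(u)\cdot\prod_{j,\,d_j\neq1}L_j(x)^{-(d_j-1)}\det\mathrm{Hess}(u_j)(x^j).
\]
To see this, write the Hessian in $\hat x$ and pass to $(x,x_B)$ by the change of variables. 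The fiber homogeneity $L_j u_j(\hat x^j/L_j)$ makes the mixed blocks drop out after a Schur complement, and the $x$-block reduces to $\mathrm{Hess}(u)$. Equivalently, we can note that $\det\mathrm{Hess}(\hat u)^{-1}$ is the ratio of the volume form of $\omega$ to $\omega^{\mathrm{flat}}$, and then read it off from \eqref{comp:metric}.

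Everything except the factor $\det\mathrm{Hess}(u)$ is independent of $u$. So in the ratio with $\hat u_0$, which is built from $u_0$, we get
\[
\log\det\bigl(\mathrm{Hess}(\hat u)\mathrm{Hess}(\hat u_0)^{-1}\bigr)=\log\det\bigl(\mathrm{Hess}(u)\mathrm{Hess}(u_0)^{-1}\bigr).
\]
This is a function of $x$ only. Integrating it against $v\,d\hat x$ with \eqref{int:func} produces $\mathrm{Vol}(\Delta_B)\int_\Delta(\cdot)\,pv\,dx$.

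Adding the entropy and linear parts gives the claimed identity. The main obstacle is the determinant factorization. I would first check it in the case $\ell=1$, $d_0=1$, $d_1=2$, and then carry out the block computation in general, keeping track of the Jacobian of $\hat x\mapsto(x,x_B)$. A second, minor point is justifying the integrability of the log-determinant near $\partial\hat\Delta$. This holds because both potentials satisfy the same boundary behavior, so the ratio is bounded.
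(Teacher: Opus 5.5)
Your proposal is correct and follows the same route as the paper. The linear part comes from linearity of $\mathcal{F}^{\hat{\Delta}}_{v,w}$ together with Lemma \ref{Lemma-symp-pot} and \eqref{equa:fut2}; the entropy part comes from factorizing $\det \mathrm{Hess}(\hat u)$, so that the ratio with $\hat u_0$ reduces to $\det\bigl(\mathrm{Hess}(u)\,\mathrm{Hess}(u_0)^{-1}\bigr)$, and then applying \eqref{int:func}. Incidentally, your Schur-complement factor $\prod_{j} L_j^{-(d_j-1)} = p^{-1}$ is the right one, whereas the paper's \eqref{det:hu} displays $p$; since this factor depends only on $x$ and not on $u$, it cancels in the ratio either way and the conclusion is unaffected.
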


\begin{proof}
By Lemma \ref{Lemma-symp-pot} we get

\begin{equation}{\label{det:hu}}
    \det \textnormal{Hess}(\hat{u})= \det \textnormal{Hess}(u)  \det \textnormal{Hess}(u_B)p,
\end{equation}
where $u_B:= \sum_{j=0 \atop d_j\neq 1}^\ell u_j$. Then

\begin{equation*}
    \det \textnormal{Hess}(\hat{u})\textnormal{Hess}(\hat{u}_0)^{-1}= \det \textnormal{Hess}(u)  \det \textnormal{Hess}(u_0)^{-1}.
\end{equation*}

Hence by \eqref{int:func} we deduce that

\begin{equation}{\label{ent}}
    \begin{split}
    \int_{\hat{\Delta}}\log \det \textnormal{Hess}(\hat{u}) \det \textnormal{Hess}(\hat{u}_0)^{-1} v d\hat{x}  
      = \mathrm{Vol}(\Delta_B)\int_{\Delta} \log \det \textnormal{Hess}(u)\textnormal{Hess}(u_0)^{-1} pvdx 
    \end{split}
\end{equation}
The computation of the linear part follows from Lemma \ref{Lemma-symp-pot} and \eqref{equa:fut2}.
\end{proof}

\begin{lemma}{\label{stable-equivaut-energy-propr-v}}
Suppose that $(X,[\omega_{\mathrm{FS}}],\T)$ is $(pv,\hat{w})$-uniformly K-stable for compatible  test configurations. Then there exist constants  $C>0$ and $D >0 $ such that
    
\begin{equation}{\label{coerc:toric}}
    \mathcal{M}_{pv,\hat{w}}^{\Delta}(u) \geq C \|u^*   \|_{L^1_{pv}(\Delta)} - D
\end{equation}

\noindent for all $u \in \mathcal{S}(\Delta,\mathbf{L})$.
\end{lemma}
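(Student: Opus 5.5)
The plan is to first convert the stability hypothesis into a polytope inequality, and then run the standard Donaldson / Zhou--Zhu argument for toric Mabuchi energies, with the weight $pv$ in place of a positive weight.

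By Proposition~\ref{l:kstab:stab:delta}, uniform stability of $(X,[\omega_{\mathrm{FS}}],\T)$ on compatible test configurations is equivalent to $(pv,\hat w)$-uniform K-stability of $\Delta$. (The statement's hypothesis is meant in this sense: $(v,w)$-stability of $X$ translates into $(pv,\hat w)$-stability of $\Delta$.) Using Lemma~\ref{p:stab:eq:rigid} and Lemma~\ref{l:stab:eq}, we obtain $\lambda>0$ such that
\[
\mathcal{F}^{\Delta}_{pv,\hat w}(f)\ \ge\ \lambda\,\|f^*\|_{L^1_{pv}(\Delta)}\qquad\text{for all } f\in\mathcal{CV}^\infty(\Delta).
\]
Since $\mathcal F^\Delta_{pv,\hat w}$ vanishes on affine functions (this is the normalization assumed for the weights), we may replace $u$ by $u^*$ everywhere. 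The Mabuchi energy changes only by this vanishing linear term, because the Hessian is unchanged.

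Next, write $\mathcal M^\Delta_{pv,\hat w}(u)=\mathcal E(u)+\mathcal F(u)$, where $\mathcal E(u)=-\int_\Delta\log\det(\hess(u)\hess(u_0)^{-1})\,pv\,dx$ is the entropy term. Split the linear part as
\[
\mathcal F=(1-\epsilon)\mathcal F+\epsilon\mathcal F\ \ge\ (1-\epsilon)\lambda\|u^*\|_{L^1_{pv}}+\epsilon\mathcal F(u^*).
\]
It then suffices to show that $\mathcal E(u)+\epsilon\mathcal F(u^*)\ge -D$ for some small $\epsilon>0$. For this I follow Donaldson \cite[Prop.~5.1.2]{SKD} and \cite{ZZ}, in the weighted form of \cite{Jub23}. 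By concavity of $\log\det$ and integration by parts, using the boundary behaviour of symplectic potentials, one gets
\[
\mathcal E(u)\ \ge\ -C_1-C_2\int_{\partial\Delta}u^*\,pv\,d\sigma .
\]
Equivalently, one bounds $-\log\det\hess u$ from below by testing against $\hess u_0^{-1}$ and using the elementary inequality $\log\det A\le \mathrm{tr}A-\ell$. Since
\[
\epsilon\mathcal F(u^*)=2\epsilon\int_{\partial\Delta}u^*pv\,d\sigma-\epsilon\int_\Delta u^*\hat w pv\,dx,
\]
the boundary term would be absorbed by taking $\epsilon$ large, not small. The usual trick is instead the Zhou--Zhu rescaling: apply the entropy estimate to $u_0+t\,u^*$, i.e.\ use convexity of $-\log\det$ along the segment, so that the loss is bounded by $C+\delta\,\mathcal F^+_{pv}(u^*)$. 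Then absorb $\mathcal F^+_{pv}(u^*)$ using the inequality $\mathcal F(u^*)\ge C\mathcal F^+_{pv}(u^*)$, which is \eqref{fut:b:norm}, established in the proof of Lemma~\ref{p:stab:eq:rigid}.

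The main obstacle is the degeneracy of $pv$: $p$ vanishes on the facets $\{L_j=0\}$ with $d_j\ge 2$, and $\hat w$ has poles $1/L_j$ there. So the integrations by parts defining $\mathcal F^\Delta_{pv,\hat w}$ on $u$, and the entropy bound near those facets, need care. My first approach is to avoid working directly on $\Delta$ and lift to $\hat\Delta$ via Lemma~\ref{l:mab:del:mab:hat}. There $\mathcal M^{\hat\Delta}_{v,w}(\hat u)=\mathrm{Vol}(\Delta_B)\mathcal M^\Delta_{pv,\hat w}(u)+C$, the weight $v$ is positive, and the usual toric estimates apply verbatim to compatible potentials. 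The required stability on $\hat\Delta$ for $\pi^*$ of convex functions on $\Delta$ follows from \eqref{equa:fut2} together with \eqref{int:func}, which identifies $\|\hat u^*\|_{L^1_v(\hat\Delta)}$ with $\mathrm{Vol}(\Delta_B)\|u^*\|_{L^1_{pv}(\Delta)}$ up to the fixed compatible term. One also has to check that the Donaldson entropy bound only uses stability along the compatible directions $u_0+t u^*$, which are again compatible.
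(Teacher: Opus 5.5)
Your proposal is correct and follows essentially the paper's argument. You lift to $\hat\Delta$ via Lemma~\ref{l:mab:del:mab:hat} so that the weight $v$ is positive, use stability only on pulled-back functions through \eqref{equa:fut2} and \eqref{int:func}, run the \cite{Jub23}/Zhou--Zhu entropy estimate there, and pull back; invoking \eqref{fut:b:norm} to absorb the small boundary term from the rescaled concavity bound for $\log\det$ makes explicit the step the paper delegates to \cite[Proposition~7.9]{Jub23}, since by Lemma~\ref{l:bound:measure} it is exactly the boundary-integral form of stability on $\hat\Delta$ for pulled-back functions.
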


This result for $p=v=1$ is due to \cite{SKD,ZZ} and was generalized in \cite[Proposition~7.9]{Jub23} to the case $v>0$ and $p=1$.

\begin{proof}

The main difficulty is that \(p\) may vanish on the boundary of \(\Delta\). To circumvent this issue, we consider the projection \(\pi : \hat{\Delta} \to \Delta\) and work instead on \(\hat{\Delta}\).

By Proposition \ref{l:kstab:stab:delta}, \(\Delta\) is \((pv,\hat{w})\)-uniformly K-stable. Therefore, by \eqref{int:func}, \eqref{equa:fut2}, and Lemma \ref{l:stab:eq}, there exists \(\lambda>0\) such that for any \(f \in \mathcal{CV}_*^{\infty}(\Delta)\),
\begin{equation}\label{stab}
    \mathcal{F}^{\hat{\Delta}}_{v,w}(f) \geq \lambda \| f \|_{L^1_v(\hat{\Delta})}.
\end{equation}
Let \(\hat{u}\) denote the symplectic potential in \(S(\hat{\Delta},\hat{\mathbf{L}})\) associated with a symplectic potential \(u \in S(\Delta,\mathbf{L})\) via Lemma \ref{Lemma-symp-pot}. Then
\begin{equation*}
\begin{split}
\mathcal{M}^{\hat{\Delta}}_{v,w}(\hat{u})
= &\, \mathcal{F}^{\hat{\Delta}}_{v,w}(u)
 - \int_{\hat{\Delta}} \log \det \bigl(\mathrm{Hess}(\hat{u})\, \mathrm{Hess}(\hat{u}_0)^{-1}\bigr)\, v \, dx + C,
\end{split}
\end{equation*}
where
\(
C := \mathcal{F}_{v,w}^{\hat{\Delta}}\!\left(\sum_{j=0 \atop d_j \neq 1}^\ell (\langle p_j,x\rangle + c_j)u_j\right)
\)
is independent of \(u\).

Since \(v>0\) and \(u \in \mathcal{CV}^{\infty}(\Delta)\), we may apply \eqref{stab} together with the arguments of \cite[Proposition 7.9]{Jub23} to obtain constants \(C_1, C_2, C_3>0\), independent of \(u\), such that
\begin{equation*}
\begin{split}
\mathcal{M}^{\hat{\Delta}}_{v,w}(\hat{u}) \geq &\, -C_3+ C_1 \|u^*\|_{L_v^1(\hat{\Delta})}
 - \int_{\hat{\Delta}} \log \det \bigl(\mathrm{Hess}(\hat{u})\, \mathrm{Hess}(\hat{u}_0)^{-1}\bigr)\, v \, dx \\
& + \int_{\hat{\Delta}} \log \det \bigl(\mathrm{Hess}(C_2u)\, \mathrm{Hess}(\hat{u}_0)^{-1}\bigr)\, v \, dx .
\end{split}
\end{equation*}
Hence, by \eqref{det:hu},
\begin{equation*}
\begin{split}
\mathcal{M}^{\hat{\Delta}}_{v,w}(\hat{u}) \geq &\, -C_3 + C_1 \|u^*\|_{L_v^1(\hat{\Delta})}
 - n \log(C_2)\int_{\hat{\Delta}} \log \det \bigl(\mathrm{Hess}(\hat{u})\, \mathrm{Hess}(u)^{-1}\bigr)\, v \, dx \\
= &\, -C_3 + C_1 \|u^*\|_{L_v^1(\hat{\Delta})}
 - n \log(C_2)\int_{\hat{\Delta}} \log \det \bigl(\mathrm{Hess}(u_B)\bigr) v \, dx .
\end{split}
\end{equation*}

Therefore, there exists a constant \(D>0\), independent of \(u\), such that
\begin{equation*}
    \mathcal{M}^{\hat{\Delta}}_{v,w}(\hat{u}) \geq D \|u^*\|_{L_v^1(\hat{\Delta})} - D.
\end{equation*}
We conclude by applying Lemma \ref{l:mab:del:mab:hat} and \eqref{int:func}.

\end{proof}

\begin{proof}\textit{(of Theorem \ref{t:intro:fiber})}.
The ideas are similar to those in \cite[Theorem~7.12]{Jub23}, so we only sketch the argument.

\smallskip

Recall that the Fubini--Study metric on $X$ is a compatible K\"ahler metric in the sense of Definition~\ref{d:comp:metric}. Let $\omega_{\P^\ell}$ denote the K\"ahler metric on $\P^\ell$ inducing $\omega_{\mathrm{FS}}$ on $X$.

By Lemma~\ref{stable-equivaut-energy-propr-v}, the functional $\mathcal{M}^{\Delta}_{pv,\hat{w}}$ is coercive, i.e. it satisfies \eqref{coerc:toric}. Using the same argument as in \cite[Eqs.~(63) and (64)]{Jub23} together with Lemma~\ref{l:stab:eq}, the weighted Mabuchi energy $\mathbf{M}_{pv,\hat{w}}^{\P^\ell}$ of $(\P^\ell,[\omega_{\P^\ell}])$ is $\T^{\C}$-coercive with respect to the $p$-weighted $\mathbf{J}$-functional $\mathbf{J}^{\P^\ell}_{p}$ with reference metric $\omega_{\P^\ell}$ (see e.g. \cite[Definition~6.2 and Remark~6.3]{AJL}). More precisely, there exist constants $C,D>0$ such that for any $\omega \in \mathcal{K}(\P^\ell)^\T$,
\begin{equation}{\label{mab:coerc}}
 \mathbf{M}_{pv,\hat{w}}^{\P^\ell}(\omega)
 \geq C \inf_{\gamma \in \T^{\C}} \mathbf{J}^{\P^\ell}_{p}(\gamma^*\omega) - D,
\end{equation}
where  $\mathcal{K}(\P^\ell)^\T$ denotes the space of $\T$-invariant K\"ahler metrics in $[\omega_{\P^\ell}]$. We can not use the result of \cite{DJL24, DJL25} to obtain the existence of a $(pv,\hat{w})$-cscK metric here, since the weight $p$ may vanish. Hence, we will work on $X$.

We now consider the embedding of $\mathcal{K}(\P^\ell)^\T$ into $\mathcal{K}(X)^\T$, the space of $\T$-invariant K\"ahler metrics in $[\omega_{\mathrm{FS}}]$, described in Section~\ref{s:set-up}. Its image consists precisely of the compatible K\"ahler metrics. 

Let $\mathbf{M}_{v,w}^{X}$ be the $(v,w)$-weighted Mabuchi energy of $(X,[\omega_{\mathrm{FS}}])$. Then, by Lemma~\ref{l:scal} and \cite[(8)]{ACGT11},

\begin{equation}{\label{restric:mab}}
\mathbf{M}_{v,w}^{X}\big|_{\mathcal{K}(\P^\ell)^\T}
=\mathrm{Vol}(B,\omega_B)\mathbf{M}_{pv,\hat{w}}^{\P^\ell}.
\end{equation}
where $\mathrm{Vol}(B,\omega_B)$ denotes the volume of $(B,\omega_B)$.

Let $\mathbf{J}^X:=\mathbf{J}^X_1$ denote the usual unweighted $\mathbf{J}$-functional on $\mathcal{K}(X)$ with reference metric $\omega_{\mathrm{FS}}$ (see e.g. \cite[Definition~3.1]{AJL}). Using \cite[(8)]{ACGT11}, \cite[Lemma~7]{ACGT11}, and the second equality in the proof of \cite[Lemma~6.4]{AJL}, we obtain
\begin{equation}{\label{restric:d1}}
   \mathbf{J}^X\big|_{\mathcal{K}(\P^\ell)^\T}
   =
   \mathrm{Vol}(B,\omega_B)\,\mathbf{J}_{p}^{\P^\ell}.
\end{equation}

Combining \eqref{mab:coerc}, \eqref{restric:mab}, and \eqref{restric:d1}, we deduce that the restriction of $\mathbf{M}_{v,w}^{X}$ to the space of compatible K\"ahler metrics is $\T^{\C}$-coercive. More precisely, there exist constants $C',D'>0$ such that for any compatible metric $\tilde{\omega} \in \mathcal{K}(\P^\ell)^\T \subset \mathcal{K}(X)^\T$,
\begin{equation}{\label{mab:coerc2}}
 \mathbf{M}^X_{v,w}(\tilde{\omega})
 \geq C' \inf_{\gamma \in \T^{\C}}\mathbf{J}^X(\gamma^*\tilde{\omega}) - D'.
\end{equation}

\smallskip

We now consider the $(v,w)$-weighted continuity path of Chen on $\mathcal{K}(X)^\T$ as in \cite[(21)]{DJL25}:
\begin{equation}{\label{cont:path1}}
  t\big(\mathrm{Scal}_{v}(\tilde{\omega})-w(\mu_{\tilde{\omega}})\big)
  =
  (1-t)\big(\Lambda_{\tilde{\omega},v}(\tilde{\omega}_0)-\tilde{v}(\mu_{\tilde{\omega}})\big),
  \qquad t\in[0,1],  \tilde{\omega} \in \mathcal{K}(X)^\T,
\end{equation}
where $\tilde{v}:= n + \langle d\log(v), x \rangle$ is a fixed weight with $x \in \Delta$, $\Lambda_{\tilde{\omega},v}$ denotes the $v$-weighted trace defined in Section~\ref{s:reviewcscK} and $\tilde{\omega}_0$ is a suitable compatible K\"ahler metric on $X$ that will be choose below.

In \cite[Theorem 1.1]{DJL25}, it is shown that this continuity path can be solved at $t=1$ whenever $\mathbf{M}^X_{v,w}$ is $\T^{\C}$-coercive on $\mathcal{K}(X)^\T$. In our setting, however, we only know coercivity on the subset of compatible metrics $\mathcal{K}(\P^\ell)^\T \subset \mathcal{K}(X)^\T$. We therefore follow the strategy of \cite[Theorem 6.1]{Jub23} (see \cite[Corollary~5.12]{DJL25} for the weighted generalization) and solve the continuity path within this smaller space under this weaker hypothesis. 

More precisely, set
\[
    S:=\{ t \in (0,1] \mid \eqref{cont:path1} \text{ admits a compatible solution } \tilde{\omega} \in \mathcal{K}(\P^\ell)^\T \subset \mathcal{K}(X)^\T \}.
\]
It is enough to show that $S$ is open, closed and non-empty.

Openness of $S$ follow from a careful application of the implicit function theorem, as in \cite[Proposition~6.2]{Jub23}, using the decomposition of the linearized operator of \eqref{cont:path1} into vertical and horizontal components established in \cite[Lemma~5.11]{DJL25} instead of \cite[(43)]{Jub23}.

For the existence of a solution for $t \in (0,1)$, we follow \cite[Lemma~6.3]{Jub23}, choosing $\tilde{\omega}_0$ carefully as in \cite[(40)]{Jub23}.

Closedness follows as in \cite[Proposition~6.5]{Jub23}, using \eqref{mab:coerc2} and the existence result for weighted cscK metrics from \cite[Theorem~1.1]{DJL25}, in place of the existence of extremal metric results of \cite[Theorem~1.6]{CC21c} and \cite[Theorem~3.3]{He}.

We therefore obtain a compatible solution of \eqref{cont:path1} at $t=1$, which concludes the proof.

\end{proof}

\section{Horospherical symmetries of compatible test configurations of projective space}\label{horo tc for proj}
\subsection{Projective space of direct sums and polytopes}
Let $X = \mathbb{P}\left(\oplus_{j=0}^\ell \mathbb{C}^{d_j}\right)$, $G = \prod_{j=0}^\ell U(d_{j})$ and $G^{\mathbb{C}} = \prod_{j=0}^\ell \mathrm{GL}(d_{j}, \mathbb{C}).$ There is a natural \( G^{\mathbb{C}}\)-action on \( X \). For an element in $X$, we write it as 
\[
[\mathbf{z}_0 : \mathbf{z}_1 : \dots : \mathbf{z}_\ell],
\]
where $\mathbf{z}_{j} \in \mathbb{C}^{d_j}$, and $\mathbf{z}_j$ can be written as 
\[
\mathbf{z}_j = (z_{j,0}, z_{j,1}, \dots, z_{j,d_{j}-1}).
\]

Let $\mathbf{e_j} = (1, 0, \dots, 0)\in \mathbb{C}^{d_{j}}$.
  
Let us fix the Borel subgroup of $G^{\mathbb{C}}$ as \( \prod_{i=0}^\ell B_i \), where \( B_i \) is the subgroup of upper triangular matrices in $\mathrm{GL}(d_{i},\mathbb{C})$. We fix the maximal algebraic torus of $G^{\mathbb{C}}$ as the product of diagonal matrices.

\begin{lemma}
Under the $G^{\mathbb{C}}$-action, \( X \) is a horospherical variety. 
\end{lemma}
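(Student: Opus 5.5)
The plan is to check the definition of a horospherical variety directly: $X$ is a normal $G^{\mathbb{C}}$-variety with an open dense orbit $G^{\mathbb{C}}\cdot x_0$ whose stabilizer $H$ contains a maximal unipotent subgroup $U$ of $G^{\mathbb{C}}$. Since $X$ is smooth, normality is automatic, so only the orbit and its stabilizer need to be analysed.

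We take as base point
\[
x_0=[\mathbf{e_0}:\mathbf{e_1}:\dots:\mathbf{e_\ell}].
\]
First we show that its orbit is the open subset
\[
\Omega=\{[\mathbf{z}_0:\dots:\mathbf{z}_\ell]\mid \mathbf{z}_j\neq 0 \text{ for all } j\}.
\]
This set is clearly $G^{\mathbb{C}}$-stable, open and dense. It is also a single orbit: for each $j$, $\mathrm{GL}(d_j,\mathbb{C})$ acts transitively on $\mathbb{C}^{d_j}\setminus\{0\}$, so any point of $\Omega$ can be written as $g\cdot x_0$ with $g=(g_0,\dots,g_\ell)$ and $g_j\mathbf{e_j}=\mathbf{z}_j$.

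Next we compute the stabilizer. An element $g=(g_j)$ fixes $x_0$ exactly when there is a scalar $\lambda\in\mathbb{C}^*$ with $g_j\mathbf{e_j}=\lambda\mathbf{e_j}$ for every $j$. Equivalently, every $g_j$ lies in the subgroup $P_j$ of matrices whose first column is a multiple of $\mathbf{e_j}$, and these first-column scalars all coincide.

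Let $U=\prod_j U_j$, where $U_j$ is the group of upper triangular unipotent matrices in $\mathrm{GL}(d_j,\mathbb{C})$. This is the unipotent radical of the fixed Borel subgroup $\prod_j B_j$, and hence a maximal unipotent subgroup of $G^{\mathbb{C}}$. Each $u_j\in U_j$ satisfies $u_j\mathbf{e_j}=\mathbf{e_j}$, so $U\subset H$. Hence $H$ is horospherical and $X$ is a horospherical $G^{\mathbb{C}}$-variety.

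For later use we also record the structure of the open orbit. The normalizer of $H$ is the parabolic subgroup $P=\prod_j P_j$, which contains the Borel subgroup. The quotient $P/H\cong(\mathbb{C}^*)^{\ell+1}/\mathbb{C}^*\cong(\mathbb{C}^*)^\ell$ is exactly the complexification of the torus $\T$. The fibration $G^{\mathbb{C}}/H\to G^{\mathbb{C}}/P=\prod_j\mathbb{P}(\mathbb{C}^{d_j})$ recovers, over the open orbit, the bundle structure of $\tilde X$ described in Section~\ref{s:set-up}. This identification is what lets the rank-$\ell$ lattice of the horospherical structure match the lattice of $\T$.

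None of these steps is a serious obstacle. The only point needing care is convention: one must fix which maximal unipotent subgroup is used, namely that of the chosen Borel subgroup. With upper triangular matrices, $U_j$ fixes the first basis vector $\mathbf{e_j}$, so the base point $x_0$ is compatible with this choice.
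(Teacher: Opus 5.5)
Your proof is correct and follows the same route as the paper. The paper also takes the base point $[\mathbf{e}_0:\dots:\mathbf{e}_\ell]$, identifies its open orbit $\mathring{X}=\{\mathbf{z}_j\neq 0 \ \forall j\}$, computes the stabilizer $H$ as the block upper-triangular group with equal $(1,1)$-entries, and notes that $H$ contains the unipotent radical of $\prod_j B_j$. Your explicit checks of normality and transitivity, and your remarks that $N_{G^{\mathbb{C}}}(H)=\prod_j P_j$ with $N_{G^{\mathbb{C}}}(H)/H\cong\mathbb{T}^{\mathbb{C}}$, are correct additions that the paper leaves implicit or uses only later.
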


\begin{proof}
To see this, let us consider the point \( x = [\mathbf{e}_0 : \mathbf{e}_1 : \mathbf{e}_2 : \dots : \mathbf{e}_\ell] \). Clearly, \( G^{\mathbb{C}} \cdot x \) is open in \( X \) and  \( G^{\mathbb{C}} \cdot x \subset X\) is a $G^{\mathbb{C}}$-equivariant embedding. We denote $G^{\mathbb{C}} \cdot x$ as $\mathring{X}$, we have
\[\mathring{X} = \{[\mathbf{z}_0 : \mathbf{z}_1 : \dots : \mathbf{z}_\ell]\,|\, \mathbf{z}_{j}\neq 0\}.\]
The subgroup of $G^{\mathbb{C}}$ which fixes \( x \) is 
\[
H = \left\{ \left(
\begin{pmatrix}
a_0 & B_{0} \\
0 & C_0
\end{pmatrix},
\begin{pmatrix}
a_1 & B_{1} \\
0 & C_1
\end{pmatrix},\cdots,
\begin{pmatrix}
a_{l} & B_{l} \\
0 & C_\ell
\end{pmatrix}\right)
\mid a_i \in \mathbb{C}^*, a_0 = a_1 = \dots = a_\ell 
\right\}.
\]
The unipotent radical of $B$ is contained in $H$, thus $H$ is a horospherical subgroup (see section \ref{preliminarieshoro} ) and $X$ is a $G^{\mathbb{C}}$-horospherical variety.
\end{proof}

We denote the algebraic moment polytope of $\mathcal{O}_{X}(1)$ with respect to $G^{\mathbb{C}}$ as $\Delta_{\mathrm{alg}}$. Note that, $z_{i,d_{i}}$ considered as elements in $H^{0}(X,\mathcal{O}(1))$, are $B$-weight vectors. We denote the weight of $z_{i,d_{i}}$ as $\chi_{i}$. The algebraic moment polytope $\Delta_{\mathrm{alg}}$ is exactly generated by $\chi_{i}$. We also have the polytope $\Delta = \Delta_{\mathrm{alg}}-\chi_{0}$ which is the polytope related to the section $z_{0,d_{0}}.$

The spherical lattice $M$ of $X$ is generated by $\frac{z_{i,d_{i}}}{z_{0,d_{0}}}$ for $i\neq 0$. We thus have
\[M = \mathbb{Z}\text{-}\{\chi_{i}-\chi_{0} \,|\,i\neq 0\} \cong \mathbb{Z}^\ell\]

Now we consider the symplectic side of the story.

We consider the Kirwan polytope $\Delta_{\mathrm{Kir}}$ of $(X,\omega_{\mathrm{FS}})$ under the action of $G = \prod_{j=0}^{\ell}U(d_{j})$. The moment map of $(X,\omega_{\mathrm{FS}})$ with respect to $G$ is
\[
\mu^{G}_{\omega_{\mathrm{FS}}}([\mathbf{z}_0 : \mathbf{z}_1 : \dots : \mathbf{z}_\ell])(a_{0},\cdots, a_{l}) = \frac{\sum_{j=0}^{\ell}\mathbf{z}_j^* a_{j}\mathbf{z}_j}{i\sum_{j=0}^{\ell} |\mathbf{z}_j|^2} ,
\]
where $a_j \in \mathfrak{u}(d_j)$. In particular,
\[
\mu^{G}_{\omega_{\mathrm{FS}}}\left([0: \dots : 0 : \mathbf{e_j} : 0 : \dots : 0]\right)(a_0, a_1, \dots, a_n) = \frac{a_{j,11}}{i}.
\]
The Kirwan polytope $\Delta_{\mathrm{Kir}}$ is exactly the polytope generated by all
\[
\mu^{G}_{\omega_{\mathrm{FS}}}\left([0: \dots : 0 : \mathbf{e_j} : 0 : \dots : 0]\right).
\]
We also have a natural $\mathbb{T} := (\mathbb{S}^1)^{\ell}$-action on $X$:
\[
(e^{it_1}, \dots, e^{it_\ell}) \cdot [\mathbf{z}_0 : \dots : \mathbf{z}_\ell] = 
[\mathbf{z}_0 : e^{it_1}\mathbf{z}_1 : \dots : e^{it_\ell}\mathbf{z}_\ell].
\]
The moment map of $\omega_{\mathrm{FS}}$ with respect to the $\mathbb{T}$-action is,
\[
\mu_{\omega_{\mathrm{FS}}}^{\mathbb{T}}([\mathbf{z}_0 : \mathbf{z}_1 : \dots : \mathbf{z}_\ell]) (it_1, \dots, it_\ell) =
\frac{t_1 |\mathbf{z}_1|^2 + \dots + t_\ell |\mathbf{z}_\ell|^2}{|\mathbf{z}_0|^2 + |\mathbf{z}_1|^2 + \dots + |\mathbf{z}_\ell|^2}.
\]

The torus $\mathbb{T}$ can be considered as a subgroup of $G$. So $\mu_{\omega_{\mathrm{FS}}}^{\mathbb{T}}$ is simply the composition of $\mu^{G}_{\omega_{\mathrm{FS}}}$ and the projection $\mathfrak{g}^{*}\rightarrow \mathfrak{t}^{*},$ where $\mathfrak{g}$ is the Lie algebra of $G$ and $\mathfrak{g}^*$ is the dual of $\mathfrak{g}.$ Since $\mathbb{T}$ is contained in the center of $G$, we have a map $\Delta_{\mathrm{Kir}}\rightarrow \Delta,$ where $\Delta$ is the symplectic moment polytope. We can check by hand to see:

\begin{lemma}
The natural map between the two polytopes $\Delta_{\mathrm{Kir}}$ and $\Delta$ is an isomorphism.
\end{lemma}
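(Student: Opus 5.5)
The plan is to make both polytopes completely explicit as convex hulls of $\ell+1$ points and to check that the natural map $\mathfrak{g}^*\to\mathfrak{t}^*$ sends one vertex set bijectively onto the other, affinely independently.

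First, I use the description above of $\Delta_{\mathrm{Kir}}$ as the convex hull of the points $\nu_j:=\mu^{G}_{\omega_{\mathrm{FS}}}([0:\dots:\mathbf{e}_j:\dots:0])$, $j=0,\dots,\ell$, where $\nu_j(a_0,\dots,a_\ell)=a_{j,11}/i$. Viewed in the dual of the Cartan subalgebra of $\mathfrak{g}$ (the diagonal matrices), $\nu_j$ is the coordinate functional $\varepsilon_{j,1}$ picking the first diagonal entry of the $j$-th block. These $\ell+1$ functionals are linearly independent, so $\Delta_{\mathrm{Kir}}$ is an $\ell$-simplex.

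Second, I compute the image of each vertex in $\mathfrak{t}^*$. The $k$-th circle of $\mathbb{T}$ ($k=1,\dots,\ell$) embeds in $G$ as the scalar matrix $it_k\,\mathrm{Id}_{d_k}$ in block $k$, with identity in the other blocks. Restricting $\nu_j$ to $\mathfrak{t}$ gives $\nu_j|_{\mathfrak{t}}(it_1,\dots,it_\ell)=t_j$ for $j\ge1$, and $\nu_0|_{\mathfrak{t}}=0$. This matches the formula for $\mu^{\mathbb{T}}_{\omega_{\mathrm{FS}}}$ evaluated at the coordinate points. The images are therefore $0,e_1^*,\dots,e_\ell^*$, which are exactly the vertices of the standard simplex $\Delta=\mu^{\mathbb{T}}_{\omega_{\mathrm{FS}}}(X)$. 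That $\Delta$ is this convex hull follows from the explicit formula for $\mu^{\mathbb{T}}_{\omega_{\mathrm{FS}}}$: its values are $(|\mathbf{z}_1|^2,\dots,|\mathbf{z}_\ell|^2)/\sum_j|\mathbf{z}_j|^2$, which fill out the standard simplex.

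Finally, the restriction map $\mathfrak{g}^*\to\mathfrak{t}^*$ is linear. It sends the $\ell+1$ affinely independent vertices of $\Delta_{\mathrm{Kir}}$ to the $\ell+1$ affinely independent vertices of $\Delta$. Its restriction to the affine span of $\Delta_{\mathrm{Kir}}$ is therefore an affine bijection onto $\mathfrak{t}^*$, and it maps the convex hull onto the convex hull, giving the isomorphism $\Delta_{\mathrm{Kir}}\cong\Delta$.

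The only delicate point is the claim that $\Delta_{\mathrm{Kir}}$ is exactly the hull of the $\nu_j$, since the Kirwan polytope is the image of the moment map intersected with the positive Weyl chamber. I take this as stated in the text. If a check were needed, it would come from the fact that a point $[\mathbf{z}]$ is $G$-conjugate to $[\,|\mathbf{z}_0|\mathbf{e}_0:\dots:|\mathbf{z}_\ell|\mathbf{e}_\ell]$. At such a point $\mu^G$ lies in the chamber and equals $\sum_j |\mathbf{z}_j|^2\nu_j/\sum_j|\mathbf{z}_j|^2$, which ranges over the full simplex.
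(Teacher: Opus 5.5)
Your proposal is correct and follows the paper's approach. The paper justifies the lemma only with ``we can check by hand'', and your explicit computation supplies exactly that check: the vertices $\nu_j$ of $\Delta_{\mathrm{Kir}}$ restrict to $0,e_1^*,\dots,e_\ell^*$ under $\mathfrak{g}^*\to\mathfrak{t}^*$, and affine independence then makes the restriction an affine isomorphism of simplices.
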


Now assume that we have a $f \in \mathcal{DPL}_{\mathrm{dom}}(\Delta)$. We choose a rational $R>0$ such that $R-f$ is strictly positive. We then have the polytope:
\[\Delta_{R-f} = \{(x,y) \, | \, x
\in \Delta,0\leq y\leq R-f\}\]
The function $R-f$ induces a function on $\Delta_{\mathrm{Kir}}.$ We then have the polytope $\Delta_{R-f,\mathrm{Kir}}$ defined as in the similar manner as $\Delta_{R-f}.$

This provides a $G^{\mathbb{C}}$-equivariant test configuration $(\mathcal{X}_{f},\mathcal{L}_{f})$ of $(X,\mathcal{O}(1))$ following Theorem \ref{kir_alg_id} and Theorem \ref{sphericaltest}. This $\mathcal{X}_{f}$ is a $\Gc\times\mathbb{C}^{*}$-equivariant embedding of $\mathring{\mathcal{X}_{f}}:=\mathring{X}\times \mathbb{C}^{*}\cong \Gc\times\mathbb{C}^{*} /(H\times\{1\} )$, so it is a $\Gc\times\mathbb{C}^{*}$-horospherical variety. We conclude as follows.

\begin{lemma}
For a $f \in \mathcal{DPL}_{\mathrm{dom}}(\Delta)$, we have a corresponding $G^{\mathbb{C}}$-equivariant test configuration $(\mathcal{X}_{f},\mathcal{L}_{f})$ of $(X,\mathcal{O}(1))$. The variety $\mathcal{X}_{f}$ is a $\Gc\times\mathbb{C}^{*}$-horospherical variety.
\end{lemma}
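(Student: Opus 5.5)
The plan is to assemble the test configuration from the spherical/horospherical dictionary recalled in Appendix~\ref{a:horo}, and then read off the horospherical structure of $\mathcal{X}_f$ from its open orbit. First, we use the preceding lemma, which identifies $\Delta_{\mathrm{Kir}}$ with $\Delta$, to view $f\in\mathcal{DPL}_{\mathrm{dom}}(\Delta)$ as a rational convex piecewise affine function on $\Delta_{\mathrm{Kir}}$. Through the translation $\Delta=\Delta_{\mathrm{alg}}-\chi_0$ and Theorem~\ref{kir_alg_id}, it also becomes a function on the algebraic moment polytope $\Delta_{\mathrm{alg}}$. Choosing $R$ rational with $R-f>0$, the polytope $\Delta_{R-f,\mathrm{Kir}}$ is then a rational polytope in $(M\oplus\mathbb{Z})\otimes\mathbb{Q}$. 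We will check that its linear pieces have slopes in the spherical lattice $M\cong\mathbb{Z}^\ell$; this holds because the $f_k$ are integral combinations of the labels $L_j$, and these correspond to the weights $\chi_j-\chi_0$. Under this dictionary, $\Delta_{R-f,\mathrm{Kir}}$ is the moment polytope of a polarized $G^{\mathbb{C}}\times\mathbb{C}^*$-spherical variety.

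Second, we apply Theorem~\ref{sphericaltest}, Delcroix's construction \cite[Theorem~4.1]{Del2023}, to the homogeneous space $G^{\mathbb{C}}/H$ with the convex rational function $R-f$. This produces a $G^{\mathbb{C}}$-equivariant test configuration $(\mathcal{X}_f,\mathcal{L}_f)$ of $(X,\mathcal{O}(1))$. Concretely, $\mathcal{X}_f$ is the normal projective $G^{\mathbb{C}}\times\mathbb{C}^*$-variety given as $\mathrm{Proj}$ of the graded algebra of sections whose $B\times\mathbb{C}^*$-weights lie in the cone over $\{(\chi,t):\chi\in\Delta_{\mathrm{alg}},\ t\le R-f(\chi-\chi_0)\}$. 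Its generic fiber is $X$, the $\mathbb{C}^*$-action covers the standard one on the base, and $\mathcal{L}_f$ restricts to $\mathcal{O}(1)$ away from $0$.

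Third, we identify the open orbit. Over $\mathbb{C}^*$ we have $\mathcal{X}_f\setminus\pi^{-1}(0)\cong X\times\mathbb{C}^*$ equivariantly, so $\mathring{X}\times\mathbb{C}^*$ is a dense open $G^{\mathbb{C}}\times\mathbb{C}^*$-orbit. The stabilizer of $(x,1)$ is $H\times\{1\}$, so this orbit is $(G^{\mathbb{C}}\times\mathbb{C}^*)/(H\times\{1\})$. Because $H$ contains the unipotent radical $U$ of the Borel subgroup $\prod_i B_i$ (shown in the earlier lemma), $H\times\{1\}$ contains the unipotent radical $U\times\{1\}$ of the Borel subgroup $\prod_i B_i\times\mathbb{C}^*$. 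Hence $H\times\{1\}$ is horospherical, and $\mathcal{X}_f$, being a normal equivariant embedding of this orbit, is a $G^{\mathbb{C}}\times\mathbb{C}^*$-horospherical variety.

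I expect the main obstacle to be the first step. One must verify that the Kirwan-to-algebraic identification of Theorem~\ref{kir_alg_id} is compatible with the translation by $\chi_0$ and the lattice $M$, so that $R-f$ satisfies the integrality hypotheses of Theorem~\ref{sphericaltest} and the resulting polytope is $\Delta_{R-f,\mathrm{Kir}}$ rather than a rescaled version of it. I would first try to settle this by writing the weights $\chi_i-\chi_0$ explicitly as the vertices of the standard simplex $\Delta$, and comparing them with the Fubini--Study moment map values $\mu^{G}_{\omega_{\mathrm{FS}}}([0:\dots:\mathbf{e}_j:\dots:0])$ computed above.
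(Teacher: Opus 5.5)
Your proposal is correct and follows the paper's own argument. The paper likewise transports $R-f$ to the algebraic moment polytope via $\Delta_{\mathrm{Kir}}\cong\Delta$ and Theorem~\ref{kir_alg_id}, applies Theorem~\ref{sphericaltest}, and then observes that $\mathcal{X}_f$ is an equivariant embedding of $\mathring{X}\times\mathbb{C}^*\cong(\Gc\times\mathbb{C}^*)/(H\times\{1\})$, which is horospherical by exactly your unipotent-radical observation.

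Two small slips do not affect the argument. First, $R-f$ is concave, not convex, and concavity is what Theorem~\ref{sphericaltest} needs. Second, the slopes of the affine pieces of $f$ lie in the dual lattice of $M$, not in $M$ itself. Since that theorem only requires rational piecewise linear functions, the integrality and polytope-matching check you flag as the main obstacle is not needed for this lemma. It matters only later, when the moment polytope of $\mathcal{X}_f$ is identified with $\hat{\Delta}_{R-f}$.
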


\subsection{Smoothness of the horospherical test configuration}\label{smoothnesstc}
We embed $\mathcal{X}_{f}$ in a $ \Gc\times\mathbb{C}^{*}$-equivariant way into $\mathbb{P}(H^{0}(\mathcal{X}_{f},k\mathcal{L}_{f})^{*})$ by using sections of $k\mathcal{L}_{f}$ for some large $k$ and pullback (some multiple of) the Fubini-Study metric, so we have a $G\times \mathbb{S}^{1}$-invariant Kähler metrics $\omega_{\mathcal{X}_{f}}$ which represents $2\pi c_{1}(\mathcal{L}_{f})$. Let $\mu_{\omega_{\mathcal{X}_{f}}}^{\tl\times \mathbb{S}^{1}}$ (respectively $\mu_{\omega_{\mathcal{X}_{f}}}^{G\times \mathbb{S}^{1}}$) be the moment map of $(\mathcal{X}_{f},\omega_{\mathcal{X}_{f}})$ under the action of $\mathbb{T}\times \mathbb{S}^{1}$ (respectively $G\times \mathbb{S}^{1}$).

The polytope $\Delta_{R-f,\mathrm{Kir}}$ is exactly the Kirwan polytope of $(\mathcal{X}_{f},\omega_{\mathcal{X}_{f}})$ under the $G\times \mathbb{S}^{1}$-action. This follows from Theorem~\ref{kir_alg_id} and the fact that 
$\Delta_{R-f,\mathrm{alg}}$ is precisely the algebraic moment polytope of 
$(\mathcal{X}_{f},\mathcal{L}_{f})$ under the $G^{\mathbb{C}}\times\mathbb{C}^{*}$–action, 
as explained in the proof of \cite[Theorem~4.1]{Del2023}.

We have a standard $T$-action on $X$ which makes $(X,\omega_{\mathrm{FS}})$ a Kähler toric manifold. We have a corresponding $T$-action on $\mathcal{X}_{f}$ since $T$ is a subgroup of $G.$

Note that the image of $\mu_{\omega_{\mathcal{X}_{f}}}^{G\times \mathbb{S}^{1}}$ is the union of all coadjoint orbits of points in $\Delta_{R-f,\mathrm{Kir}}$ under the $G\times \mathbb{S}^{1}$-action. It follows directly that the symplectic moment polytope of $(\mathcal{X}_{f},\omega_{\mathcal{X}_{f}})$ under $T\times \mathbb{S}^{1}$-action is $\hat{\Delta}_{R-f}$. From Proposition \ref{l:CT} we know that $\hat{\Delta}_{R-f}$ is a Delzant polytope. 

Note that $\mathcal{X}_{f}$ is a $T^{\mathbb{C}}\times \mathbb{C}^{*}$-equivariant embedding of $T^{\mathbb{C}}\times \mathbb{C}^{*}$. We denote the Kähler class of $2\pi c_{1}(\mathcal{L}_{f})$ as $\mathcal{A}_{\mathcal{X}_{f}}.$ Then we have:

\begin{prop}
    The corresponding horospherical test configuration $\mathcal{X}_{f}$ is smooth. $(\mathcal{X}_{f},\mathcal{A}_{\mathcal{X}_{f}})$ is a Kähler $T\times \mathbb{S}^{1}$-toric manifold whose symplectic moment polytope is $\hat{\Delta}_{R-f}$. In particular, as a toric manifold, it is isomorphic to compatible test configurations introduced in Definition \ref{d:compatible:tc}.
\end{prop}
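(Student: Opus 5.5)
The plan is to reduce everything to the classical dictionary between smooth projective toric varieties and Delzant polytopes. The inputs are already in place. $\mathcal{X}_f$ is a normal projective $G^{\mathbb{C}}\times\mathbb{C}^*$-horospherical variety, being an equivariant embedding of $\mathring{X}\times\mathbb{C}^*$. Since $T^{\mathbb{C}}\subset G^{\mathbb{C}}$ acts on $\mathring{X}$ with a dense open orbit (the open torus orbit of the toric variety $X$), the group $T^{\mathbb{C}}\times\mathbb{C}^*$ has a dense open orbit in $\mathcal{X}_f$ isomorphic to $T^{\mathbb{C}}\times\mathbb{C}^*$ itself. Hence $\mathcal{X}_f$ is a normal projective toric variety for $T^{\mathbb{C}}\times\mathbb{C}^*$, polarized by the ample line bundle $\mathcal{L}_f$.

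First, I identify the moment polytope of $(\mathcal{X}_f,\omega_{\mathcal{X}_f})$ for $T\times\mathbb{S}^1$. The restriction of a $G\times\mathbb{S}^1$ moment map to the maximal torus $T\times\mathbb{S}^1$ is the composition with the projection $\mathfrak{g}^*\oplus\mathbb{R}\to\mathfrak{t}_T^*\oplus\mathbb{R}$. The image of $\mu^{G\times\mathbb{S}^1}_{\omega_{\mathcal{X}_f}}$ is the union of coadjoint orbits through $\Delta_{R-f,\mathrm{Kir}}$. By the Schur--Horn/Kostant convexity theorem, the projection of $\mathrm{Ad}^*(G)\cdot\lambda$ to $\mathfrak{t}_T^*$ is the convex hull of the Weyl orbit of $\lambda$. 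Taking $\lambda$ in the facet of $\Delta_{\mathrm{Kir}}\cong\Delta$ times the fiber $[0,R-f]$, the union of these hulls is computed block by block. In the $j$th block, $\lambda_j=L_j(x)\,\mathbf{e}_{j}$-type weights, so the Weyl orbit hull is $L_j(x)\Delta_j$. This gives exactly the set $\{(x,\hat{x}_B,y): \hat{x}^j\in L_j(x)\Delta_j,\ 0\le y\le R-f(x)\}$. By Lemma~\ref{l:label}, this set is $\hat{\Delta}_{R-\pi^*f}$. Since $\omega_{\mathcal{X}_f}$ is the restriction of a Fubini--Study metric, this is also the algebraic weight polytope of $\mathcal{L}_f$ for $T^{\mathbb{C}}\times\mathbb{C}^*$.

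Second, I deduce smoothness. For a normal polarized toric variety, the polytope of $\mathcal{L}_f$ determines $(\mathcal{X}_f,\mathcal{L}_f)$: its normal fan is the fan of $\mathcal{X}_f$. The variety is smooth iff the polytope is Delzant with respect to the lattice of one-parameter subgroups of $T^{\mathbb{C}}\times\mathbb{C}^*$. Proposition~\ref{l:CT} gives that $\hat{\Delta}_{R-f}$ is Delzant for the lattice $\Lambda'\times\mathbb{Z}$. One must check that this lattice coincides with the one coming from the character lattice of $T^{\mathbb{C}}\times\mathbb{C}^*$ acting on $\mathcal{X}_f$. This holds because both restrict to the standard lattice of $T$ on the general fiber $X$. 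Hence $\mathcal{X}_f$ is smooth.

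Finally, by Delzant's theorem, two compact symplectic toric manifolds with the same Delzant polytope are equivariantly symplectomorphic. Equivalently, the normal fan determines the complex toric manifold, and the polytope determines the class $\mathcal{A}_{\mathcal{X}_f}=2\pi c_1(\mathcal{L}_f)$. Comparing with Definition~\ref{d:torictc} applied to $\hat{\Delta}_{R-\pi^*f}$ then identifies $(\mathcal{X}_f,\mathcal{A}_{\mathcal{X}_f})$ with the compatible test configuration of Definition~\ref{d:compatible:tc}. This identification is compatible with the maps to $\mathbb{P}^1$, because these are induced by the $\mathbb{C}^*$-factor, that is, the last coordinate $y$.

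I expect the main obstacle to be the first step: justifying carefully that the $T\times\mathbb{S}^1$-image of the union of coadjoint orbits over the Kirwan polytope is exactly $\hat{\Delta}_{R-f}$, matching the labels of Lemma~\ref{l:label}, including the normalization shift by $\chi_0$.
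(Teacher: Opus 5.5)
Your proposal is correct and follows essentially the paper's route. You project the union of $G\times\mathbb{S}^1$-coadjoint orbits through the Kirwan polytope $\Delta_{R-f,\mathrm{Kir}}$ to get the $T\times\mathbb{S}^1$ moment polytope $\hat{\Delta}_{R-f}$; the paper obtains that Kirwan polytope from Brion's theorem and Delcroix's correspondence, and you take it as an input. You then use Proposition~\ref{l:CT} for the Delzant property and the toric dictionary for smoothness and the identification, and your block-by-block Kostant--Schur--Horn computation and lattice check simply make explicit the step the paper says ``follows directly.''
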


\subsection{Semisimplicity and rigidity of the $\mathbb{T}\times \mathbb{S}^{1}$-action on $(\mathcal{X}_{f},\omega_{\mathcal{X}_{f}})$} \label{semirigiXf}

We first recall several definitions of notable classes of Kähler manifolds equipped with torus actions, following \cite{ACGT04}.

\begin{define}\label{d:rigide}
Let \((X, \omega)\) be a connected Kähler manifold with an effective isometric Hamiltonian action of a torus \(\mathbb{T}\) with moment map \(\mu_{\omega}^{\mathbb{T} }\colon X \to \mathfrak{t}^*\). We say the action is \textit{rigid} if for all \(x \in X\), \(R_x^* g\) depends only on \(\mu(x)\), where \(R_x \colon \mathbb{T} \to \mathbb{T} \cdot x \subset X\) is the orbit map and $g$ is the Riemannian metric corresponding to $\omega.$
\end{define}

\begin{define}\label{d:semisimple}
Let \((X, \omega)\) be a connected Kähler manifold with an effective isometric Hamiltonian action of a torus \(\mathbb{T}\) with moment map \(\mu_{\omega}^{\mathbb{T}} \colon X \to \mathfrak{t}^*\). Note that for any regular value $\lambda$ of the moment map, we can define a Kähler quotient $(S,\omega_{\lambda})$, where $S$ is the complex quotient, independent of the choice of $\lambda$. We say that the torus action is semisimple if for any regular value \(h\) of the momentum map \(\mu_{\omega}^{\mathbb{T}}\), the derivative with respect to \(\mu_{\omega}^{\mathbb{T}}\) of the family \(\omega_\lambda\) of Kähler forms on the complex quotient \(S\) is parallel and diagonalizable with respect to \(\omega_h\) at \(\mu_{\omega}^{\mathbb{T}} = h\). 
\end{define}

We show:
\begin{prop}\label{rdss}
The $\tl\times \mathbb{S}^{1}$-action on $(\mathcal{X}_{f},\omega_{\mathcal{X}_{f}})$ is rigid and semisimple.
\end{prop}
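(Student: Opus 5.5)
The plan is to exploit the extra non-abelian symmetry: $\omega_{\mathcal{X}_f}$ is $G\times\mathbb{S}^1$-invariant, and $\mathbb{T}\times\mathbb{S}^1$ lies in the center of $G\times\mathbb{S}^1$. Both conditions will be read off from the structure of the open dense orbit $\mathring{\mathcal{X}}_f\cong G^{\mathbb{C}}\times\mathbb{C}^*/(H\times\{1\})$ and its torus fibration.

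\emph{Rigidity.} Fix $x\in\mathcal{X}_f$. The pulled-back metric $R_x^*g$ on $\mathbb{T}\times\mathbb{S}^1$ is determined by the matrix $g(X_\xi,X_\eta)(x)$ for $\xi,\eta\in\mathfrak{t}\oplus\mathbb{R}$. Because the action is holomorphic and Hamiltonian, $g(X_\xi,X_\eta)=\omega(X_\xi,JX_\eta)=d\langle\mu,\xi\rangle(JX_\eta)$. By $G$-invariance it suffices to check rigidity along a slice meeting every $G\times\mathbb{S}^1$-orbit.

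On the open orbit, the $G^{\mathbb{C}}\times\mathbb{C}^*$-action factors, modulo $H$, through the torus $(G^{\mathbb{C}}\times\mathbb{C}^*)/(H\cdot U)$. Here $\mathbb{T}^{\mathbb{C}}\times\mathbb{C}^*$ acts with an open orbit on $H\backslash$-type fibres, since $H$ contains the unipotent radical and the quotient by $P=N_{G^{\mathbb{C}}}(H)$ is the flag variety $B=\prod_j\mathbb{P}(\mathbb{C}^{d_j})$. Concretely, $\mathring{\mathcal{X}}_f\to B$ is a principal $(\mathbb{C}^*)^{\ell}\times\mathbb{C}^*$-bundle obtained by taking each $\mathbf{z}_j$ up to scale. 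The $G$-action is transitive on $B$, and the stabilizer of a point of $B$ contains $\prod_j U(1)\times U(d_j-1)$. This stabilizer commutes with $\mathbb{T}$ and acts trivially on the fibre directions, which fixes the fibre geometry as a function of the torus moment map.

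We will show that the $\mathbb{T}\times\mathbb{S}^1$-moment map separates the $G\times\mathbb{S}^1$-orbits that contain a $\mathbb{T}^{\mathbb{C}}\times\mathbb{C}^*$-orbit through the base point $[\mathbf{e}_0:\dots:\mathbf{e}_\ell]$. This uses the Kirwan polytope identification $\Delta_{R-f,\mathrm{Kir}}\cong\Delta_{R-f}$ and the fact that $\mathbb{T}\times\mathbb{S}^1$ is central, so $\mu^{\mathbb{T}\times\mathbb{S}^1}$ is the composition of $\mu^{G\times\mathbb{S}^1}$ with the projection. Since $\mu^{G\times\mathbb{S}^1}$ is $G$-equivariant and $g$ is $G$-invariant, the orbit metric at $x$ equals the one at a point of the slice with the same $\mathbb{T}\times\mathbb{S}^1$-moment value. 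On the slice, the orbit metric is a function of the moment map by the toric (Guillemin) description of the $\mathbb{T}\times\mathbb{S}^1$-orbit closure. Rigidity then extends to all of $\mathcal{X}_f$ by density and continuity.

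\emph{Semisimplicity.} The complex quotient of $\mathcal{X}_f$ by $\mathbb{T}^{\mathbb{C}}\times\mathbb{C}^*$ is $S=B=\prod_j\mathbb{P}(\mathbb{C}^{d_j})$. The reduced forms $\omega_\lambda$ are $G$-invariant Kähler forms on $B$, so by Schur's lemma each is of the form $\sum_j c_j(\lambda)\,\omega_{\mathrm{FS}_j}$, because $G$ acts irreducibly on each tangent factor $T\mathbb{P}(\mathbb{C}^{d_j})$. By Duistermaat--Heckman, $d\omega_\lambda/d\lambda=\sum_j\omega_{\mathrm{FS}_j}\otimes p_j'$, where the linear coefficients $p_j'$ come from the curvature of the principal bundle $\mathring{\mathcal{X}}_f\to B$. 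This derivative is a combination of the $\omega_{\mathrm{FS}_j}$, each parallel, and it is diagonalizable with respect to $\omega_h$ since both live on the same product decomposition. We expect the coefficients to be exactly the $L_j$ composed with the projection $\Delta_{R-f}\to\Delta$, consistent with the toric picture of Proposition~\ref{l:CT} via the label computation of Lemma~\ref{l:label}.

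\emph{Main obstacle.} The hard step is rigidity off the open orbit, together with justifying that the slice is hit with the correct moment values. We would handle it by working on the open dense set $\mathring{\mathcal{X}}_f$ and extending by continuity, since rigidity is a closed condition. The alternative route would be to show directly that $\omega_{\mathcal{X}_f}$ is compatible in the sense of Definition~\ref{d:comp:metric}, that is, that its toric symplectic potential has the form \eqref{comp:symp}. That appears to require matching the Fubini--Study embedding with the toric data, which is more delicate.
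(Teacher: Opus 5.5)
Your argument is correct in substance, but both halves are organized differently from the paper.

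\textbf{Rigidity.} The paper invokes Woodward: the $G\times\mathbb{S}^1$-action is multiplicity-free, so points of $\Delta_{R-f,\mathrm{Kir}}$ correspond bijectively to $G\times\mathbb{S}^1$-orbits on all of $\mathcal{X}_f$. Combined with $\Delta_{R-f,\mathrm{Kir}}\cong\Delta_{R-f}$ and centrality of $\mathbb{T}\times\mathbb{S}^1$, the value $\mu^{\mathbb{T}\times\mathbb{S}^1}(x)$ then determines the $G\times\mathbb{S}^1$-orbit of $x$, hence $R_x^*g$.

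You obtain the needed parametrization by hand. Since $G$ is transitive on $B=\prod_j\mathbb{P}(\mathbb{C}^{d_j})$, every point of $\mathring{\mathcal{X}}_f$ is $G$-conjugate to a point of the free $\mathbb{T}^{\mathbb{C}}\times\mathbb{C}^*$-orbit $O$ of the base point. On $O$ the orbit metric is a function of $\mu$, because $\mu$ is the gradient of a strictly convex function in logarithmic coordinates. This is more elementary and self-contained, in effect reproving multiplicity-freeness on the open part. The paper's route instead covers the lower-dimensional orbits at no extra cost.

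Three remarks on this half.
\begin{enumerate}
\item Your separation claim is not needed: conjugating into $O$ plus toric rigidity on $O$ already suffices. It also does not follow from the Kirwan identification plus centrality, since that is exactly the multiplicity-free input. It follows instead from injectivity of $\mu$ on $O/(\mathbb{T}\times\mathbb{S}^1)$.
\item ``Rigidity is a closed condition'' is not a valid general principle. Density only helps if the function $F$ with $R_x^*g=F(\mu(x))$ on $\mathring{\mathcal{X}}_f$ extends continuously to $\Delta_{R-f}$. You can get this from your own setup. The set $G\cdot\overline{O}$ is compact and contains $G\cdot O=\mathring{\mathcal{X}}_f$, hence equals $\mathcal{X}_f$. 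By Atiyah's theorem on closures of complex torus orbits, $\mu$ induces a homeomorphism $\overline{O}/(\mathbb{T}\times\mathbb{S}^1)\cong\Delta_{R-f}$. So the orbit metric is a continuous function of $\mu$ on $\overline{O}$, and therefore everywhere by $G$-invariance.
\item Minor: the $U(1)$ factors of the stabilizer $\prod_j U(1)\times U(d_j-1)$ act on $O$ through $\mathbb{T}$, not trivially. Only the $U(d_j-1)$ factors act trivially, which is harmless.
\end{enumerate}

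\textbf{Semisimplicity.} Your Schur-lemma argument makes precise the paper's remark that the reduced $G$-invariant Kähler form on $B$ is determined by its Kirwan point. It directly gives $\omega_\lambda=\sum_j c_j(\lambda)\,\omega_{\mathrm{FS}_j}$, whose $\lambda$-derivative is parallel and diagonalizable with respect to the product metric $\omega_h$. So Duistermaat--Heckman is not even needed for the proposition.

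The paper instead computes the reduced form explicitly from the connection on $P_{\mathbb{T}\times\mathbb{S}^1}$ induced by the Hopf bundles. It obtains $\omega_\lambda=\omega_{\mathrm{FS}_0}+\langle\Omega_{\mathbb{T}\times\mathbb{S}^1},\lambda\rangle$, i.e.\ $c_j=L_j$ pulled back to $\Delta_{R-f}$, as you anticipated. That explicit formula is not required here, but it is what feeds the generalized Calabi description of $\omega_{\mathcal{X}_f}$ and the subsequent bundle computation for $\mathcal{Y}_f$.
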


\begin{proof}
We know that $\Delta_{R-f,\mathrm{Kir}}$ is exactly the Kirwan polytope of $(\mathcal{X}_{f},\omega_{\mathcal{X}_{f}})$ under the action of $G\times \mathbb{S}^{1}$ and $\Delta_{R-f}$ is the symplectic moment polytope of  $(\mathcal{X}_{f},\omega_{\mathcal{X}_{f}})$ under the action of $\tl\times \mathbb{S}^{1}.$ What's more, there is a natural isomorphism $\Delta_{R-f,\mathrm{Kir}}\rightarrow \Delta_{R-f}.$

We know that $(\mathcal{X}_{f},\omega_{\mathcal{X}_{f}})$ is multiplicity-free in the Hamiltonian sense with respect to the $G\times \mathbb{S}^{1}$-action by \cite[Proposition 9.2.4]{woodward2010moment}, then by \cite[Appendix]{Woodwardmultifree1996} or \cite[Theorem 9.2.1]{woodward2010moment} we know that the points in $\Delta_{R-f,\mathrm{Kir}}$ correspond bijectively to $G\times \mathbb{S}^{1}$-orbits in $\mathcal{X}_{f}$. 

The Riemannian metric $g_{\mathcal{X}_{f}}$ corresponding to $\omega_{\mathcal{X}_{f}}$ is $G\times \mathbb{S}^{1}$-invariant. Then by the identification of $G\times \mathbb{S}^{1}$-orbits with points in $\Delta_{R-f}$ and the fact that the action of $\tl\times \mathbb{S}^{1}$ commutes with that of $G\times \mathbb{S}^{1}$, we know that the $\tl\times \mathbb{S}^{1}$-action is rigid on $(\mathcal{X}_{f},\omega_{\mathcal{X}_{f}}).$

Then we show that $\omega_{\mathcal{X}_{f}}$ is semisimple with respect to the $\mathbb{T}\times \mathbb{S}^{1}$-action. We denote the interior of $\Delta_{R-f}$ as $\mathring{\Delta}_{R-f}.$ First note that $(\mu_{\omega_{\mathcal{X}_{f}}}^{\tl\times \mathbb{S}^{1}})^{-1}(\mathring{\Delta}_{R-f}) = \mathring{\mathcal{X}_{f}}.$ This follows from the identification $\Delta_{R-f,\mathrm{Kir}}\rightarrow \Delta_{R-f},$ \cite[Proposition 5.3.2]{brion1997varietes} and Theorem \ref{kir_alg_id} .

For any point $\lambda$ in $\mathring{\Delta}_{R-f}$, we can also think of it as a point in the relative interior of $\Delta_{R-f,\mathrm{Kir}}$ and we denote its projection onto $\mathfrak{g}^{*}$ as $\bar{\lambda}$. The pair $(\mathcal{X}_{f},\omega_{\mathcal{X}_{f}})$ is a $G\times (\mathbb{T}\times \mathbb{S}^{1})$-Hamiltonian manifold. We can then take the Kähler reduction at $(0,\lambda)$ by adopting the Marsden–Weinstein–Meyer reduction theorem (see for example \cite[Proposition 1.2]{Apo}) to have a $G$-Hamiltonian manifold $(\mu_{\omega_{\mathcal{X}_{f}}}^{\tl\times \mathbb{S}^{1}})^{-1}(\lambda)/(\tl\times \mathbb{S}^{1})$ with a Kähler structure. Note that complex quotient of $\mathring{\mathcal{X}_{f}}$ by $\tl^{\mathbb{C}}\times \mathbb{C}^{*}$ is $\prod_{j=0}^{\ell}\mathbb{P}(\mathbb{C}^{d_{j}})$. So the Kähler reduction is $\prod_{j=0}^{\ell}\mathbb{P}(\mathbb{C}^{d_{j}})$ with a Kähler form whose Kirwan polytope of the moment map with respect to the $\prod_{j=0}^{\ell}U(d_{j})$-action is $\bar{\lambda}.$ The Kähler form is uniquely determined by this Kirwan polytope. We can already conclude semisimplicity here. But let us provide more details.

We know that $(\mu_{\omega_{\mathcal{X}_{f}}}^{\tl\times \mathbb{S}^{1}})^{-1}(\lambda)$ for $\lambda$ in $\mathring{\Delta}_{R-f}$ is a principal $\tl\times \mathbb{S}^{1}$-bundle over $\prod_{j=0}^{\ell} \mathbb{P}(\mathbb{C}^{d_{j}})$. We call it $P_{\tl\times \mathbb{S}^{1}}$. This bundle can be described explicitly in the following way.

We have the Hopf fibration, $$S^{2d_{j}-1}\rightarrow \frac{U(d_{j})}{U(1)\times U(d_{j}-1)}\cong \mathbb{P}(\mathbb{C}^{d_{j}})$$ which provides a $\mathbb{S}^{1}$-principal bundle. We call this principal $\mathbb{S}^{1}$-bundle $P_{j}$ and pull it back to a $\mathbb{S}^{1}$-principal bundle over $\prod_{j=0}^{\ell} \mathbb{P}(\mathbb{C}^{d_{j}})$ which we still call $P_{j}$. Then we have a $(\mathbb{S}^{1})^{\ell+1}\times \mathbb{S}^{1}$-principal bundle $(\oplus_{j=0}^{\ell}P_{j} )\times \mathbb{S}^{1}$. Afterwards, $P_{\tl\times \mathbb{S}^{1}}$ is exactly the quotient of $(\oplus_{j=0}^{\ell}P_{j} )\times \mathbb{S}^{1}$ by the diagonal $\mathbb{S}^{1}$-action on the  $\oplus_{j=0}^{\ell}P_{j}$ part.

Then we have a homomorphism between principal bundles (see \cite[Chapter 1, Section 5]{kobayashi1996foundations}) $(\oplus_{j=0}^{\ell}P_{j} )\times \mathbb{S}^{1}\rightarrow P_{\tl\times \mathbb{S}^{1}}$.

Note that $P_{j}$ is the unitary bundle inside $\mathcal{O}_{\mathbb{P}(\mathbb{C}^{d_{j}})}(-1).$ So there is a natural connection on $P_{j}$ whose curvature is $i\omega_{\mathrm{FS}_{j}}$ (i, the imaginary root is considered as the basis for $\mathrm{Lie}(\mathbb{S}^{1}) = i\mathbb{R}$). Afterwards, $(\oplus_{j=0}^{\ell}P_{j} )\times \mathbb{S}^{1}$ admits a natural connection. Hence, there is an induced connection $\theta_{\tl\times \mathbb{S}^{1}}$ (see \cite[Chapter 2, Section 6]{kobayashi1996foundations}) on $P_{\tl\times \mathbb{S}^{1}}$ whose curvature is
\[\Omega_{\tl\times \mathbb{S}^{1}} = \sum_{j=1}^{\ell }(\omega_{\mathrm{FS}_{j}}-\omega_{\mathrm{FS}_{0}}) \otimes \xi_{j}.\]

Note that $\Omega_{\tl\times \mathbb{S}^{1}}$ on $P_{\tl\times \mathbb{S}^{1}}$ can descend to a form on $\prod_{j=0}^{\ell}\mathbb{P}(\mathbb{C}^{d_{j}})$ which we still denote as $\Omega_{\tl\times \mathbb{S}^{1}}.$

Then note that the Kähler form on the symplectic quotient $(\mu_{\omega_{\mathcal{X}_{f}}}^{\tl\times \mathbb{S}^{1}})^{-1}(\lambda)/(\mathbb{T}\times \mathbb{S}^{1})\cong \prod_{j=0}^{\ell}\mathbb{P}(\mathbb{C}^{d_{j}})$ is exactly $\omega_{\mathrm{FS}_{0}} + \langle \Omega_{\tl\times \mathbb{S}^{1}},\lambda\rangle.$ From this we conclude the semisimplicity.
\end{proof}

\subsection{Generalized Calabi construction of $(\mathcal{X}_{f},\omega_{\mathcal{X}_{f}})$} \label{gencalabiconsxf}

Let us get more descriptions of $(\mathcal{X}_{f},\omega_{\mathcal{X}_{f}})$ following \cite{ACGT04}. For $X = \mathbb{P}\left(\oplus_{j=0}^{\ell} \mathbb{C}^{d_j}\right)$, we have the open part
$$\mathring{X} = \{[\mathbf{z}_0 : \mathbf{z}_1 : \dots : \mathbf{z}_\ell]  \, | \, \mathbf{z}_{j}\neq 0\}.$$ This open part $\mathring{X}$ admits a natural projection to $\prod_{j=0}^{\ell}\mathbb{P}(\mathbb{C}^{d_{j}})$ which realizes $\mathring{X}$ as a $\mathbb{T}^{\mathbb{C}}$-principal bundle over $\prod_{j=0}^{\ell}\mathbb{P}(\mathbb{C}^{d_{j}}).$
Then we consider the open part $\mathring{\mathcal{X}_{f}}\cong \mathring{X}\times \mathbb{C}^{*}\subset \mathcal{X}_{f}$ which is a $\mathbb{T}^{\mathbb{C}}\times \mathbb{C}^{*}$-principal bundle over $\prod_{j=0}^{\ell}\mathbb{P}(\mathbb{C}^{d_{j}}).$ As before, let $P_{\tl\times \mathbb{S}^{1}}$ denote the $\mathbb{T}\times \mathbb{S}^{1}$-principal bundle on $\prod_{j=0}^{\ell}\mathbb{P}(\mathbb{C}^{d_{j}})$. Then we have the identification $\mathring{\mathcal{X}_{f}}\cong P_{\tl\times \mathbb{S}^{1}}\times_{\tl\times \mathbb{S}^{1}}(\mathbb{T}^{\mathbb{C}}\times \mathbb{C}^{*}).$ 

By rigidity and semisimplicity, $\omega_{\mathcal{X}_{f}}$ comes from a generalized Calabi construction (see \cite[Section 2.5]{ACGT04}). More precisely, we have a Kähler toric manifold $(\mathcal{Z}_f,\omega_{\mathcal{Z}_f})$ of complex dimension $\ell+1$ which is a $\mathbb{T}^{\mathbb{C}}\times \mathbb{C}^{*}$-equivariant embedding of $\mathring{\mathcal{Z}}_{f}\cong \mathbb{T}^{\mathbb{C}}\times \mathbb{C}^{*}$, with a moment map $\mu_{\omega_{\mathcal{Z}_f}}^{\mathbb{T}\times \mathbb{S}^{1}}\rightarrow \Delta_{R-f}$ . The momentum-angle coordinate provides $\mathring{\mathcal{Z}}_{f}\cong (\tl\times \mathbb{S}^{1}) \times \mathring{\Delta}_{R-f}.$ We can then write $$\mathring{\mathcal{X}_{f}}\cong P_{\tl\times \mathbb{S}^{1}}\times_{\tl\times \mathbb{S}^{1}}(\mathbb{T}^{\mathbb{C}}\times \mathbb{C}^{*}) \cong P_{\tl\times \mathbb{S}^{1}}\times_{\tl\times \mathbb{S}^{1}} \mathring{\mathcal{Z}}_{f} \cong  P_{\tl\times \mathbb{S}^{1}} \times \mathring{\Delta}_{R-f}.$$ By $\mathring{\mathcal{X}_{f}}\cong P_{\tl\times \mathbb{S}^{1}}\times_{\tl\times \mathbb{S}^{1}} \mathring{\mathcal{Z}}_{f}$, the $\mu_{\omega_{\mathcal{Z}_{f}}}^{\mathbb{T}\times \mathbb{S}^{1}}$ on $\mathring{\mathcal{Z}}_{f}$ descends to a map on $\mathring{\mathcal{X}_{f}}$ which is exactly $\mu_{\omega_{\mathcal{X}_{f}}}^{\tl\times \mathbb{S}^{1}}.$ Moreover, for $\mathring{\mathcal{X}_{f}} \cong P_{\tl\times \mathbb{S}^{1}} \times \mathring{\Delta}_{R-f}$ we have,
\[\omega_{\mathcal{X}_{f}} = \omega_{\mathrm{FS}_{0}}+\langle \mu_{\omega_{\mathcal{Z}_f}}^{\mathbb{T}\times \mathbb{S}^{1}}, \Omega_{\tl\times \mathbb{S}^{1}} \rangle + \langle d\mu_{\omega_{\mathcal{Z}_f}}^{\mathbb{T}\times \mathbb{S}^{1}} \wedge \theta_{\tl\times \mathbb{S}^{1}} \rangle,\]
 where we identify $\mu_{\omega_{\mathcal{Z}_f}}^{\mathbb{T}\times \mathbb{S}^{1}}$ as the coordinate on $ \mathring{\Delta}_{R-f}.$

\section{Compatible test configurations and existence of an extremal metric on projective bundles}\label{diffcalDFinv}

\subsection{Compatible K\"ahler metric on projective bundles}{\label{s:comp:bundle}}
We consider
\[
Y := \mathbb{P}\!\left(\oplus_{j=0}^{\ell} E_j\right) \longrightarrow C,
\]
the projectivization of the vector bundle $\oplus_{j=0}^{\ell} E_j$ over a compact smooth curve $C$ with a constant scalar curvature Kähler metric $\omega_{C}$ of integral $2\pi$. We briefly recall  the construction of a compatible K\"ahler metric on $Y$ following \cite{ACGT11}. Each $E_j$ is assumed to be projectively flat. 
Let
\begin{equation*}
    \tilde{Y} := \mathbb{P}\left(\oplus_{j=0}^\ell \mathcal{O}_{{\mathbb{P}(E_j)}}(-1)\right) \longrightarrow C
\end{equation*}
be the blow-up of $Y$ along the submanifolds $\bigcup_{j=0}^\ell \mathbb{P}(E_j)$. We denote by
\(
\tilde{X} := \mathbb{P}\left(\oplus_{j=0}^\ell \mathcal{O}_{\mathbb{P}(\mathbb{C}^{d_j})}(-1)\right)
\)
its fiber. Consider a degenerate K\"ahler metric $\omega_{\tilde{X}}$ on the fiber $\tilde{X}$, induced by a K\"ahler metric $\omega_{\mathbb{P}^\ell}$ on $\mathbb{P}^\ell$ as explained in Section \ref{s:set-up}. Then $\omega_{\tilde{X}}$ descends to a smooth form on $\tilde{Y}$ (still denoted by $\omega_{\tilde{X}}$), and we define a degenerate smooth form on $\tilde{Y}$ by
\[
\omega_{\tilde{Y}} := \omega_{\tilde{X}} + \big(\langle \mu, p_C \rangle + c-\mu(E_{0})\big)\,\omega_C, 
\]
where $c \in \mathbb{R}$ is any constant such that $c>\mu(E_{j})$ for any $0\leq j\leq \ell$, $\mu$ denotes the moment map of $\omega_{\mathbb{P}^\ell}$ and  $p_C=\sum_{j=1}^\ell (\mu(E_0)-\mu(E_j))\otimes \xi_j$.
Here $\mu(E_j)$ is the slope of $E_j$.

It follows from \cite[Proposition~2]{ACGT06} that $\omega_{\tilde{Y}}$ is the pull-back, via the blow-down map $\mathrm{Bd} : \tilde{Y} \rightarrow Y$, of a unique K\"ahler metric $\omega_Y$ on $Y$.

Following \cite{ACGT06, ACGT04}, we make the following definition.

\begin{define}\label{d:comp:metric:2}
A K\"ahler metric $\omega_Y$ on $Y$ constructed as above from a K\"ahler metric $\omega_{\mathbb{P}^\ell}$ on $\mathbb{P}^\ell$ is called a compatible K\"ahler metric.
\end{define}

Observe that the compatible K\"ahler metrics defined in Definition~\ref{d:comp:metric} are a special case of those in Definition~\ref{d:comp:metric:2} when the base $C$ reduces to a point.

\begin{remark}It follows from \cite{ACGT06, ACGT04} that the compatible K\"ahler metrics $\omega_Y$ on $Y$ are precisely those for which the $\T$-action on $(Y,\omega_Y)$ is rigid and semisimple, in the sense of Definitions \ref{d:rigide} and \ref{d:semisimple}.
\end{remark}

\subsection{Compatible test configurations for projective bundles}{\label{s:bundle:tc}}
Let $h_j$ be the Hermitian-Einstein metric on $E_j$ for $j = 0,\cdots, \ell.$ 
Let $G = U(d_0) \times \cdots \times U(d_\ell)$. Let $P$ be the principal $G$-bundle over $C$ corresponding to $(E,\oplus_{j=0}^{\ell} h_j)$ with the curvature in the form:
\[
\Omega = - i 
\begin{pmatrix}
\mu(E_{0}) \text{Id} & \cdots & 0 \\
\vdots & \ddots & \vdots \\
0 & \cdots & \mu(E_{l}) \text{Id}
\end{pmatrix} 
\otimes \omega_C,
\]
where $\mu(E_{j})$ is the slope of $E_{j}.$

From now on, we assume that $\Delta_{R-f}$ is a Delzant polytope. In Subsection \ref{semirigiXf} we construct the Kähler manifold $(\mathcal{X}_{f},\omega_{\mathcal{X}_{f}})$ with the corresponding moment map $\mu_{\omega_{\mathcal{X}_{f}}}^{G}:\mathcal{X}_{f}\rightarrow \mathfrak{g}^{*}.$ Then by using the bundle construction described in Section \ref{bundleconstruction}, we have a Kähler manifold

\begin{equation*}
    \mathcal{Y}_{f} :=P\times_{G}\mathcal{X}_{f}
\end{equation*}
with the Kähler form $$\omega_{\mathcal{Y}_{f}}:=c\omega_{C} + \langle \mu_{\omega_{\mathcal{X}_{f}}}^{G}, \Omega \rangle + \omega_{\mathcal{X}_{f}}$$ for $c> \max\{\mu(E_{j})\}$. See Subsection \ref{Form_constructions} for details about $\omega_{\mathcal{X}_{f}}$. 

Note that we have a $\mathbb{T}\times \mathbb{S}^{1}$-action on $\mathcal{Y}_{f}$ induced by the $\mathbb{T}\times \mathbb{S}^{1}$-action on $\mathcal{X}_{f}$. It is easy to see that $\omega_{\mathcal{Y}_{f}}$ is $\mathbb{T}\times \mathbb{S}^{1}$-invariant. Although $\mu_{\omega_{\mathcal{X}_{f}}}^{G}$ cannot descend from $P\times \mathcal{X}_{f}$ to $\mathcal{Y}_{f}$, the moment map $\mu_{\omega_{\mathcal{X}_{f}}}^{\mathbb{T}\times \mathbb{S}^{1}}$ can since this map is $G$-invariant (it is actually $G\times \mathbb{S}^{1}$-invariant). We denote the corresponding map on $\mathcal{Y}_{f}$ as $\mu_{\omega_{\mathcal{Y}_{f}}}^{\mathbb{T}\times \mathbb{S}^{1}}.$  We have the following proposition.

\begin{prop}\label{Yfssrgd}
The map $\mu_{\omega_{\mathcal{Y}_{f}}}^{\mathbb{T}\times \mathbb{S}^{1}}$ is a moment map of $(\mathcal{Y}_{f},\omega_{\mathcal{Y}_{f}})$ for the action of $\mathbb{T}\times \mathbb{S}^{1}$. Moreover, the $\mathbb{T}\times \mathbb{S}^{1}$-action on $(\mathcal{Y}_{f},\omega_{\mathcal{Y}_{f}})$ is semisimple and rigid.
\end{prop}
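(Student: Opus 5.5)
The plan is to work on the principal bundle $P\times\mathcal{X}_f$ and push everything down to $\mathcal{Y}_f=P\times_G\mathcal{X}_f$. We use the standard description of the minimal coupling form from Appendix~\ref{bundleconstruction}. Choose the connection $\theta$ on $P$ with curvature $\Omega$. On $P\times \mathcal{X}_f$, the form
\[
\tilde\omega:=c\,\omega_C+\omega_{\mathcal{X}_f}+d\langle \mu^G_{\omega_{\mathcal{X}_f}},\theta\rangle
\]
is $G$-basic and descends to $\omega_{\mathcal{Y}_f}$. Its value on horizontal vectors is $c\,\omega_C+\langle\mu^G,\Omega\rangle$, which is positive because $c>\max_j\mu(E_j)$ and the $\T$-directions of $\mu^G$ take values in the simplex.

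\textbf{Moment map.} Since the $\T\times\S^1$-action on $\mathcal{X}_f$ commutes with $G$ (the torus $\T$ sits in the centre of $G$), it acts on $P\times\mathcal{X}_f$ through the second factor and descends to $\mathcal{Y}_f$. For $\zeta\in\mathfrak{t}\oplus\R$ with fundamental field $\zeta_{\mathcal{X}}$, we compute $\iota_{\zeta}\tilde\omega$. The term $c\,\omega_C$ contributes nothing. From $\omega_{\mathcal{X}_f}$ we get $-d\langle\mu^{\T\times\S^1},\zeta\rangle$. The coupling term gives
\[
\iota_\zeta d\langle\mu^G,\theta\rangle=\mathcal{L}_\zeta\langle\mu^G,\theta\rangle-d\,\iota_\zeta\langle\mu^G,\theta\rangle=0,
\]
because $\theta$ does not see vertical $\mathcal{X}_f$-directions and $\mu^G$ is $\T\times\S^1$-invariant, since these actions commute with $G$ and $\T\times\S^1$ is connected. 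Hence the $G$-invariant function $\mu^{\T\times\S^1}_{\omega_{\mathcal{X}_f}}$ is a moment map upstairs, and its descent $\mu^{\T\times\S^1}_{\omega_{\mathcal{Y}_f}}$ is a moment map on $\mathcal{Y}_f$.

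\textbf{Rigidity.} The induced metric on a $\T\times\S^1$-orbit in $\mathcal{Y}_f$ depends only on the restriction of $\tilde\omega$ and $J$ to the orbit directions, which are vertical. These quantities coincide with those of $\omega_{\mathcal{X}_f}$ at the corresponding point of $\mathcal{X}_f$, because the horizontal corrections vanish on vertical vectors. Proposition~\ref{rdss} then shows that the orbit metric depends only on the value of $\mu^{\T\times\S^1}_{\omega_{\mathcal{X}_f}}$, which equals $\mu^{\T\times\S^1}_{\omega_{\mathcal{Y}_f}}$.

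\textbf{Semisimplicity.} We compute the reduction at a regular value $\lambda\in\mathring{\Delta}_{R-f}$. Reduction commutes with the associated bundle construction because the $\T\times\S^1$- and $G$-actions commute. The reduced space is therefore $P\times_G S$ with $S=\prod_j\P(\C^{d_j})$, and by the same computation as in Proposition~\ref{rdss} it carries the form
\[
c\,\omega_C+\langle\bar\lambda,\Omega\rangle+\omega_{\mathrm{FS}_0}+\langle\Omega_{\T\times\S^1},\lambda\rangle .
\]
This is a flat $S$-bundle over $C$, and the two middle terms can be written explicitly as $\bigl(c-\sum_j\mu(E_j)L_j(\lambda)\bigr)\omega_C$ plus the fibrewise form $\sum_j L_j(\lambda)\omega_{\mathrm{FS}_j}$. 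Hence $\omega_\lambda$ is affine in $\lambda$, and its derivative is a combination of $\omega_C$ and the $\omega_{\mathrm{FS}_j}$. Each of these is parallel for the product (locally product) metric $\omega_\lambda$ and simultaneously diagonalizable with respect to it, which gives semisimplicity.

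The main obstacle is this last identification. It requires checking carefully that the $G$-moment map restricted to the level set induces $\langle\bar\lambda,\Omega\rangle=-\sum_j\mu(E_j)L_j(\lambda)\,\omega_C$, and that the reduced bundle is locally a Riemannian product, which uses the projective flatness of the $E_j$ (the Hermitian–Einstein connection has central curvature). I would first verify this on the open set $\mathring{\mathcal{Y}}_f$, using the momentum-angle description of Subsection~\ref{gencalabiconsxf}.
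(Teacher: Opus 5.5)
Your proposal is correct and follows essentially the same route as the paper. Rigidity and the moment-map claim are reduced to the fibre $\mathcal{X}_f$ and Proposition~\ref{rdss}; your minimal-coupling computation on $P\times\mathcal{X}_f$ just spells out what the paper calls direct to check. Semisimplicity is obtained, as in the paper, by computing the reduced form $\omega_\lambda=\bigl(c-\sum_j\mu(E_j)L_j(\lambda)\bigr)\omega_C+\sum_j L_j(\lambda)\omega_{\mathrm{FS}_j}$ on $S=\mathbb{P}(E_0)\times_C\cdots\times_C\mathbb{P}(E_\ell)$, and then using the flat $\prod_j PU(d_j)$-bundle induced by the projectively flat $P$ to see that $\omega_\lambda$ is locally a Riemannian product.
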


\begin{proof}
It is direct to check the first statement. The rigidity follows directly from Proposition \ref{rdss}. Let us focus on the semisimplicity. For this, we need to compute the Kähler reduction for each point $\lambda$ in $\mathring{\Delta}_{R-f}.$

Note that $\mathcal{Y}_{f}$ contains the following open part:
\[\mathring{\mathcal{Y}}_{f}:= P\times_{G} \mathring{\mathcal{X}}_{f}\]
which is a $\T^{\mathbb{C}}\times \mathbb{C}^{*}$-principal bundle over $S:= P\times_{G} \prod_{j=0}^{\ell}\mathbb{P}(\mathbb{C}^{d_{j}}) = \mathbb{P}(E_{0})\times_{C}\mathbb{P}(E_{1})\times_{C}\cdots \times_{C}\mathbb{P}(E_{\ell}).$ Recall that $\mathring{\mathcal{X}}_{f}$ is introduced in Subsection \ref{gencalabiconsxf}. So as a complex manifold, $(\mu_{\omega_{\mathcal{Y}_{f}}}^{\mathbb{T}\times \mathbb{S}^{1}})^{-1}(\lambda)/(\tl\times \mathbb{S}^{1})$ is $S$.

Note that the $G-$horizontal direction  of $T\mathcal{Y}_{f}$ is orthogonal to the vertical distribution of $T\mathcal{Y}_{f}$ with respect to the Riemannian metric associated with $\omega_{\mathcal{Y}_{f}}.$ This implies that the induced Kähler form, $\omega_\lambda$ on $S$ decomposes into two parts: the first is 
\begin{equation}
\begin{aligned}
&c\omega_{C} - \mu(E_{0})\omega_{C}+ \langle \sum_{j=1}^{\ell}(\mu(E_{0})-\mu(E_{j}))\xi_{j},\lambda\rangle \omega_{C} \\
=&c\omega_{C} - \mu(E_{0})\omega_{C} - \langle \sum_{j=0}^{\ell} \mu(E_{j})p_{j},\lambda\rangle\omega_{C} 
\end{aligned}
\end{equation}
which is induced by $c\omega_{C} + \langle \mu_{\omega_{\mathcal{X}_{f}}}^{G}, \Omega \rangle$ and is defined on the $G$-horizontal distribution of $S$. Recall that $p_{j} = \xi_{j}$ for $1\leq j\leq \ell$ and $p_{0} = -\sum_{j=1}^{\ell}\xi_{j}.$ The second is the form induced on the vertical distribution of $TS$ by
$(\mu_{\omega_{\mathcal{X}_{f}}}^{\tl \times \mathbb{S}^{1}})^{-1}(\lambda)/(\tl \times \mathbb{S}^{1})$
from the proof of Proposition~\ref{rdss}. It follows that the induced Kähler form $\omega_\lambda$ on $S$ is,
\begin{equation}\label{omega_lambda}
\begin{aligned}
&c\omega_{C} - \mu(E_{0}) \omega_{C} + \omega_{\mathrm{FS}_{0}} + \langle \sum_{j=1}^{\ell}(\omega_{\mathrm{FS}_{j}}-\omega_{\mathrm{FS}_{0}} + (\mu(E_{0})-\mu(E_{j}))\omega_{C})\otimes\xi_{j},\lambda\rangle\\
=& c\omega_{C} - \mu(E_{0}) \omega_{C} + \omega_{\mathrm{FS}_{0}} - \langle \sum_{j=0}^{\ell} \mu(E_{j})p_{j},\lambda\rangle\omega_{C} +  \sum_{j=0}^{\ell}\langle p_{j}, \lambda  \rangle \omega_{\mathrm{FS}_{j}}
\end{aligned}
\end{equation}

We immediately observe that the family of Kähler forms \( \omega_{\lambda} \) on \( S \), parametrized by \( \lambda \in \mathring{\Delta}_{R-f} \), is simultaneously diagonalizable. In particular, this implies that the derivative of \( \omega_{\lambda} \) with respect to \( \lambda \) is diagonalizable at \( \lambda = h \) with respect to $\omega_{h}$, for any \( h \in \mathring{\Delta}_{R-f} \) .

The Kähler form $\omega_{\lambda}$ is locally a product. Let us explain this more explicitly. We have the $G$-principal bundle $P$, then we have the $\tilde{G} = \prod_{i=0}^{\ell}PU(d_{i})$-principal bundle $\tilde{P}$ over $C.$ We have a natural principal bundle morphism with them. This provides a flat connection on $\tilde{P}$ since the connection of $P$ is projectively-flat.  By the flatness of the connection on $\tilde{P},$ we know that locally on a simply connected open set $D,$ $\tilde{P}|_{D}\cong D\times \tilde{G}$ and the horizontal distribution associated with the connection on $\tilde{P}$ is exactly $TD.$ 

We have $S=P\times_{G} \prod_{j=0}^{\ell}\mathbb{P}(\mathbb{C}^{d_{j}})= \tilde{P}\times_{\tilde{G}}\prod_{j=0}^{\ell}\mathbb{P}(\mathbb{C}^{d_{j}}),$ so locally $S|_{D}\cong D\times \prod_{j=0}^{\ell}\mathbb{P}(\mathbb{C}^{d_{j}}).$ With this identification, the Kähler form $\omega_{\lambda}$ on $S|_{D}$ is exactly a product of some multiple of $\omega_{C}$ on $D$ and some multiple of $\omega_{\mathrm{FS}_{j}}$ on each $\mathbb{P}(\mathbb{C}^{d_{j}})$. This follows from the expression \eqref{omega_lambda} and the fact that the $G$-horizontal distribution and the $\tilde{G}$-horizontal distribution of $S$ are the same since the connection on $\tilde{P}$ is induced by the connection on $P.$ This implies that the derivative of \( \omega_{\lambda} \) with respect to \( \lambda \) is parallel with respect to the Levi–Civita connection associated with $\omega_{h}$ at \( \lambda = h \), for any \( h \in \mathring{\Delta}_{R-f} \).

We can already conclude semisimplicity at this stage. But let us see the Kähler form on $S$ in a clearer manner. 

We have $(\mu_{\omega_{\mathcal{Y}_{f}}}^{\mathbb{T}\times \mathbb{S}^{1}})^{-1}(\lambda) \cong P\times_{G}P_{\tl\times \mathbb{S}^{1}},$ where $P_{\tl\times \mathbb{S}^{1}}$ is the $\tl\times \mathbb{S}^{1}$-principal bundle over $\prod_{j=0}^{\ell}\mathbb{P}(\mathbb{C}^{d_{j}})$ introduced in the proof of Proposition \ref{rdss}. In the proof of Proposition \ref{rdss}, we also introduced the $(\mathbb{S}^{1})^{\ell+1}\times \mathbb{S}^{1}$-principal bundle $(\oplus_{j=0}^{\ell}P_{j} )\times \mathbb{S}^{1}$ as well as the principal bundle morphism $(\oplus_{j=0}^{\ell}P_{j} )\times \mathbb{S}^{1}\rightarrow P_{\tl\times \mathbb{S}^{1}}.$  With the bundle construction, we also have the $(\mathbb{S}^{1})^{\ell+1}\times \mathbb{S}^{1}$-principal bundle $P\times_{G} ((\oplus_{j=0}^{\ell}P_{j} )\times \mathbb{S}^{1})$ and a bundle morphism $P\times_{G} ((\oplus_{j=0}^{\ell}P_{j} )\times \mathbb{S}^{1})\to P\times_{G}P_{\tl\times \mathbb{S}^{1}}.$ 

Note that $P\times_{G}P_{j}$ is just the unitary bundle inside $\mathcal{O}_{\mathbb{P}(E_{j})}(-1).$ There is a natural connection on it induced by the Chern connection of $(\mathcal{O}_{\mathbb{P}(E_{j})}(-1),h_{j})$, the corresponding Chern curvature is $i(\omega_{FS_{j}}-\mu(E_{j})\omega_{C}).$ Then we have a natural connection on the principal bundle $P\times_{G} ((\oplus_{j=0}^{\ell}P_{j} )\times \mathbb{S}^{1})$, which we still call the Chern connection, whose curvature is 
\begin{equation}
\sum_{j=0}^{\ell}(\omega_{\mathrm{FS}_{j}} - \mu(E_{j})\omega_{C})\otimes\xi_{j}
\end{equation}

The principal bundle morphism $P\times_{G} ((\oplus_{j=0}^{\ell}P_{j} )\times \mathbb{S}^{1})\to P\times_{G}P_{\tl\times \mathbb{S}^{1}}$ provides an induced connection $\hat{\theta}_{\tl\times \mathbb{S}^{1}}$ (see \cite[Chapter 2, Section 6]{kobayashi1996foundations}) on $P\times_{G}P_{\tl\times \mathbb{S}^{1}}$. The curvature $d\hat{\theta}_{\tl\times \mathbb{S}^{1}}$ is
\begin{equation}
\begin{aligned}
d\hat{\theta}_{\tl\times \mathbb{S}^{1}} =& \sum_{j=1}^{\ell}(\omega_{\mathrm{FS}_{j}}-\omega_{\mathrm{FS}_{0}} + (\mu(E_{0})-\mu(E_{j}))\omega_{C})\otimes\xi_{j}\\
=&\sum_{j=0}^{\ell}(\omega_{\mathrm{FS}_{j}} - \mu(E_{j})\omega_{C})\otimes p_{j}
\end{aligned}
\end{equation}
The curvature $d\hat{\theta}_{\tl\times \mathbb{S}^{1}}$ can descend to $S$. Then we see that the  Kähler  form $\omega_\lambda$ on $(\mu_{\omega_{\mathcal{Y}_{f}}}^{\mathbb{T}\times \mathbb{S}^{1}})^{-1}(\lambda)/(\tl\times \mathbb{S}^{1})\cong S$ can be written as,
\[ \omega_\lambda = c\omega_{C} - \mu(E_{0}) \omega_{C} + \omega_{\mathrm{FS}_{0}} + \langle d\hat{\theta}_{\tl\times \mathbb{S}^{1}},\lambda \rangle.\]

This completes the proof of the semisimplicity.
\end{proof}

Like in Section \ref{gencalabiconsxf}, we can say more about $\omega_{\mathcal{Y}_{f}}$ following \cite{ACGT04}. We have the Kähler toric manifold $(\mathcal{Z}_{f},\omega_{\mathcal{Z}_{f}})$ from Subsection \ref{gencalabiconsxf}. The momentum-angle coordinate provides $\mathring{\mathcal{Z}}_{f}\cong (\tl\times \mathbb{S}^{1}) \times \mathring{\Delta}_{R-f}.$ Denote $P\times_{G}P_{\tl\times \mathbb{S}^{1}}$ as $\mathbf{P}_{\tl\times \mathbb{S}^{1}}$. We can write $$\mathring{\mathcal{Y}}_{f}\cong \mathbf{P}_{\tl\times \mathbb{S}^{1}}\times_{\tl\times \mathbb{S}^{1}}(\mathbb{T}^{\mathbb{C}}\times \mathbb{C}^{*}) \cong (\mathbf{P}_{\tl\times \mathbb{S}^{1}})\times_{\tl\times \mathbb{S}^{1}} \mathring{\mathcal{Z}}_{f} \cong  \mathbf{P}_{\tl\times \mathbb{S}^{1}} \times \mathring{\Delta}_{R-f}.$$ By $\mathring{\mathcal{Y}}_{f}\cong (\mathbf{P}_{\tl\times \mathbb{S}^{1}})\times_{\tl\times \mathbb{S}^{1}} \mathring{\mathcal{Z}}_{f}$, the $\mu_{\omega_{Z}}^{\mathbb{T}\times \mathbb{S}^{1}}$ on $\mathring{\mathcal{Z}}_{f}$ descends to a map on $\mathring{\mathcal{Y}}_{f}$ which turns out to be exactly $\mu_{\omega_{\mathcal{Y}_{f}}}^{\tl\times \mathbb{S}^{1}}.$ Moreover, for $\mathring{\mathcal{Y}}_{f} \cong \mathbf{P}_{\tl\times \mathbb{S}^{1}} \times \mathring{\Delta}_{R-f}$ we have,
\[\omega_{\mathcal{Y}_{f}} = c\omega_{C} - \mu(E_{0}) \omega_{C}+\omega_{\mathrm{FS}_{0}}+\langle \mu_{\omega_{Z}}^{\mathbb{T}\times \mathbb{S}^{1}}, d\hat{\theta}_{\tl\times \mathbb{S}^{1}} \rangle + \langle d\mu_{\omega_{Z}}^{\mathbb{T}\times \mathbb{S}^{1}}\wedge\hat{\theta}_{\tl\times \mathbb{S}^{1}} \rangle,\]
where we identify $\mu_{\omega_{\mathcal{Z}_{f}}}^{\mathbb{T}\times \mathbb{S}^{1}}$ as the coordinate on $ \mathring{\Delta}_{R-f}.$

\begin{prop}
The Kähler form $\omega_{\mathcal{Y}_{f}}$ represents the Kähler class $2\pi[\mathcal{O}_{\mathcal{Y}_{f}}(1)+cF]$.
\end{prop}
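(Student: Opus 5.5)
The plan is to identify the class of $\omega_{\mathcal{Y}_f}$ summand by summand, using the bundle construction $\omega_{\mathcal{Y}_f}=c\omega_C+\langle \mu^G_{\omega_{\mathcal{X}_f}},\Omega\rangle+\omega_{\mathcal{X}_f}$. Here $\mathcal{O}_{\mathcal{Y}_f}(1)$ denotes the line bundle $P\times_G \mathcal{L}_f$ induced from the $G^{\mathbb{C}}\times\mathbb{C}^*$-linearized polarization $\mathcal{L}_f$ of $\mathcal{X}_f$, twisted by the Hermitian--Einstein data $(h_j)$. $F$ is the class of a fiber of $\mathcal{Y}_f\to C$. Since $\int_C\omega_C=2\pi$, the term $c\omega_C$ pulled back from $C$ represents $2\pi c\,[F]$. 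It therefore remains to show that $\langle \mu^G_{\omega_{\mathcal{X}_f}},\Omega\rangle+\omega_{\mathcal{X}_f}$ represents $2\pi c_1(\mathcal{O}_{\mathcal{Y}_f}(1))$.

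\textbf{Step 1: a Chern-connection description.} Let $h_{\mathcal{L}_f}$ be the $G\times\mathbb{S}^1$-invariant Hermitian metric on $\mathcal{L}_f$ obtained by restricting the Fubini--Study metric via the equivariant embedding $\mathcal{X}_f\hookrightarrow\mathbb{P}(H^0(\mathcal{X}_f,k\mathcal{L}_f)^*)$, rescaled by $1/k$. Its Chern curvature is $-i\,\omega_{\mathcal{X}_f}$. By Kostant's formula, the infinitesimal $G$-action on sections of $\mathcal{L}_f$ is given by covariant derivative plus $i\langle\mu^G_{\omega_{\mathcal{X}_f}},\cdot\rangle$, which holds with the normalization of $\mu^G$ coming from the linearization.

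\textbf{Step 2: curvature of the induced line bundle.} Form $P\times_G\mathcal{L}_f\to\mathcal{Y}_f$ with the connection induced from the connection $\vartheta$ on $P$ (with curvature $\Omega$) and the Chern connection of $h_{\mathcal{L}_f}$. The minimal coupling formula then says its curvature is
\[
-i\big(\omega_{\mathcal{X}_f}+\langle\mu^G_{\omega_{\mathcal{X}_f}},\Omega\rangle\big),
\]
where $\omega_{\mathcal{X}_f}$ is understood via the horizontal lift. This is the standard computation recalled in Appendix~\ref{bundleconstruction}, the same one used to show that $\omega_{\mathcal{Y}_f}$ is closed. This identifies the class with $2\pi c_1(P\times_G\mathcal{L}_f)$.

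\textbf{Step 3: the main obstacle, matching normalizations.} The moment map $\mu^G$ is determined only up to a central shift, and any such shift changes the class by a multiple of $[F]$. I will pin it down by restricting to the fiber $\mathcal{Y}_{f,1}=Y$ over $\tau=1$. There $P\times_G\mathcal{O}_X(1)$ is $\mathcal{O}_{\mathbb{P}(E)}(1)$ for the Hermitian--Einstein metric, and the compatible metric of Definition~\ref{d:comp:metric:2} has class $2\pi[\mathcal{O}(1)+cF]$. This check uses the constant $-\mu(E_0)$ appearing in $\omega_\lambda$ together with the choice of vertex normalization $\Delta=\Delta_{\mathrm{alg}}-\chi_0$.

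Because $H^2(\mathcal{Y}_f)$ is generated by the fiber-type classes and $[F]$, I also need the degree of the restricted class on one horizontal section. I will compute it using the section $[\mathbf{e}_0:0:\dots:0]$, on which $\mu^G$ takes the constant value $\chi_0$. The $\mathcal{O}(1)$-degree there is $-\mu(E_0)\cdot$(degree), so the $-\mu(E_0)$ shifts cancel. This confirms the stated representative.
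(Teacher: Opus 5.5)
Your Steps 1--2 give a correct proof, but by a different route from the paper. The paper never computes a curvature on $\mathcal{Y}_f$ itself. It uses the equivariant embedding $\mathcal{X}_f\hookrightarrow\mathbb{P}(H^0(\mathcal{X}_f,k\mathcal{L}_f)^*)$ to embed $\mathcal{Y}_f$ into the projective bundle $P\times_G\mathbb{P}(H^0(\mathcal{X}_f,k\mathcal{L}_f)^*)\to C$. On that bundle it puts the form $kc\,\omega_C+\langle\mu^G_{\mathrm{FS}},\Omega\rangle+\omega_{\mathrm{FS}}$ from Appendix~\ref{bundleconstruction} and quotes \cite{AK} for the fact that this form represents $2\pi[\mathcal{O}(1)+kcF]$. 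It then observes that, by naturality of the construction, this form pulls back to $k\,\omega_{\mathcal{Y}_f}$.

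You instead do the Chern--Weil computation intrinsically. Kostant's formula plus minimal coupling identify $\langle\mu^G,\Omega\rangle+\omega_{\mathcal{X}_f}$, pointwise, with $i$ times the curvature of the induced connection on $P\times_G\mathcal{L}_f$. The paper's route takes two lines and makes the normalization of $\mu^G$ automatic, since it is the pulled-back Fubini--Study moment map. Its price is reliance on the projective-bundle computation of \cite{AK}, which applies because $P\times_G H^0(\cdots)^*$ splits into projectively flat summands, $\Omega$ being central. It also requires reading $\mathcal{O}_{\mathcal{Y}_f}(1)$ as $\frac1k$ times the restriction of the ambient $\mathcal{O}(1)$. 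Your route is self-contained and works for any polarized $G$-manifold with a linearization-compatible moment map. However, it needs the full minimal-coupling curvature identity, which is more than the closedness lemma actually proved in Appendix~\ref{bundleconstruction}.

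Step 3 is largely unnecessary. Step 2 is an identity of forms, and $\mu^G_{\omega_{\mathcal{X}_f}}$ is already the moment map induced by the linearization, so there is no residual multiple of $[F]$ to fix. The first half of Step 3 only checks that $P\times_G\mathcal{L}_f$ restricts to $\mathcal{O}_{\mathbb{P}(E)}(1)$ on $Y$. The horizontal-section computation should be dropped, and as written it is also inaccurate:
\begin{itemize}
\item For $d_0\geq 2$ the point $[\mathbf{e}_0:0:\dots:0]$ is not $G$-fixed, so it does not define a section of $\mathcal{Y}_f\to C$.
\item $\mu^G$ is not constant on $\mathbb{P}(\mathbb{C}^{d_0})$; only its central part, and hence $\langle\mu^G,\Omega\rangle$, is.
\item The $\mathcal{O}(1)$-degree of the section of $\mathbb{P}(E)$ given by a line subbundle $M\subset E_0$ is $-\deg M$. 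This agrees with $-\mu(E_0)$ only when $d_0=1$.
\end{itemize}
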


\begin{proof}

Let $V' = H^{0}(\mathcal{X}_{f},k\mathcal{L}_{f})^{*}$.
We have the embedding $\mathcal{Y}_{f}\rightarrow \mathbb{P}(V')\cong P^{\mathbb{C}}\times_{G^{\mathbb{C}}}\mathbb{P}(H^{0}(\mathcal{X}_{f},k\mathcal{L}_{f})^{*})\cong P\times_{G}\mathbb{P}(H^{0}(\mathcal{X}_{f},k\mathcal{L}_{f})^{*})$. Then we can construct a Kähler form $\omega_{\mathbb{P}(V')} = \langle \mu_{\mathbb{P}(H^{0}(\mathcal{X}_{f},k\mathcal{L}_{f})^{*}),G},\Omega\rangle + \omega_{\mathrm{FS}} + kc \omega_{C}$ on $\mathbb{P}(V')$ following Section \ref{bundleconstruction}. As in \cite[Page 12]{AK}, $\omega_{\mathbb{P}(V')}$ represents $2
\pi[\mathcal{O}_{\mathbb{P}(V')}(1)+kcF]$. Then note that the pullback of $\omega_{\mathbb{P}(V')}$ to $\mathcal{Y}_{f}$ is exactly $k\omega_{\mathcal{Y}_{f}}$.
\end{proof}

We denote the Kähler class of $\omega_{\mathcal{Y}_{f}}$ as $\mathcal{A}_{\mathcal{Y}_f}.$ 

\begin{define}
A test configuration $(\mathcal{Y}_f, \mathcal{A}_{\mathcal{Y}_f})$ for $(Y,[\omega_Y],\T)$ constructed as above is called a compatible test configuration.
\end{define}

\subsection{Proof of Theorem \ref{t:intro:ytd}, Theorems \ref{t:intro:bundle} and Corollary \ref{c:intro:B}}

\begin{theorem}[Theorem \ref{t:intro:bundle}]{\label{t:stab:y:x}}
 Consider the projectivization of a holomorphic bundle over a curve $Y=\mathbb{P}(E)$ with fixed K\"ahler class $[\omega_Y]$. Let $(X,[\omega_X])$ be its fiber. If $(Y,[\omega_Y])$ is $\T$-relatively uniformly K-stable for compatible test configurations, then $(X,[\omega_X])$ is $\T$-relatively $(\bar{p},\bar{w})$-uniformly K-stable for compatible test configurations.
\end{theorem}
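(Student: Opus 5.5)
The plan is to reduce both sides of the stability inequality on $\mathcal{Y}_f$ to weighted polytope functionals on $\Delta$, and then compare them with the corresponding functionals for the fiber test configuration $\mathcal{X}_f$ computed in Section~\ref{s:comp:tc}.

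The first step is the Donaldson--Futaki invariant. For $f\in\mathcal{DPL}_{\mathrm{dom}}(\Delta)$ we equip $\mathcal{Y}_f$ with $\omega_{\mathcal{Y}_f}$. By Proposition~\ref{Yfssrgd}, the $\T\times\S^1$-action on $(\mathcal{Y}_f,\omega_{\mathcal{Y}_f})$ is rigid and semisimple. Hence $\omega_{\mathcal{Y}_f}$ is given by the generalized Calabi ansatz over the base $S=\mathbb{P}(E_0)\times_C\cdots\times_C\mathbb{P}(E_\ell)$, with fiber the toric manifold $\mathcal{Z}_f$ whose polytope is $\Delta_{R-f}$. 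The reduced forms are $\omega_\lambda$ from \eqref{omega_lambda}. Their volume density is, up to a constant, $\bar p(x)p(x)$ with $x$ the $\Delta$-coordinate of $\lambda$: the $\omega_C$-coefficient is affine and equals $\bar p$ after shifting $c$, and the $\omega_{\mathrm{FS}_j}$-coefficients are the $L_j$.

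I would then apply the formula for the scalar curvature of compatible metrics, as in Lemma~\ref{l:scal} and \cite[(7)]{ACGT11}, but now on $\mathcal{Y}_f$. It gives $\Scal(\omega_{\mathcal{Y}_f})=\Scal_{p\bar p}(\omega_{\mathcal{Z}_f}) + \sum_j \frac{2d_j(d_j-1)}{L_j} + \frac{4(1-\mathrm g)}{\bar p}$. Integrating by parts on $\Delta_{R-f}$, exactly as in the proof of Lemma~\ref{fut-rigid} (interior term via fiber integration as in \eqref{int:func}, boundary term via Lemma~\ref{l:bound:measure}), should yield
\[
\mathbf{DF}_{1,\ell_{\mathrm{ext}}}(\mathcal{Y}_f,\A_{\mathcal{Y}_f}) = c_0\,\mathcal{F}^{\Delta}_{p\bar p,\bar w}(f),
\]
with $c_0>0$ depending only on $\mathrm{Vol}(\Delta_B)$, $2\pi$ and $\mathrm{Vol}(C)$. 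Here $\ell_{\mathrm{ext}}$ is pulled back from an affine function on $\Delta$, by the second part of Lemma~\ref{l:scal} applied to $Y$. The main care point is that the horizontal factor $\omega_C$ contributes both to the weight ($\bar p$) and to the curvature term $4(1-\mathrm g)/\bar p$, and that these combine correctly with the boundary measure where $p$ vanishes. I expect this to be a direct adaptation of \cite[Section 5]{AJL} and \cite[Lemma 5]{ACGT11}, since the computation is local over $S$.

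The second step is the reduced $J$-functional. As in Lemma~\ref{l:J-fonc:compa:toric}, the non-Archimedean $J$-functional of $\mathcal{Y}_f$ is an integral over $\Delta_{R-f}$ of the fiber volume density. The twists $\xi\in\mathfrak t$ act by affine changes of $f$. Together these give
\[
\J_{\T^{\C}}(\mathcal{Y}_f,\A_{\mathcal{Y}_f}) = c_1\,\|f\|_{J_{p\bar p}(\Delta)}.
\]

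With these two identities, the assumed inequality $\mathbf{DF}\geq\lambda\J$ on compatible $\mathcal{Y}_f$ becomes
\[
\mathcal{F}^{\Delta}_{p\bar p,\bar w}(f)\ge\lambda'\|f\|_{J_{p\bar p}(\Delta)}\quad\text{for all } f\in\mathcal{DPL}_{\mathrm{dom}}(\Delta).
\]
Lemma~\ref{p:stab:eq:rigid}, with $v=\bar p$ and $w$ chosen so that $\hat w=\bar w$, shows this is equivalent to $(p\bar p,\bar w)$-uniform stability of $\Delta$. Its proof only uses that $\bar p>0$ is smooth, which holds here. Proposition~\ref{l:kstab:stab:delta} then translates this into $(\bar p,\bar w)$-uniform K-stability of $(X,[\omega_X])$ on compatible test configurations, with $\bar w$ read as the weight whose hat-correction matches \eqref{bar:p:w}.

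I expect the main obstacle to be the first step: rigorously justifying the Calabi-ansatz scalar curvature formula on the total space $\mathcal{Y}_f$, which is only a $G$-bundle and not a toric fibration. I would handle it by working on the dense open set $\mathring{\mathcal{Y}}_f\cong\mathbf{P}_{\T\times\S^1}\times\mathring\Delta_{R-f}$, where the explicit form of $\omega_{\mathcal{Y}_f}$ is available. I would then check that the boundary terms match the measure of Lemma~\ref{l:bound:measure}, using smoothness of $\mathcal{Y}_f$.
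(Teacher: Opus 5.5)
Your proposal is correct, but it is routed differently from the paper's proof.

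\textbf{The paper's route.} The paper compares $\mathcal{Y}_f$ with $\mathcal{X}_f$ directly, at the level of test configurations. It views $\mathcal{Y}_f\to C$ as a fibration with fiber $\mathcal{X}_f$ and uses \cite[(7),(8)]{ACGT11} to get $\mathrm{Scal}(\omega_{\mathcal{Y}_f})=\mathrm{Scal}_{\bar p}(\omega_{\mathcal{X}_f})+4(1-\mathrm{g})/\bar p$ and $\omega_{\mathcal{Y}_f}^{[n+2]}=\bar p\,\omega_{\mathcal{X}_f}^{[n+1]}\wedge\omega_C$. This gives $\mathbf{DF}(\mathcal{Y}_f,\A_{\mathcal{Y}_f})=\mathrm{Vol}(C)\,\mathbf{DF}_{\bar p,\cdot}(\mathcal{X}_f,\A_{\mathcal{X}_f})$. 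It then writes $\mathbf{J}^{\mathrm{NA}}(\mathcal{Y}_f)$ as the $p\bar p$-weighted integral of $f-\inf_\Delta f$ and bounds it below by $(\inf_\Delta\bar p)\,\mathbf{J}^{\mathrm{NA}}(\mathcal{X}_f)$. The stability of $X$ follows at once.

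\textbf{Your route.} You push both invariants of $\mathcal{Y}_f$ all the way down to $\Delta$. You then climb back up through Lemma~\ref{p:stab:eq:rigid} and Proposition~\ref{l:kstab:stab:delta}.

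\textbf{The Futaki step.} The paper's factorization through $\mathcal{X}_f$ is what makes this step cheap. Once the integrand on $\mathcal{Y}_f$ is a $\bar p$-weighted integrand on the toric manifold $\mathcal{X}_f$, the toric formula of Lemma~\ref{l:toric:fut} on $\hat\Delta$ and Lemma~\ref{l:bound:measure} supply the boundary terms. This includes the blow-down facets where $p$ vanishes. Since $\mathcal{Y}_f$ itself is not toric, ``exactly as in Lemma~\ref{fut-rigid}'' is not literally available to you. The obstacle you flag, a direct integration by parts of the Calabi ansatz on $\Delta_{R-f}$, is most easily resolved by inserting precisely this factorization.

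\textbf{The detour through Lemma~\ref{p:stab:eq:rigid}.} This detour, a compactness argument over all of $\mathcal{CV}^\infty(\Delta)$, is unnecessary. Lemmas~\ref{fut-rigid} and~\ref{l:J-fonc:compa:toric} turn your inequality on $\mathcal{DPL}_{\mathrm{dom}}(\Delta)$ directly into the required one for compatible $\mathcal{X}_f$. The only extra input is $\|f\|_{J_{p\bar p}(\Delta)}\ge(\inf_\Delta\bar p)\,\|f\|_{J_p(\Delta)}$. That is exactly the paper's key inequality, and Proposition~\ref{l:kstab:stab:delta} also uses it implicitly.

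\textbf{What your route gives in exchange.} It produces the $(p\bar p,\bar w)$-uniform stability of $\Delta$ as an explicit intermediate result. This is what Corollary~\ref{c:intro:B} and the remark following Corollary~\ref{c:thm:A} need. Your reading of the weight is also the consistent one. On $X$ the second weight must be $\ell_{\mathrm{ext}}-4(1-\mathrm{g})/\bar p$, and its hat-correction is the $\bar w$ of \eqref{bar:p:w}. The paper writes $\bar w$ for both.
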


We recall that $\bar{v}$ and $\bar{p}$ are introduced in \eqref{bar:p:w}.

\begin{proof}
We fix a compatible Kähler metric $\omega_{\mathcal{Y}_f} \in \mathcal{A}_{\mathcal{Y}_f}$ induced by  $\omega_{\mathcal{X}_f}$ on $\mathcal{X}_f$. By Proposition  \ref{Yfssrgd} the $\mathbb{T}$-action on $(\mathcal{Y}_f,\omega_{\mathcal{Y}_f})$ is rigid and semisimple. Hence, by \cite[(7) \& (8)]{ACGT11} (see also Lemma \ref{l:scal})

\begin{equation*}
    \mathrm{Scal}(\omega_{\mathcal{Y}_f})= \mathrm{Scal}_{\bar{p}}(\omega_{\mathcal{X}_f}) + \frac{\mathrm{Scal}(\omega_C)}{\bar{p}},
\end{equation*}
and

\begin{equation}{\label{vol:bundle}}
    \omega_{\Y_f}^{[n+2]}= \bar{p}(\mu_{\omega_{\X_f}})\omega_{\X_f}^{[n+1]}\wedge\omega_C.
\end{equation}
We immediately infer that

\begin{equation}{\label{df:1}}
    \mathbf{DF}(\Y_f,\A_{\Y_f})= \mathrm{Vol}(C,[\omega_C])\mathbf{DF}_{\bar{p},\bar{w}}(\X_f,\A_{\X_f}).
\end{equation}
where $\bar{w}$ is defined in \eqref{bar:p:w}. We note that the affine extremal function $\ell_{\mathrm{ext}}$ of $(Y,[\omega_Y])$ is (the pullback of) a function on $\Delta$ by \cite[Lemma 5]{ACGT11}, which shows that $\bar{w}$ is a well-defined smooth function on $\Delta$.

We now compute the non-Archimedean $J$-functional. By \eqref{vol:bundle} and  \cite[(8)]{ACGT11}

\begin{equation}{\label{j:3}}
    \int_{\mathcal{Y}_f} \omega_{\Y_f}^{[n+2]}= \mathrm{Vol}(C,[\omega_C])\int_{\mathcal{X}_f} \bar{p}(\mu_{\omega_{\X_f}})\omega_{\X_f}^{[n+1]} 
\end{equation}
and 
\begin{equation}{\label{j:2}}
    \int_{Y} \omega_{Y}^{[n+1]}= \mathrm{Vol}(C,[\omega_C])\int_{X} p(\mu_{\omega_{X}})\omega_{X}^{[n]} .
\end{equation}
Since $\mathcal{X}_f$ is a $G^\mathbb{C}$-horospherical dominating test configuration, the morphism $\Pi : \mathcal{X}_f \rightarrow X \times \mathbb{P}^1$ is $G$-equivariant and descends to a map $\Pi : \mathcal{Y}_f \rightarrow Y \times \mathbb{P}^1$. Hence

\begin{equation}{\label{j:1}}
\begin{split}
    \int_{\Y_f} \Pi^*(\omega_Y^{[n+1]}) \wedge \omega_{\Y_f} & = \int_{\Y_f} \Pi^*\big(\bar{p}(\mu_{\omega_{X}})\omega_X^{[n]}\wedge \omega_C \big)\wedge (\omega_{\X_f} + c \omega_C) \\
    & = \int_{\Y_f} \Pi^*\big(\bar{p}(\mu_{\omega_{X}})\omega_X^{[n]} \big) \wedge \omega_C\wedge (\omega_{\X_f} + c \omega_C) \\
     & = \int_{\Y_f} \Pi^*\big(\bar{p}(\mu_{\omega_{X}})\omega_X^{[n]} \big) \wedge \omega_C \wedge \omega_{\X_f}  \\
     &=\mathrm{Vol}(C,[\omega_C])\int_{\X_f} \Pi^*\big(\bar{p}(\mu_{\omega_{X}})\omega_X^{[n]} \big) \wedge \omega_{\X_f}   \\
\end{split}    
\end{equation}

From \eqref{j:3}, \eqref{j:2}, \eqref{j:1}, and a similar computation to that in Lemma \ref{l:J-fonc}, we get

\begin{equation*}{\label{ineq:j}}
    \begin{split}
\frac{1}{(2\pi)^{n+2}}\mathbf{J}^{\mathrm{NA}}(\mathcal{Y}_f, \mathcal{A}_{\Y_f})&= \mathrm{Vol}(C,[\omega_C]) \frac{\mathrm{Vol}(\Delta_B)}{\mathrm{Vol([\omega]})}  \int_\Delta \big( f -\inf_\Delta f \big)p\bar{p}dx \\
&\geq \inf(\bar{p})\mathrm{Vol}(C,[\omega_C]) \frac{\mathrm{Vol}(\Delta_B)}{\mathrm{Vol([\omega]})}   \int_\Delta \big( f -\inf_\Delta f \big)pdx \\
&=\frac{\inf(\bar{p})}{(2\pi)^{n+2}}\mathbf{J}^{\mathrm{NA}}(\mathcal{X}_f, \mathcal{A}_{\X_f}),
    \end{split}
\end{equation*}
where $p$ is defined in \eqref{weight:x}. From the above inequality and \eqref{df:1}, we conclude that there exists  constants $\lambda>0$ independent $(\mathcal{X}_f, \mathcal{A}_{\X_f})$ and $(\mathcal{Y}_f, \mathcal{A}_{\Y_f})$  such that

\begin{equation*}
\begin{split}
   \mathrm{Vol}(C,[\omega_C])  \mathbf{DF}_{\bar{p},\bar{w}}(\mathcal{X}_f, \mathcal{A}_{\X_f}) = &  \mathbf{DF}(\mathcal{Y}_f, \mathcal{A}_{\Y_f}) \\
    \geq & \lambda \mathbf{J}_{\T^{\mathbb{C}}}^{\mathrm{NA}}(\mathcal{Y}_f, \mathcal{A}_{\Y_f})  \\
    \geq &\lambda \inf(\bar{p}) \mathbf{J}_{\T^{\mathbb{C}}}^{\mathrm{NA}}(\mathcal{X}_f, \mathcal{A}_{\X_f}).
\end{split}    
\end{equation*}

\end{proof}

We mention that a very similar computation to the one in the above proof shows the converse direction of Theorem \ref{t:stab:y:x}.

By \cite[Lemma~3.3]{AK}, every K\"ahler class on $Y$ admits a compatible representative. Hence, without loss of generality, we may assume that $\omega_Y$ is compatible on $Y$, induced by a compatible K\"ahler metric $\omega_X$ on $X$.

\begin{corollary}[Theorem \ref{t:intro:ytd}]{\label{c:thm:A}}
Consider the projectivization of a holomorphic bundle overs a curve $Y=\mathbb{P}(E)$ with fixed K\"ahler class $[\omega_Y]$. Suppose that $(Y,[\omega_Y])$ is $\T$-relatively uniformly K-stable for compatible test configurations. Then there exists an extremal metric in $[\omega_Y]$. Moreover, any extremal metric  in $[\omega_Y]$ is compatible.
\end{corollary}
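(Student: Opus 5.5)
The plan is to chain Theorem \ref{t:stab:y:x} with Theorem \ref{t:intro:fiber}, and then lift the resulting weighted metric on the fiber to $Y$.

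\emph{Stability on the fiber.} We may assume, as noted above via \cite[Lemma~3.3]{AK}, that $\omega_Y$ is compatible and induced by a compatible metric $\omega_X$ on $X$. Theorem \ref{t:stab:y:x} then shows that $(X,[\omega_X],\T)$ is $(\bar p,\bar w)$-uniformly K-stable for compatible test configurations. Here $\bar p=c-\sum_j\mu(E_j)L_j$ is affine and positive on $\Delta$ (because $c>\max_j\mu(E_j)$), so it is log-concave. The weight $\bar w$ is smooth on $\Delta$ because $\ell_{\mathrm{ext}}$ is pulled back from $\Delta$ by \cite[Lemma 5]{ACGT11}. If $[\omega_X]$ is not literally $[\omega_{\mathrm{FS}}]$, I would rescale or normalize the simplex: every compatible class on the fiber is of this type up to the choice of $\Delta$, and the arguments of Sections \ref{s:comp:tc}--\ref{s:exi:x} only use the labeled simplex.

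\emph{Existence of the fiber metric.} Apply Theorem \ref{t:intro:fiber} with $v=\bar p$ and $w=\bar w$. This gives a compatible $(\bar p,\bar w)$-cscK metric $\omega$ on $X$, induced by a metric $\omega_{\P^\ell}$ on $\P^\ell$. By Lemma \ref{l:scal}, $\omega_{\P^\ell}$ is then $(p\bar p,\hat{\bar w})$-cscK. Let $\tilde\omega$ be the compatible metric on $Y$ built from $\omega_{\P^\ell}$ as in Definition \ref{d:comp:metric:2}; it lies in $[\omega_Y]$ because compatible classes are determined by the polytope data and $c$. The scalar curvature formula \cite[(7)]{ACGT11},
$$\Scal(\tilde\omega)=\Scal_{\bar p}(\omega)+\frac{\Scal(\omega_C)}{\bar p},$$
together with the definition \eqref{bar:p:w} of $\bar w$, gives $\Scal(\tilde\omega)=\ell_{\mathrm{ext}}(\mu)$. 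Hence $\tilde\omega$ is extremal.

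\emph{Every extremal metric is compatible.} Let $\omega'$ be any extremal metric in $[\omega_Y]$. By uniqueness of extremal metrics up to $\aut_{\red}(Y)$ (Berman--Berndtsson, Chen--Paun--Zeng), $\omega'=\phi^*\tilde\omega$ for some $\phi\in\aut_{\red}(Y)$. I would then argue that pulling back by such automorphisms preserves the compatible form. This is the main obstacle, since $\aut_{\red}(Y)$ is non-abelian (it contains gauge transformations of $\oplus_j E_j$ commuting with the projectively flat structure). The first approach I would try is to use Calabi's theorem: the identity component of the isometry group of an extremal metric is a maximal compact subgroup of $\aut_{\red}(Y)$, and all such subgroups are conjugate. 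After conjugating, one may assume that the isometry group of $\omega'$ contains $\T$ and the compact group preserving the Hermitian--Einstein structure. Then $\T$ acts rigidly and semisimply on $\omega'$, and the remark following Definition \ref{d:comp:metric:2} (from \cite{ACGT06,ACGT04}) shows $\omega'$ is compatible. If this fails, I would fall back on the statement in \cite[Theorem 2]{AK} and \cite{ACGT11} that extremal metrics on such $Y$ are compatible.
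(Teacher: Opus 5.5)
Your first two steps are the paper's proof. Theorem~\ref{t:stab:y:x} gives $(\bar p,\bar w)$-stability of the fiber, Theorem~\ref{t:intro:fiber} with $v=\bar p$, $w=\bar w$ gives a compatible weighted cscK metric, and \cite[(7)]{ACGT11} makes the induced compatible metric on $Y$ extremal. Checking that $\bar p$ is positive and affine, hence log-concave, is a detail the paper leaves implicit.

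There is one bookkeeping caveat, inherited from the paper. The weight on $X$ that the proof of Theorem~\ref{t:stab:y:x} actually produces is $\ell_{\mathrm{ext}}-4(1-\mathrm{g})/\bar p$. By contrast, \eqref{bar:p:w} is the corresponding weight on $\Delta$, since it already contains $-\sum_j 2d_j(d_j-1)/L_j$. Read this way, your curvature computation gives exactly $\Scal(\tilde\omega)=\ell_{\mathrm{ext}}$.

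For the final assertion the paper only refers to the argument of \cite[Corollary~6.6]{Jub23}, and your first sentence is the essential point. Uniqueness of extremal metrics modulo $\aut_{\red}(Y)$, applied to the compatible extremal metric $\tilde\omega$ you have just built, shows that every extremal metric in $[\omega_Y]$ is of the form $\phi^*\tilde\omega$, i.e.\ compatible up to an automorphism.

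The gap is in your attempt to go further and prove that $\omega'$ itself is compatible. Conjugating the isometry group of $\omega'$ amounts to replacing $\omega'$ by some $\psi^*\omega'$, so it gives nothing beyond ``up to automorphism''.

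More seriously, ``$\T$ acts rigidly and semisimply on $\omega'$'' does not follow from $\T$ acting by isometries. Every $\T$-invariant metric in $[\omega_Y]$ has that property, and generic ones are neither rigid nor semisimple. The compact group preserving the Hermitian--Einstein structure adds nothing either, since it acts on $Y$ through $\T$ itself when the $E_j$ are stable. Rigidity and semisimplicity are transported only by a $\T$-equivariant isometry, so you would need the $\phi$ from the uniqueness theorem to commute with $\T$.

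In general it does not. Suppose $0\neq s\in H^0(C,\mathrm{Hom}(E_i,E_j))$ for some $i\neq j$. This happens, e.g.\ for Hirzebruch surfaces, since $\aut_{\red}(Y)$ need not be reductive when the extremal metric is not cscK. Then $\phi=\mathrm{Id}+s$ gives an extremal metric $\phi^*\tilde\omega\in[\omega_Y]$ that is not even $\T$-invariant. Otherwise the compact isometry group of $\tilde\omega$ would contain the nontrivial unipotent commutators $\phi t\phi^{-1}t^{-1}=\mathrm{Id}+(1-t_jt_i^{-1})s$, $t\in\T$. Hence $\phi^*\tilde\omega$ is not compatible in the literal sense.

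So the assertion must be read modulo $\aut_{\red}(Y)$, and then the uniqueness step alone finishes the proof. The Calabi detour is not needed. Your fallback does not help either: \cite[Theorem~2]{AK} is the splitting theorem quoted in the introduction, not a statement about compatibility of extremal metrics.
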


\begin{proof}
By Theorem~\ref{t:stab:y:x}, its fiber $(X,[\omega_X])$ is $(\bar{p},\bar{w})$-uniformly K-stable. Hence, by Theorem~\ref{t:intro:fiber}, there exists a compatible $(\bar{p},\bar{w})$-cscK metric $\omega$ in $[\omega_X]$. Let $\tilde{\omega}$ be the compatible K\"ahler metric on $Y$ associated to $\omega$ provided by Section~\ref{s:comp:bundle}. It follows from \cite[(7)]{ACGT11} that this metric is extremal.

Finally, the same argument as in \cite[Corollary 6.6]{Jub23} proves that any extremal metric is compatible.
\end{proof}

We conclude by the proof of Corollary \ref{c:intro:B}.

\begin{proof}
The implication $(1)\Rightarrow(3)$ follows from Corollary \ref{c:intro}, Theorem \ref{t:intro:bundle}, and Proposition \ref{l:kstab:stab:delta}.

For $(3)\Rightarrow(1)$, we apply Proposition \ref{l:kstab:stab:delta} and Theorem \ref{t:intro:fiber} to obtain the existence of a $(\bar{p},\bar{w})$-cscK metric on $X$. Then, by \cite[(7)]{ACGT11}, the associated compatible metric on $Y$ is extremal.

The implication $(3)\Rightarrow(2)$ follows immediately from the arguments of $(3)\Rightarrow(1)$.

Finally, we prove $(2)\Rightarrow(3)$. When the K\"ahler class $[\omega_X]=2\pi c_1(L_X)$ is polarized, it is known that the existence of a $(\bar{p},\bar{w})$-cscK metric implies $(\bar{p},\bar{w})$-weighted uniform $K$-stability, see  \cite[Corollary 1]{AJL} based on the work of \cite{BDL, Has16, Li_2022, Sjo20}. The conclusion then follows from Proposition \ref{l:kstab:stab:delta}. 

\end{proof}

\begin{remark}
To obtain an extremal metric on $Y$, one may also work directly on $Y$ without proving the stability of $X$. More precisely, one proves that the stability of $(Y,[\omega_Y])$ for compatible test configurations implies the $(p\bar{p},\bar{w})$-uniform K-stability of $\Delta$, by the same argument as in Corollary~\ref{c:thm:A}. A straightforward adaptation of the arguments of Theorem~\ref{t:intro:fiber} then yields the existence of a compatible extremal metric on $(Y,[\omega_Y])$ whenever $\Delta$ is $(p\bar{p},\bar{w})$-uniformly K-stable.
\end{remark}

\begin{remark}
The formulas for the Donaldson--Futaki invariant and for the non--archimedean $J$-functional are essential to the proof of Theorem~\ref{t:stab:y:x}. A purely algebraic approach to deriving these formulas can be found in \cite[Section~1.2]{yin2025explicit}.
\end{remark}

\appendix
\section{Weighted toric K-stability}{\label{a:toric}}

In this appendix, we recall some results that appear to be well known to experts, but for which we have not been able to find explicit proofs in the literature.

\subsection{Donaldson's construction of toric test configurations}{\label{COnstruction:Donaldson}}
Consider a toric K\"ahler manifold $(Z, \omega_Z, \T)$ with Delzant polytope $(\Delta,\mathbf{L})$. Let $f = \max(f_1, \ldots, f_p)$ be a convex PL function on $\Delta$, with $f_1, \ldots, f_p$ being a minimal set of affine-linear functions with \emph{rational coefficients} defining $f$. Multiplying $f$ by a suitable denominator, we can assume without loss that every $f_i$ has integer coefficients. We choose $R > 0$ such that $(R - f)$ is strictly positive on $\Delta$. We then consider the polytope  $\Delta_{R - f} \subset \mathbb{R}^{n+1} $, defined by $\Delta_{R - f}:=\{(x,y) \in \Delta \times \mathbb{R} \text{ s. t. } 0 \leq y \leq R - f \}$,
with label $\mathbf{L}_{R-f}:=\mathbf{L}\cup \{  y \geq 0, \quad  R - y - f_i \geq0, i=1,\dots p \}$.
Let $(\mathcal{Z}_f, \mathcal{A}_{\Z_f}, \mathbb{T} \times \S^1)$ be the toric K\"ahler orbifold with fixed cohomology class $\mathcal{A}_{\Z_f}$, obtained via Lerman-Tolman correspondence \cite{LT} with respect to the labeled polytope $(\Delta_{R - f},\mathbf{L}_{R-f})$.

\begin{define}[ \cite{DR17}, \cite{SKD}]{\label{d:torictc}}
    The toric K\"ahler orbifold $(\mathcal{Z}_f, \mathcal{A}_{\Z_f}, \mathbb{T} \times \S^1)$ constructed above is called a toric test configuration.
\end{define}

Let $\RPL$ be the set of convex piecewise linear functions with rational coefficients on $\Delta$. By the above construction, $\RPL$ can be thought of as the space of orbifold toric  test configurations of $(Z,[\omega_Z],\T)$. We also define the space $\DPL$ as the set of convex piecewise linear functions $f$ such that the associated polytope $\Delta_{R - f}$ by Donaldson's construction  is Delzant. This set can be thought as the space of smooth toric test configurations. Finally, we consider

$$
\mathcal{DPL}_{\mathrm{dom}}(\Delta) := \left\{ \, f = \max(f_1, \dots, f_p) \in \mathcal{DPL}(\Delta) \,\middle|\, \exists\, i_0 \in [0,p] \text{ s.t. } f_{i_0} = \text{const} \right\}.$$
Any element of $\mathcal{DPL}_{\mathrm{dom}}(\Delta)$ defines a smooth dominating toric test configuration, see Appendix \ref{a1:toric}.

\subsection{Twist of toric test configurations}

Let $(\Z_f,\A_{\Z_f})$ be a toric test configuration with labeled moment polytope 
$(\Delta_{R - f},\mathbf{L}_f)$
as in Section \ref{COnstruction:Donaldson}. A direct computation shows that the Delzant polytope of the $\xi$-twist $((\mathcal{Z}_f)_\xi, (\mathcal{A}_{\Z_f})_\xi, \T \times \S^1_{\xi})$ of $(\Z_f,\A_{\Z_f}, \T \times \S^1)$ by an element $\xi$ is $\Delta_{R-f-\xi}$. Hence, the $\xi$-twist $((\Z_f)_\xi, (\A_{\Z_f})_\xi)$ of $(\Z_f,\A_{\Z_f})$ is 
$(\Z_{f+\xi},\A_{\Z_{f+\xi}})$.

\subsection{Weighted K-stability of toric manifolds}

A detailed proof of the below result for the polarized case can be found in \cite[Proposition 4.5.3]{NS}. Since no proof appears to be available to the authors for the transcendental case, we provide a sketch below. Our argument will also allow us to obtain a weighted version in Lemma \ref{l:J-fonc:compa:toric}.

\begin{lemma}{\label{l:J-fonc}}
Let $(\Z_f, \A_{\Z_f}, \T \times \S^1)$ be a dominating toric test configuration associated with a function $f \in \mathcal{DPL}_{\mathrm{dom}}(\Delta)$. Then, the reduced $\mathbf{J}_{\T^{\mathbb{C}}}^{\mathrm{NA}}$-functional evaluated on $(\Z_f, \A_{\Z_f}, \T \times \S^1)$ is given by
    
\begin{equation*}    
\frac{1}{(2\pi)^{n+1}} \J_{\mathbb{T}^{\mathbb{C}}}(\Z_f,\A_{\Z_f})= \frac{1}{\mathrm{Vol([\omega]})}   \inf_{\xi \in \mathrm{Aff}(\Delta)}\int_\Delta \big( f+\xi -\inf_\Delta (f+\xi)\big)dx.
\end{equation*} 
 
\end{lemma}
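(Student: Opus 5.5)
The strategy is to compute the non-Archimedean $\mathbf{J}$-functional directly from its definition on the toric test configuration, turning each intersection number into a polytope integral, and then to minimise over twists.

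\textbf{Step 1: the untwisted functional.} I will first show that for every $f\in\mathcal{DPL}_{\mathrm{dom}}(\Delta)$
\[
\frac{1}{(2\pi)^{n+1}}\mathbf{J}^{\mathrm{NA}}(\Z_f,\A_{\Z_f})=\frac{1}{\mathrm{Vol}([\omega])}\int_\Delta\big(f-\inf_\Delta f\big)dx .
\]
The definition of $\mathbf{J}^{\mathrm{NA}}$ has two terms.

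\emph{The top-power term.} I use the Duistermaat--Heckman formula on the smooth toric manifold $\Z_f$. Its moment polytope is $\Delta_{R-f}$, so
\[
\int_{\Z_f}\A_{\Z_f}^{[n+1]}=(2\pi)^{n+1}\mathrm{Vol}(\Delta_{R-f})=(2\pi)^{n+1}\int_\Delta (R-f)\,dx .
\]

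\emph{The mixed term.} The domination condition gives a constant $f_{i_0}=c_0$ among the $f_i$. The facet $\{y=R-c_0\}$ then corresponds to the strict transform of $X\times\{\infty\}$, or equivalently to the trivial piece. From this one sees that $\Pi^*[\omega]$ is represented by the pullback of the $\T$-invariant form on $Z$. Via the moment map this form is $\pi_\Delta^*$ of the symplectic form on the base polytope.

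To compute $\int_{\Z_f}(\Pi^*[\omega])^{n}\wedge\A_{\Z_f}$, I will use the standard Legendre-dual description: in a class of the form $[\omega]+t[\text{fiber}]$, integrals on a toric test configuration are integrals over $\Delta$ of the height function. Expanding and using that $(\Pi^*\omega)^{n+1}=0$ (it is pulled back from an $n$-dimensional space), I expect
\[
\int(\Pi^*[\omega])^n\wedge\A=(2\pi)^{n+1}\int_\Delta\big(R-\min_\Delta f\big)\,dx ,
\]
up to the normalisation constant. Heuristically, the $\S^1$-moment map restricted along the central fiber equals the maximum height $R-\inf f$.

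Combining the two terms gives the untwisted formula. The expression is invariant under changes of $R$ and of the constant term of $f$, as it should be.

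\textbf{Step 2: the twist.} The paper already shows that the $\xi$-twist of $\Z_f$ is $\Z_{f+\xi}$. Applying Step 1 to $f+\xi$ for rational $\xi$ gives the same formula with $f$ replaced by $f+\xi$. Note that $f+\xi$ is no longer dominating in general, so I will instead compute with a dominating resolution. Alternatively, since the right-hand side depends only on $f+\xi$ through $\int_\Delta(f+\xi-\inf_\Delta(f+\xi))\,dx$, I can argue that the intersection formula holds for any toric test configuration after passing to a common toric resolution dominating $Z\times\P^1$. Intersection numbers are invariant under such pullbacks.

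By continuity in $\xi$, the formula extends to all $\xi\in\mathfrak t$. Affine functions modulo constants are exactly the linear twists, so taking the infimum over $\xi$ yields the stated infimum over $\mathrm{Aff}(\Delta)$.

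\textbf{Main obstacle.} The difficulty lies in the mixed term, specifically in justifying cleanly, in the transcendental setting, that $\int(\Pi^*[\omega])^n\wedge\A_{\Z_f}$ localizes to $\int_\Delta(R-\inf f)$. I would try first to write $\A_{\Z_f}-\Pi^*[\omega]$ as a combination of toric divisors supported on the central fiber, with coefficients given by the values of $R-f$ at vertices. Each such divisor $D$ contributes $\int_D(\Pi^*\omega)^n$, and this is nonzero only for components mapping birationally onto $X$, which is the top facet. This would reduce the computation to the volume of $\Delta$ times the relevant height.
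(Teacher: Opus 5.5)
Your plan is correct and is essentially the paper's argument. The paper also writes $\A_{\Z_f}$ as a combination of toric divisors (via Sj\"ostr\"om Dyrefelt's Proposition~3.10) and reads off from the polytope that the strict transform $E_0$, corresponding to the facet $\{y=\sup(R-f)\}$, has coefficient $\sup(R-f)$ and is the only component contributing to the mixed term. It then uses the toric volume formula for the top power and handles twists by approximation; your resolution/pullback-invariance variant for the twists works equally well.

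One small slip: in the paper's convention ($\Pi_0$ trivializes away from $0$), the facet $\{y=R-c_0\}$ with $c_0=\min_\Delta f$ is the strict transform of $X\times\{0\}$, a central-fibre component, while $X\times\{\infty\}$ is the facet $\{y=0\}$. Your ``main obstacle'' paragraph already uses the correct picture.
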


\begin{proof}
By Lemma \ref{l:smooth:test}, we can suppose without loss that $(\Z_f,\A_{\Z_f})$ is dominating and we let $\Pi : \mathcal{Z}_f \rightarrow Z \times \P^1$ be the dominating map. It follows from \cite[Proposition 3.10]{Sjo} that we can write 
\[
    \mathcal{A}_{\Z_f}= a_0 E_0 + \sum_{j=1}^q b_j F_j + \sum_{i=1}^p a_iE_i,
\]
where the strict transform $E_0$ corresponds to the facet $F_{E_0} := \Delta_{R - f} \cap \{y = \sup(R - f)\}$ in $\Delta_{R - f}$, $F_i$ are toric divisors associated with the facet $L_i=0$ of $\Delta_{R - f}$, and $E_i$, $i\geq1$, are irreducible components of the exceptional divisor with respect to the map $\Pi$. It follows from \cite[Proposition 4.2.14]{CLS11} (and a direct approximation argument) that the coefficient $a_0 = \sup(R - f)$. Injecting this into \eqref{non:arc:j} and pushing forward via the moment map, we obtain the result for $\T=\{1\}$. For the reduced version, any twist can be approximated by a dominating toric test configuration. Hence, the result follows by an approximation argument and passing to the limit.
\end{proof}

\begin{lemma}{\label{l:toric:fut}}\textnormal{(\cite[Proposition 4]{Lah19})}. 
Let $(\Z_f, \A_{\Z_f}, \T \times \S^1)$ be a smooth toric test configuration associated with a function $f \in \mathcal{DPL}(\Delta)$. Then, the weighted Donaldson-Futaki invariant of $(\Z_f, \A_{\Z_f}, \T \times \S^1)$ is given by

\begin{equation}{\label{obj2}}
    \mathbf{DF}_{v,w}(\Z_f, \A_{\Z_f})= (2\pi)^{n+1}\mathcal{F}^{\Delta}_{v,w}(f).
\end{equation}
\end{lemma}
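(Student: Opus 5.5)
The statement to prove is Lemma \ref{l:toric:fut}: for a smooth toric test configuration $(\Z_f,\A_{\Z_f})$ built from $f\in\mathcal{DPL}(\Delta)$,
\[
\mathbf{DF}_{v,w}(\Z_f,\A_{\Z_f})=(2\pi)^{n+1}\mathcal{F}^{\Delta}_{v,w}(f).
\]
The plan is to compute the left-hand side on a toric representative and push every integral forward to $\Delta_{R-f}$ with the moment map. Then integrate out the vertical variable $y$.

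\textbf{Step 1 (pushing forward).} Choose a $\T\times\S^1$-invariant Kähler metric $\Omega$ in $\A_{\Z_f}$, given by a symplectic potential $U$ on $(\Delta_{R-f},\mathbf{L}_{R-f})$. Normalize its moment map so that the $\T$-component takes values in $\Delta$. The weights $v,w$ are pulled back through the projection $\Delta_{R-f}\to\Delta$.
\begin{itemize}
\item The pushforward of $\Omega^{[n+1]}$ is $(2\pi)^{n+1}dx\,dy$.
\item By Abreu's formula in the weighted form of \cite{Lah19}, $v\,\Scal_v(\Omega)=-\sum_{a,b}\partial_a\partial_b\big(vU^{ab}\big)$.
\end{itemize}
Integrating by parts with the boundary behaviour of $U$ (the Guillemin conditions) gives
\[
\int_{\Delta_{R-f}}v\,\Scal_v\,dx\,dy=2\int_{\partial\Delta_{R-f}}v\,d\sigma .
\]
This holds because $v$ does not depend on $y$, so no extra terms appear. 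Consequently
\[
\mathbf{DF}_{v,w}=(2\pi)^{n+1}\Big(-2\int_{\partial\Delta_{R-f}}v\,d\sigma+\int_{\Delta_{R-f}}wv\,dx\,dy\Big)+8\pi\,\mathrm{Vol}(X),
\]
with the paper's sign convention and the constant to be tracked carefully.

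\textbf{Step 2 (splitting the boundary).} Decompose $\partial\Delta_{R-f}$ into three pieces.
\begin{itemize}
\item The bottom facet $\{y=0\}$: its measure is $dx$, contributing $\int_\Delta v\,dx$.
\item The top facets $\{R-y-f_i=0\}$: these are graphs over the regions where $f=f_i$. Since $d(R-y-f_i)\wedge d\sigma=-dx\wedge dy$, the induced measure projects exactly onto $dx$. Together they contribute $\int_\Delta v\,dx$ as well.
\item The side facets $\{L_j=0\}\times[0,R-f]$: their measure is $d\sigma_\Delta\,dy$, contributing $\int_{\partial\Delta}(R-f)v\,d\sigma$.
\end{itemize}
The interior term equals $\int_\Delta(R-f)wv\,dx$. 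Collecting terms, the $R$-dependent part is $R\,\mathcal{F}^\Delta_{v,w}(1)$, which vanishes by the standing assumption that $\mathcal{F}_{v,w}$ kills affine functions. The two terms $2\int_\Delta v\,dx$ combine with the normalizing constant $8\pi\mathrm{Vol}$, which is exactly why that constant appears in the definition. What is left is $2\int_{\partial\Delta}fv\,d\sigma-\int_\Delta fwv\,dx=\mathcal{F}^\Delta_{v,w}(f)$, up to the factor $(2\pi)^{n+1}$.

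\textbf{Main obstacle.} The delicate part is the bookkeeping of normalizations. This includes
\begin{itemize}
\item the factor $2$ in the boundary measure and in $\Scal$,
\item the $\Delta$-normalization of the $\S^1$ component of the moment map (the $y$-coordinate could be shifted),
\item the exact constant $8\pi\mathrm{Vol}(X)$.
\end{itemize}
All of these must conspire so that the bottom and top facet terms cancel against the constant. I would first verify the formula on the product configuration $f\equiv0$, where $\mathbf{DF}_{v,w}$ must vanish; that check fixes the constant. After that, the general case follows from the linearity in $f$ established above.
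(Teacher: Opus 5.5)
Your computation is correct. It is essentially the standard argument behind \cite[Proposition 4]{Lah19}, which the paper only cites and does not reprove: push forward to $\Delta_{R-f}$, apply the weighted Abreu formula and integrate by parts, then split $\partial\Delta_{R-f}$ into bottom, top and side facets, using $\mathcal{F}^{\Delta}_{v,w}(1)=0$ to remove the $R$-dependence.

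The one point to state precisely, rather than ``fix'' through the case $f\equiv 0$, is the constant. The bottom and top facets contribute $-4(2\pi)^{n+1}\int_\Delta v\,dx$. This cancels exactly against $8\pi\int_Z v(\mu_\omega)\,\omega^{[n]}=8\pi(2\pi)^n\int_\Delta v\,dx$. So the constant term in $\mathbf{DF}_{v,w}$ must be read as the $v$-weighted volume, which is Lahdili's normalization. If it were the unweighted $\mathrm{Vol}(X,[\omega])$ as literally written in the paper's definition, a spurious $4(2\pi)^{n+1}\int_\Delta(1-v)\,dx$ would remain.
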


By Lemmas \ref{l:J-fonc} and \ref{l:toric:fut}, and using a density result, which we prove in Lemma \ref{l:smooth:test}, we deduce

\begin{lemma}
Let $(Z,\omega,\T)$ be a toric K\"ahler manifold with Delzant polytope $\Delta$. Suppose that $v>0$. Then $(Z,[\omega],\T)$ is $(v,w)$-uniformly K-stable if and only if $\Delta$ is $(v,w)$-uniformly K-stable.
\end{lemma}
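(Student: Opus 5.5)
The plan is to translate both sides of the equivalence into statements about the Delzant polytope $\Delta$ and compare them there. Lemmas~\ref{l:J-fonc} and~\ref{l:toric:fut} do the translation on the geometric side, and a density argument (Lemma~\ref{l:smooth:test}) does the comparison on the polytope side.

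First, take a smooth dominating toric test configuration $(\Z_f,\A_{\Z_f})$ with $f\in\mathcal{DPL}_{\mathrm{dom}}(\Delta)$. Lemma~\ref{l:toric:fut} gives $\mathbf{DF}_{v,w}(\Z_f,\A_{\Z_f})=(2\pi)^{n+1}\mathcal{F}^{\Delta}_{v,w}(f)$. For the norm, I would use the weighted analogue of Lemma~\ref{l:J-fonc}. The same push-forward argument works with the measure $v\,dx$, since the $\mathbf{J}^{\mathrm{NA}}$ appearing in the definition of weighted stability is compared through the weighted volume. It yields $\J_{\T^{\C}}(\Z_f,\A_{\Z_f})=c\,\|f\|_{J_v(\Delta)}$ with $c>0$ a universal constant. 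If one insists on the unweighted $\mathbf{J}^{\mathrm{NA}}$, the hypothesis $v>0$ on the compact set $\Delta$ gives $\inf v\,\|f\|_{J}\le \|f\|_{J_v}\le \sup v\,\|f\|_J$, so the two norms are equivalent up to constants. Either way, $(v,w)$-uniform K-stability of $(Z,[\omega],\T)$ on toric test configurations is exactly the inequality $\mathcal{F}^\Delta_{v,w}(f)\ge\lambda\|f\|_{J_v(\Delta)}$ for all $f\in\mathcal{DPL}_{\mathrm{dom}}(\Delta)$.

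It then remains to show that this inequality for $f\in\mathcal{DPL}_{\mathrm{dom}}(\Delta)$ is equivalent to the same inequality for all $f\in\mathcal{CV}^\infty(\Delta)$ (Definition~\ref{d:stability}). One direction is immediate: a function in $\mathcal{DPL}_{\mathrm{dom}}$ is continuous and convex, and it is a uniform limit of functions in $\mathcal{CV}^\infty(\Delta)$ whose boundary integrals converge, so both sides pass to the limit. The converse is the substantive step. The plan is:
\begin{enumerate}
\item Approximate $f\in\mathcal{CV}^\infty(\Delta)$ uniformly on $\Delta$ by rational piecewise linear convex functions, for instance by maxima of rational tangent affine functions. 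This makes both $\mathcal{F}^\Delta_{v,w}$ and $\|\cdot\|_{J_v}$ converge, because both are continuous in sup norm on $\Delta$ including the boundary term.
\item Refine the approximants so that $\Delta_{R-f}$ is Delzant, by a toric resolution (subdividing the normal fan) that moves $f$ by an arbitrarily small amount.
\item Add a constant piece $f_{i_0}$ to obtain domination.
\end{enumerate}
This is precisely the content of Lemma~\ref{l:smooth:test}, which I would invoke directly. Adding a constant, multiplying by a positive rational, or adding an affine $\xi$ leaves $\mathcal{F}^\Delta_{v,w}$ unchanged because $\mathcal{F}_{v,w}$ vanishes on affine functions. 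By construction $\|\cdot\|_{J_v}$ is also invariant under these operations.

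The main obstacle is step (2): keeping the Delzant (smoothness) condition while approximating, with control of the boundary integral. If Lemma~\ref{l:smooth:test} were unavailable, I would first prove the inequality for $\RPL$, which corresponds to orbifold test configurations. I would then use that on a rational polytope any rational PL convex function becomes Delzant after a small perturbation of $R$ and a piecewise linear refinement. Throughout, I would check that $\mathcal{F}_{v,w}$ changes by at most $C\|{\rm perturbation}\|_{C^0}$.
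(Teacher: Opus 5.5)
Your proposal is correct and matches the paper's argument, which likewise combines Lemma~\ref{l:toric:fut}, Lemma~\ref{l:J-fonc} and the density Lemma~\ref{l:smooth:test}; your steps (1)--(3) mirror the sketch given there. Two small corrections: in the paper $\mathbf{J}^{\mathrm{NA}}$ is unweighted, so Lemma~\ref{l:J-fonc} produces $\|f\|_{J(\Delta)}$, and what is actually needed is your fallback comparison $\inf_\Delta v\,\|f\|_{J(\Delta)}\le\|f\|_{J_v(\Delta)}\le\sup_\Delta v\,\|f\|_{J(\Delta)}$ (using $v>0$); and positive rescaling is harmless because both sides of the inequality are homogeneous of degree one, not because $\mathcal{F}^{\Delta}_{v,w}$ and $\|\cdot\|_{J_v(\Delta)}$ are unchanged.
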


\subsection{The space of dominating toric test configurations}{\label{a1:toric}}

Consider a polytope $\Delta_{R - f}$ associated with a function $ f \in \mathcal{DPL}_{\mathrm{dom}}(\Delta)$ as explained in Section \ref{COnstruction:Donaldson}. Then $f$ is of the form $
f=\max(f_1,\dots,f_{i_0}:=A,\dots,f_p)$
for a certain constant $A \in \mathbb{R}$. In particular, there exists a facet $F \subset \Delta_{R - f}$ lying in the hyperplane $\{ y = R - A \}$ and the label of $\Delta \times [0, \sup(R-f)]$ is contained in the label of $\Delta_{R - f}$. Therefore, we can construct $\Delta_{R - f}$ by performing a sequence of blow-ups on the polytope $\Delta \times [0, \sup(R - f)]$, defined by the hyperplanes $H_i := \{-y + R - f_i = 0\}$ (see the proof of Lemma \ref{l:smooth:test} for the definition of a toric blow-up).

This sequence of blow-ups induces a holomorphic submersion at the level of the associated manifolds (see \cite[Theorem 3.3.4]{CLS}), $
\mathcal{Z}_f \longrightarrow Z \times \mathbb{P}^1$,
where $\mathcal{Z}_f$ is the toric test configuration corresponding to $\Delta_{R - f}$. This map is precisely the blow-down map.

\subsection{Smooth dominating toric test configurations are enough}

Let $\mathcal{S}$ be one of the spaces, $\RPL$, $\DPL$, $\mathcal{DPL}_{\mathrm{dom}}(\Delta)$.
\begin{lemma}{\label{l:smooth:test}}
Suppose that $v>0$. Then, the following statements are equivalent.

 \begin{enumerate}
     \item The Delzant polytope is $(v,w)$-weighted uniformly K-stable, i.e., satisfies Definition \ref{d:stability}.
     \item There exists $\lambda > 0$ such that for all $f \in \mathcal{S}$, 
\begin{equation*}
\DF(f) \geq \lambda \lVert f\rVert_{J_v(\Delta)}.
\end{equation*}
 \end{enumerate}  
\end{lemma}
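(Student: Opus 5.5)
The implications $(1)\Rightarrow(2)$ for each choice of $\mathcal{S}$ are immediate, because $\mathcal{DPL}_{\mathrm{dom}}(\Delta)\subset\DPL\subset\RPL\subset\mathcal{CV}^0(\Delta)$. A function in $\RPL$ is continuous and convex but not smooth in $\mathring{\Delta}$. To compare it with Definition~\ref{d:stability}, I would approximate it by smooth convex functions, for instance by convolution after a slight dilation of $\Delta$. Under such an approximation both $\DF$ and $\|\cdot\|_{J_v(\Delta)}$ converge, since $v$ and $w$ are smooth and bounded on $\Delta$ and the approximants converge uniformly, which controls both the boundary and the interior integrals. The real content is therefore $(2)\Rightarrow(1)$, and it suffices to treat the smallest class $\mathcal{S}=\mathcal{DPL}_{\mathrm{dom}}(\Delta)$.

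For $(2)\Rightarrow(1)$ I would first pass from $\mathcal{DPL}_{\mathrm{dom}}(\Delta)$ to $\RPL$. Both $\DF$ and $\|\cdot\|_{J_v}$ are invariant under adding constants, and the $J$-norm is invariant under adding affine functions. For $f\in\RPL$ I add a constant piece $A\le\min f$ to the max, which does not change $f$, so $f$ is dominating as soon as it is Delzant. Multiplying by a positive integer rescales both sides linearly, so integrality is harmless. Making $\Delta_{R-f}$ Delzant, that is simple and smooth, is a toric resolution problem. I would perturb the rational slopes of the $f_i$ slightly and then apply toric blow-ups, that is, successive corner cuts, which in Donaldson's picture amount to adding extra affine pieces $f_{p+1},\dots$ arbitrarily close to $f$ in sup norm. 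The goal is a sequence $f_k\in\mathcal{DPL}_{\mathrm{dom}}(\Delta)$ with $f_k\to f$ uniformly on $\Delta$. Uniform convergence gives $\DF(f_k)\to\DF(f)$ and $\|f_k\|_{J_v}\to\|f\|_{J_v}$, and the latter uses the equivalence of Lemma~\ref{l:stab:eq}. The inequality then passes to $\RPL$.

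Next I would pass from $\RPL$ to $\mathcal{CV}^\infty(\Delta)$. Given a continuous convex $f$ that is smooth inside, I would take rational PL interpolants, namely the maximum of tangent planes at a finite rational grid, and rationalize their coefficients. These converge uniformly to $f$ on compact subsets. Convergence on the boundary is the delicate point, because $f$ may have unbounded gradient there. I would use the normalization $f^*$ of \eqref{normalized-function-polytope} together with the lower semicontinuity of boundary integrals of convex functions (\cite[Proposition 5.2.6]{SKD}), choosing the PL approximants to lie below $f$ and increase to $f$. Monotone convergence on $\partial\Delta$, with $v\ge0$, then gives convergence of the boundary term. The interior term $\int f w v\,dx$ converges by dominated convergence. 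The $J$-norm converges by Lemma~\ref{l:stab:eq}, since $\|f_k^*\|_{L^1_v}\to\|f^*\|_{L^1_v}$.

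The main obstacle I expect is the resolution step: showing that every rational convex PL $f$ is a uniform limit of functions in $\DPL$ whose Donaldson polytope $\Delta_{R-f}$ is Delzant. My first attempt would be toric resolution of singularities for the fan of $\Delta_{R-f}$, realized by projective star subdivisions. I would check that the new facets can be taken of the form $y=R-g$ with $g$ affine, so that they remain graphs over $\Delta$, and that the cuts can be made at depth $\varepsilon$. If some subdivision introduces a ray with vanishing $y$-component, I would instead refine the PL structure of $f$ itself, adding pieces so that the $f_i$ locally form unimodular configurations with the labels of $\Delta$. This exploits the fact that $\Delta$ itself is Delzant.
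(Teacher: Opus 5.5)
Your architecture is the same as the paper's. The real content is a density statement: resolve $\Delta_{R-f}$ by corner cuts (toric blow-ups), which amount to adding affine pieces uniformly close to $f$, and pass to the limit. Separately, you need the equivalence between the conditions on $\RPL$ and on $\mathcal{CV}^\infty(\Delta)$.

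For that equivalence the paper only cites \cite[Theorem 1.0.1]{NS}. Your direct argument is a legitimate and more elementary substitute, and it is easier than you fear.
\begin{itemize}
\item Functions in $\mathcal{CV}^\infty(\Delta)$ are continuous on the closed polytope.
\item Take increasing maxima $g_k$ of rationalized, slightly lowered tangent planes at a dense set of interior points, with errors tending to zero. Their supremum is a closed convex function equal to $f$ on $\mathring{\Delta}$. Such a function equals its radial limits at boundary points, so $g_k\to f$ pointwise on all of $\Delta$, including $\partial\Delta$.
\item Dini's theorem then gives uniform convergence. This controls $\DF$ directly, and controls $\|\cdot\|_{J_v(\Delta)}$ up to an error of $2\|g_k-f\|_\infty\int_\Delta v\,dx$.
\end{itemize}
So neither \cite[Proposition 5.2.6]{SKD} nor Lemma \ref{l:stab:eq} is needed. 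What the route through \cite{NS} buys instead is the intermediate condition \eqref{cond:NS} on the larger class $\mathcal{CV}^0_{L^1}(\Delta)$. The paper reuses this in Lemma \ref{p:stab:eq:rigid}, where the weight vanishes on $\partial\Delta$.

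\textbf{The dominating step fails as written.} Adding a constant piece $A\le\min f$ does not make $f$ dominating. Domination requires a genuine facet of $\Delta_{R-f}$ in the hyperplane $\{y=R-A\}$, which is what realizes $\Delta_{R-f}$ as an iterated blow-up of $\Delta\times[0,\sup(R-f)]$ in Section \ref{a1:toric}. In other words, $f$ must equal its minimum on an open set.
\begin{itemize}
\item If $A<\min f$, the piece is redundant.
\item If $A=\min f$, it is active only on $\{f=\min f\}$, which may have empty interior, for instance a single vertex of the subdivision.
\end{itemize}
In both cases the polytope, and hence the test configuration, is unchanged. The repair is to pass to $\max(f,\min f+\varepsilon)$ with $\varepsilon>0$ rational. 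This is $\varepsilon$-close to $f$ and has a genuine constant facet. Then resolve that function with cuts shallow enough that this facet survives.

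\textbf{The obstacle you single out in the resolution step is not the real one.} A cut at a top vertex has inward normal in that vertex's normal cone, spanned by the $(-\nabla f_k,-1)$ and, over $\partial\Delta$, some $(p_i,0)$. The lattice points of height $0$ in this cone lie in the face spanned by the $(p_i,0)$. That face is already unimodular because $\Delta$ is Delzant, so a resolution never needs such rays.

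The genuine constraint is that each new primitive normal have last coordinate exactly $-1$. Otherwise the new piece has non-integral slope, and the facet is not a Donaldson label $R-y-\tilde f_i\ge 0$ with integral $\tilde f_i$. Clearing denominators stretches the $y$-direction and destroys unimodularity; geometrically, the central fibre becomes non-reduced and Lemma \ref{l:toric:fut} no longer applies.

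Resolving with height $-1$ rays only essentially amounts to a unimodular triangulation of the local slope configurations using their own lattice points. This can fail at fixed scale once $n\ge 3$. Take $f=\max(0,x_1,x_2,x_1+x_2+rx_3)$ near an interior point translated to the origin, with $r\ge 2$. The slopes span a Reeve tetrahedron, and the corresponding normal cone contains no lattice point of height $-1$ besides its generators.

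Your fallback of refining the PL structure of $f$ is the right reformulation, but it needs the dilation you already have at hand:
\begin{itemize}
\item Replace $f$ by $Nf$; both sides are positively homogeneous.
\item Invoke a Knudsen--Mumford--Waterman type theorem to get a regular unimodular refinement of the dilated subdivision.
\item Realize it by a small height perturbation $g$ with $\|g-Nf\|_\infty$ arbitrarily small.
\end{itemize}
The paper passes over exactly this point by appealing to the proof of \cite[Theorem 2.8]{BSS}, so this is where a complete argument has to do real work.
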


\begin{proof}
When $\mathcal{S} = \RPL$, the result is proved in \cite[Theorem 1.0.1]{NS}.  

We sketch below the proof that condition (2) with $\mathcal{S} = \DPL$ implies condition (2) with $\mathcal{S} = \RPL$ (the converse direction being straightforward).

The fact that condition (2) with $\mathcal{S} = \mathcal{DPL}_{\mathrm{dom}}(\Delta)$ implies condition (2) with $\mathcal{S} = \DPL$ follows from a similar argument and will not be discussed here.

\smallskip

 Let \(f \in \RPL\) and let \((\Z, \A_{\Z_f}, \T \times \mathbb{S}^1)\) be the associated toric orbifold via the Lerman–Tolman correspondence \cite{LT}. We consider a toric resolution of the toric orbifold along toric submanifolds as described in \cite{BSS}. The resolution is constructed through a series of blow-ups along toric suborbifolds \(N_F \subset \Z_f\) corresponding to a face \(F \subset \Delta_{R - f}\). 

In terms of the associated polytope \(\Delta_{R - f}\), the blow-up \(\hat{\Z}_f\) of \(\Z_f\) along \(N_F\) corresponds to the orbifold obtained via the Lerman–Tolman correspondence from a certain polytope \(\hat{\Delta}_f\), defined by performing the following operation on \(\Delta_{R - f}\): we introduce a hyperplane \(H \subset \mathfrak{t}^* \times \mathbb{R}\), which divides \(\Delta_{R - f}\) into two parts such that: 

\begin{enumerate}
    \item the vertices of the face \(F \subset \Delta_{R - f}\) are contained in the half-space \(H_{>0}\) defined by \(H$;
    \item all other vertices of \(\Delta_{R - f}\) are contained in the other half-space \(H_{\leq 0}$.
\end{enumerate}

The polytope \(\tilde{\Delta}_f\) is then defined as $
    \tilde{\Delta}_{R-f} := \Delta_{R - f} \cap H_{\leq 0}$.
The set of facets of \(\tilde{\Delta}_f\) is the same as that of \(\Delta_{R - f}\), with the addition of a new facet \(F_H\) given by \(F_H := H \cap \tilde{\Delta}_f\). A closer look at the proof of \cite[Theorem 2.8]{BSS} shows that  \(\tilde{\Delta}_f\) corresponds to a function in \(\DPL\) via Donaldson construction (see Section \ref{COnstruction:Donaldson}), i.e. $
\tilde{\Delta}_f= \Delta_{\tilde{f}}$
for a certain function \(\tilde{f} \in \DPL\). Moreover, we can choose \(H\) such that \(\tilde{f}\) and \(f\) are as close as we want in the \(\mathcal{C}^0(\Delta)\)-topology. Hence, the conclusion follows by an approximation argument and passing to the limit.

\end{proof}

\section{Toric manifolds with a rigid and semisimple action: generalization of Theorem \ref{t:intro:fiber}}{\label{a:toric:comp}}

In this appendix, we explain how to extend Theorem \ref{t:intro:fiber} to toric manifolds with a rigid and semisimple action in the terminology of \cite{ACGT06, ACGT04} (see Definitions \ref{d:rigide}). We indicate only the necessary modifications.

Let $(X^n,\omega,T)$ be a toric Kähler manifold with labeled Delzant polytope $(\hat{\Delta},\hat{\mathbf{L}})$. Suppose that $(X,\omega,T)$ is endowed with a rigid action of a subtorus $\mathbb{T} \subset T$, see Definition \ref{d:rigide}. It is known that the image of the moment map associated with $\T$ is a labeled Delzant polytope $(\Delta,\mathbf{L})$ (\cite[Prop. 4]{ACGT04}). By \cite{ACGT11}, $X^0 := \mu^{-1}(\mathring{\Delta})$ is a principal $\mathbb{T}^{\mathbb{C}}$-bundle over a base $B$:

$$
    \mathring{X} = P \times_\T \mathbb{T}^{\mathbb{C}} \longrightarrow B.
$$

Let us consider a particular of a semisimple $\T$-action in the sense of Definition \ref{d:semisimple}. Assume that the full principal bundle arises from the \textit{blow-down} construction of \cite{ACGT11}. In other words, the base $B$ is a product $(B,\omega_B) := \prod_{b \in \beta}(B_b,\omega_b)$, where each factor $(B_b,\omega_b)$ is a complex projective space $\mathbb{P}^{d_b}$ endowed with the Fubini–Study metric $\omega_{b}$. Moreover, suppose that $P$ is endowed with a \textit{semisimple connection} $\theta \in \Omega^1(P,\mathfrak{t})$ of the form

$$
d\theta = \sum_{b \in \beta} \omega_b \otimes p_b,
$$

where $p_b \in \mathfrak{t}$ are the inward normal vectors of the facets $F_b$ of $\Delta$ involved in the blow-down process (see e.g. \cite[Section 3.1]{ACGT11}). For each facet $L_b \in \mathbf{L}$ involved in the blow-down process, we write $L_b = \langle p_b, x \rangle + c_b$.

We have an analogue of Lemma \ref{l:label}:

\begin{lemma}{\label{l:a:label}}
Let $\mathbf{L}=\{L_1, \dots, L_d\}$ be the label of $\Delta$ and  $ \mathbf{L}_b=\{L^b_{k_1}, \dots, L^b_{k_b}\}$ the label of $\Delta_b$. Then the label $\hat{\mathbf{L}}=(\hat{L}_1,\dots, \hat{L}_p)$ of the Delzant polytope $\hat{\Delta}$  is

\begin{equation}{\label{lab:a:delta:hat}}
 \hat{\Delta}= \{ L_i(x) \geq 0  \textnormal{ for } i \notin \beta, ( \langle p_b , x \rangle + c_b) L^b_{r_b} \geq 0 \textnormal{ for } b \in  \beta \},
\end{equation}
 
\end{lemma}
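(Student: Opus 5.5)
Lemma \ref{l:a:label} is the analogue of Lemma \ref{l:label}, and I will prove it the same way. I first identify the moment coordinates of the full torus $T$ in terms of those of $\T$ and of the base tori, and then check that the proposed affine functions cut out the moment polytope $\hat{\Delta}$ with the correct normals. The starting point is the analogue of Lemma \ref{l:moment}, taken from \cite[Section 3.4]{ACGT11}. For each $b\in\beta$, the standard toric action $T_b$ on $(B_b,\omega_b)=\mathbb{P}^{d_b}$ lifts to a Hamiltonian action on $(X,\omega)$ with moment coordinates
\[
\hat{x}^b = x^b(\langle p_b,x\rangle + c_b),
\]
where $x^b$ are the moment coordinates of $(B_b,\omega_b,T_b)$. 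This gives the splitting $\hat{\mathfrak{t}}=\mathfrak{t}\oplus\bigoplus_b\mathfrak{t}_b$ and the identification $\mathring{\hat{\Delta}}\cong\mathring{\Delta}\times\prod_b\mathring{\Delta}_b$, exactly as in \eqref{identify:pol}. In these coordinates every function listed in \eqref{lab:a:delta:hat} is affine in $\hat{x}=(x,\hat{x}_B)$, which is what needs checking first. The functions $L_i$ with $i\notin\beta$ are affine in $x$. For $b\in\beta$, the products $(\langle p_b,x\rangle+c_b)L^b_r$ equal $\hat{x}^b_r$ for $r\neq 0$, and for $r=0$ they equal $\langle p_b,x\rangle+c_b-\sum_r\hat{x}^b_r$.

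Next I show that the region defined by \eqref{lab:a:delta:hat} coincides with the image $\mu^T(X)$. On the dense open set $\mu^{-1}(\mathring{\Delta})$, the factor $\langle p_b,x\rangle+c_b$ is positive. So the conditions $(\langle p_b,x\rangle+c_b)L^b_r\ge0$ are equivalent to $x^b\in\Delta_b$, and the region equals $\mathring{\Delta}\times\prod_b\Delta_b$ under the identification above. Taking closures gives $\hat{\Delta}$, by convexity and compactness of the moment image. I will also check that the facets $\{L_b=0\}$ with $b\in\beta$ disappear: the blow-down collapses $\{L_b=0\}$ in $\Delta$ to a lower-dimensional face of $\hat{\Delta}$, on which all $(\langle p_b,x\rangle+c_b)L^b_r$ vanish simultaneously. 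Hence $L_b$ itself is not part of the label $\hat{\mathbf{L}}$; it is recovered as $\sum_r(\langle p_b,x\rangle+c_b)L^b_r$, using $\sum_rL^b_r=1$.

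The step I expect to be the main obstacle is confirming the normalization of the labels, namely that each listed affine function is primitive with respect to the lattice of $T$. This is what makes $(\hat{\Delta},\hat{\mathbf{L}})$ the labeled Delzant polytope of $X$ rather than merely a polytope with the same facets. In Lemma \ref{l:label} this was done by an explicit unimodular change of variables \eqref{transfo:pol} reducing to the standard simplex, which is not available in general. Instead I will work vertex by vertex. At a vertex of $\hat{\Delta}$ lying over a vertex $v$ of $\Delta$, the active labels are the $L_i$ active at $v$ together with, for each $b$, all but one of the $(\langle p_b,x\rangle+c_b)L^b_r$. Their differentials are $dL_i$ and $d\hat{x}^b_r$, possibly shifted by $p_b$, so they form a block-triangular system. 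Its diagonal blocks are the lattice bases coming from the Delzant property of $\Delta$ and of the $\Delta_b$, so the system is unimodular. Combining this with the lattice description of $\hat{\mathfrak{t}}$ from \cite{ACGT11}, in which the integrality of the $p_b$ enters, gives the Delzant condition with these labels. This completes the identification \eqref{lab:a:delta:hat}.
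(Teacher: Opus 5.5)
Your argument takes a different route from the paper. The paper never identifies the lattice of $T$ and never inspects vertices. It takes the compatible potential $\hat u=u+\sum_{b\in\beta}(\langle p_b,x\rangle+c_b)u_b$, already known from Lemma~\ref{Lemma-symp-pot} to be a symplectic potential of $\omega$, and checks directly that it satisfies Abreu's boundary conditions \eqref{boundary:cond} for the proposed affine functions. The terms $L_b\log L_b$ cancel because $\sum_r L^b_r=1$. The powers of $L_b$ coming from $\hess(\hat u)$ and from $\prod_c\hat L_c$ combine into the single factor $L_b$ needed to recover $\det\hess(u)\prod_i L_i$. The coefficients of the $\tfrac12\hat L_c\log\hat L_c$ singularities of a symplectic potential fix the normalization of each label, so this identifies the labels; the lattice information is read off analytically from the boundary behaviour of $\hat u$.

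You instead identify the moment image with the region \eqref{lab:a:delta:hat} explicitly, and then certify the normalization combinatorially by unimodularity at vertices. This is more elementary and gives the Delzant property directly. The price is that you need the lattice $\hat\Lambda$ of $T$ in the coordinates dual to $(x,\hat x_B)$, which the paper's argument never has to compute.

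Two points need repair for your version to be complete.

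\emph{The lattice.} This is the actual crux, and you defer it to an unspecified statement of \cite{ACGT11}. Lemma~\ref{l:moment} only gives an exact sequence of Lie algebras, so a priori $\hat\Lambda$ is just some extension of $\bigoplus_b\Lambda_b$ (with $\Lambda_b$ the lattice of $T_b$) by $\Lambda$. You must show $\hat\Lambda=\Lambda\oplus\bigoplus_b\Lambda_b$ in these coordinates. That is, show the lift with moment coordinates $L_b(x)x^b$ is an action of $T_b$ itself and not of a cover, for instance by lifting to $P$ through a $T_b$-linearization of $\mathcal{O}_{\mathbb{P}^{d_b}}(-1)$. Then $\mathbb{T}\times\hat T_B$ acts effectively, because $T_B$ acts effectively on $B$, and by dimension it equals $T$. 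Only then does unimodularity at vertices give primitivity in $\hat\Lambda$.

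\emph{The active labels.} Your list is wrong at vertices lying over a blown-down facet. If $L_b(v)=0$ for some $b\in\beta$, then $L_b$ is not a label of $\hat\Delta$, the fibre $L_b(v)\Delta_b$ collapses to a point, and all $d_b+1$ functions $L_bL^b_r$ are active there. The determinant argument still goes through: since $dL_b=\sum_r d(L_bL^b_r)$, the $\mathbb{Z}$-span of the true active normals equals that of your set. You should state it this way.
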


Since we do not have an explicit expression for the label $\mathbf{L}$, we need to adopt a different strategy than the one used in Lemma \ref{l:label}.

As observed in \cite{ACGL}, by \cite[Theorem 2]{Abr} and \cite[Remark 4.2]{ACGT04}, a strictly convex function $u$ on a labeled polytope $(\Delta, \textbf{L})$ which is smooth in the interior $\mathring{\Delta}$ defines a symplectic potential if and only if

\begin{equation}\label{boundary:cond}
\left\{
\begin{aligned}
    &u - \frac{1}{2}\sum_{i=1}^k L_i \log(L_i) \in \mathcal{C}^{\infty}(\Delta), \\
    &\det \textnormal{Hess}(u) \prod_{i=1}^k L_i \in \mathcal{C}^{\infty}(\Delta) \textnormal{ and is strictly positive}.
\end{aligned}
\right.
\end{equation}
The following proof is a generalization of \cite[Proposition 7]{ACGL}. 

\begin{proof}
Lemma \ref{Lemma-symp-pot} still holds in this context and we consider $\hat{u}$, a compatible symplectic potential in $\mathcal{S}(\hat{\Delta}, \hat{\mathbf{L}})$. It suffices to show that $\hat{u}$ satisfies the boundary conditions \eqref{boundary:cond} for the labeling on the RHS of \eqref{lab:delta:hat}. Indeed, by the uniqueness of the Kähler structure determined by a symplectic potential, the labeling given by the RHS of \eqref{lab:delta:hat} must correspond to $(X, \omega, T)$.

We recall that by definition $L_b=\langle p_b, x \rangle+c_b$ for any $b \in \beta$. 
We check the first condition of \eqref{boundary:cond}. By Lemma \ref{Lemma-symp-pot} we get that
\begin{align*}
   \hat{u} - \frac{1}{2} \sum_{c=1}^p \hat{L}_c \log\left( \hat{L_c} \right)  =& u + \sum_{b \in \beta} \left( \langle p_b, x \rangle+c_b \right) u_b -  \frac{1}{2} \sum_{i \notin \beta } L_i \log \left( L_i \right) \\
   &- \frac{1}{2} \sum_{b \in \beta} \sum_{j=1}^{k_b} L^b_{k_j} (\langle p_b, x \rangle +c_b) \log \left( L^b_{k_j} (\langle p_b, x \rangle +c_b) \right) \\
=& u - \frac{1}{2}  \sum_{i=1}^d L_i \log \left( L_i \right) +   \frac{1}{2} \sum_{b \in \beta}  (\langle p_b, x \rangle+c_b) \log \left(\langle p_b, x \rangle+c_b \right) \\
& +  \sum_{b \in \beta} \left( \langle p_b, x \rangle+c_b \right) u_b \\
&-  \frac{1}{2}  \sum_{b \in \beta} \sum_{j=1}^{k_b} L^b_{k_j} (\langle p_b, x \rangle +c_b) \log \left( L^b_{k_j} (\langle p_b, x \rangle +c_b) \right) \\
=& \textnormal{ smooth } +  \frac{1}{2} \sum_{b \in \beta}  (\langle p_b, x \rangle+c_b) \log \left(\langle p_b, x \rangle+c_b \right)  \\
&+\sum_{b \in \beta} \left( \langle p_b, x \rangle+c_b \right) \left( u_b -  \frac{1}{2}  \sum_{j=1}^{k_b}L^b_{k_j} \log \left( L_{k_j}^b  \right) \right)\\
&-  \frac{1}{2}  \sum_{b \in \beta } \sum_{j=1}^{k_b} L^b_{k_j} ( \langle p_b,x \rangle + c_b) \log ( \langle p_b,x \rangle + c_b) \\
=& \textnormal{ smooth } \\
&- \frac{1}{2} \sum_{b \in \beta} \left(  ( \langle p_b,x \rangle + c_b) \log ( \langle p_b,x \rangle + c_b)  \left( \sum_{j=1}^{k_b} L^b_{k_j}  - 1 \right) \right) \\
=&  \textnormal{ smooth }. 
\end{align*}  
\noindent To pass from the second line to the third, and from the third line to the fourth, we use the first condition of \eqref{boundary:cond}. To pass from the fourth line to the last line, we use the fact that for any $b \in \beta$, we have $\sum_{j=1}^{k_b} L^b_{k_j} =1$, since $(L_{k_j}^b)$ is the label of the projective space $(B_b,\omega_{b},T_b)$.

We now check the second condition of \eqref{boundary:cond}.
\begin{equation*}
  \begin{split}
    \det \hess (\hat{u}) \prod_{c=1}^p \hat{L}_c =&  \det \hess (u)  \prod_{b\in \beta} (\det  \hess (u_b )) (\langle p_b,x \rangle + c_b)   \prod_{c=1}^p \hat{L}_c\\
    =& \det \hess(u)\prod_{b\in \beta} \det  \hess (u_b) (\langle p_b,x \rangle + c_b) \prod_{j=1}^{k_b} L_{k_j}^b \prod_{i \notin \beta} L_j \\
    =& \left( \det \hess (u) \prod_{i}^d L_i \right)  \left(  \prod_{b \in \beta} \det \hess(u_b) \prod_j^{k_b} L^b_{k_j} \right).
\end{split}   
\end{equation*}

\noindent For passing from the second line to the third line we use that $L_b=\langle p_b,x \rangle + c_b$ for any $b \in \beta$. By the second condition of \eqref{boundary:cond}, the two terms of the third line are positive and smooth on $\hat{\Delta}$. That concludes the proof.
\end{proof}

\begin{lemma}{\label{l:a:bound:measure}}
The measure $d\sigma_{\hat{\Delta}}$ is given by

\begin{equation*}
\begin{split}
&d \sigma_{\hat{\Delta}}|_{\{L_i=0\}} = (pd\sigma_{\Delta})|_{\{L_i=0\}}  dx_B  \\
&d \sigma_{\hat{\Delta}}|_{\{\langle p_b , x \rangle + c_b) L^b_{r_b}=0\}} = \frac{p}{\langle p_b , x \rangle + c_b} dx (d\sigma_{\Delta_B})_{\{L^b_{r_b}=0\}}.
\end{split}    
\end{equation*} 

\end{lemma}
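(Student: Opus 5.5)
The plan is to mimic the proof of Lemma \ref{l:bound:measure}, now in the general blow-down setting of Lemma \ref{l:a:label}. We work on the interior identification $\mathring{\hat{\Delta}}\cong\mathring{\Delta}\times\mathring{\Delta}_B$, where $\hat{x}=(x,\hat{x}_B)$ and $\hat{x}^b=(\langle p_b,x\rangle+c_b)\,x^b$ for $b\in\beta$. As in \eqref{measure}, this gives $d\hat{x}=p\,dx\,dx_B$ with $p=\prod_{b\in\beta}(\langle p_b,x\rangle+c_b)^{\dim B_b}$. By Lemma \ref{l:a:label}, the facets of $\hat{\Delta}$ come in two families:
\begin{enumerate}
\item the facets $\{L_i=0\}$ for $i\notin\beta$;
\item the facets $\{(\langle p_b,x\rangle+c_b)L^b_{r_b}=0\}$ for $b\in\beta$.
\end{enumerate}
We treat them separately. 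In each case we restrict the defining identity $-d\hat{L}_c\wedge d\sigma_{\hat{\Delta}}=d\hat{x}$ of \eqref{bound:mes} to the facet, and contract $d\hat{x}$ with a vector $\nu\in\hat{\mathfrak{t}}$ satisfying $d\hat{L}_c(\nu)=1$.

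For the first family ($i\notin\beta$), $\hat{L}_c=L_i$ depends only on $x$. Choose $\nu\in\mathfrak{t}\subset\hat{\mathfrak{t}}$ with $dL_i(\nu)=1$. Since $d\hat{x}=dx\wedge d\hat{x}_B$ and $\nu$ annihilates $d\hat{x}_B$, we get $\nu\lrcorner d\hat{x}=(\nu\lrcorner dx)\wedge d\hat{x}_B$. On $\{L_i=0\}$ we have $\nu\lrcorner dx=\pm d\sigma_\Delta$, and $d\hat{x}_B=p\,dx_B$ modulo terms containing $dx$. Those terms vanish after wedging with the $(\ell-1)$-form $d\sigma_\Delta$ on the facet, because they contribute an extra $x$-direction. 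This yields $d\sigma_{\hat{\Delta}}=p\,d\sigma_\Delta\,dx_B$ as positive measures.

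For the second family, fix $b\in\beta$. We may choose coordinates on the simplex $\Delta_b$ so that either $L^b_{r_b}=x^b_r$ is a coordinate, or $L^b_{r_b}=1-\sum_r x^b_r$. In the first case $\hat{L}_c=\hat{x}^b_r$, and contracting with the dual basis vector $\xi^b_r$ gives $\pm dx\wedge\bigwedge_{(b',q)\neq(b,r)}d\hat{x}^{b'}_q$. Since $x$ is frozen in the wedge with $dx$, each $d\hat{x}^{b'}_q$ becomes $(\langle p_{b'},x\rangle+c_{b'})\,dx^{b'}_q$. The product of these factors is $p/(\langle p_b,x\rangle+c_b)$, which gives $\frac{p}{\langle p_b,x\rangle+c_b}\,dx\,(d\sigma_{\Delta_B})|_{\{L^b_r=0\}}$. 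In the second case, $d\hat{L}_c=p_b+u_b$ with $u_b=-\sum_r\xi^b_r$. The contraction splits into two pieces. The piece $(p_b\lrcorner dx)\wedge d\hat{x}_B$ carries the factor $p$, which vanishes when restricted to the face where $\langle p_b,x\rangle+c_b=0$; it also has the wrong transversality on the generic part of the facet. The remaining piece is handled exactly as in the first case.

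The main obstacle is the last point: justifying that the $p_b$-contribution drops out as a measure on the facet $\{(\langle p_b,x\rangle+c_b)L^b_0=0\}$. This facet should really be identified with $\{L^b_0=0\}$ over $\mathring{\Delta}$, plus a measure-zero locus where $L_b=0$. Since $p$ vanishes on $\{L_b=0\}$ to order $\dim B_b\geq1$, the boundary term carries no mass there. We check this first, using the vanishing order of $p$, exactly as in the proof of Lemma \ref{l:bound:measure}. Sign bookkeeping is then irrelevant, because we only claim equality of positive measures.
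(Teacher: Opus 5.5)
Your strategy is the paper's: restrict $-d\hat L_c\wedge d\sigma_{\hat\Delta}=d\hat x$ to each facet and contract with a vector $\nu$ satisfying $d\hat L_c(\nu)=1$. Your treatment of the facets $\{L_i=0\}$ ($i\notin\beta$) and $\{\hat x^b_r=0\}$ is correct. The step that fails is the facet $F=\{(\langle p_b,x\rangle+c_b)L^b_0=0\}$.

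Contracting the defining identity with an arbitrary vector $\nu$ and restricting to $F$ gives $(\nu\lrcorner d\hat x)|_F=-d\hat L_c(\nu)\,d\sigma_{\hat\Delta}|_F$. Since $\hat L_c=L_b(x)-\sum_r\hat x^b_r$, this has two consequences.
\begin{itemize}
\item Taking $\nu=p_b$: the piece you discard equals $-dL_b(p_b)\,d\sigma_{\hat\Delta}|_F$. Here $dL_b(p_b)=|p_b|^2$ in the identification you use, so this piece is \emph{not} zero.
\item Taking $\nu=u_b$: the piece you keep equals $-\dim B_b\,d\sigma_{\hat\Delta}|_F$. So your argument would output $\dim B_b$ times the correct measure, and $p_b+u_b$ is not normalized.
\end{itemize}

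Your proposed justification, the vanishing order of $p$ on $\{L_b=0\}$, does not address this. Generic points of $F$ have $L_b(x)>0$ and $L^b_0(x_B)=0$, so $F$ lies over $\mathring{\Delta}$ and $x$ ranges over an open $\ell$-dimensional set there. The term $p\,(p_b\lrcorner dx)\wedge dx_B$ does die by transversality. But $d\hat x^b_q=L_b\,dx^b_q+x^b_q\,dL_b$ also produces cross terms $x^b_q\,(p_b\lrcorner dx)\wedge dL_b\wedge\cdots$, and these survive. The locus $\{L_b=0\}$ is a null subset of $F$ and is irrelevant.

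A concrete check: for $X=\mathbb{P}(\mathbb{C}\oplus\mathbb{C}^2)$ one has $\hat\Delta=\{0\le\hat x\le x\le 1\}$ and $\hat L_c=x-\hat x$. The discarded piece is $d\hat x$, which restricts to $dx\neq 0$ on $\{\hat x=x\}$. The proof of Lemma~\ref{l:bound:measure} that you are imitating contains the same slip (``since $p=0$ on $L_j$''), so following it verbatim inherits the error.

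The repair is easy. Contract instead with $\nu=-\xi^b_1$, meaning $dx(\nu)=0$, $d\hat x^b_1(\nu)=-1$, and $d\hat x^{b'}_q(\nu)=0$ for all other $(b',q)$. Then $d\hat L_c(\nu)=1$ and there is only one piece. Your coordinate-case computation then gives $\frac{p}{L_b}\,dx\wedge\bigwedge_{(b',q)\neq(b,1)}dx^{b'}_q$. This equals $\frac{p}{L_b}\,dx\,(d\sigma_{\Delta_B})|_{\{L^b_0=0\}}$, because contracting with $-\partial_{x^b_1}$ likewise computes $d\sigma_{\Delta_b}$ on $\{L^b_0=0\}$.

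A uniform alternative is to pull the defining identity back by $\Phi(x,x_B)=(x,(L_b(x)x^b)_{b\in\beta})$, which is a local diffeomorphism near generic points of every facet. One has $\Phi^*d\hat x=p\,dx\wedge dx_B$.
\begin{itemize}
\item On a facet $\{L_bL^b_r=0\}$, the relation $L^b_r=0$ gives $d(L_bL^b_r)=L_b\,dL^b_r$. Hence $-L_b\,dL^b_r\wedge\Phi^*d\sigma_{\hat\Delta}=p\,dx\wedge dx_B$, which yields the second formula.
\item On a facet $\{L_i=0\}$ with $i\notin\beta$, one gets $-dL_i\wedge\Phi^*d\sigma_{\hat\Delta}=p\,dx\wedge dx_B$, which yields the first formula.
\end{itemize}
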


\begin{proof}
The proof goes as in Lemma \ref{l:bound:measure}.
\end{proof}

As in Section \ref{s:compatible:test}, we can define $\T$-compatible  test configurations. For any function $f \in \mathcal{DPL}_{\mathrm{dom}}(\Delta)$, we consider the associated polytope $\Delta_{R - f}$ as explained in Section \ref{COnstruction:Donaldson}. 

\begin{prop}{\label{l:a:CT}}
The function $f$ belongs to $\mathcal{DPL}_{\mathrm{dom}}(\hat{\Delta})$. In particular,  $f$ induces a smooth dominating test configuration $(\X_{f}, \A_{\X_f})$ for $(X,[\omega],\T)$ defined as the toric test configuration $(\X_{f}, \A_{\X_{{f}}})$.
\end{prop}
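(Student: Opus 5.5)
The plan is to mimic the proof of Proposition \ref{l:CT}, with Lemma \ref{l:a:label} replacing Lemma \ref{l:label}. The one identity needed there, $\sum_i L^j_i=1$, holds here for each projective factor $(B_b,\omega_b,T_b)$. We must show three things about $\hat{\Delta}_{R-\pi^*f}=\{(\hat x,y)\mid \hat x\in\hat{\Delta},\ 0\le y\le R-f(\pi(\hat x))\}$. First, $\pi^*f$ is convex piecewise affine on $\hat{\Delta}$. Second, each affine piece has integral linear part with respect to the lattice of $T\times\S^1$, so that the new facets $R-y-f_k\ge 0$ have primitive integral normals. Third, the resulting polytope is simple and Delzant, and one of the pieces is constant. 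Convexity and the dominating condition are immediate: $\pi$ is the restriction of a linear map $\hat{\mathfrak t}^*\to\mathfrak t^*$, so pulling back preserves convexity, and a constant $f_{i_0}$ stays constant.

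For integrality, I use that $\Delta_{R-f}$ is Delzant. At each vertex, the normals $-df_k$ together with the normals of $\mathbf L$ must span the lattice $\Lambda\times\mathbb Z$. Hence the linear part of each $f_k$ lies in the lattice generated by the $dL_i$, and we can write $f_k=\sum_i a^k_iL_i+\text{const}$ with $a^k_i\in\mathbb Z$. I then rewrite each $L_i$ in the label of $\hat{\Delta}$ using Lemma \ref{l:a:label}:
\begin{itemize}
\item if $i\notin\beta$, then $L_i$ is itself an element of $\hat{\mathbf L}$;
\item if $i=b\in\beta$, then $L_b=\sum_{r}(\langle p_b,x\rangle+c_b)L^b_{r}$, because $\sum_r L^b_r=1$.
\end{itemize}
So each $f_k$ is an integer combination of elements of $\hat{\mathbf L}$ plus a constant. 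Its linear part therefore lies in the lattice of $T$, and the affine pieces have integral coefficients on $\hat{\Delta}$.

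The remaining and main step is the Delzant/simplicity condition at vertices of $\hat{\Delta}_{R-f}$ lying on the top facets $y=R-f_k$. I would argue locally. Near a vertex $(\hat v,y)$ with $\pi(\hat v)=v$, the facets through it are of two kinds: facets of $\hat{\Delta}$ through $\hat v$, and top facets $y=R-f_k$ for the $k$ active at $v$. I first try to show these active $k$ are exactly those active at the vertex $(v,y)$ of $\Delta_{R-f}$. I then use the block-triangular structure from the exact sequence $0\to\mathfrak t\to\hat{\mathfrak t}\to\mathfrak t_B\to 0$ of Lemma \ref{l:moment}. Facets of $\hat{\Delta}$ of the form $(\langle p_b,x\rangle+c_b)L^b_r$ with $r$ not meeting the $L_b$-direction contribute a basis of the $\mathfrak t_B$-part. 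The remaining normals project to the normals of $\Delta_{R-f}$ at $(v,y)$, which form a lattice basis of $\Lambda\times\mathbb Z$ by hypothesis. A basis-extension argument then gives a $\mathbb Z$-basis of $\Lambda'\times\mathbb Z$.

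The delicate point is the case where $v$ lies on a blown-down facet $L_b=0$. There the fibre over $v$ degenerates, and $\hat v$ is a vertex of $\hat{\Delta}$ where $d_b$ facets of the form $L_bL^b_r$ meet. I would handle this by an explicit local change of coordinates, as in \eqref{transfo:pol}, checking the count of facets and the unimodularity directly. As a fallback, I would invoke the symplectic-potential criterion \eqref{boundary:cond} on $\hat{\Delta}_{R-f}$: build the compatible potential $\hat u_f=u_f+\sum_b L_b u_b$ from a symplectic potential $u_f$ of $\Delta_{R-f}$, as in the proof of Lemma \ref{l:a:label}, and verify both boundary conditions. By Abreu's theorem this would show that $\hat{\Delta}_{R-f}$ is the Delzant polytope of a smooth toric test configuration, dominating because a constant piece exists.
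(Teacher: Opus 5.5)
Your proposal follows the paper's route: you mimic Proposition \ref{l:CT} and rewrite each affine piece $f_k$ in the label $\hat{\mathbf L}$ of Lemma \ref{l:a:label}, handling blown-down facets via $\sum_r L^b_r=1$ and the others directly. Your vertex-wise check is a correct addition that supplies the simplicity/Delzant verification the paper only asserts: over a blown-down facet you trade the normals of the facets $L_bL^b_r$ for $(p_b,0)$ plus all but one of them by a unimodular change, then use the block-triangular structure. The one thing to discard is the Abreu fallback, because the boundary conditions \eqref{boundary:cond} hold on any simple labeled (orbifold) polytope, for instance for the Guillemin potential, so verifying them cannot certify that $\hat\Delta_{R-f}$ is Delzant.
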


\begin{proof} 
The proof is similar to that of Proposition \ref{l:CT}, treating separately the affine functions that arise from the blow-down and those that do not. 
\end{proof} 

Lemma \ref{fut-rigid} remains valid by using Lemma \ref{l:a:bound:measure}. Finally, Lemma \ref{p:stab:eq:rigid} and its proof still hold by slightly modifying the definition of $\mathcal{F}_{pv,\hat{w}}^+$:

\begin{equation*} 
\mathcal{F}^{+}_{pv,\hat{w}}(f):= 2\int_{\partial \Delta} f pv \, d\sigma_{\Delta} + \sum_{b \in \beta} \int_{\Delta} \frac{2d_b(d_b-1)}{\langle p_b,x \rangle +c_b} fpv \, dx. 
\end{equation*} 

As a final step, applying the same arguments as in Theorem \ref{t:intro:fiber}, but considering the Kähler manifold $(Z,[\omega_Z],\T)$ associated with $\Delta$ by the Delzant correspondence (instead of the projective space $\mathbb{P}^\ell$), we obtain the following.

\begin{theorem}
Let $v>0$ be $\log$-concave and $w$ be two weight functions on $\Delta$. Suppose that $(X,[\omega],\T)$ is $(v,w)$-uniformly K-stable for compatible  test configurations. Then there exists a weighted $(v,w)$-cscK metric in $[\omega]$. Moreover, this metric is compatible. 
\end{theorem}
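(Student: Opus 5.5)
The plan is to run the proof of Theorem \ref{t:intro:fiber} in this generalized setting, with $\mathbb{P}^\ell$ replaced by the toric manifold $(Z,[\omega_Z],\T)$ that the Delzant correspondence attaches to $(\Delta,\mathbf{L})$.

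\emph{From stability to polytope stability.} First I transfer stability to the polytope. Proposition \ref{l:a:CT} identifies compatible test configurations with Donaldson test configurations on $\hat{\Delta}$ built from $\pi^*f$, where $f\in\mathcal{DPL}_{\mathrm{dom}}(\Delta)$. Lemma \ref{l:a:bound:measure} and the identity \eqref{int:func}, which holds verbatim since $d\hat x = p\,dx\,dx_B$ on the interior, then give the analogues of Lemmas \ref{l:J-fonc:compa:toric} and \ref{fut-rigid}:
\[
\mathbf{DF}_{v,w}(\X_f,\A_{\X_f})=(2\pi)^{n+1}\mathrm{Vol}(\Delta_B)\,\mathcal{F}^{\Delta}_{pv,\hat w}(f),
\]
together with the corresponding formula for $\J_{\T^{\C}}$ in terms of the $pv$-weighted $J$-norm. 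For the boundary contribution of each blown-down facet I use $2\int_{\partial\Delta_b}d\sigma=\mathrm{Vol}(\Delta_b)\,\Scal(\omega_b)$.

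Next I apply the modified Lemma \ref{p:stab:eq:rigid}, with $\mathcal{F}^+_{pv,\hat w}$ as indicated. This upgrades stability on $\mathcal{DPL}_{\mathrm{dom}}(\Delta)$ to $(pv,\hat w)$-uniform K-stability of $\Delta$ on all of $\mathcal{CV}^\infty(\Delta)$. The only input that needs rechecking is the inequality $\mathcal{F}^+_{pv}(f^*)\ge \|f^*\|_{L^1_{pv}}$ in the style of \cite[Lemma 6.3]{Del2023}, and it uses only the structure of $p=\prod_b L_b^{d_b-1}$.

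\emph{Coercivity.} I then prove coercivity as in Lemma \ref{stable-equivaut-energy-propr-v}. I work on $\hat{\Delta}$, where the weight $v$ is positive, and use the Hessian factorization $\det\hess(\hat u)=\det\hess(u)\prod_b\det\hess(u_b)\,L_b^{d_b-1}$ from the proof of Lemma \ref{l:a:label}. This gives the analogue of Lemma \ref{l:mab:del:mab:hat}, and the argument of \cite[Prop. 7.9]{Jub23} on $\hat\Delta$ then yields \eqref{coerc:toric}. Log-concavity of $v$ is used exactly where it is in \cite{Jub23}, namely to control the entropy term under rescaling. After that I pass to $\T^{\C}$-coercivity of $\mathbf{M}^Z_{pv,\hat w}$ relative to $\mathbf{J}^Z_p$, and through \eqref{restric:mab} and \eqref{restric:d1} to coercivity of $\mathbf{M}^X_{v,w}$ restricted to compatible metrics, which is \eqref{mab:coerc2}. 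The generalized Calabi ansatz of \cite{ACGT11} supplies these restriction formulas for any semisimple rigid blow-down construction, not just for $\mathbb{P}^\ell$.

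\emph{Continuity method.} I run Chen's weighted continuity path \eqref{cont:path1} restricted to compatible metrics.
\begin{itemize}
\item Openness comes from the vertical/horizontal splitting of the linearized operator \cite[Lemma 5.11]{DJL25}.
\item Nonemptiness comes from the choice of $\tilde\omega_0$ made in \cite[Lemma 6.3]{Jub23}.
\item Closedness comes from \eqref{mab:coerc2} and the a priori estimates of \cite{DJL25}.
\end{itemize}

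\emph{Compatibility of the solution.} The limit is a compatible $(v,w)$-cscK metric. To see that any $(v,w)$-cscK metric is compatible, I combine uniqueness modulo $\T^{\C}$ (weighted Berman–Berndtsson) with \cite[Cor. 6.6]{Jub23}.

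\emph{Expected obstacle.} The main difficulty should be closedness. Coercivity is only known on the compatible subspace, while the estimates of \cite{DJL25} are phrased on the full space. I would first try to show that the minimizing sequences and the path solutions remain compatible, using uniqueness of solutions of \eqref{cont:path1} for $t<1$ together with invariance under the full torus $T$. This is the same route as in \cite{Jub23}, but the possibly vanishing weight $p$ forces every estimate to be carried out on $X$ rather than on $Z$.
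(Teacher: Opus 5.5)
Your proposal is correct and is essentially the paper's proof: the paper likewise reruns the argument of Theorem~\ref{t:intro:fiber} with $\mathbb{P}^\ell$ replaced by the Delzant manifold $Z$ of $(\Delta,\mathbf{L})$, using Lemmas~\ref{l:a:label} and~\ref{l:a:bound:measure}, Proposition~\ref{l:a:CT}, the analogues of Lemmas~\ref{fut-rigid} and~\ref{p:stab:eq:rigid}, coercivity proved on $\hat{\Delta}$, the restriction of $\mathbf{M}^X_{v,w}$ and $\mathbf{J}^X$ to compatible metrics, and Chen's weighted path solved inside the compatible space. Three of your side remarks need correcting, though none affects the argument: (i) log-concavity of $v$ is not used in the coercivity step (Lemma~\ref{stable-equivaut-energy-propr-v} only needs $v>0$, and rescaling $u$ only shifts the entropy by a constant), but enters through the weighted existence and continuity-method results of \cite{DJL25}; (ii) compatibility of the limit is automatic because the continuity set $S$ is defined through compatible solutions, and your fallback via uniqueness plus $T$-invariance would not give it anyway, since $T$-invariant symplectic potentials on $\hat{\Delta}$ need not be of the form $u+\sum_b L_b u_b$; (iii) uniqueness of $(v,w)$-cscK metrics holds modulo the centralizer of $\mathbb{T}$ in $\mathrm{Aut}_{\mathrm{red}}(X)$ (which contains $T^{\mathbb{C}}$), not modulo $\mathbb{T}^{\mathbb{C}}$, so ``every $(v,w)$-cscK metric is compatible'' holds only up to such automorphisms, and the statement does not require it.
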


\section{Preliminaries on horospherical varieties}{\label{a:horo}}

\subsection{Basics of $G^{\mathbb{C}}$-varieties}
We recall some basic definitions about  $G^{\mathbb{C}}$-varieties in this section.

Assume that $G$ is a connected compact Lie group and let $G^{\mathbb{C}}$ be its complexification, a connected reductive algebraic group. Assume that $B$ is a Borel subgroup of $G^{\mathbb{C}}$. Assume that $X$ is a $G^{\mathbb{C}}$-variety. Let's denote the field of rational functions on $X$ as $\mathbb{C}(X)$. The group $G^{\mathbb{C}}$ acts on $\mathbb{C}(X)$ as $(g\cdot f)(x) = f(g^{-1}\cdot x)$, where $g\in \Gc$ and $f\in \mathbb{C}(X)$. Let's denote the character group of $B$ by $\mathcal{X}(B)$. The following definition is classical (see \cite[Section 1.1]{brion1997varietes}),
\begin{define}
We have the weight lattice of $X$ with respect to $B$
\begin{align*}
M = \{\chi \in \mathcal{X}(B) \mid \exists f_{\chi} \in \mathbb{C}(X)^{*} \text{ such that } 
 b \cdot f_{\chi} = \chi(b) f_{\chi} \text{ for any } b \in B \}.
\end{align*}
\end{define}

Now let's assume that we have an ample $G^{\mathbb{C}}$-linearized line bundle on a projective $G^{\mathbb{C}}$-variety $X$, which means that there is a lift of the $G^{\mathbb{C}}$-action of $X$ to $L$. The following definition is classic.
\begin{define}
For $k\in \mathbb{Z}_{>0}$, let
\[\Delta_{\mathrm{alg},k}  = \left\{\frac{\chi}{k}|\exists\text{ nonzero } s_{\chi}\in H^{0}(X,kL)\text{ such that }  b\cdot s_{\chi} = \chi(b)s_{\chi} \text{ for any }b\in B\right\}\]
Then let
    \[\Delta_{\mathrm{Qalg}} = \cup_{k=1}^{\infty} \Delta_{\mathrm{alg},k}.\]
It is a rational polytope inside $\mathcal{X}(B)\otimes \mathbb{Q}$. If we take any element $\chi$ inside $\Delta_{\mathrm{Qalg}}$, then the polytope $\Delta_{\mathrm{Qalg}} - \chi$ is a full dimensional polytope inside $M\otimes \mathbb{Q}$. See \cite[Section 1.1]{brion1997varietes}.
We let $\Delta_{\mathrm{alg}} = \overline{\Delta_{\mathrm{Qalg}}}$ be the closure of $\Delta_{\mathrm{Qalg}}$ in $\mathcal{X}(B)\otimes \mathbb{R}.$ We call $\Delta_{\mathrm{alg}}$ the algebraic moment polytope of $L.$
\end{define}

On the other hand, we have the Kirwan polytope from the symplectic side. 

\begin{define}
Let \((X, \omega)\) be a compact symplectic manifold equipped with a Hamiltonian action of a compact Lie group $G$, and let \(\mu_{\omega}^{G}: X \to \mathfrak{g}^*\) be the associated moment map, where \(\mathfrak{g}^*\) is the dual of the Lie algebra of $G$.

The Kirwan polytope is the convex polytope defined as
\[
\Delta_{\mathrm{Kir}} = \mu_{\omega}^{G}(X) \cap \hat{\mathfrak{t}}^*_+,
\]
where \(\hat{\mathfrak{t}}^*\) is the dual of the Lie algebra of a maximal torus \(T\subset G\), and \(\hat{\mathfrak{t}}^*_+ \subset \hat{\mathfrak{t}}^*\) is a chosen positive Weyl chamber. Notice that $\hat{\mathfrak{t}}^*$ can be identified with the $T$-invariant subspace of $\mathfrak{g}^*.$
\end{define}

The relation between the algebraic moment polytope and the Kirwan polytope is explained in the following paragraphs.

Let $\mathcal{M}$ be a finite-dimensional $G^{\mathbb{C}}$-module. We choose a maximal complex torus $T^{\mathbb{C}}$ in $G^{\mathbb{C}}$ such that its intersection with $G$, $T^{\mathbb{C}} \cap G = T$, is a maximal compact real torus in $G$. We also fix a Borel subgroup of $G^{\mathbb{C}}$ that contains $T^{\mathbb{C}}$. 

We have a natural $G^{\mathbb{C}}$-action on the projective space $\mathbb{P}(\mathcal{M})$. There is also a canonical $G^{\mathbb{C}}$-action on the line bundle $\mathcal{O}_{\mathbb{P}(\mathcal{M})}(-1)$, induced from the action on $\mathcal{M}$. 

From this, we get a $G^{\mathbb{C}}$-action on the dual line bundle $\mathcal{O}_{\mathbb{P}(\mathcal{M})}(1)$, defined by  
\[
(g \alpha)(g v) = \alpha(v)
\]  
where $\alpha \in \mathcal{O}_{\mathbb{P}(\mathcal{M})}(1)_m$, $v \in \mathcal{O}_{\mathbb{P}(\mathcal{M})}(-1)_m$, and $m$ is any point in $\mathbb{P}(\mathcal{M})$. 

If $X \subset \mathbb{P}(\mathcal{M})$ is an irreducible closed subvariety that is stable under the $G^{\mathbb{C}}$-action, then $\mathcal{O}_X(1)$ inherits a natural $G^{\mathbb{C}}$-linearization. Thereafter, we have the polytopes $\Delta_{\mathrm{Qalg}}$ and $\Delta_{\mathrm{alg}}$.

Next, we choose a $G$-invariant Hermitian metric $\langle \cdot, \cdot \rangle$ on $\mathcal{M}$, then we have a corresponding Fubini-Study metric. We also have a moment map with respect to the restriction of the Fubini-Study metric $\omega_{\mathrm{FS}}$ on $X$, $\mu_{\omega_{\mathrm{FS}}}^{G}:X\rightarrow \mathfrak{g}^{*}$. The moment map is given by $\mu_{\omega_{\mathrm{FS}}}^{G}(x)(a) = \frac{\langle a\tilde{x},\tilde{x}\rangle}{i\langle \tilde{x},\tilde{x}\rangle}$, where $a\in \mathfrak{g}$, $\tilde{x}$ is any vector in $\mathcal{M}$ over $x$. We denote the Lie algebra of $T$ as $\hat{\mathfrak{t}}$. The dual $\hat{\mathfrak{t}}^{*}$ is identified with the $T$-invariant subspace of $\mathfrak{g}^{*}$. The Borel subgroup $B$ of $G^{\mathbb{C}}$ provides a choice of positive Weyl chamber inside $\hat{\mathfrak{t}}^{*}$, which we denote as $\hat{\mathfrak{t}}_{+}^{*}$. Then we have the Kirwan polytope $\Delta_{\mathrm{Kir}}= \mu_{\omega}^{G}(X) \cap \hat{\mathfrak{t}}^*_+.$ We rely heavily on the following important theorem in Section \ref{horo tc for proj}.

\begin{theorem}\textbf{\cite{brion1987image}}\label{kir_alg_id}
The polytope $-w_{0}\Delta_{\mathrm{Qalg}}$ is exactly the set of rational points of $\Delta_{\mathrm{Kir}}$, where $w_{0}$ is the longest element of the Weyl group of $(G,T)$. 
\end{theorem}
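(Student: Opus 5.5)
The plan is to prove the identity by the standard ``shifting trick'', which reduces the statement about a nonzero moment level to a statement about the zero level of an auxiliary Hamiltonian $G$-space. We then combine the Kempf--Ness theorem with Borel--Weil. Throughout, $X\subset \mathbb{P}(\mathcal{M})$ is $G^{\mathbb{C}}$-stable, $L=\mathcal{O}_X(1)$, and $\omega=\omega_{\mathrm{FS}}|_X$ with the moment map $\mu^G_{\omega_{\mathrm{FS}}}$ written above.

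\emph{Step 1 (algebraic side).} By complete reducibility, $H^0(X,kL)=\bigoplus_\nu V(\nu)^{m_\nu}$ as $G^{\mathbb{C}}$-modules, where $V(\nu)$ is irreducible of highest weight $\nu$. The $B$-eigenvectors of weight $\chi$ in $H^0(X,kL)$ are exactly the highest weight vectors of the copies of $V(\chi)$. Hence $\chi/k\in\Delta_{\mathrm{alg},k}$ if and only if $V(\chi)\subset H^0(X,kL)$. Since $V(\chi)^*\cong V(-w_0\chi)$, this is equivalent to
\[
\bigl(H^0(X,kL)\otimes V(-w_0\chi)\bigr)^{G^{\mathbb{C}}}\neq 0 .
\]
Setting $\lambda:=-w_0\chi$, Borel--Weil identifies $V(\lambda)$ (up to the dualisation dictated by the sign conventions of $\mathcal{O}_{\mathbb{P}(\mathcal{M})}(1)$ fixed above) with the sections of a line bundle $L_\lambda$ on the flag variety $F=G^{\mathbb{C}}/B$. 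Therefore $\chi/k\in\Delta_{\mathrm{alg},k}$ if and only if $X\times F$ carries a nonzero $G^{\mathbb{C}}$-invariant section of $kL\boxtimes L_\lambda$.

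\emph{Step 2 (symplectic side).} For dominant integral $\lambda$, $F$ is identified with the coadjoint orbit $\mathcal{O}_{-\lambda}$ (suitably oriented), carrying the Kirillov--Kostant--Souriau form. This form is the curvature of $L_\lambda$, and its moment map is the inclusion. By Kempf--Ness for the projective $G^{\mathbb{C}}$-variety $X\times F$ with ample linearized bundle $kL\boxtimes L_\lambda$, invariant sections exist if and only if the zero level of $k\mu+\iota_{\mathcal{O}_{-\lambda}}$ is nonempty. That in turn holds if and only if $\mu(X)\cap G\cdot(\lambda/k)\neq\emptyset$, which by definition of the Kirwan polytope means $\lambda/k\in\Delta_{\mathrm{Kir}}$. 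Combining with Step~1 gives $-w_0\Delta_{\mathrm{alg},k}=\Delta_{\mathrm{Kir}}\cap \tfrac1k\{\text{dominant integral weights}\}$.

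\emph{Step 3.} Taking the union over $k$ gives the claim, since every rational point of $\Delta_{\mathrm{Kir}}\subset\hat{\mathfrak{t}}^*_+$ is of the form $\lambda/k$ with $\lambda$ dominant integral. Replacing $L$ by a power does not change the sets involved, up to the rescaling already built into $\Delta_{\mathrm{alg},k}$.

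\emph{Main obstacle.} I expect the main difficulty to be bookkeeping of signs and duals in Steps~1--2. One must check that the $B$-eigenweight convention $(b\cdot s)=\chi(b)s$, the linearization $(g\alpha)(gv)=\alpha(v)$, the factor $1/i$ in the moment map, and the choice of positive chamber fixed by $B$ combine to produce exactly $-w_0$ rather than $w_0$ or the identity. I would settle this first on $X=\mathbb{P}(V(\nu)^*)$ or a single highest-weight orbit, where both polytopes can be computed by hand, and then propagate the conventions. A secondary point is applying Kempf--Ness to the possibly singular variety $X\times F$. For this I would use the version for closed $G^{\mathbb{C}}$-stable subvarieties of projective space: embed $X\times F$ in $\mathbb{P}(\mathcal{M})\times\mathbb{P}(V(\lambda)^*)$ via Segre, so that the moment map is the restriction of the Fubini--Study one.
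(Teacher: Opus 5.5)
The paper does not prove this statement; it quotes it from Brion and uses it as a black box for the horospherical test configurations. Your outline (Borel--Weil, the shifting trick, Kempf--Ness on $X\times F$) is the standard proof of that result. Your sign bookkeeping also comes out right with the paper's conventions. The Fubini--Study moment map sends a weight line to its weight, so the closed $G^{\mathbb{C}}$-orbit $F\subset\mathbb{P}(V(\lambda)^*)$ has $H^0(F,\mathcal{O}(1))\cong V(\lambda)$ and moment image $G\cdot(-\lambda)$, and the shift produces exactly $-w_0$.

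One step is false as written. Kempf--Ness, combined with the GIT definition of semistability, says that the zero level of $k\mu+\iota$ on $X\times F$ is nonempty if and only if \emph{some positive power} $(kL\boxtimes L_\lambda)^{\otimes m}$ has a nonzero $G^{\mathbb{C}}$-invariant section. It gives no control over $m=1$. Consequently the fixed-level identity $-w_0\Delta_{\mathrm{alg},k}=\Delta_{\mathrm{Kir}}\cap\frac1k\{\text{dominant integral weights}\}$ at the end of Step~2 fails in general.

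For a counterexample, let $\mathbb{S}^1$ act on $\mathbb{C}^2$ with weights $0$ and $2$, and take $X=\mathbb{P}(\mathbb{C}^2)$. Then $H^0(X,\mathcal{O}(1))$ has only the weights $0$ and $-2$, so $-w_0\Delta_{\mathrm{alg},1}=\{0,2\}$. On the other hand $\Delta_{\mathrm{Kir}}=[0,2]$ contains the integral point $1$, which first appears at level $k=2$, from the product of the two coordinate sections.

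The repair costs nothing, because the theorem only concerns the union $\Delta_{\mathrm{Qalg}}$. Since $(kL\boxtimes L_\lambda)^{\otimes m}=mkL\boxtimes L_{m\lambda}$ and $H^0(F,L_{m\lambda})\cong V(m\lambda)$ by Borel--Weil, an invariant section of the $m$-th power means $V(m\chi)\subset H^0(X,mkL)$. That is, $\chi/k=m\chi/(mk)\in\Delta_{\mathrm{alg},mk}$. The correct pointwise statement is therefore: for dominant integral $\lambda$ and $k\geq 1$, $\lambda/k\in\Delta_{\mathrm{Kir}}$ if and only if $-w_0\lambda/k\in\Delta_{\mathrm{Qalg}}$. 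Step~3 then goes through unchanged.

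A smaller point concerns non-regular $\lambda$. In that case $L_\lambda$ on $G^{\mathbb{C}}/B$ is only globally generated, not ample. Also, $G^{\mathbb{C}}/B$ is not the coadjoint orbit through $\lambda$; it fibres over it. Take $F$ instead to be the partial flag variety given by the closed orbit in $\mathbb{P}(V(\lambda)^*)$, which is what your Segre-embedding remark effectively does. Then the bundle is very ample and $F\to G\cdot(-\lambda)$ is an equivariant diffeomorphism, which is what Kempf--Ness and the shifting step require.
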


\subsection{Horospherical varieties}\label{preliminarieshoro}
We have the following crucial class of $G^{\mathbb{C}}$-varieties.
\begin{define}
A normal $G^{\mathbb{C}}$-variety is spherical if it contains an open $B$-orbit, where $B$ is a Borel subgroup.
\end{define}

We recall the following definition about a special type of spherical varieties from \cite{pasquier2006fano,pasquier2008varietes}.

\begin{define}
    A closed subgroup $H$ of $G^{\mathbb{C}}$ is called horospherical if it contains the unipotent radical of a Borel subgroup $B$. We call $G^{\mathbb{C}}/H$ a horospherical homogeneous space. A normal $G^{\mathbb{C}}$-equivariant embedding $X$ of $G^{\mathbb{C}}/H$ is called a horospherical variety.
\end{define}
Let $N_{G^{\mathbb{C}}}(H)$ be the normalizer of $H$. We collect here a few facts,

\begin{enumerate}
    \item $N_{G^{\mathbb{C}}}(H)$ is a parabolic subgroup which contains $B$;
    \item $N_{G^{\mathbb{C}}}(H)/H$ is a complex torus;
    \item the spherical lattice $M$ is isomorphic to the character group of $N_{G^{\mathbb{C}}}(H)/H$ (see \cite[Section 4.3]{brion1997varietes});
    \item note that $N_{G^{\mathbb{C}}}(H)/H$ acts on $G^{\mathbb{C}}/H$ by $pH\cdot gH = gp^{-1}H$, where $p\in N_{G^{\mathbb{C}}}(H)$. This group action realizes $G^{\mathbb{C}}/H$ as a torus principal bundle over $G/N_{G^{\mathbb{C}}}(H)$. This action extends to $X$ and realizes $N_{G^{\mathbb{C}}}(H)/H$ as the identity component of $\mathrm{Aut}_{G}(X)$ (\cite[Section 3.1.3]{delcroix2020k}).
\end{enumerate}

The following is a significantly simplified version of \cite[Theorem~4.1]{Del2023}. We only require this simplified form, as we deal exclusively with horospherical varieties in this work.

\begin{theorem}\label{sphericaltest}  \textbf{\cite[Theorem 4.1]{Del2023}}
For a polarized horospherical variety \((X,L)\), \(G^{\mathbb{C}}\)-equivariant test configurations are in one-to-one correspondence with strictly positive rational piecewise linear concave functions on \(\Delta_{\mathrm{alg}}\).
\end{theorem}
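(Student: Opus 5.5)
The plan is to realize every $G^{\mathbb{C}}$-equivariant test configuration algebraically as a filtration of the section ring. Multiplicity-freeness of horospherical varieties then reduces such a filtration to a single function on $\Delta_{\mathrm{alg}}$.

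\textbf{Filtrations.} Let $(\mathcal{X},\mathcal{L})$ be a $G^{\mathbb{C}}$-equivariant test configuration for $(X,L)$. Restricted over $\mathbb{A}^1$, it corresponds by the standard Rees construction to a $\mathbb{Z}$-filtration $\mathcal{F}^\bullet$ of the section ring $R(X,L)=\bigoplus_k H^0(X,kL)$ that is multiplicative and finitely generated. Equivariance makes each $\mathcal{F}^t H^0(X,kL)$ a $G^{\mathbb{C}}$-submodule.

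\textbf{Encoding by a function.} Since $X$ is spherical, $H^0(X,kL)=\bigoplus_{\chi\in k\Delta_{\mathrm{alg}}\cap(\chi_0 k+M)} V_\chi$ is multiplicity-free. Hence every $G^{\mathbb{C}}$-stable filtration is determined by the jumping numbers $g_k(\chi)=\max\{t: V_\chi\subset\mathcal{F}^tH^0(kL)\}$.

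The horospherical hypothesis is used in the next step. For horospherical varieties the multiplication map satisfies $V_\chi\cdot V_{\chi'}= V_{\chi+\chi'}$, with no lower Cartan components. This holds because the product of $U$-invariant (highest weight) sections is nonzero in $\mathbb{C}[G^{\mathbb{C}}/H]$ and the algebra $\mathbb{C}[G^{\mathbb{C}}/H]^U$ is the group algebra of $M$. Multiplicativity of $\mathcal{F}$ then becomes superadditivity $g_{k+k'}(\chi+\chi')\ge g_k(\chi)+g_{k'}(\chi')$. In particular $g(x):=\lim_k g_k(kx)/k$ defines a concave function on the rational points of $\Delta_{\mathrm{alg}}$.

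\textbf{Finite generation and positivity.} Next I would show that finite generation of $\bigoplus_{k,t}\mathcal{F}^tH^0(kL)$ is equivalent to $g$ being rational piecewise linear with $g_k(\chi)=\lfloor k\,g(\chi/k)\rfloor$. The Rees algebra is then the semigroup algebra, twisted by the $V_\chi$'s, of the lattice points of the cone over $\Delta_g=\{(x,t): x\in\Delta_{\mathrm{alg}},\ t\le g(x)\}$. By Gordan's lemma it is finitely generated exactly when this cone is rational polyhedral. Normality of $\mathcal{X}$ corresponds to saturation, which is why the exact formula with $\lfloor\cdot\rfloor$ holds.

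After adding a large constant, which corresponds to twisting $\mathcal{L}$ by a multiple of the central fiber, I may assume $g>0$. Conversely, the ampleness of the compactified $\mathcal{L}$ over $\mathbb{P}^1$ translates into the polytope $\{(x,t): 0\le t\le g(x)\}$ being full-dimensional with $g$ strictly positive. So the data reduce to a strictly positive rational concave PL function, with ampleness on the trivial compactification at infinity ensured by the lower facet $t=0$.

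\textbf{Converse.} Given such a $g$, I define the graded algebra $\bigoplus_{k,t}\bigoplus_{\chi:\ t\le k g(\chi/k)}V_\chi$. It is finitely generated by Gordan's lemma, normal by saturation, and carries a $G^{\mathbb{C}}\times\mathbb{C}^*$-action. Its $\mathrm{Proj}$ over $\mathbb{A}^1$ is flat, being torsion-free over $\mathbb{C}[t]$, and has generic fiber $(X,L)$. Compactifying trivially at $\infty$ gives the test configuration, and the two constructions are mutually inverse by construction.

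\textbf{Main obstacle.} I expect the hardest step to be the identification of finite generation and normality of the filtration with rational piecewise linearity, together with the exact rounding formula for $g_k$. I would attack it by viewing $(\mathcal{X},\mathcal{L})$ as a polarized $G^{\mathbb{C}}\times\mathbb{C}^*$-horospherical variety, an embedding of $(G^{\mathbb{C}}\times\mathbb{C}^*)/(H\times\{1\})$. Its algebraic moment polytope is then rational by Brion's theory, and one reads $\Delta_g$ off it directly as the polytope of $(\mathcal{X},\mathcal{L})$.
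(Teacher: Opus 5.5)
The paper gives no proof of this statement; it quotes it from Delcroix. The only trace of the cited argument is how the paper uses it in Section~\ref{horo tc for proj}. There the test configuration is treated as a polarized $G^{\mathbb{C}}\times\mathbb{C}^*$-horospherical embedding of $(G^{\mathbb{C}}\times\mathbb{C}^*)/(H\times\{1\})$, and the function is read off its algebraic moment polytope $\{(x,t): x\in\Delta_{\mathrm{alg}},\ 0\le t\le g(x)\}$. In that route, finite generation, normality and ampleness are all absorbed into the combinatorial theory of polarized spherical embeddings, which is your fallback in the last paragraph. Your main line is a genuinely different, more hands-on route, and it is sound in outline: Rees filtration, multiplicity-freeness, jumping numbers, the $U$-invariants of the Rees algebra forming the semigroup algebra of $\{(k,\chi,t): t\le g_k(\chi)\}$, Gordan, and normality $\Leftrightarrow$ saturation via Vust's theorem. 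Its advantage is that it isolates exactly where the horospherical hypothesis enters. That is, however, the one step you have not justified.

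Your argument for $V_\chi\cdot V_{\chi'}=V_{\chi+\chi'}$ only proves the inclusion $V_{\chi+\chi'}\subseteq V_\chi\cdot V_{\chi'}$. A product of highest weight sections is a nonzero $B$-eigenvector of weight $\chi+\chi'$ on any spherical variety, simply because the section ring is a domain. (Also, $\mathbb{C}[G^{\mathbb{C}}/H]^U$ is only a semigroup algebra; it is the $B$-eigenvectors in $\mathbb{C}(G^{\mathbb{C}}/H)$, modulo scalars, that form $M$.) This inclusion is all that superadditivity of $g_k$ needs, so your forward direction does not use the hypothesis at all. The hypothesis is needed in the converse, where $\bigoplus_{t\le kg(\chi/k)}V_\chi$ must be closed under multiplication for an arbitrary concave $g$. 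That requires $V_\chi\cdot V_{\chi'}\subseteq V_{\chi+\chi'}$, which fails in general. For example, take $\mathbb{P}^1\times\mathbb{P}^1$ with the diagonal $\mathrm{SL}_2$-action, so $H^0(\mathcal{O}(1,1))=V_2\oplus V_0$. Then $V_2\cdot V_2$ contains the degree-$2$ copy of $V_0$, namely the square of the section vanishing on the diagonal. Hence $g(x)=1+x$ on $[0,2]$ gives no multiplicative filtration, which is why a slope restriction is needed for general spherical varieties. As written, your proof would establish that false general statement.

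The missing idea is the grading by $P=N_{G^{\mathbb{C}}}(H)$ (compare item (4) in Section~\ref{preliminarieshoro}). Restrict sections to the open orbit and view them as $H$-semi-invariant functions on $G^{\mathbb{C}}$. Right translation by $P$ preserves these spaces, is multiplicative, and commutes with $G^{\mathbb{C}}$. It acts on the $V_\chi$-isotypic part by the character $\chi$ extended to $P$, and distinct $\chi$ give distinct characters. Hence $V_\chi\cdot V_{\chi'}$ lies in the $\chi+\chi'$ weight space, which is exactly $V_{\chi+\chi'}$.

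Two smaller points. First, ``after adding a large constant I may assume $g>0$'' is incompatible with a bijection, since the shift changes the test configuration. Strict positivity has to come from ampleness of the polarization on the trivial compactification over $\mathbb{P}^1$, as you assert next, so drop the first sentence. Second, finite generation alone does not give $g_k(\chi)=\lfloor k\,g(\chi/k)\rfloor$; saturation, hence normality of $\mathcal{X}$, does. The statement must therefore be read for normal test configurations, as in the paper's definition.
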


\section{Complex structures and closed forms on bundles}\label{bundleconstruction}
\subsection{Complex structure on bundles}\label{cmlxstrcons}
This subsection covers classical results, which we include for completeness and to fix notations. We omit the proofs here, as they are explained in \cite[Section 1.3]{yin2025explicit}.

Let \( C \) be a smooth curve with complex structure $J$ and \( \pi:P \to C \) be a $G$-principal bundle, where $G$ is a compact connected Lie group. Let \( \theta \) be a connection on \( P \), and \( \Omega \) be its curvature. Assume that for any local section \( s: D \to P \), \( s^* \Omega \) is a \((1,1)\)-form. 

The connection provides a horizontal distribution \( \mathcal{H} \subset TP \). We have a section $J_{\mathcal{H}}$ of \( \text{End}(\mathcal{H}) \) which is the lift of \( J \), it satisfies \( J_{\mathcal{H}}^2 = -\text{Id} \). Then we can define \( \mathcal{H}^{1,0} \subset \mathcal{H}^{\mathbb{C}} \subset T^{\mathbb{C}} P \), where $\mathcal{H}^{1,0}$ is the $i$-eigenvalue space of $J_{\mathcal{H}}$. 

The following proposition is classic.

\begin{prop}
    The $(1,0)$-part of the horizontal distribution, namely $\mathcal{H}^{1,0}$, defines a CR structure  on $P$.
\end{prop}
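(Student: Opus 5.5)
The statement to prove is that $\mathcal{H}^{1,0}$ is a CR structure on $P$. That means two things: $\mathcal{H}^{1,0}\cap\overline{\mathcal{H}^{1,0}}=\{0\}$, and $\mathcal{H}^{1,0}$ is involutive, i.e. $[\Gamma(\mathcal{H}^{1,0}),\Gamma(\mathcal{H}^{1,0})]\subset\Gamma(\mathcal{H}^{1,0})$.

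The first condition is immediate. Since $J_{\mathcal{H}}^2=-\mathrm{Id}$, the complexification $\mathcal{H}^{\mathbb{C}}$ splits as the sum of the $\pm i$-eigenspaces. These eigenspaces are conjugate to each other, so they intersect trivially.

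Involutivity is the real content, and I would check it locally. Fix a local section $s\colon D\to P$, which trivialises $P|_D\cong D\times G$. Take a local holomorphic coordinate $z$ on $D$. Let $\tilde\partial_z$ denote the horizontal lift of $\partial_z$ with respect to $\theta$. Then $\mathcal{H}^{1,0}$ is the line bundle spanned pointwise by $\tilde\partial_z$. Any two sections of it have the form $f\tilde\partial_z$ and $g\tilde\partial_z$ with $f,g$ complex-valued functions on $P|_D$. Their bracket is
\[
[f\tilde\partial_z,g\tilde\partial_z]=\big(f\,\tilde\partial_z g-g\,\tilde\partial_z f\big)\tilde\partial_z+fg\,[\tilde\partial_z,\tilde\partial_z],
\]
and the last term vanishes. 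So the bracket again lies in $\mathcal{H}^{1,0}$.

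The only place where one has to be careful is the base dimension. Because $C$ is a curve, $\mathcal{H}^{1,0}$ has complex rank one, and involutivity is automatic. In higher-dimensional bases one would write $[\tilde X,\tilde Y]=\widetilde{[X,Y]}-\Omega(X,Y)^{*}$ for horizontal lifts of vector fields $X,Y$ on the base, where $\Omega(X,Y)^{*}$ is the fundamental vertical field of the curvature value. The hypothesis that $s^*\Omega$ is of type $(1,1)$ is then exactly what makes the vertical term vanish for $X,Y$ of type $(1,0)$. Here, since $\Omega$ is a $2$-form on a complex curve, it is automatically of type $(1,1)$, so I would note that remark and conclude.
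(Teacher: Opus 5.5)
Your proof is correct, though your main argument follows a different route from the one the paper's setup points to.

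The paper does not actually prove this proposition. It calls it classical and defers to Yin's note. The hypothesis it states, that $s^*\Omega$ is of type $(1,1)$, indicates that the intended argument is the general one. For $(1,0)$ vector fields $X,Y$ on the base, the bracket of the horizontal lifts is $\widetilde{[X,Y]}$ minus the fundamental vertical field of $\Omega(X,Y)$. The first term lies in $\mathcal{H}^{1,0}$ because the base complex structure is integrable, so $[X,Y]$ is again of type $(1,0)$. The second term vanishes by the type condition on the curvature. You sketch this in your closing remark, though you do not mention the integrability input needed for the horizontal term.

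Your main argument instead uses that $C$ is a curve. Then $\mathcal{H}^{1,0}$ is a complex line bundle locally spanned by the single field $\tilde\partial_z$, and it is involutive for the trivial reason that $[V,V]=0$. This makes the curvature hypothesis superfluous in the paper's setting; as you note, any $2$-form on a curve is of type $(1,1)$. The general argument is only needed over a higher-dimensional complex base.

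One point you leave implicit is why $\tilde\partial_z$ spans the $i$-eigenspace. Since $d\pi|_{\mathcal{H}}$ is an isomorphism intertwining $J_{\mathcal{H}}$ with $J$, you get $J_{\mathcal{H}}\tilde\partial_z=\widetilde{J\partial_z}=i\tilde\partial_z$. This is a one-line check, not a gap.
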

Now assume that we have a complex manifold \( X \) on which $G$ acts in a holomorphic way, which means that we have a Lie group homomorphism $K\rightarrow \mathrm{Aut}(X)$, where $\mathrm{Aut}(X)$ is the automorphism group of $X$ as a complex manifold. Consider \( P \times X \) and \( P \times_{G} X \), where \( P \times_{G} X \) is the quotient of \( P \times X \) by the following right group action,
\[ (p,x)\cdot g = (pg,g^{-1}x).\]

Naturally we have \( P \times X \to P \times_{G} X \). Actually, $P \times X \to P \times_{G} X$ is a principal $G$-bundle  with the right $G$-action given above.

We define the horizontal distribution of the above $G$-principal bundle as \( \mathcal{H} \oplus TX \).
This provides a connection. It is clear that \( \mathcal{H}^{1,0} \oplus T^{1,0}X \) is a CR-structure and is $G$-invariant. We also have a section $J_{\mathcal{H}\oplus TX}$ of \( \text{End}(\mathcal{H} \oplus TX) \) which is clearly $G$-invariant. So $J_{\mathcal{H}\oplus TX}$ descends to an almost complex structure \( J_{Y} \) on \( P \times_{G} X =: Y \). 

The following lemma is classic.

\begin{lemma}
The almost complex structure $J_{Y}$ is integrable.    
\end{lemma}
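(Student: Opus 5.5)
The plan is to prove integrability of $J_Y$ locally, by producing holomorphic charts, rather than by computing the Nijenhuis tensor head-on. The curvature hypothesis that $s^*\Omega$ is of type $(1,1)$ is automatic here: $C$ is a curve, so every $2$-form on it is $(1,1)$. On $P\times X$ the distribution $\mathcal{H}^{1,0}\oplus T^{1,0}X$ is already known to be a $G$-invariant CR structure. The only thing to check is that it descends to an involutive distribution of maximal rank on the quotient $Y$.

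\textbf{Step 1: reduce involutivity.} The projection $q:P\times X\to Y$ restricts to an isomorphism $dq:\mathcal{H}\oplus TX\to TY$, and this isomorphism intertwines $J_{\mathcal{H}\oplus TX}$ with $J_Y$. Let $Z_1,Z_2$ be local $(1,0)$ vector fields on $Y$ and $\tilde Z_1,\tilde Z_2$ their $G$-invariant horizontal lifts, which are sections of $\mathcal{H}^{1,0}\oplus T^{1,0}X$. Then
\[
[\tilde Z_1,\tilde Z_2] = \widetilde{[Z_1,Z_2]} + V,
\]
where $V$ is a vertical term, tangent to the $G$-orbits of the right action $(p,x)\cdot g=(pg,g^{-1}x)$.

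\textbf{Step 2: identify the vertical term.} Since $\mathcal{H}^{1,0}\oplus T^{1,0}X$ is involutive (the earlier Proposition, extended trivially to the product), the bracket $[\tilde Z_1,\tilde Z_2]$ lies in $\mathcal{H}^{1,0}\oplus T^{1,0}X$. Its component along the vertical directions is therefore zero modulo this distribution. On the other hand, that component is computed by the curvature $\Omega(\tilde Z_1,\tilde Z_2)$ of the product connection, and $\Omega$ vanishes on pairs of $(1,0)$ vectors because it is a $(1,1)$-form pulled back from $C$. Hence $V=0$ and $[Z_1,Z_2]$ is of type $(1,0)$. The Newlander--Nirenberg theorem then gives integrability of $J_Y$.

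\textbf{Step 3 (alternative, more concrete).} Over a disc $D\subset C$, take a local section $s$ and write $\theta=\mathrm{Ad}\cdot(s^*\theta)$. Since $s^*\Omega$ is $(1,1)$ and $D$ is $1$-dimensional, the $(0,1)$-part $A=(s^*\theta)^{0,1}$ satisfies $\bar\partial A+[A,A]=0$ trivially. One solves $\bar\partial g = -A g$ locally for a map $g:D\to G^{\mathbb{C}}$; this is a $\bar\partial$-equation on a Riemann surface with the complexified group. The gauge transformation by $g$ makes the CR structure the product one. The map $D\times X\to Y|_D$, $(z,x)\mapsto[s(z),g(z)\cdot x]$ is then holomorphic, using that $G$ acts holomorphically and that the action extends to $G^{\mathbb{C}}$. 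This yields explicit holomorphic charts.

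\textbf{Main obstacle.} I expect the hardest point to be Step 2, namely correctly identifying the vertical component of the bracket with the curvature term. The cleanest route is probably Step 3: local triviality of holomorphic structures over a curve, which amounts to solving a $\bar\partial$-equation for a $G^{\mathbb{C}}$-valued gauge. That step needs the $G$-action on $X$ to extend holomorphically to $G^{\mathbb{C}}$, which is automatic for compact $X$ because the action integrates holomorphic vector fields.
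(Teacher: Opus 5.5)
Your Steps 1--2 are correct and are exactly the standard argument the paper intends; the paper itself omits the proof as classic. Horizontal lifts of local $(1,0)$-fields on $Y$ are $G$-invariant sections of the involutive distribution $\mathcal{H}^{1,0}\oplus T^{1,0}X$, so their bracket stays in it and projects under $dq$ into $T^{1,0}Y$. That already finishes the proof, so the curvature identification and Step 3 are superfluous. The curvature identification is right, since the connection form of $\mathcal{H}\oplus TX$ on $P\times X\to Y$ is $\mathrm{pr}_P^*\theta$, but Step 3 would also need a holomorphic $G^{\mathbb{C}}$-extension of the action, which is not among the hypotheses when $X$ is noncompact.
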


Note that the image of $\mathcal{H}$ defines a subbundle of $TY$, which we call the $G$-horizontal distribution of $TY$, or, with a slight abuse of terminology, the $G$-horizontal distribution of $Y$. The image of $TX$ is also a subbundle of $Y$, it is simply the vertical distribution of the fibration $Y\rightarrow C.$

\subsection{Form constructions}\label{Form_constructions}

This subsection reproduces the material in \cite[Proposition 3.3]{mccarthy2022canonical}, which in turn, essentially restates a theorem of Weinstein (see \cite[Theorem 6.3.3]{mcduff2017introduction}). We include the subsection here for completeness and to fix notations. More explanations can be found in \cite[Section 1.3]{yin2025explicit}.

Assume that we have a $G$-pre-Hamiltonian manifold \( (X, \omega_{X}) \) with a moment map
\[\mu_{\omega}^{G}: X \rightarrow \mathfrak{g}^*.\]

Consider the product \( P \times X \) and the quotient space \( P \times_{G} X =: Y \), we have
\[
P \times X \rightarrow Y \rightarrow C.
\]

As before, \( P \times X \rightarrow Y \) is a principal $G$-bundle with the $G$-action defined by:
\[
(p,x) \cdot g = (pg, g^{-1}x).
\]

We decompose 
\[
T(P \times X) = \mathcal{H} \oplus TX \oplus \mathcal{V},
\]
where \( \mathcal{V} \) represents the vertical distribution of the principal $G$-bundle $P\times X\rightarrow Y$. As before, \( \mathcal{H} \oplus TX \) provides a connection.

On \( P \times X \), we have a form \( \langle \mu_{\omega}^{G}, \Omega \rangle \). This form is $G$-invariant because both 
 $\mu_{\omega}^{G}$ and $\Omega$ are $G$-equivariant. It's clear that \( \langle \mu_{\omega}^{G}, \Omega \rangle \) vanishes on the vertical distribution of the bundle \( P \times X \rightarrow Y \). This tells us that \( \langle \mu_{\omega}^{G}, \Omega \rangle \) is basic, so it is a pullback from \( Y \). The corresponding form on $Y$ vanishes on the vertical distribution of the fibration of $Y\rightarrow C.$

We also consider the form \( \omega_{X} \) (with slightly abuse of notations) on \( P \times X \) which is \( \omega_{X} \) on \( TX \) but vanishes on \( \mathcal{H} \) and \( \mathcal{V} \). This \( \omega_{X} \) is $G$-invariant, it is also a basic form and hence a pullback from \( Y \). The corresponding form on $Y$ vanishes on the $G$-horizontal distribution.

The following lemma is classic and is included in the proof of \cite[Proposition 3.3]{mccarthy2022canonical}.

\begin{lemma}
    The form $\langle \mu_{\omega}^{G}, \Omega \rangle+\omega_{X}$ is closed on $P\times X$, so it descends to a closed form on $Y$.
\end{lemma}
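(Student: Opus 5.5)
The strategy is to verify closedness upstairs on $P\times X$, where the form is an explicit combination of pullbacks, and then use that it is basic for $P\times X\to Y$. I would work locally over a trivializing open set $D\subset C$ with a local section $s$, so that $P|_D\cong D\times G$ and the connection is $\theta=g^{-1}(s^*\theta)g+g^{-1}dg$. The cleanest route is the minimal coupling (Sternberg--Weinstein) form
\[
\tilde\omega:=\omega_X - d\langle \mu^G_\omega,\theta\rangle \quad\text{on } P\times X,
\]
with $\omega_X$ pulled back via the projection $P\times X\to X$. This form is automatically closed, being a closed form plus an exact form. The work is therefore to show that $\tilde\omega$ coincides with the form in the statement.

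I would expand
\[
d\langle\mu,\theta\rangle=\langle d\mu\wedge\theta\rangle+\langle \mu,d\theta\rangle ,
\]
and use the structure equation $d\theta=\Omega-\tfrac12[\theta,\theta]$. I would then decompose vectors according to $T(P\times X)=\mathcal{H}\oplus TX\oplus\mathcal{V}$ and evaluate $\tilde\omega$ on each pair of components.

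On $\mathcal{H}\times\mathcal{H}$, only $\Omega$ contributes, giving $\langle\mu,\Omega\rangle$ up to the sign convention used for $\Omega$. On $TX\times TX$ we simply get $\omega_X$. Here the restriction of $\omega_X$ to $TX$ in the statement means $\omega_X$ extended by zero on $\mathcal{H}\oplus\mathcal{V}$, which differs from the pullback by terms pairing $\mathcal{V}$ with $TX$. On $\mathcal{H}\times TX$ and $\mathcal{H}\times\mathcal{V}$ every term vanishes, since $\theta(\mathcal{H})=0$.

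The main obstacle is the bookkeeping of the mixed terms involving $\mathcal{V}$. These must be reconciled with the stated form, which vanishes on $\mathcal{V}$ by construction. For this I would use the moment map equation $d\langle\mu,a\rangle=\iota_{a_X}\omega_X$ together with equivariance of $\mu$ (the $[\theta,\theta]$ term is matched by $\langle\mu,[a,b]\rangle=\omega_X(a_X,b_X)$).

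The vertical directions of $P\times X\to Y$ are generated by $(a_P,-a_X)$. Showing that $\iota_{(a_P,-a_X)}\tilde\omega=0$ then reduces to the moment map identity. This proves that $\tilde\omega$ is basic, and comparing values on $\mathcal{H}\oplus TX$ identifies $\tilde\omega$ with $\langle\mu,\Omega\rangle+\omega_X$, both being basic forms that agree on the horizontal complement.

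Closedness then transfers, and the form descends to a closed form on $Y$, because the pullback along the submersion $P\times X\to Y$ is injective and commutes with $d$. If the sign conventions resist, I would instead directly compute $d(\langle\mu,\Omega\rangle+\omega_X)$ in a local gauge. In that gauge the Bianchi identity $d\Omega=[\Omega,\theta]$ together with equivariance of $\mu$ cancels the $\langle d\mu\wedge\Omega\rangle$ contributions against $d$ of the extension of $\omega_X$.
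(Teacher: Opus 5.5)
Your argument is the one the paper relies on. The paper gives no proof of its own but defers to \cite[Proposition 3.3]{mccarthy2022canonical}, i.e.\ Weinstein's minimal coupling construction \cite[Theorem 6.3.3]{mcduff2017introduction}, which is your route: add an exact term $\pm d\langle\mu,\theta\rangle$ to $\omega_X$, check basicness via the moment map identity and equivariance, and identify the result on $\mathcal{H}\oplus TX$.

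The one point to tighten is the sign, which is fixed by the moment map convention rather than by the convention for $\Omega$. With $d\langle\mu,a\rangle=\iota_{a_X}\omega_X$ as you wrote, $\omega_X-d\langle\mu,\theta\rangle$ equals $\omega_X^{\mathrm{ext}}-\langle\mu,\Omega\rangle$, where $\omega_X^{\mathrm{ext}}$ is the extension of $\omega_X$ by zero on $\mathcal{H}\oplus\mathcal{V}$. So the stated form requires the opposite convention $\iota_{a_X}\omega_X=-d\langle\mu,a\rangle$, which the paper's Fubini--Study moment map obeys, together with the coupling form $\omega_X+d\langle\mu,\theta\rangle$. This is not cosmetic: with mismatched signs the form is not closed in general.
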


With a bit of abuse of notations, we still denote the form on $Y$ as $\langle \mu_{\omega}^{G}, \Omega \rangle+\omega_{X}$ in this article.

\subsection{Projective bundle as an example}\label{projective_bundle_G_construction}

We assume in this subsection that $\omega_{C}$ has integral $2\pi$ and is a constant scalar curvature Kähler metric.

Let $(E_{j},h_{j})$ be an Hermitian-Einstein vector bundle over a smooth curve $C$ for $j = 0,\cdots, \ell,$ let $E = \oplus_{j=0}^{\ell}E_{j}.$ 
Let $G = \prod_{j=0}^{\ell}U(d_j)$. Let $P$ be the corresponding projectively-flat principal $G$-bundle over $C$ with the curvature in the form:
\[
\Omega = - i 
\begin{pmatrix}
\mu(E_{0}) \text{Id} & \cdots & 0 \\
\vdots & \ddots & \vdots \\
0 & \cdots & \mu(E_{l}) \text{Id}
\end{pmatrix} 
\otimes \omega_C,
\]
where $\mu(E_{j})$ is the slope of $E_{j}.$

Let $X $ be $  \mathbb{P}\left(\oplus_{j=0}^\ell \mathbb{C}^{d_j}\right)$, the projective space of lines. For an element in $X$, we write it as 
\[
[\mathbf{z}_0 : \mathbf{z}_1 : \dots : \mathbf{z}_\ell].
\]
where $\mathbf{z}_i \in \mathbb{C}^{d_i}$.
We endow $X$ with the natural Fubini-Study metric $\omega_{\mathrm{FS}}$. We have a natural $\prod_{j=0}^{\ell}U(d_j)$-action on $X$. The corresponding moment map is:
\[
\mu^{G}_{\omega_{\mathrm{FS}}}(x)(a) = \frac{\langle a\tilde{x},\tilde{x}\rangle}{i\langle \tilde{x},\tilde{x}\rangle},
\]
where $a \in \oplus_{j=0}^{\ell}\mathfrak{u}(d_{j})$ and $\tilde{x}$ is a point in $\mathbb{C}^{d_0} \oplus \cdots \oplus \mathbb{C}^{d_\ell}$ over $x$. In this case, we see that:
\[\langle \mu^{G}_{\omega_{\mathrm{FS}}}, \Omega \rangle = 
-\frac{\mu(E_{d_0}) |\mathbf{z}_0|^2 + \mu(E_{d_1}) |\mathbf{z}_1|^2 + \cdots + \mu(E_{d_\ell}) |\mathbf{z}_\ell|^2}{|\mathbf{z}_0|^{2}+\cdots |\mathbf{z}_\ell|^2}\omega_{C} .
\]
If $c >   \mu(E_{j})$ for all $j$, then $
c \omega_C + \langle \mu^{G}_{\omega_{\mathrm{FS}}}, \Omega \rangle + \omega_{FS} \text{ is a Kähler form on } Y = P \times_{G} X.$
Actually $Y$ is just $\mathbb{P}(E)$ in this case. Clearly the Kähler form is the same as the one constructed in \cite{AK}, and it lives in the Kähler class
\[
\left[ 2\pi(cF +  \mathcal{O}_{\mathbb{P}(E)}(1)) \right]
\]
on $\mathbb{P}(E)$, where $F$ represents a fiber. Note that $
\left[ 2\pi(cF +  \mathcal{O}_{\mathbb{P}(E)}(1)) \right]$ represents a Kähler class if and only if
$c-\mu(E_{j})>0$ for all $j.$
So we know that the Kähler classes of $c \omega_C + \langle \mu^{G}_{\omega_{\mathrm{FS}}}, \Omega \rangle + \omega_{FS}$ exhaust the Kähler cone of $\mathbb{P}(E)$.

\bibliographystyle{plain}
\bibliography{References.bib}
\end{document}